\documentclass[11pt,a4paper,final]{article}

\newif\ifsiamart
\makeatletter%
\@ifclassloaded{siamart171218}%
  {\siamarttrue}%
  {\siamartfalse}%
\makeatother%

\ifsiamart\else
\usepackage{authblk}
\fi
\usepackage{silence}
\WarningFilter{latex}{Overwriting file}
\usepackage[T1]{fontenc}
\usepackage[utf8]{inputenc}
\usepackage{csquotes}
\usepackage[UKenglish]{babel}
\usepackage{paralist}
\usepackage[shortlabels]{enumitem}
\usepackage[margin=.6in]{geometry}
\usepackage{xcolor}
\usepackage{graphicx}
\usepackage{bbm}
\usepackage{amsmath}
\usepackage{amssymb}
\usepackage{amsfonts}
\usepackage{stmaryrd}
\usepackage{mathrsfs}
\usepackage{mathtools}
\usepackage{epstopdf}
\usepackage{comment}

\usepackage{xspace}
\usepackage{float}
\usepackage{stackengine}

\ifsiamart\else
\usepackage{amsthm}
\usepackage{hyperref}
\hypersetup{%
    linktocpage=true,
    colorlinks=true,
    citecolor=red,
    linkcolor=blue,
    urlcolor=blue,
}
\usepackage[nameinlink,capitalise]{cleveref}
\newcommand{\email}[1]{\href{mailto:#1}{#1}}
\newenvironment{keywords}{\noindent \textbf{Keywords.}}{}
\newenvironment{AMS}{\noindent \textbf{AMS subject classification.}}{}
\fi

\usepackage{setspace}
\onehalfspacing

\usepackage{amssymb,bbm,mathrsfs}
\usepackage{mathtools}
\usepackage{multicol}
\usepackage[normalem]{ulem}
\usepackage{caption}
\usepackage{subcaption}
\usepackage{array}

\DeclareMathOperator{\supp}{supp}

\def\W{\mathcal W}

\def\Hc{\mathcal H}
\def\N{\mathcal N}
\def\nat{\mathbb{N}}
\def\M{\mathcal M}

\def\V{\mathcal V}

\def\P{\mathcal P}
\def\tP{\tilde{\mathcal P}}
\def\R{\mathbb R}
\def\cR{\mathcal R}

\def\U{\mathcal U}
\newcommand{\Wass}{\mathscr{W}}

\newcommand{\rhot}{\tilde{\rho}}

\newcommand{\rhoinf}{\rho_\infty}
\newcommand{\lambdat}{\tilde{\lambda}}
\newcommand{\Lambdat}{\tilde{\Lambda}}
\newcommand{\rhob}{\bar{\rho}}

\newcommand{\Gr}{\text{Gr}\, }

\DeclareMathOperator*{\amin}{argmin}
\DeclareMathOperator*{\amax}{argmax}

\DeclareMathOperator{\id}{id}
\DeclareMathOperator{\Id}{I}

\newcommand{\br}{r}
\newcommand{\bx}{b}
\newcommand{\eps}{\varepsilon}

\newcommand{\Wbar}{\overline{\mathscr{W}}}

\newcommand{\m}{m}

\newcommand{\Hess}[1]{\text{Hess}\left( #1 \right)}

\DeclareMathOperator*{\argmin}{arg\,min}

\renewcommand{\d}{\mathrm{d}}
\newcommand{\ddt}{\frac{\d}{\d t}}

\newcommand{\dds}{\frac{\d}{\d s}}
\newcommand{\ddss}{\frac{\d^2}{\d s^2}}
\newcommand{\weakstar}{\overset{\ast}{\rightharpoonup}}
\newcommand{\li}{\liminf_{n\rightarrow\infty}}
\newcommand{\ls}{\limsup_{n\rightarrow\infty}}
\newcommand{\norm}[1]{\left\lVert#1\right\rVert}
\renewcommand{\div}[1]{\mathrm{div}\left( #1 \right)}
\newcommand{\pxi}{\partial_{x_i}}

\def\<#1,#2>{\left\langle #1,\,#2\right\rangle}

\ifsiamart
\newtheorem{remark}[theorem]{Remark}
\newtheorem{assumption}{Assumption}
\else
\theoremstyle{plain}

\newtheorem{theorem}{Theorem}
\newtheorem{lemma}[theorem]{Lemma}
\newtheorem{corollary}[theorem]{Corollary}
\newtheorem{proposition}[theorem]{Proposition}

\newtheorem{remark}[theorem]{Remark}
\newtheorem{definition}[theorem]{Definition}

\newtheorem{assumption}{Assumption}
\fi

\crefname{lemma}{Lemma}{Lemmas}
\crefname{remark}{Remark}{Remarks}
\crefname{assumption}{Assumption}{Assumptions}
\crefname{proposition}{Proposition}{Propositions}
\crefname{section}{Section}{Sections}
\crefname{subsection}{Subsection}{Subsections}
\crefname{equation}{}{}
\Crefname{equation}{Equation}{Equations}
\newlist{lemmaenum}{enumerate}{3}
\setlist[lemmaenum]{label=(\alph*),ref=\,(\alph*)}
\crefname{lemmaenum}{Lemma}{Lemmas}
\newlist{assumpenum}{enumerate}{5}
\setlist[assumpenum]{label=(\alph*), font={\bfseries}}
\newlist{auxenum}{enumerate}{2}
\setlist[auxenum]{label=(\alph*),ref=(\alph*)}
\crefname{auxenumi}{Item}{Items}
\crefname{enumi}{}{}
\crefname{equation}{}{}
\crefname{assumpenumi}{}{}
\crefname{assumpenumii}{}{}
\Crefname{assumpenumi}{Assumption}{Assumptions}
\Crefname{assumpenumii}{Assumption}{Assumptions}
\Crefname{assumpenumii}{Assumption}{Assumptions}
\crefrangeformat{assumpenumi}{#3#1#4--#5#2#6}
\Crefname{lemmaenumi}{Part}{Parts}
\Crefname{figure}{Figure}{Figures}
\numberwithin{equation}{section}
\numberwithin{theorem}{section}

\usepackage{tikz}
\usetikzlibrary{shapes.misc}
\usetikzlibrary{positioning}

\usepackage[url=true,backend=biber,style=trad-abbrv,url=false,doi=false,isbn=false]{biblatex}
\addbibresource{references.bib}
\DeclareFieldFormat{volume}{volume \textbf{#1}}
\DeclareFieldFormat[article]{volume}{\textbf{#1}}

\ifsiamart\else
\let\oldparagraph=\paragraph
\renewcommand\paragraph[1]{\oldparagraph{#1.}}
\fi

\emergencystretch=1em

\begin{document}
\title{Coupled Wasserstein Gradient Flows\\ for Min-Max and Cooperative Games}

\ifsiamart
    \author{%
    }
\else
    \author[1]{Lauren Conger$^{a,}$}
    \author[1]{Franca Hoffmann$^{b,}$}
    \author[1]{Eric Mazumdar$^{c,}$}
    \author[2]{Lillian J. Ratliff$^{d,}$}
    \affil[ ]{\footnotesize $^a$\email{lconger@caltech.edu},
        $^b$\email{franca.hoffmann@caltech.edu},
    $^c$\email{mazumdar@caltech.edu},
    $^d$\email{ratliffl@uw.edu}}
    \affil[1]{\footnotesize Department of Computing and Mathematical Sciences, Caltech, USA}
    \affil[2]{\footnotesize Department of Electrical and Computer Engineering, University of Washington, USA}
    \date{}
\fi

\maketitle

\begin{abstract}

We propose a framework for two-player infinite-dimensional games with cooperative or competitive structure. These games take the form of coupled partial differential equations in which players optimize over a space of measures, driven by either a gradient descent or gradient descent-ascent in Wasserstein-2 space.
We characterize the properties of the Nash equilibrium of the system, and relate it to the steady state of the dynamics.
In the min-max setting, we show, under sufficient convexity conditions, that solutions converge exponentially fast and with explicit rate to the unique Nash equilibrium.  Similar results are obtained for the cooperative setting. We apply this framework to distribution shift induced by interactions among a strategic population of agents and an algorithm, proving additional convergence results in the timescale-separated setting. 
We illustrate the performance of our model on (i) real data from an economics study on Colombia census data, (ii) feature modification in loan applications, and (iii) performative prediction.
The numerical experiments demonstrate the importance of distribution-level, rather than moment-level, modeling. 
\end{abstract}

\begin{keywords}
    Wasserstein gradient flow,
    multispecies systems,
    min-max,
    Nash equilibrium,
    distribution shift,
    zero sum game.
\end{keywords}

\begin{AMS}
35G50,
91A25,
49J35.
\end{AMS}

\section{Introduction}\label{sec:introduction}
Gradient flows generalize gradient descent to infinite dimensional spaces, for instance when probability distributions evolve in the direction of steepest descent for a given functional and metric. Recent years have seen a surge in the application of gradient flow theory including work in Monte Carlo sampling \cite{samplingDurmus,samplingMa}, generative modeling \cite{kwon_score-based_2022}, image processing (image registration, warping, shape classification, image segmentation and image restoration) \cite{Burger2009,Haker2004,M2AN_2015__49_6_1745_0,RabinPapadakis}, 
modeling the behavior of plastic materials in material science \cite{mielke2015rate}, or biological systems \cite[Ch 8]{perthame_parabolic_2015}. The majority of the gradient flow literature considers a single species, that is, the evolution of a single distribution, yet in many applications settings
multiple distributions evolve simultaneously. In these \textit{multispecies} systems, each species still minimizes its own functional, but this functional can depend on the other species. This interdependence results in a system of coupled PDEs that globally can no longer be viewed as a gradient flow on a single functional. Due to the loss of the gradient flow structure, this class of PDEs can exhibit complex dynamics like cycling or chaos. Such systems model many types of phenomena, ranging from chemotaxis \cite{espejo_simultaneous_2009,horstmann_nonlocal_2009,wolansky_chemotactic_2016} to opinion dynamics \cite{during_boltzmann_2009}. More recently, such coupled structures have been shown to arise also in machine learning in the literature on distributionally robust optimization \cite{delage_distributionally_2010,duchi_learning_2021,netessine_wasserstein_2019,lin_distributionally_2022,ma_provably_2022} and distribution shift in machine learning \cite{conger_strategic_2023}. 
In such settings, natural questions that arise include the existence and characterization of solutions and steady states, and convergence of the coupled PDE system to these steady states (if they exist). Current results focus on proving existence of solutions for some of these systems 
\cite{alasio_existence_2022,carrillo_measure_2020,MR4572120,giunta_local_2022,jungel_nonlocal_2022},
but less is known about their long-time behavior.

In this work we study multispecies systems that arise from competitive and cooperative games. In these games the two species either seek to maximize the same functional, or compete with one maximizing and the other minimizing the functional---i.e., they are infinite dimensional players in a cooperative or min-max game. We show that the steady states of the resulting system of partial differential equations (PDEs) coincide with the equilibria of the underlying games and characterize their rates of convergence under natural structural assumptions on the corresponding functionals. The convergence of Wasserstein-2 gradient flows for min-max problems over spaces of measures was recently posed as an open question in \cite{wang_open_2024}, and our analysis provides an answer to these questions for displacement convex-concave functionals over unbounded sets.

We use these results to investigate the long-term effects of strategic interactions in driving distribution shift in real-world machine learning contexts.
In many machine learning systems, agents whose data is analyzed by the system are incentivized to manipulate their data to achieve a desired output. Additionally, distribution shift can occur naturally, or agents share information that causes other players to evolve. This behavior is not well-understood and has become a subject of recent interest; see for instance \cite{agarwal_minimax_2022,izzo_how_2021,lei_near-optimal_2021,miller_outside_2021,miller_effect_2020,perdomo_performative_2020,wiles_fine-grained_2021}. In settings where the objective of the learning algorithm opposes that of the agents, the update process can be modeled as a min-max problem over a large number of agents, which in a mean-field limit can be analyzed as an optimization problem over measures. In particular, we incorporate intra-agent interactions in the model via an interaction potential, exogenous shifts, and strategic responses to the algorithm. We illustrate how these model components capture rich distributional behavior (see Section \ref{subsec:colombia_data}) and can show disparate effects of retraining among subpopulations (see Section \ref{subsec:loan_applications}). 
The implementation in Section \ref{subsec:loan_applications} uses a particle method, highlighting how real-world economic settings of many agents can be analyzed via a mean-field description at the PDE level.

In this work, we consider two-species systems with an energy functional containing potential terms, self-interaction kernels, diffusion, and a coupling term which is linearly dependent on both species. Each species evolves according to a Wasserstein-2 gradient flow with respect to its own energy differential, in the direction of steepest ascent or descent. The resulting dynamical system is a joint gradient flow in the setting where both species descend, or a gradient descent-ascent flow, in the setting of opposing dynamics. In line with intuitive notions from game theory, we call the joint gradient flow setting the \textit{cooperative} setting, because one can view the resulting dynamics as a game in which both players aim to achieve the same objective of minimizing the same energy. We name the gradient descent-ascent case the \textit{competitive} setting, due to the zero-sum game structure in which one player aims to maximize a function which the other player aims to minimize.
Let the energy in the cooperative setting be defined as $F_a:\P(\R^{d_1})\times\P(\R^{d_2})\to\R\cup\{\infty\}$ and in the competitive setting as $F_c:\P(\R^{d_1})\times\P(\R^{d_2})\to\R\cup\{\infty\}$, where $\P(\R^d)$ is the space of probability measures on $\R^d$,
\begin{subequations}
\begin{align}
    F_a(\rho,\mu) &= \iint f(z,x) \d \rho(z) \d \mu(x) + \cR(\rho) + \U(\mu) \label{eq:energy_aligned}\,, \\
    F_c(\rho,\mu) &= \iint f(z,x) \d \rho(z) \d \mu(x) - \cR(\rho)+\U(\mu)\,, \label{eq:energy_competitive}
\end{align}
\end{subequations}
where \begin{align*}
    \cR(\rho) &=  \alpha \Hc(\rho) + \frac{1}{2}\int  W_1 \ast \rho(z) \,\d\rho(z) + \int V_1(z) \d \rho(z)\,, \\ 
    \U(\mu) &= \beta \Hc(\mu) + \frac{1}{2}\int  W_2 \ast \mu(x)\,\d\mu(x) + \int V_2(x) \d \mu(x)\,,
\end{align*}
with $\alpha,\beta\ge 0$. Here, we denote by $f:\R^{d_1}\times \R^{d_2}\to \R$ the function governing coupling forces between the species $\rho$ and $\mu$, by $\Hc(\eta) :\P(\R^d)\to \R\cup\{\infty\}$ the entropy functional
$$\Hc(\eta) = \begin{cases}
    \int \eta \log \eta & \text{if }\eta \ll \mathcal L^{d} \\
    +\infty& \text{otherwise}
\end{cases}\,,$$
for $\mathcal L^{d}$ the $d$-dimensional Lebesgue measure, by $V_i:\R^{d_i}\to\R$ external potentials, and by $W_i:\R^{d_i}\to \R$ interaction potentials. For $N_z$ interacting particles $\{z^{(i)}_t\}_{i=1}^{N_z}$ representing agents from species $\rho$, and $N_x$ interacting particles $\{x^{(j)}_t\}_{j=1}^{N_x}$ representing agents from species $\mu$, consider their evolution given by 
\begin{subequations}\label{eq:particles}
\begin{align}
    \d z_t^{(i)} &= \pm\frac{1}{N_x}\sum_{j=1}^{N_x}\nabla_z f\big(z_t^{(i)},x_t^{(j)} \big) \d t -\frac{1}{N_z}\sum_{k=1}^{N_z} \nabla_z W_1\big(|z_t^{(i)}-z_t^{(k)}|\big) \d t -\nabla V_1\big(z_t^{(i)}\big)\d t + \sqrt{2\alpha}\d B_t^{(i)} \,,\\
    \d x_t^{(j)} &= -\frac{1}{N_z}\sum_{i=1}^{N_z}\nabla_x f\big(z_t^{(i)},x_t^{(j)} \big)\d t -\frac{1}{N_x}\sum_{k=1}^{N_x} \nabla_x W_2\big(|x_t^{(j)}-x_t^{(k)}|\big) \d t-\nabla V_2\big(x_t^{(j)}\big)\d t + \sqrt{2\beta}\d B_t^{(j)}\,,
\end{align}
\end{subequations}
where the first equation appears with a minus sign in the cooperative setting, and with a plus sign in the competitive setting. In this work, we focus on the mean-field formulation of this game for all our theoretical results, and make use of the particle model above as a way of numerically approximating the corresponding mean-field system of coupled PDEs. Let us denote by $\Wass_2$ the Wasserstein-2 metric, and $\nabla_{\Wass_2,\eta} F$ the Wasserstein-2 gradient of $F$ with respect to $\eta$.
 Then the mean-field dynamics in the cooperative setting are
\begin{align}\label{eq:dynamics_aligned}
    \partial_t \rho = -\nabla_{\Wass_2,\rho} F_a(\rho,\mu)\,, \qquad \partial_t \mu = -\nabla_{\Wass_2,\mu}F_a(\rho,\mu)\,.
\end{align}
In the competitive case, the dynamics are
\begin{align}\label{eq:dynamics_competitive}
    \partial_t \rho = \nabla_{\Wass_2,\rho} F_c(\rho,\mu)\,, \qquad \partial_t \mu = -\nabla_{\Wass_2,\mu}F_c(\rho,\mu)\,.
\end{align}
Both systems can be viewed as a two-player game over measures, rather than a mean-field game, since the dynamics coincide with a mean-field game solution only if the optimal trajectory is a gradient flow. In a mean-field game, each player optimizes over a vector field trajectory, while in this setting the velocity field trajectory comes directly from the gradient of the first variation of the energy.
The analysis of the dynamics in the cooperative setting \eqref{eq:dynamics_aligned} proceeds similarly to the approach in \cite{carrillo_kinetic_2003}, in which an HWI inequality is proven for a single species and log-Sobolev and Talagrand inequalities follow.
However, because the dynamics in the competitive setting \eqref{eq:dynamics_competitive} no longer have a gradient flow structure, the classical gradient flow techniques no longer apply.

In this paper we show, under sufficient convexity conditions, that the competitive dynamics \eqref{eq:dynamics_competitive} converge exponentially fast in the joint Wasserstein-2 metric, with the rate dependent on the displacement convexity of $F_c$ with respect to $
\mu$ and displacement concavity of $F_c$ with respect to $\rho$.
We prove that any two solution pairs $(\rho,\mu)$ and $(\rhot,\tilde \mu)$ to \eqref{eq:dynamics_competitive} contract in $\Wass_2^2 \times \Wass_2^2$, the squared joint Wasserstein-2 metric. Based on this result, we then show existence of a unique steady state for the dynamics. In order to show uniform boundedness of the second moments for both species, we show that they converge exponentially fast to a ball, and then remain inside that ball for all time. 
Finally, we show that the steady state is in fact a critical point of $F_c$, and the unique Nash equilibrium.
While the convexity and smoothness assumptions can be generalized, even mild relaxations on the lower-bounds in finite dimensions do not give the same guarantees. For example, in Euclidean space, assuming that the energy satisfies a Polyak \L ojasiewicz (P\L) condition instead of convexity results in non-uniqueness of Nash equilibria. 
With respect to convexity, our results mirror the state-of-the-art guarantees existing for finite-dimensional games. However, relaxing the regularity assumptions on the functionals is likely possible, and an interesting direction of future research.

\subsection{Related Literature}
The existence and convergence results utilize tools from long-time behavior analysis of PDEs and game theory, and apply to multispecies PDEs with a cooperative or competitive structure. Our results extend what is known in min-max problems with applications in machine learning, DRO, and strategic distribution shifts. 

\paragraph{Coupled PDEs} 
The structure of the models \eqref{eq:dynamics_aligned} and \eqref{eq:dynamics_competitive} is closely related to multispecies PDEs in a variety of application areas, including chemotaxis \cite{espejo_simultaneous_2009,horstmann_nonlocal_2009,wolansky_chemotactic_2016,wolansky_multi-components_2002}, opinion formation \cite{during_boltzmann_2009}, pedestrian dynamics \cite{appert-rolland_two-way_2011}, population biology \cite{chen_minimal_2014,di_francesco_nonlocal_2016} and cell-cell adhesion \cite{MR4758375}. Recent progress focuses on well-posedness questions  \cite{alasio_existence_2022,MR4596268,carrillo_measure_2020,di_francesco_nonlinear_2018,francesco_measure_2013}, connections to related models via limiting procedures \cite{MR4572120,doumic_multispecies_2023}, and asymptotic pattern formation \cite{burger_sorting_2018,MR4650881}. The mathematical theory for multispecies PDEs is still nascent, and, as the above list of works demonstrates, even if the equations exhibit a gradient flow structure, general results characterizing the asymptotic behavior of solutions (especially results achieving exponential convergence with explicit rates) are rare and concern rather special modeling choices. Here, we present a framework through which long-time behavior can be analyzed. A number of recent works study two-species systems, some of which can be treated with our framework.
In \cite{carrillo_zoology_2018}, a numerical method for computing solutions to two-species non-local cross-diffusion models is used to analyze the steady states of these systems. The two-species model in \cite{duong_coupled_2020} considers potential terms in addition to the cross-diffusion and self-interaction terms, which is closer to our model, and proves a mean-field limit from the particle stochastic differential equation system to the PDE limit. In \cite{doumic_multispecies_2023}, the authors prove conditions under which measures converge to Diracs under cross-diffusion, without any self-diffusion. We allow for the possibility of diffusion which we show leads to Lebesgue-measurable steady states with support over the entire space. 

\paragraph{Particle Systems}
Viewing the coupled system of PDEs as a mean-field description of a large number of interacting agents of two types as in \eqref{eq:particles} offers another powerful analysis tool. A number of works \cite{cai_convergence_2024,lu_two-scale_2023,wang_exponentially_2023} provide particle methods to find mixed Nash equilibria. We study a more general energy functional but, like \cite{cai_convergence_2024}, we restrict the functional to have convexity-concavity structure, and use this to prove the existence of a Nash equilibrium rather than assuming it.

\paragraph{Game Theory}
Although the dynamical system we study is a system of PDEs, the steady state of the system can be analyzed through the lens of game theory and optimization. In game theory, existence of equilibria in min-max problems has been studied over spaces of (1) deterministic strategies or (2) probabilistic strategies over compact sets \cite{glicksberg_further_1952,simons_minimax_1995}.
Recent progress in developing algorithms to solve infinite-dimensional min-max optimization problems includes \cite{Razvan2024}, which proposes a mirror ascent-descent scheme to compute a solution to a min-max problem over convex sets of measures and assumes existence of an equilibrium, while \cite{garcia_trillos_adversarial_2024,liu_infinite-dimensional_2021,lu_two-scale_2023,wang_exponentially_2023} propose other gradient ascent-descent schemes either under the assumption that a Nash equilibrium exists or that the optimization is over a convex set.  
We build upon these works by proving existence of a unique Nash equilibrium for a general class of energy functionals over unbounded sets of measures, and prove that gradient ascent-descent in Wasserstein-2 converges exponentially to this equilibrium. This addresses the strongly displacement convex-concave setting of the open problem posed in \cite{wang_open_2024}. 

\paragraph{Applications in Machine Learning}
Solving min-max games is an important problem for many applications in machine learning which can be formulated as min-max problems over the space of distributions, such as distributionally robust optimization (DRO) \cite{lin_distributionally_2022}, strategic distribution shift~\cite{conger_strategic_2023,perdomo_performative_2020,zrnic_who_2021}, and generative adversarial networks (GANs) \cite{AGGARWAL2021100004,Goodfellow14}. Existing results in DRO \cite{delage_distributionally_2010,duchi_learning_2021,netessine_wasserstein_2019,lin_distributionally_2022,ma_provably_2022} for machine learning considers optimization over a \textit{bounded} set of measures. Our setting builds upon this by considering optimization over an \textit{unbounded} set of measures. Other applications of DRO include portfolio selection \cite{fonseca_decision-dependent_2023,WU2023513} and train freight optimization \cite{AN2024110228}. In support of developing numerical methods, duality structures of distributionally robust optimization problems are studied \cite{gao_distributionally_2023,zhang_short_2024} as well as sensitivity analysis \cite{bartl_sensitivity_2021}. By connecting the gradient-flow structure of min-max problems to multispecies PDEs, analysis tools and numerical methods from the PDE literature become available for these applications.

\subsection{Contributions}
Our contributions in this work are twofold; first on the theoretical side, our theorems extend what is known in the PDE literature and optimization literature. Secondly, we apply our framework to illustrate the importance of modeling distribution shift in strategic populations in machine learning.
\paragraph{Theory}
This framework sits at the intersection of PDE analysis and optimization, providing contributions to each of these fields. From an optimization perspective, the existence of a Nash equilibrium over measures on unbounded sets has been an open question. Since existence is unknown, there are no systematic tools for computing equilibria and in particular, convergence of gradient descent-ascent to an equilibrium is an open problem \cite{wang_open_2024}. We expand this area of game theory in two key ways.
\begin{enumerate}
    \item Classical proofs for existence of Nash equilibria assume optimization over compact spaces of measures; we prove results without this assumption by showing contraction in $\P_2 \times \P_2$.
    \item By analyzing distributions over action spaces rather than deterministic actions, the achieved equilibrium is not restricted to a pure Nash equilibrium; it can be a mixed Nash equilibrium. Outside of specific games, such as ones with a finite number of actions or structure that allows direct computation via calculus of variations, computing mixed Nash equilibria over continuous action spaces is difficult to solve in the general setting. Our results suggest that the gradient ascent-descent structure in Wasserstein-2 offers a solution.
\end{enumerate}

From a PDE perspective, this setting opens the door 
for using techniques from calculus of variations and gradient flows in metric spaces.
In particular, it can be framed in the language of  multi-species systems, a field for which only very few and recent results exist on asymptotics via entropy methods. We show the existence of a unique steady state and exponential convergence to it with explicit rates in four different two-species settings. This extends what is known about long-time asymptotics for systems of coupled PDEs; in particular, the technical contributions include the following.
    \begin{enumerate}
        \item In the cooperative setting, classical functional inequalities are extended to the case of multiple species.
        \item In the competitive setting, convergence is proven without the use of timescale separation; this requires entirely different proof techniques both for existence of the steady state and convergence. Direct differentiation of $\Wass_2$ results in convergence, and existence of a unique Nash equilibrium is shown via contraction. A dynamical systems-type argument is used for uniform estimates of the second moments.
        \item We demonstrate in a particular application setting how a Danskin-type result (also known as an envelope theorem in analysis) can be obtained from basic assumptions using a $\Gamma$-convergence argument (see Proposition~\ref{prop:gamma_cv}). This removes a key assumption in \cite{conger_strategic_2023} on the differentiability of the best response (see \cite[Lemma 29]{conger_strategic_2023}). Such a $\Gamma$-convergence approach is expected to generalize to other choices of functionals.
        
    \end{enumerate}
\paragraph{Application}
One particular setting in which models of type \eqref{eq:dynamics_aligned} and \eqref{eq:dynamics_competitive} appear is when machine learning algorithms interact with strategic populations \cite{conger_strategic_2023}. In many real-world settings, populations dynamically adapt their strategy based on algorithm behavior. Optimization methods for algorithms do not usually account for this data manipulation, and we provide examples illustrating why modeling distribution shift in the face of learning is critical for improved performance.
\begin{enumerate}
    \item We illustrate our model on real data from an economics study (see Section~\ref{subsec:colombia_data}), a setting in which agents manipulated data in response to the action of an algorithm, showing that our model is able to accurately capture such behavior.
    \item We show the importance of modeling distribution shift in detail. A state-of-the-art performative prediction method, based on mean shift, is outperformed when the classifier follows a simple gradient descent scheme. We also illustrate how modeling population interactions can be overlooked when looking at classifier accuracy, but these interaction terms matter when considering classifier performance on certain subpopulations.
\end{enumerate}

\subsection{Paper Structure}
In Section~\ref{sec:preliminaries}, we provide relevant definitions and notation. Section~\ref{sec:results} contains the key assumptions and main results for the cooperative and competitive settings. In Section~\ref{sec:application}, we discuss an application of the model to strategic distribution shift in machine learning, with timescale-separated convergence results. Numeric examples and insights are shown in Section~\ref{sec:numerics}. The proof of the key convergence results in the cooperative and competitive settings are postponed to Sections~\ref{sec:proof_thm_aligned} and \ref{sec:competitive_proof} respectively. Appendices~\ref{sec:fast_x_proof} and \ref{sec:fast_rho_proof} contain proofs for the timescale-separated settings and in Appendix~\ref{sec:extra_lemmas} we collect the supporting technical lemmas.

\section{Preliminaries}\label{sec:preliminaries}
This section provides definitions and notation used throughout the paper. $\Id_d$ denotes the $d\times d$ identity matrix, and $\id$ denotes the identity map. $\Hess{f}$ denotes the Hessian of $f$ in all variables, while $\nabla^2_{x}{f}$ denotes the Hessian of $f$ in the variable $x$ only. 
The notation $\mathbb{I}\{B\}$ is an indicator function for the set $B$. Unless otherwise specified, $\norm{\cdot}$ notes the Euclidean norm for vectors and $\norm{\cdot}_2$ is the induced 2-norm when the argument is a matrix. Let $L_+^1(\R^d)=\{\mu\in L^1(\R^d)\, : \, \mu\ge 0\ a.e.\}$.
The narrow topology is defined as convergence in duality with continuous bounded functions and the weak-* topology is defined in duality with continuous functions vanishing at infinity. Throughout the manuscript, we will use the following related notion of convergence, which we refer to as \emph{weak topology}.
\begin{definition}[Weak Convergence]\label{def:weakcv}
    A sequence of measures $(\rho_n)$ converges in the \emph{weak topology}, denoted by $\rho_n \rightharpoonup \rho$, when $(\rho_n)$ converges narrowly, that is, in duality with all continuous bounded functions, and there exists a uniform second moment bound for $(\rho_n)$.
\end{definition}
Note that weak convergence implies narrow convergence, which implies weak-* convergence; the converse however does not hold. If a sequence converging weak-* also has a uniform second moment bound, then tightness follows from Markov's inequality \cite{ghosh} and so the masses also converge; consequently, the sequence also converges narrowly and weakly.

The energy functionals we are considering are usually defined on the set of probability measures on $\R^{d}$, denoted by $\P(\R^{d})$. At times we abbreviate this as $\P$ if the underlying space is clear from context. The set $\P^{ac}(\R^d)$ denotes the set of probability measures on $\R^d$ that are absolutely continuous with respect to the Lebesgue measure. Throughout, we use the same notation for measures in $\P(\R^d)$ and their densities with respect to the Lebesgue measure. We also use $\tP(\R^d):=\P^{ac}(\R^d)\cup\{\rho\in \P(\R^d)\,:\, \rho=\delta_z \text{ for some } z\in\R^d\}$. If we consider the subset $\P_2(\R^{d})$ of probability measures with bounded second moment, 
$$\P_2(\R^{d}):=\left\{\rho\in \P(\R^{d})\,: \, \int_{\R^{d}} \|z\|^2\d\rho(z)<\infty\right \}\,,$$
then we can endow this space with the Wasserstein-2 metric,
    \begin{align*}
        \Wass_2(\mu, \nu)^2 = \inf_{\gamma\in\Gamma(\mu,\nu)} \int \norm{z-z'}^2 \d \gamma(z,z')
    \end{align*}
    where $\Gamma(\mu,\nu)\in \P_2(\R^{d}\times\R^{d})$ is the set of all joint probability distributions with bounded second moments and marginals $\mu,\nu$, i.e. $\mu (\d z) = \int \gamma(\d z,z')\d z'$ and $\nu(\d z') = \int \gamma(z,\d z') \d z$. 
 Throughout this paper, we set $z\in\R^{d_1}$ and $x\in\R^{d_2}$, and denote by $\Wbar$ the joint Wasserstein metric. 
\begin{definition}[Joint Wasserstein Metric]
    Denote by $\Wbar$ the metric over $\P_2(\R^{d_1})\times\P_2(\R^{d_2})$ given by
\begin{align*}
    \Wbar((\rho,\mu),(\tilde \rho,\tilde\mu))^2 = \Wass_2(\rho,\tilde\rho)^2 + \Wass_2(\mu,\tilde\mu)^2
\end{align*}
for all pairs $(\rho,\mu),(\tilde \rho,\tilde\mu) \in \P_2(\R^{d_1})\times\P_2(\R^{d_2})$. 
\end{definition}
Geodesic convexity in the Wasserstein-2 space $(\P_2(\R^{d_i}),\Wass_2)$ is known as displacement convexity.
\begin{definition}[Displacement Convexity \cite{mccann_convexity_1997}]\label{def:displacement_convexity}
    A functional $G:\P(\R^{d_i})\to\R$ is \emph{displacement convex} if for all $\rho_0,\rho_1\in\P_2(\R^{d_i})$ that are atomless we have
    \begin{align*}
        G(\rho_s) \leq (1-s)G(\rho_0) + s G(\rho_1)\,,
    \end{align*}
    where $\rho_s = [(1-s)\id+s\nabla \psi]_{\#}\rho_0$ is the displacement interpolant between $\rho_0$ and $\rho_1$  for all $s\in [0,1]$. 
    Further, $G:\P(\R^{d_i})\to\R$ is \emph{uniformly displacement convex with constant $\lambda>0$} if 
    \begin{align*}
        G(\rho_s) \leq (1-s)G(\rho_0) + s G(\rho_1)-s(1-s) \frac{\lambda}{2} \Wass_2(\rho_0,\rho_1)^2\,,
    \end{align*}
    where $\rho_s = [(1-s)\id+s\nabla \psi]_{\#}\rho_0$ is the displacement interpolant between $\rho_0$ and $\rho_1$.
\end{definition}

\begin{remark}
    In other words, $G$ is displacement convex (concave) if the function $G(\rho_s)$ is convex (concave) in $s\in[0,1]$ with $\rho_s = [(1-s\id+s\nabla \psi]_{\#}\rho_0$ being the displacement interpolant between $\rho_0$ and $\rho_1$. Contrast this with the classical notion of convexity (concavity) for $G$, where we require that the function $G((1-s)\rho_0+s\rho_1)$ is convex (concave). One can think of displacement convexity for an energy functional defined on $\P_2$ as convexity along the shortest path in the Wasserstein-2 metric (linear interpolation in the Wasserstein-2 space) between any two given probability distributions.
\end{remark}
 We will use $s$ to denote the interpolation parameter for geodesics, and $t$ to denote time related to solutions of \eqref{eq:dynamics_aligned}-\eqref{eq:dynamics_competitive}.
In fact, if the energy $G$ is twice differentiable along geodesics, then the condition $\ddss G(\rho_s) \geq 0 $ along any geodesic $(\rho_s)_{s\in[0,1]}$ between $\rho_0$ and $\rho_1$ is sufficient to obtain displacement convexity. Similarly, when $\ddss G(\rho_s) \geq \lambda \Wass_2(\rho_0,\rho_1)^2$, then $G$ is uniformly displacement convex with constant $\lambda>0$.
For more details, see \cite{mccann_convexity_1997} and \cite[Chapter 5.2]{villani-OTbook-2003}.
\begin{definition}[Relative Energy]\label{def:relative_energy}
    The relative energy of a functional $G$ is given by \newline $G(\gamma|\gamma_\infty)=G(\gamma)-G(\gamma_\infty)$, where $G(\gamma_\infty)$ is the energy at some reference measure $\gamma_\infty$.
\end{definition}

Using the first variation, we can express the gradient in Wasserstein-2 space, see for example \cite[Exercise 8.8]{villani-OTbook-2003}. More precisely,
    the gradient of an energy $G:\P_2(\R^{d_i})\to\R$ in the Wasserstein-2 space is given by
    $$
    \nabla_{\Wass_2} G(\rho)=-\div{\rho\nabla \delta_\rho G[\rho](x)}\,,
    $$
    where $\delta_\rho G[\rho](x)$ denotes the first variation of $G$ at $\rho$ (if it exists).
As a consequence, the infinite dimensional steepest descent in Wasserstein-2 space of a given energy $G:\P_2\to \R \cup \{+\infty \}$ can be expressed as the PDE
\begin{align}
    \partial_t \rho = -\nabla_{\Wass_2} G(\rho) = \div{\rho\nabla \delta_\rho G[\rho]}\,.
\end{align}
All the coupled gradient flows considered in this work have this Wasserstein-2 structure.

\paragraph{Steady states}

The main goal in our theoretical analysis is to characterize the asymptotic behavior for the models \eqref{eq:dynamics_aligned} and \eqref{eq:dynamics_competitive} as time goes to infinity.
The steady states of these equations are the natural candidates to be asymptotic profiles for the corresponding dynamics. Thanks to the gradient flow structure, we expect to be able to make a connection between critical points of the energy functionals and the steady states of the corresponding gradient ascent/descent dynamics. More precisely, any minimizer or maximizer is in particular a critical point of the energy, and therefore satisfies that the first variation is constant on disconnected components of its support. If this ground state also has enough regularity (weak differentiability) to be a solution to the equation, it immediately follows that it is in fact a steady state. To make this connection precise, we first introduce what exactly we mean by a steady state.

\begin{definition}[Steady states]\label{def:sstates}
    For $\rho_\infty,\mu_\infty\in \P_2$, the pair $(\rho_\infty,\mu_\infty)$ is a steady state for the systems 
\eqref{eq:dynamics_aligned}-\eqref{eq:dynamics_competitive} if 
    \begin{enumerate}
        \item [(i)] $\nabla W_1 \ast \rho_\infty \in L_{loc}^1(\R^{d_1})$, $\nabla W_2 \ast \mu_\infty \in L_{loc}^1(\R^{d_2})$,
        \item [(ii)] if additionally, $\alpha>0$, then $\rho_\infty\in W_{loc}^{1,2}(\R^{d_1}) \cap L^1_+(\R^{d_1})\cap L^\infty_{loc}(\R^{d_1})$ and $\|\rho_\infty\|_1=1$,
        \item [(iii)] if additionally, $\beta>0$, then $\mu_\infty\in W_{loc}^{1,2}(\R^{d_2})\cap L^1_+(\R^{d_1})\cap L^\infty_{loc}(\R^{d_1})$, $\norm{\mu_\infty}_1 = 1$,
        \item [(iv)]  $(\rho_\infty,\mu_\infty)$ satisfy \eqref{def:sstate_Ga} for dynamics \eqref{eq:dynamics_aligned} or \eqref{def:sstate_Gc} for dynamics \eqref{eq:dynamics_competitive}\,.
    \end{enumerate}
    The conditions \eqref{eq:sstates} are given by
    \begin{subequations}\label{eq:sstates}
    \begin{align}
   \nabla_z \delta_\rho F_a[\rho_\infty,\mu_\infty](z) = 0\,, \quad \nabla_x \delta_\mu F_a[\rho_\infty,\mu_\infty](x)&=0 \quad \forall z\in\supp(\rhoinf)\,, x\in\supp(\mu_\infty)\label{def:sstate_Ga} \\
   \nabla_z \delta_\rho F_c[\rho_\infty,\mu_\infty](z) = 0\,, \quad \nabla_x \delta_\mu F_c[\rho_\infty,\mu_\infty](x)&=0 \quad \forall z\in\supp(\rhoinf)\,,x\in\supp(\mu_\infty)\label{def:sstate_Gc}
\end{align}
\end{subequations}
    in the sense of distributions.
\end{definition}

We define the Nash equilibrium of a game, and later show that the steady state of the dynamics in the zero-sum setting is in fact a Nash equilibrium.
\begin{definition}[Nash Equilibrium]
    A pair of measures $\gamma_*=(\rho_*,\mu_*)\in\P(\R^{d_1})\times \P(\R^{d_2})$ is a Nash equilibrium for the competitive setting if it satisfies
    \begin{subequations}\label{def:F_nash}
        \begin{align}
        F_c(\rho_*,\mu_*) &\geq F_c(\rho,\mu_*) \quad \forall\ \rho\in\P(\R^{d_1}) \label{eq:nash_rho}\\
        F_c(\rho_*,\mu_*) &\leq F_c(\rho_*,\mu) \quad \forall\ \mu \in \P(\R^{d_2})\,. \label{eq:nash_mu}
    \end{align}
    \end{subequations}
\end{definition}

\section{Main Results}\label{sec:results}
The convergence analysis of these systems allows us to understand and predict the long-time behavior of the dynamics. 
The asymptotics are given by the ground and saddle states of the energy functionals $F_a$ and $F_c$ respectively. We prove existence and uniqueness of the critical points of the functionals and, under sufficient convexity criteria, convergence with explicit rates.  

\begin{remark}[Cauchy-Problem]
To execute the arguments on convergence to equilibrium, we require sufficient regularity of solutions to the PDEs under consideration. 
In fact, it is sufficient if we can show that equations \eqref{eq:dynamics_aligned} - \eqref{eq:dynamics_competitive} can be approximated by equations with smooth solutions. Albeit tedious, these are standard techniques in the regularity theory for partial differential equations; see for example \cite[Proposition 2.1 and Appendix A]{carrillo_kinetic_2003}, \cite{otto_generalization_2000}, \cite[Chapter 9]{villani-OTbook-2003}, and the references therein. Similar arguments as in \cite{desvillettes_spatially_2000} are expected to apply to the coupled gradient flows considered here to guarantee existence of smooth solutions with fast enough decay at infinity. In this work, we do not focus on the existence and regularity of solutions.
\end{remark}

\subsection{Assumptions}\label{subsec:assumptions}
The key results on existence and uniqueness of a ground state or saddle point, as well as the convergence behavior of solutions, depend on convexity (concavity) of the corresponding functionals. The notion of convexity that we will employ for energy functionals in the Wasserstein-2 geometry is \textit{(uniform) displacement convexity}, which is analogous to (strong) convexity in Euclidean spaces; see Definition~\ref{def:displacement_convexity}.

We use subsets of the following assumptions in the cooperative and competitive cases.

\begin{assumption}\label{assump:f_lower}
The coupling potential $f$ satisfies $f\in C^2(\R^{d_1} \times \R^{d_2}, \R)$, and for all $(z,x)\in\R^{d_1}\times\R^{d_2}$,
\begin{itemize} 
\item[(i)] Cooperative Setting: There exists $\lambda_f\in\R$ such that $\Hess{f}(z,x) \succeq \lambda_f \Id_{d_1\times d_2}$. That is, $f$ is $\lambda_f$-convex. Additionally, $f \ge 0$.
\item[(ii)] Competitive Setting: There exists $\lambda_{f,1}\in\R$ such 
that $-\nabla_z^2 f(z,x) \succeq \lambda_{f,1} \Id_{d_1}$ and $\lambda_{f,2}\in\R$ such that 
and $\nabla_x^2 f(z,x) \succeq \lambda_{f,2} \Id_{d_2}$. That is, $f$ is $\lambda_{f,1}$-concave in $z$ and $\lambda_{f,2}$-convex in $x$.
\end{itemize} 
\end{assumption}

\begin{assumption}\label{assump:V_lower}
    The external potentials $V_i:\R^{d_i}\to[0,\infty)$ are in $C^2$ and satisfy lower Hessian bounds:
    there exists $\lambda_{V,i}\in\R$ such that $\nabla^2 V_i \succeq \lambda_{V,i} \Id_{d_i}$.
\end{assumption}

\begin{assumption}\label{assump:W_lower}
The interaction potentials $W_i:\R^{d_i}\to [0,\infty)$ are in $C^2$, are symmetric
and satisfy lower Hessian bounds: 
there exists $\lambda_{W,i}\ge 0$ such that $\nabla^2 W_i \succeq \lambda_{W,i} \Id_{d_i}$.
\end{assumption}
In the timescale-separated competitive setting, we use additional assumptions which provide upper bounds on the Hessian terms; see Section~\ref{sec:application}.

\begin{remark}
    The assumptions above and in Section~\ref{sec:application} are not intended to be optimal; rather, they provide conditions analogous to conditions in the finite-dimensional setting under which convergence is guaranteed. For details on how some convexity assumptions can be weakened in combination with stronger assumptions on other terms, see \cite{carrillo_kinetic_2003}. The assumptions that the above potentials are in $C^2$ is strong and can likely be weakened without losing the main convergence guarantees. For the application settings considered in Sections~\ref{sec:application} and \ref{sec:numerics} and in \cite{conger_strategic_2023}, all potentials are in $C^2$; however, more singular potentials are common in other settings such as interacting species in math-biology.
\end{remark}

\subsection{The Cooperative Setting}
The cooperative setting can be viewed as a class that includes potential games. From the PDE perspective, the system has a joint gradient flow structure which is utilized to prove convergence.
\begin{theorem}\label{thm:PDE_aligned} 
   Suppose that Assumptions~\ref{assump:f_lower}(i), \ref{assump:V_lower}, and \ref{assump:W_lower} are satisfied with $$\lambda_a \coloneqq \lambda_f+\min\{\lambda_{V,1},\lambda_{V,2} \} > 0.$$ 
   Consider solutions $\gamma_t:=(\rho_t,\mu_t)$ to the dynamics \eqref{eq:dynamics_aligned} with initial condition satisfying $\gamma_0\in\P_2(\R^{d_1})\times \P_2(\R^{d_2})$, $F_a(\gamma_0)<\infty$, and
   \begin{align*}
       \int \norm{\nabla_z\delta_\rho F_a[\gamma_0](z)}^2 \d \rho_0(z) + \int \norm{\nabla_x\delta_\mu F_a[\gamma_0](x)}^2 \d \mu_0(x) < \infty \,.
   \end{align*}
   Then the following hold:
    \begin{enumerate}[topsep=0ex]
        \setlength\itemsep{0.01em}
        \item[(a)] There exists a unique minimizer $\gamma_\infty =(\rho_\infty,\mu_\infty)$ of $F_a$ in $\P \times \P$, which is also a steady state for equation \eqref{eq:dynamics_aligned}. Further, $\gamma_\infty\in \P_2 \times \P_2$.
        \begin{enumerate}
            \item [(i)] If $\alpha >0$, then $\rho_\infty\in  L_+^1(\R^{d_1}) \cap C^2(\R^{d_1})$ and $\supp(\rho_\infty)=\R^{d_1}$.
            \item [(ii)] If $\beta > 0$, then $\mu_\infty\in L_+^1(\R^{d_2})\cap C^2(\R^{d_2})$ and $\supp(\mu_\infty)=\R^{d_2}$.
        \end{enumerate} 
        \item[(b)] The solution $\gamma_t$ converges exponentially fast in $F_a (\cdot \,|\, \gamma_\infty)$ and $\Wbar$,
        \begin{equation*}
    F_a(\gamma_t\,|\,\gamma_\infty)\le e^{-2\lambda_a t} F_a(\gamma_0\,|\,\gamma_\infty)\, \quad \text{ and }\quad
    \Wbar(\gamma_t,\gamma_\infty) \le c e^{-\lambda_a t}
    \quad \text{ for all } t\ge 0\,,
\end{equation*}
where $c>0$ is a constant only depending on $\gamma_0$, $\gamma_\infty$ and the parameter $\lambda_a$.
    \end{enumerate}
\end{theorem}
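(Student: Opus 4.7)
The plan is to exploit the joint gradient flow structure of $F_a$ in the product Wasserstein space $(\P_2 \times \P_2, \Wbar)$. The central observation is that $F_a$ should be $\lambda_a$-uniformly displacement convex along geodesics in this product metric. To see this, I would pick two pairs $(\rho_0,\mu_0), (\rho_1,\mu_1)$ with optimal transport maps $T_\rho, T_\mu$, and form the product geodesic $\rho_s = [(1-s)\id + sT_\rho]_\# \rho_0$, $\mu_s = [(1-s)\id + sT_\mu]_\# \mu_0$, which realizes the geodesic in $\Wbar$. The entropies $\Hc(\rho_s), \Hc(\mu_s)$ are $0$-displacement convex by McCann's theorem; the interaction terms involving $W_i$ are $\lambda_{W,i}\ge 0$-convex; the potentials $\int V_i\, d\eta_s$ are $\lambda_{V,i}$-convex; and the coupling $\iint f(z,x)\,d\rho_s(z)\,d\mu_s(x)$, rewritten as $\iint f((1-s)z + sT_\rho(z), (1-s)x + sT_\mu(x))\,d\rho_0(z)\,d\mu_0(x)$, has second derivative in $s$ controlled below by $\lambda_f$ times the joint squared displacement thanks to Assumption~\ref{assump:f_lower}(i). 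Summing and taking the worst potential rate yields $\ddss F_a(\gamma_s) \ge \lambda_a \Wbar(\gamma_0,\gamma_1)^2$.

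For part (a), uniform displacement convexity with $\lambda_a>0$ gives strict convexity together with coercivity (the nonnegativity of $f$, $V_i$, $W_i$ provides the needed compactness of sublevel sets via a second moment bound). Existence and uniqueness of a minimizer $\gamma_\infty = (\rho_\infty,\mu_\infty)\in\P_2\times\P_2$ then follow from the direct method applied jointly. Writing the Euler--Lagrange conditions yields that $\gamma_\infty$ satisfies \eqref{def:sstate_Ga}, hence is a steady state in the sense of Definition~\ref{def:sstates}. When $\alpha>0$, the Euler--Lagrange equation gives
\begin{equation*}
\alpha\log\rho_\infty(z) + V_1(z) + W_1 \ast \rho_\infty(z) + \int f(z,x)\,d\mu_\infty(x) = C
\end{equation*}
on $\supp(\rho_\infty)$, so $\rho_\infty$ is proportional to a strictly positive Gibbs-type density in $z$; combined with the $C^2$ regularity of $V_1$, $W_1$, $f$ and a bootstrapping argument, this gives $\rho_\infty\in L_+^1(\R^{d_1})\cap C^2(\R^{d_1})$ with full support. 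The argument for $\mu_\infty$ is symmetric.

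For part (b), I would follow the Otto--Villani/Carrillo--McCann--Villani strategy, adapted to the joint geometry. First, the energy dissipation identity along \eqref{eq:dynamics_aligned} reads
\begin{equation*}
\ddt F_a(\gamma_t) = -\int \|\nabla_z \delta_\rho F_a[\gamma_t]\|^2 \,d\rho_t - \int \|\nabla_x \delta_\mu F_a[\gamma_t]\|^2 \,d\mu_t =: -I_a(\gamma_t)\,.
\end{equation*}
The $\lambda_a$-uniform displacement convexity along product geodesics yields a joint HWI inequality
\begin{equation*}
F_a(\gamma\,|\,\gamma_\infty) \le \Wbar(\gamma,\gamma_\infty)\sqrt{I_a(\gamma)} - \tfrac{\lambda_a}{2}\Wbar(\gamma,\gamma_\infty)^2\,,
\end{equation*}
from which a joint log-Sobolev inequality $F_a(\gamma\,|\,\gamma_\infty) \le \tfrac{1}{2\lambda_a} I_a(\gamma)$ and a joint Talagrand inequality $\Wbar(\gamma,\gamma_\infty)^2 \le \tfrac{2}{\lambda_a} F_a(\gamma\,|\,\gamma_\infty)$ follow by the standard maximization-over-initial-data trick. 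Gronwall applied to the dissipation identity then gives $F_a(\gamma_t\,|\,\gamma_\infty) \le e^{-2\lambda_a t} F_a(\gamma_0\,|\,\gamma_\infty)$, and composing with Talagrand yields the $\Wbar$-decay with rate $\lambda_a$ and constant $c = \sqrt{2 F_a(\gamma_0\,|\,\gamma_\infty)/\lambda_a}$.

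The main obstacle is the rigorous passage from displacement convexity of $F_a$ in the product metric to the joint HWI/LSI/Talagrand inequalities: one must differentiate $\Wbar$ along the flow and along geodesics jointly, and justify all computations for the coupling term $\iint f \,d\rho\,d\mu$ using only the $C^2$ regularity and one-sided Hessian bound in Assumption~\ref{assump:f_lower}(i). Standard smooth approximation arguments (as referenced in the Cauchy-problem remark) together with lower semicontinuity of $\Hc$, $\cR$, $\U$ in the weak topology should allow this to be carried out cleanly, with the joint geodesic structure being the key enabler that reduces the two-species analysis to a scalar analogue of \cite{carrillo_kinetic_2003}.
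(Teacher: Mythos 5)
Your proposal follows essentially the same route as the paper: establish $\lambda_a$-uniform displacement convexity of $F_a$ along product $\Wbar$-geodesics by differentiating twice in the interpolation parameter and bounding each term (entropy, interaction, potential, coupling $f$), obtain existence and uniqueness via the direct method plus strict convexity, extract regularity and full support from the Euler--Lagrange/Gibbs representation, and then deduce the joint HWI, log-Sobolev, and Talagrand inequalities to run the standard dissipation-plus-Gr\"onwall argument. The paper organizes this as Proposition~\ref{prop:dipl-convexity}, Proposition~\ref{prop:existenceGa}, Corollary~\ref{cor:ground-states-supp-Ga}, Proposition~\ref{prop:HWI}, and Corollaries~\ref{cor:logSob}--\ref{cor:Talagrand}, but the content, the decomposition of $F_a$, and the key lemmas are the ones you identified.
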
 
To prove existence and uniqueness, we leverage classical techniques in the calculus of variations. To obtain convergence to equilibrium in energy, our key result is an HWI-type inequality, providing as a consequence generalizations of the log-Sobolev inequality and the Talagrand inequality. Together, these inequalities relate the energy (classically denoted by $H$ in the case of the Boltzmann entropy), the metric (classically denoted by $W$ in the case of the Wasserstein-2 metric) and the energy dissipation (classically denoted by $I$ in the case of the Fisher information)\footnote{Hence the name HWI inequalities.}. Combining these inequalities with Gr\"onwall's inequality allows us to deduce convergence both in energy and in the metric $\Wbar$. See Section~\ref{sec:proof_thm_aligned} for a detailed proof.

\subsection{The Competitive Setting}
In the competitive setting, gradient descent by each player results in convergence to the unique Nash equilibrium.
\begin{theorem}\label{thm:convergence_Wbar_competitive}
    Suppose Assumptions~\ref{assump:f_lower}(ii), \ref{assump:V_lower} and \ref{assump:W_lower} are satisfied with $$\lambda_c\coloneqq \min \{\lambda_{f,1}+\lambda_{V,1},\lambda_{f,2}+\lambda_{V,2} \}>0.$$
    Consider solutions to \eqref{eq:dynamics_competitive} with initial condition $\gamma_0\in\P_2(\R^{d_1})\times\P_2(\R^{d_2})$ satisfying
    \begin{align*}
        \int \norm{\nabla_z\delta_\rho F_c[\gamma_0](z)}^2 \d \rho_0(z) + \int \norm{\nabla_x\delta_\mu F_c[\gamma_0](x)}^2 \d \mu_0(x) < \infty\,.
    \end{align*}
If $\alpha=0$, assume $\rho_0=\delta_{z_0}$ for some $z_0\in\R^{d_1}$. If $\beta=0$, assume $\mu_0=\delta_{x_0}$ for some $x_0\in\R^{d_2}$.
    Then the following hold:
    \begin{enumerate}
        \item[(a)] There exists a unique critical point $\gamma_*\in\tP_2\times \tP_2$ for $F_c$ which is also a steady state for equation~\eqref{eq:dynamics_competitive} and the unique Nash equilibrium.
        \begin{itemize}
            \item [(i)] If $\alpha >0$, then $\rho_*\in L^1_+(\R^{d_1})\cap C^2(\R^{d_1})$ and $\supp(\rho_*)=\R^{d_1}$.
            \item [(ii)] If $\beta > 0$, then $\mu_*\in L^1_+(\R^{d_2})\cap C^2(\R^{d_2})$ and $\supp(\mu_*)=\R^{d_2}$. 
        \end{itemize}
        \item[(b)] The solution $\gamma_t\coloneqq(\rho_t,\mu_t)$ to the dynamics \eqref{eq:dynamics_competitive} is in $\tP_2\times \tP_2$, has uniformly bounded second moments,
               \begin{align*}
               \exists\, K>0\,:\,\quad 
           \int \norm{z}^2 \d \rho_t(z) + \int \norm{x}^2 \d \mu_t(x) \le K \quad \forall\, t\ge 0\,,
       \end{align*}
        and converges exponentially fast in $\Wbar$ with rate $\lambda_c$,
        \begin{align*}
            \Wbar(\gamma_t,\gamma_*) \le e^{-\lambda_c t}\Wbar(\gamma_0,\gamma_*)\,.
        \end{align*}
    \end{enumerate}
\end{theorem}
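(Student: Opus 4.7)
The strategy is to establish a contraction estimate in the joint metric $\Wbar$ between any two solutions of \eqref{eq:dynamics_competitive}, from which both uniqueness of the Nash equilibrium and exponential convergence follow, and then to obtain existence of the steady state through a compactness argument based on uniform second-moment bounds. Identification of the steady state as the unique Nash equilibrium comes from the convexity--concavity structure of $F_c$.

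The heart of the argument is the inequality
\begin{equation*}
\frac{\d}{\d t}\Wbar(\gamma_t, \tilde\gamma_t)^2 \le -2\lambda_c\, \Wbar(\gamma_t, \tilde\gamma_t)^2
\end{equation*}
for any two solutions $\gamma_t = (\rho_t, \mu_t)$ and $\tilde\gamma_t = (\tilde\rho_t, \tilde\mu_t)$. The plan is to differentiate $\Wass_2(\rho_t,\tilde\rho_t)^2$ and $\Wass_2(\mu_t,\tilde\mu_t)^2$ separately using the standard transport formula, with velocity fields $\nabla\delta_\rho F_c$ for $\rho$ (ascent) and $-\nabla\delta_\mu F_c$ for $\mu$ (descent). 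The diagonal contributions from $\cR(\rho)$ and $\U(\mu)$ reduce to the usual displacement convexity estimates for the entropy, $W_i$, and $V_i$ terms (via Assumptions~\ref{assump:V_lower} and \ref{assump:W_lower}), producing at least $-\lambda_{V,1}\Wass_2(\rho_t,\tilde\rho_t)^2$ and $-\lambda_{V,2}\Wass_2(\mu_t,\tilde\mu_t)^2$. The subtle part is the coupling term: writing it as an integral against a joint plan $\pi$ whose $(z,\tilde z)$-marginal is the optimal coupling between $\rho_t,\tilde\rho_t$ and whose $(x,\tilde x)$-marginal is optimal between $\mu_t,\tilde\mu_t$, one expands $\nabla_z f(z,x)-\nabla_z f(\tilde z,\tilde x)$ and $\nabla_x f(z,x)-\nabla_x f(\tilde z,\tilde x)$ via the fundamental theorem of calculus along the segment from $(\tilde z,\tilde x)$ to $(z,x)$. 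The mixed-derivative terms that appear satisfy $(z-\tilde z)^T\nabla_x\nabla_z f\,(x-\tilde x) = (x-\tilde x)^T\nabla_z\nabla_x f\,(z-\tilde z)$ by symmetry of second partials, so they cancel when the $\rho$- and $\mu$-contributions are summed with their opposing ascent/descent signs. What remains are the diagonal Hessian pieces, controlled pointwise by Assumption~\ref{assump:f_lower}(ii) to give $-\lambda_{f,1}\|z-\tilde z\|^2-\lambda_{f,2}\|x-\tilde x\|^2$, which after integration against $\pi$ yield the claimed overall rate $\lambda_c$.

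Next, I would show uniform second-moment bounds by a direct Lyapunov argument on $M(t) = \int\|z\|^2\,\d\rho_t + \int\|x\|^2\,\d\mu_t$, computed from \eqref{eq:dynamics_competitive}. The confinement from $V_1,V_2$ combined with the concavity/convexity of $f$ through $\lambda_c > 0$ produces a differential inequality of the form $\tfrac{\d}{\d t}M \le -2\lambda_c M + C$, ensuring exponential decay to a ball in which the orbit is trapped. The time-orbit $\{\gamma_t\}$ is then precompact in the weak topology of Definition~\ref{def:weakcv}. Because the contraction inequality implies the shifted orbits $\gamma_{t_n+\cdot}$ form a Cauchy family in $\Wbar$, any weak limit $\gamma_*$ of a time sequence $\gamma_{t_n}$ is necessarily a steady state (via passage to the limit in the PDE, justified by the $C^2$ regularity of $f,V_i,W_i$), and contraction forces the limit to be unique, yielding existence of a unique steady state $\gamma_* \in \tP_2\times\tP_2$. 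The exponential convergence $\Wbar(\gamma_t,\gamma_*)\le e^{-\lambda_c t}\Wbar(\gamma_0,\gamma_*)$ then follows by applying the contraction inequality with $\tilde\gamma_t \equiv \gamma_*$.

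Finally, to identify $\gamma_*$ as the unique Nash equilibrium, the plan is to combine the Euler--Lagrange conditions \eqref{def:sstate_Gc} with the global displacement concavity of $\rho\mapsto F_c(\rho,\mu_*)$ and displacement convexity of $\mu\mapsto F_c(\rho_*,\mu)$ inherited from Assumptions~\ref{assump:f_lower}(ii), \ref{assump:V_lower}, and \ref{assump:W_lower}, which upgrade a critical point into a global saddle and yield \eqref{def:F_nash}. Regularity when $\alpha > 0$ follows from the Euler--Lagrange equation forcing $\rho_* \propto \exp\bigl(\alpha^{-1}\bigl[\int f(\cdot,x)\,\d\mu_*(x) - W_1 \ast \rho_* - V_1\bigr]\bigr)$, which is in $C^2$ and strictly positive on all of $\R^{d_1}$; the case $\beta > 0$ is symmetric, and when $\alpha = 0$ (resp.\ $\beta = 0$) the dynamics preserve the Dirac structure of $\rho_0$ (resp.\ $\mu_0$), placing the component in the Dirac part of $\tP_2$. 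The main technical obstacle is making the cross-Hessian cancellation rigorous through a single joint plan $\pi$ (which requires care because the optimal plans for $(\rho,\tilde\rho)$ and $(\mu,\tilde\mu)$ are independent of one another but must be coupled consistently), and establishing uniform second-moment bounds without a priori knowledge of the steady state. Both rely crucially on the symmetry of mixed partials of $f$ and on the confining strength $\lambda_c > 0$.
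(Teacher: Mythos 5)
Your overall architecture mirrors the paper's closely: contraction in $\Wbar$ via the transport derivative formula, cross-term cancellation by symmetry of mixed partials (the paper symmetrizes the block Hessian in Lemma~\ref{lem:Wbar_upper_bd}, which is the same observation packaged differently), a Lyapunov bound on second moments, and existence of the steady state via a Cauchy/fixed-point argument in $\Wbar$ (the paper cites \cite[Lemma 7.3]{carrillo_contractive_2007}, which encapsulates exactly your Cauchy-family reasoning). Your worry about "coupling the two optimal plans consistently" is not an actual difficulty: the coupling term in $\nabla_z\delta_\rho F_c$ is $\int\nabla_z f(z,x)\,\d\mu(x)$, so the product of the two optimal plans is the natural transport plan and no compatibility condition is needed; the paper simply integrates against $\d\rho(z)\,\d\mu(x)$ with the two push-forward maps.

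There is one genuine gap. You invoke ``the standard transport formula'' for $\tfrac{\d}{\d t}\Wass_2^2$ (i.e.\ \cite[Theorem 23.9]{Villani07}), but that result requires the velocity fields to be in $L^2(\rho_t)$ and $L^2(\mu_t)$ for all $t$, not only at $t=0$. The theorem's hypothesis controls only $D_c(\gamma_0)$. The paper supplies the missing propagation in Lemma~\ref{lem:velocities_in_L2}, a Bakry--\'Emery argument showing that the dissipation $D_c(\gamma_t)\le e^{-2\lambda_c t}D_c(\gamma_0)$; this is not a formality, because the ascent--descent structure means the coupling contribution must be shown to cancel in $\tfrac{\d}{\d t}D_c$ as well, and this cancellation is a separate calculation from the one in your contraction step. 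Without this your application of the transport formula is not justified.

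Your identification of the steady state as the Nash equilibrium takes a genuinely different, more direct route than the paper. You propose: the distributional Euler--Lagrange conditions on $\supp\rho_*,\supp\mu_*$ make the first derivative along any geodesic vanish, and uniform displacement concavity--convexity (Lemma~\ref{lem:Gc_convex_concave}) upgrades this to a global saddle. This is a clean argument, but making it rigorous requires knowing in advance that $\rho_*$ (resp.\ $\mu_*$) has the regularity needed to justify the first-variation formula along geodesics, particularly in the singular case $\alpha=0$ or $\beta=0$ where the component is a Dirac. The paper instead constructs the best responses $\rho_\dagger=\amax_\rho F_c(\rho,\mu_\infty)$ and $\mu_\dagger=\amin_\mu F_c(\rho_\infty,\mu)$ from scratch (so their regularity is controlled by their own Euler--Lagrange equations \eqref{eq:EL-Gc-rewritten}), and then shows $\rho_\dagger=\rho_\infty$, $\mu_\dagger=\mu_\infty$ by uniqueness of steady states for the two \emph{uncoupled} gradient flows \eqref{eq:uncoupled_dynamics}. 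The paper's route is less elegant but sidesteps the regularity assumptions your argument implicitly relies on; your route is the more conceptual one, but should be flagged as needing the steady-state regularity from Proposition~\ref{prop:competitive_ground_state} established first.
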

This theorem tells us that the distributions $\rho_t$ and $\mu_t$ converge at a rate corresponding to the displacement concavity-convexity of the energy functional; we expect this to be true from similar analysis of finite-dimensional zero-sum games.

\begin{remark}\label{rem:mean_field_SDE}
    One can remove the assumption on the boundedness of the initial dissipation by using a coupling argument to show contraction of the dynamics rather than the expression for the derivative of the Wasserstein-metric as stated in \cite[Theorem 23.9]{Villani07}. Although our argument also gives exponential decay of the energy dissipation (Lemma~\ref{lem:velocities_in_L2}), which is stronger, we could instead skip this step and estimate $\d\left((Z_t,X_t)-(Z_t',X_t')\right)$ directly, where $(Z_t,X_t)$ and $(Z_t',X_t')$ both solve the mean-field SDE system corresponding to the PDE dynamics, with synchronous coupling of the noise.  The difference of these processes solves an ODE, and using the convexity assumptions on the the potentials, one also obtains exponential decay for this expression.
\end{remark}

\section{Application: Distribution Shift in Machine Learning}\label{sec:application}
Machine learning algorithms in real-world settings often update their parameters over time to improve performance; as more data is collected, it is natural for the algorithm to update based on more recent information. However, in common applications the distribution of data on which the algorithm is training may not be stationary over time. This phenomenon is known as \textit{distribution shift}, and is induced from a variety of causes, including mis-aligned incentives, interactions with other agents, and natural causes.

The model in \cite{conger_strategic_2023} proposes an energy functional which the algorithm seeks to minimize and which the population aims to maximize; some terms have dependence on both distributions while others model energies specific to the evolution of agents and algorithms respectively. The model in \cite{conger_strategic_2023} can be seen as a special case of the competitive setting considered here by choosing
\begin{align*}
    f(z,x) &= f_1(z,x)\,, \quad W_1(z) = W(z)\,, \quad \alpha>0\,, \quad \beta=0\,,\\
    V_1(z)&=-\alpha \log \rhot(z)\,, \quad V_2(x) = \int f_2(z,x) \d \pi(z) + \frac{\kappa}{2}\norm{x-x_0}^2\,,
\end{align*}
with all other terms set to zero and $d_1=d_2=d$. Here, $\rhot\in \P(\R^d)\cap L^1(\R^d)$ and $\pi\in \P(\R^d)$ are fixed reference measures.
Let us denote
\begin{align*}
     \V(\rho,\mu) = \iint f_1(z,x)\d \rho(z) \d \mu(x)+ \iint f_2(z,x) \d \pi(z) \d \mu(x)\,, \\
     \mathcal{R}(\rho)  =\frac{1}{2} \int W \ast \rho(z)\,\rho(z) + \alpha KL(\rho\,|\,\rhot)\,, \quad \mathcal{Q}(\mu) = \frac{\kappa}{2}\int \norm{x-x_0}^2\d \mu(x)\,.
\end{align*}
The functional $\V(\rho,\mu)$ models the cost which the algorithm seeks to minimize, and the population minimizes or maximizes depending on the setting. For example, when $\mu$ represents a binary classifier, the distribution $\pi$ models individuals carrying the true label 1, and the distribution $\rho(t)$ model individuals carrying a true label 0, where 0 and 1 denote the labels of two classes of interest. The term $\int f_1(z,x)\mu(t,\d x)$ represents a penalty for incorrectly classifying an individual with features $z$ with true label 0 when using the classifier $\mu(t,x)$. Analogously, the term $\int f_2(z,x)\pi(\d z)$ is large if $x$ incorrectly classifies the population $\pi$ that carries the true label 1. The functions $f_1$ and $f_2$ can be chosen according to the application; a standard choice for classification problems is the logarithmic loss \cite{miller_outside_2021}.

The functional $\mathcal{Q}(\mu)$ is a regularizer for the algorithm; this penalizes the classifier for selecting extreme learning parameters, and provides convexity for the loss function. The coefficient $\kappa>0$ parameterizes the strength of the regularizer.

The functional $\mathcal{R}(\rho)$ contains two terms, the Kulbeck-Leibler divergence (denoted $KL$), also called the relative entropy, and the interaction term driven by the potential $W$. The term $\alpha KL(\rho|\rhot)$ forces the evolution of $\rho(t)$ to approach $\rhot$. In other words, it penalizes (in energy) deviations from a given reference measure $\rhot$. In many application settings, we take $\rhot$ to be the initial distribution $\rho(t=0)$. The solution $\rho(t)$ then evolves away from $\rhot$ over time due to the other forces that are present. Therefore, the term $KL(\rho\,|\,\rhot)$ in the energy both provides smoothing of the flow and a penalization for deviations away from the reference measure $\rhot$.

The self-interaction term $W\ast\rho$ introduces non-locality into the dynamics, as the decision for any given individual to move in a certain direction is influenced by the behavior of all other individuals in the population. The choice of $W$ is application dependent. Very often, the interaction between two individuals only depends on the distance between them. This suggests a choice of $W$ as a radial function, i.e. $W(z)=\omega(|z|)$. A choice of $\omega:[0,\infty)\to\R$ such that $\omega'(r)>0$ corresponds to an \emph{attractive} force between individuals, whereas $\omega'(r)<0$ corresponds to a \emph{repulsive} force.

However, in many real-world applications of this model, either the population or the algorithm may update much faster than the other. For example, many government policies are updated on a much slower timescale compared with how quickly individuals can adjust their response. In settings like advertising algorithms, companies can update their algorithms faster than users will adjust search strategies or viewing patterns. We provide convergence results for these two settings for competitive objectives, taking the timescale separations to be large enough that one entity instantly minimizes or maximizes its objective while the other entity evolves over time. Here we consider the case $\mu(t)=\delta_{x(t)}$ where $x(t)$ solves $\dot x(t)=-\nabla_x L(\rho,x)$ with loss function $L(\rho,x)=\int f_1(z,x)\d\rho(z) + V_2(x)$. The energy in these two setting is denoted $G(\rho,x)$. 
The energy functional is given by
\begin{subequations}
    \begin{align*}
        G(\rho,x) \coloneqq  \int f_1(z,x)\d\rho(z) + \int f_2(z,x)\d\pi(z) + \frac{\kappa}{2}\norm{x-x_0}^2 -\mathcal{R}(\rho) \,.
    \end{align*}
\end{subequations}
When the algorithm updates quickly relative to the population, we consider dynamics given by
\begin{equation}\begin{aligned}\label{eq:dynamics_competitive_x_fast}
    \partial_t \rho = \nabla_{\Wass_2,\rho}G(\rho,x)|_{x=\bx(\rho)}\,, \qquad
   \bx(\rho) \coloneqq \amin_{\bar{x}\in\R^d} G(\rho,\bar{x})\,.
\end{aligned}\end{equation}

In the opposite setting where the population is fast relative the algorithm, we can consider the population immediately responding to the algorithm, which results in the dynamics
\begin{equation}\begin{aligned}\label{eq:dynamics_competitive_rho_fast}
    \ddt x = -\nabla_x G(\rho,x)|_{\rho=\br(x)}\,, \qquad
    \br(x) \coloneqq \amax_{\hat{\rho}\in\P} G(\hat{\rho},x)\,.
\end{aligned}\end{equation}

In this time-scale separated setting, model \eqref{eq:dynamics_competitive_x_fast} is a dynamic maximization of $G$ with respect to $\rho$ in Wasserstein-2 space, and an instantaneous minimization of $G$ with respect to the algorithm parameters $x$. Model~\eqref{eq:dynamics_competitive_rho_fast} is an instantaneous maximization of $G$ with respect to $\rho$ and a dynamic minimization of $G$ with respect to the algorithm parameters $x$. 

\subsection{Assumptions under Timescale Separation}
We begin by rewriting Assumptions~\ref{assump:f_lower}(ii), \ref{assump:V_lower} and \ref{assump:W_lower} in the timescale-separated setting (competitive objectives). 
From Assumption~\ref{assump:f_lower}(ii), we have $f_1\in C^2(\R^d\times\R^d)$ and (renaming parameters) assume
$$-\nabla_z^2 f_1(z,x) \succeq -\Lambda_1 \Id_{d}\,,\qquad \nabla_x^2 f_1(z,x) \succeq \lambda_1\Id_{d}$$ 
for some $\Lambda_1,\lambda_1\in\R$.
Assumption~\ref{assump:V_lower} is guaranteed by imposing 
$$-\nabla^2_{z} \log \rhot(z) \succeq \tilde\lambda \Id_d\,,\qquad \nabla^2_x f_2(z,x) \succeq \lambda_2 \Id_{d}$$ 
for some $\lambda_2, \lambdat \in \R$, together with $\log\rhot (\cdot),\, f_2(z,\cdot)\in C^2(\R^d)$, $0<\rhot\le 1$ and $0 \le \int f_2(z,\cdot)\d\pi(z)< \infty $. It follows that for any $R>0$ there exists a constant $c_2=c_2(R)\ge0$ such that
    \begin{equation}\label{eq:V_upper-b}
        \sup_{x\in B_R(0)} \int f_2(z,x) \d\pi(z) < c_2\,.
    \end{equation}
For Assumption~\ref{assump:W_lower}, we denote $\lambda_{W,1}$ by $\lambda_W$ and assume $W\in C^2(\R^d)$ is symmetric with $W\ge 0$. In summary,
\begin{gather*}
    \lambda_{f,1}=-\Lambda_1\,,\quad \lambda_{f,2}=\lambda_1 \,,\quad
    \lambda_{V,1}=\alpha\tilde\lambda\,,\quad 
    \lambda_{V,2}=\lambda_2+\kappa\,,\quad
    \lambda_{W,1}=\lambda_W\,,\quad \lambda_{W,2}=0\,.
\end{gather*}
To analyze timescale-separation, we use the following additional assumptions.
\begin{assumption}\label{assump:f_upper}
    The coupling potential $f_1$  satisfies for all $(z,x)\in\R^{d}\times\R^{d}$,
\begin{itemize}
\item[(a)] Upper Hessian bound: There exists $\ell_{1}\in\R$ such that $-\nabla_z^2 f_1(z,x) \preceq -\ell_1  \Id_{d}$. 
\item[(b)] Cross-terms: There exists $L \ge 0$ such that $\norm{\nabla_{xz}^2 f_1(z,x)}_2 \le L$. 
\end{itemize} 
\end{assumption}
Note that in \cite{conger_strategic_2023}, $\lambda_1=\ell_1$. In the results below, the value of $\ell_1$ does not actually play a role and can be chosen independently of $\lambda_1$.
\begin{assumption}\label{assump:V_upper} Upper Hessian bound for external potential:
    There exists $\Lambdat\in\R$ such that $$-\nabla^2_{z} \log \rhot(z) \preceq\Lambdat \Id_d\,.$$
\end{assumption}
\begin{assumption}\label{assump:W_upper}
Upper Hessian bound for the interaction potential:
there exists $\Lambda_W\in\R$ such that $$\nabla^2_z W \preceq \Lambda_{W} \Id_{d}\,.$$
\end{assumption}

\begin{assumption}\label{assump:f_loose_upper}
The functions $f_1,f_2$ satisfy for all $(z,x)\in\R^d\times \R^d$: There exist constants $a_i>0$ such that $$x \cdot \nabla_x f_i(z,x) \geq -a_i \quad \text{ for } i=1,2\,.$$
\end{assumption}

\subsection{Analysis of Competitive Objectives with Timescale Separation}  
In the timescale separated cases, the dynamics are 
\begin{align*}
    \partial_t \rho &= -\div{\rho \left[\nabla (f_1(\cdot,\bx(\rho)) - \alpha \log(\rho/\rhot) - W \ast \rho
 \right]}\,, \\
   \bx(\rho) &:= \amin_{\bar{x}\in\R^d} \int f_1(z,\bar x) \d \rho(z) + \int f_2(z',\bar x) \d \pi(z') + \frac{\kappa}{2} \norm{\bar x-x_0}^2\,
\end{align*}
for \eqref{eq:dynamics_competitive_x_fast}, and
\begin{align*}
        \ddt x &= -\nabla_x \left( \int f_1(z,x) \,\br(x)(\d z) + \int f_2(z',x) \d \pi(z') + \frac{\kappa}{2} \norm{x-x_0}^2 \right)\,, \\
    &\br(x) \coloneqq \amax_{\hat{\rho}\in\P} \int f_1(z,x) \d \hat\rho(z) - \alpha KL(\hat\rho|\rhot) - \frac{1}{2}\int \hat\rho W \ast \hat\rho\,.
\end{align*}
for \eqref{eq:dynamics_competitive_rho_fast}. Our results are summarized in the following theorems. 
\begin{theorem}[Fast Algorithm]\label{thm:convergence_fast_mu}
    Suppose Assumptions~\ref{assump:f_lower}(ii), \ref{assump:V_lower}, \ref{assump:W_lower}, 
    and \ref{assump:f_loose_upper}
    are satisfied with 
    $\lambda_b\coloneqq \alpha \tilde \lambda - \Lambda_{1}>0$ and $\lambda_d\coloneqq\kappa+\lambda_{1}+\lambda_2>0$.
    Define $G_b(\rho) \coloneqq G (\rho,\bx(\rho))$.
    Consider a solution $\rho_t$ to the dynamics \eqref{eq:dynamics_competitive_x_fast} with initial condition $\rho_0\in\P_2(\R^d)$ such that $G_b(\rho_0)<\infty$. Then the following hold:
    \begin{enumerate}[topsep=0ex]
    \setlength\itemsep{0.01em}
        \item[(a)] There exists a unique maximizer $\rho_\infty$ of $G_b(\rho)$, which is also a steady state for equation \eqref{eq:dynamics_competitive_x_fast}. Moreover, $\rho_\infty\in L^1_+(\R^d) \cap C(\R^d)$ with the same support as $\rhot$.
        \item[(b)] The solution $\rho_t$ converges exponentially fast to $\rho_\infty$ in $G_b(\cdot\,|\,\rho_\infty)$ and $\Wass_2$,
                \begin{equation*}
    G_b(\rho_t\,|\,\rho_\infty)\le e^{-2\lambda_b t} G_a(\rho_0\,|\,\rho_\infty)\, \quad \text{ and }\quad
    \Wass_2(\rho_t,\rho_\infty) \le c e^{-\lambda_b t}
    \quad \text{ for all } t\ge 0\,,
    \end{equation*}
    where $c>0$ is a constant only depending on $\rho_0$, $\rho_\infty$ and the parameter $\lambda_b$.
    \end{enumerate}
\end{theorem}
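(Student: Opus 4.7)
The plan is to recognize \eqref{eq:dynamics_competitive_x_fast} as a single-species Wasserstein-2 gradient ascent flow for the reduced energy
$G_b(\rho) := G(\rho,\bx(\rho)) = \min_{\bar x\in\R^d} G(\rho,\bar x)$,
and then apply the HWI / log-Sobolev / Talagrand entropy method used in the proof of Theorem~\ref{thm:PDE_aligned}, with signs flipped for the ascent convention. First I would verify that $\bx(\rho)$ is well defined: for any $\rho\in\P_2(\R^d)$, the map $\bar x\mapsto G(\rho,\bar x)$ is $\lambda_d$-convex with $\lambda_d=\kappa+\lambda_1+\lambda_2>0$ and coercive (via the quadratic regularizer together with Assumption~\ref{assump:f_loose_upper}), so the minimizer $\bx(\rho)$ exists and is unique. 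The envelope identity $\nabla_{\Wass_2,\rho}G_b(\rho) = \nabla_{\Wass_2,\rho}G(\rho,\bar x)|_{\bar x=\bx(\rho)}$ needed to re-express the PDE as gradient ascent of $G_b$ follows formally from $\nabla_{\bar x}G(\rho,\bx(\rho))=0$; rather than assuming differentiability of $\rho\mapsto\bx(\rho)$, I would rely on the $\Gamma$-convergence argument of Proposition~\ref{prop:gamma_cv} to justify it rigorously.

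The crucial structural input is that $-G_b$ is uniformly $\lambda_b$-displacement convex with $\lambda_b=\alpha\tilde\lambda-\Lambda_1$. For each fixed $\bar x\in\R^d$, the $\rho$-dependent part of $-G(\rho,\bar x)$, namely
$\alpha KL(\rho|\rhot)+\tfrac12\!\int\!\rho\,W\!\ast\!\rho-\int f_1(z,\bar x)\,\d\rho(z)$,
is $\lambda_b$-displacement convex: the $\alpha KL(\rho|\rhot)$ contribution is $\alpha\tilde\lambda$-displacement convex (the entropy part is $0$-displacement convex, while the potential part has displacement Hessian bounded below by $\alpha\tilde\lambda\|\nabla\psi-\id\|^2_{L^2(\rho_0)}$ via $-\nabla^2\log\rhot\succeq\tilde\lambda\Id_d$ from Assumption~\ref{assump:V_lower}); the interaction term contributes $\lambda_W\ge 0$ by Assumption~\ref{assump:W_lower}; and $-\int f_1(z,\bar x)\,\d\rho$ is $(-\Lambda_1)$-displacement convex by Assumption~\ref{assump:f_lower}(ii) (since $\nabla_z^2 f_1\preceq\Lambda_1\Id_d$). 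Since the supremum of a family of $\lambda_b$-displacement convex functions is itself $\lambda_b$-displacement convex, $-G_b(\rho)=\sup_{\bar x}(-G(\rho,\bar x))$ is $\lambda_b$-displacement convex. This elegantly sidesteps any need to control the second-order variation of $\bx(\rho)$ with respect to $\rho$.

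Part~(a) then proceeds by standard calculus of variations: strict $\lambda_b$-displacement convexity of $-G_b$ together with coercivity from the $\alpha KL(\rho|\rhot)$ term yield a unique maximizer $\rho_\infty$, whose Euler--Lagrange condition $\delta_\rho G[\rho_\infty,\bx(\rho_\infty)]$ constant on $\supp(\rho_\infty)$ gives the Gibbs form $\rho_\infty(z)=Z^{-1}\rhot(z)\exp\!\bigl(\alpha^{-1}\bigl[f_1(z,\bx(\rho_\infty))-W\!\ast\!\rho_\infty(z)\bigr]\bigr)$, which is continuous with $\supp(\rho_\infty)=\supp(\rhot)$ and directly verifies the steady-state equation. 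For part~(b), $\lambda_b$-displacement convexity of $-G_b$ supplies the HWI- and Talagrand-type inequalities
$G_b(\rho_\infty)-G_b(\rho)\le\tfrac{1}{2\lambda_b}\!\int\!\|\nabla\delta_\rho G[\rho,\bx(\rho)]\|^2\,\d\rho$
and
$\Wass_2(\rho,\rho_\infty)^2\le\tfrac{2}{\lambda_b}[G_b(\rho_\infty)-G_b(\rho)]$;
combined with the dissipation identity $\ddt G_b(\rho_t)=\int\|\nabla\delta_\rho G[\rho_t,\bx(\rho_t)]\|^2\,\d\rho_t$ (again via the envelope identity) and Grönwall, these give the stated exponential rates in both energy and $\Wass_2$. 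The main obstacle will be the rigorous justification of the two envelope identities---for the Wasserstein gradient of $G_b$ and for the dissipation along the flow---without assuming differentiability of $\rho\mapsto\bx(\rho)$; this is precisely what Proposition~\ref{prop:gamma_cv} is designed to deliver. Secondary technical points are uniform control of $\bx(\rho_t)$ along the flow (via the quadratic regularizer and Assumption~\ref{assump:f_loose_upper}) and enough regularity of $\rho_\infty$ to interpret the steady-state equation pointwise.
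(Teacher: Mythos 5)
Your overall strategy (recognize \eqref{eq:dynamics_competitive_x_fast} as a gradient ascent of $G_b$, establish uniform displacement concavity, then run HWI/LogSob/Talagrand and Gr\"onwall) is exactly the structure the paper follows in Appendix~\ref{sec:fast_x_proof}, so the proposal is correct in outline. However, two things are worth flagging.

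First, your argument for the uniform $\lambda_b$-displacement concavity of $G_b$ --- namely, that $-G_b(\rho)=\sup_{\bar x}\left(-G(\rho,\bar x)\right)$ is a supremum of $\lambda_b$-displacement convex functionals and hence $\lambda_b$-displacement convex --- is a genuinely different and cleaner route than the paper's. The paper (Proposition~\ref{prop:Gb_concave}) instead differentiates $G_b$ twice along geodesics and must track the second-order variation through the best response, decomposing the result into $L_1$, $L_2$, and the $\cR$-contribution and showing $L_2\le 0$ via the positive-definiteness of $Q(\rho)^{-1}$. Your supremum argument sidesteps the calculation of $\delta_\rho b$ entirely and is both shorter and more robust. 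What the paper's direct computation buys in exchange is an improvable rate: Remark~\ref{rem:f1_f2_upper_bound} shows that $L_2$ can contribute a strictly positive amount $\sigma^2/(\kappa+\Lambda_{1u}+\Lambda_{2u})$ under extra Hessian bounds, which the supremum argument throws away. For the stated constant $\lambda_b$, both give the same result.

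Second, there is a misattribution you should correct. You write that the envelope identity $\delta_\rho G_b[\rho]=\left.\delta_\rho G[\rho]\right|_{x=\bx(\rho)}$ will be justified via the $\Gamma$-convergence argument of Proposition~\ref{prop:gamma_cv}. That proposition is in fact deployed for Theorem~\ref{thm:convergence_fast_rho} (fast population), where the best response $\br(x)\in\P(\R^d)$ is genuinely infinite-dimensional and its differentiability in $x$ is nontrivial. In the fast-algorithm case at hand, the best response $\bx(\rho)\in\R^d$ is finite-dimensional and defined by the first-order optimality condition $\nabla_x G(\rho,\bx(\rho))=0$; the paper establishes differentiability of $\rho\mapsto\bx(\rho)$ directly from this condition via an implicit-function-theorem computation (Lemma~\ref{lem:first_variation_b}), plugs the resulting $\delta_\rho\bx[\rho]$ into the first variation of $G_b$ (Lemma~\ref{lem:first_variations}), and observes that the $\delta_\rho\bx$ contributions vanish against $\nabla_x G(\rho,\bx(\rho))=0$, yielding the Danskin identity directly (Proposition~\ref{prop:first_variation_Gb}). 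No $\Gamma$-convergence is required here. The rest of your plan --- coercivity and lower semicontinuity for the direct method, the Gibbs form $\rho_\infty = Z^{-1}\rhot\,e^{\alpha^{-1}(f_1(\cdot,\bx(\rho_\infty))-W\ast\rho_\infty)}$, uniform boundedness of $\bx(\rho)$ via Assumption~\ref{assump:f_loose_upper}, and the HWI/Gr\"onwall closing argument --- matches the paper (Lemma~\ref{lemma:x_bounded}, Proposition~\ref{prop:existenceGb}, Corollary~\ref{cor:ground-states-supp-Gb}, Proposition~\ref{prop:HWI-Gb}).
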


\begin{theorem}[Fast Population]\label{thm:convergence_fast_rho}
    Suppose Assumptions~\ref{assump:f_lower}(ii), \ref{assump:V_lower}-\ref{assump:W_upper}
    are satisfied with 
    $\lambda_b\coloneqq \alpha \tilde \lambda - \Lambda_{1}>0$ and $\lambda_d\coloneqq\kappa+\lambda_{1}+\lambda_2>0$.
    Define $G_d(x)\coloneqq G(\br(x),x)$. Then it holds:
    \begin{enumerate}[topsep=0ex]
    \setlength\itemsep{0.01em}
        \item[(a)] There exists a unique minimizer $x_\infty$ of $G_d(x)$ which is also a steady state for \eqref{eq:dynamics_competitive_rho_fast}.
        \item[(b)] The vector $x(t)$ solving the dynamics \eqref{eq:dynamics_competitive_rho_fast} with initial condition $x(0)\in\R^d$ converges exponentially fast to $x_\infty$  in $G_d$ and in the Euclidean norm:
        \begin{align*}
            \|x(t)-x_\infty\|&\le e^{-\lambda_{d} t}  \|x(0)-x_\infty\|\,,\\
            G_d(x(t))-G_d(x_\infty)&\le e^{-2\lambda_{d} t}\left(G_d(x(0))-G_d(x_\infty)\right)
        \end{align*}
        for all $t\ge 0$. Moreover, $G_d\in C^1(\R^d)$.
    \end{enumerate}
\end{theorem}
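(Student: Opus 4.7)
\emph{Step 1: Well-posedness of the best response.} For each fixed $x\in\R^d$, the objective $\hat\rho\mapsto G(\hat\rho, x)$ differs from $\int f_1(z,x)\,\d\hat\rho(z) - \mathcal{R}(\hat\rho)$ by a constant in $\hat\rho$. Since $-\alpha\log\rhot$ is uniformly convex with constant $\alpha\tilde\lambda$ (Assumption~\ref{assump:V_lower}), $-f_1(\cdot,x)$ is $(-\Lambda_1)$-convex in $z$ (Assumption~\ref{assump:f_lower}(ii)), and $W$ is $\lambda_W\ge 0$-convex (Assumption~\ref{assump:W_lower}), the functional $-G(\cdot,x)$ is uniformly displacement convex with constant at least $\lambda_b>0$. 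Existence of a minimizer follows from the direct method in the calculus of variations (weak lower semicontinuity of $\mathcal{R}$ together with coercivity provided by the $KL$-divergence term), and uniqueness from uniform displacement convexity. Thus $\br(x)\in\P_2$ is well-defined and satisfies the associated Euler-Lagrange condition on its support.

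\emph{Step 2: Smoothness and strong convexity of $G_d$.} For any fixed $\rho\in\P$, the Hessian of $x\mapsto G(\rho,x)$ equals $\int\nabla_x^2 f_1\,\d\rho + \int\nabla_x^2 f_2\,\d\pi + \kappa\,\Id \succeq \lambda_d\,\Id$, so $G_d(x)=\sup_{\hat\rho} G(\hat\rho,x)$ inherits $\lambda_d$-strong convexity as a pointwise supremum of a family of $\lambda_d$-strongly convex functions with uniform constant. The $C^1$ regularity and the Danskin-type identity
\[
\nabla G_d(x) = \nabla_x G(\br(x), x) = \int \nabla_x f_1(z, x)\,\d\br(x)(z) + \int \nabla_x f_2(z, x)\,\d\pi(z) + \kappa(x - x_0)
\]
is then obtained via a $\Gamma$-convergence/envelope argument in the spirit of Proposition~\ref{prop:gamma_cv}: the uniqueness of $\br(x)$ from Step 1 combined with continuity of the map $x\mapsto\br(x)$ in the weak topology of Definition~\ref{def:weakcv} allows one to pass to the limit in the difference quotients defining $\nabla G_d$. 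The upper Hessian assumptions~\ref{assump:f_upper}, \ref{assump:V_upper} and \ref{assump:W_upper} supply the uniform second-moment bounds and equi-continuity estimates on the family $\{\br(x)\}_{x\in K}$ for compact $K\subset\R^d$ needed to make this rigorous.

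\emph{Step 3: Existence, identification, and convergence.} Since $G_d$ is $C^1$ and $\lambda_d$-strongly convex, it admits a unique minimizer $x_\infty\in\R^d$ characterized by $\nabla G_d(x_\infty)=0$. By the Danskin identity of Step 2, $\nabla_x G(\br(x_\infty),x_\infty)=0$, and hence $x_\infty$ is a steady state for \eqref{eq:dynamics_competitive_rho_fast}. For convergence in Euclidean norm, strong convexity of $G_d$ yields
\[
\tfrac{1}{2}\ddt\|x(t)-x_\infty\|^2 = -\langle \nabla G_d(x(t)) - \nabla G_d(x_\infty),\, x(t)-x_\infty\rangle \le -\lambda_d\,\|x(t)-x_\infty\|^2,
\]
and Gr\"onwall gives the first claimed rate. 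Strong convexity also implies the Polyak-\L ojasiewicz inequality $G_d(x)-G_d(x_\infty)\le \tfrac{1}{2\lambda_d}\|\nabla G_d(x)\|^2$, so $\ddt(G_d(x)-G_d(x_\infty)) = -\|\nabla G_d(x)\|^2 \le -2\lambda_d\,(G_d(x)-G_d(x_\infty))$, producing the second rate.

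\emph{Main obstacle.} The principal technical difficulty lies in Step 2: establishing $G_d\in C^1(\R^d)$ together with the Danskin formula, since here the maximizer $\br(x)$ is an entire probability measure rather than a finite-dimensional vector. One must prove continuity of $x\mapsto\br(x)$ in a topology strong enough to exchange differentiation with the maximization, and control uniform regularity of the family of best responses on compacta; this is precisely where the $\Gamma$-convergence framework alluded to by the authors and the upper-bound assumptions \ref{assump:f_upper}--\ref{assump:W_upper} enter. Once this envelope step is secured, the remainder of the argument is the standard gradient-descent analysis on a strongly convex $C^1$ function.
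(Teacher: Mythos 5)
Your proposal follows the same route as the paper: well-posedness of $\br(x)$ via displacement convexity, a Danskin/envelope identity secured through $\Gamma$-convergence and continuity of $x\mapsto\br(x)$ in $\Wass_2$, strong convexity of $G_d$ as a supremum of uniformly $\lambda_d$-strongly-convex functions, and then the standard Gr\"onwall argument with the Polyak--\L ojasiewicz inequality. You also correctly identify that Step~2 is where the genuine technical work lies (the paper's Propositions~\ref{prop:gamma_cv}--\ref{prop:danskin-br}); the only minor imprecision is that the upper Hessian bounds in Assumptions~\ref{assump:f_upper}--\ref{assump:W_upper} are used for convergence of second moments (hence $\Wass_2$-continuity of the best response), while the uniform second-moment bound itself needs only the lower bounds, but this does not affect the correctness of the overall structure.
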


Theorems \ref{thm:convergence_fast_mu} and \ref{thm:convergence_fast_rho} state that we will observe exponential convergence in the timescale-separated cases. 
The proof of Theorem~\ref{thm:convergence_fast_mu} hinges on proving a generalized version of Danskin's Theorem, that is, showing that $\delta_\rho G_b(\rho) = \left(\delta_\rho G(\rho,x) \right)|_{x=b(\rho)}$. Displacement convexity of $G_b$ follows from this, and then standard HWI techniques apply. In order to prove a similar Danskin's result as an ingredient in the proof of Theorem~\ref{thm:convergence_fast_rho}, we first show that $G_d(x)$ is differentiable. The best response function $r(x)$ is defined implicitly as the result of a minimization problem, and we employ Gamma convergence tools to obtain the regularity result. For the proofs, see Appendices~\ref{sec:fast_x_proof} and \ref{sec:fast_rho_proof}.

For the time-scale separated setting, we only require additional assumptions in the variable that optimizes instantaneously: Assumption \ref{assump:f_loose_upper} concerning the $x$-variable is used in Theorem~\ref{thm:convergence_fast_mu}, and Assumptions~\ref{assump:f_upper}-\ref{assump:W_upper} concerning the $z$-variable appear in Theorem~\ref{thm:convergence_fast_rho}.
In particular, Assumption~\ref{assump:f_loose_upper} is used in Theorem~\ref{thm:convergence_fast_mu} to prove that the norm of the best response $\norm{\bx(\rho)}$ is uniformly bounded, while Assumptions~\ref{assump:f_upper}-\ref{assump:W_upper} are used in Theorem~\ref{thm:convergence_fast_rho} to prove convergence of the second moment of a sequence $(\rho_n)$ in order to obtain the $\Gamma$-convergence result.

\section{Insights from Numerical Experiments}\label{sec:numerics}
In this section, we provide examples of numerical experiments that illustrate how our framework models real-world data and implications of using various algorithm learning strategies while interacting with a dynamic, strategic population. First, we model data from an economic study of how local government officials are incentivized to misreport census data. Then we show how individuals modify loan application data to achieve a more desirable outcome from a classifier algorithm, comparing different interaction models among agents and showing how subpopulations are affected differently. Finally, we illustrate our strategic population model under a state-of-the-art performative prediction algorithm, showing that it is critical to consider distribution shift beyond only mean shift when optimizing performance. The numerical experiments are implemented using the finite volume method from \cite{carrillo_finite-volume_2015}.
\subsection{Census Data in Colombia}\label{subsec:colombia_data}
A study of Colombia census data \cite{camacho_manipulation_2011} from 1995 to 2003 investigates how local officials misreported data in order to obtain lower poverty index scores for their constituents. Households with a poverty index score below a given threshold receive government aid, a desired outcome of the census data collection. The algorithm for the poverty index score and threshold was release in 1997, and the distribution of scores shifts from a Gaussian-like shape to having a sharp drop-off above the threshold.

We model this dynamical system as a classification problem in the competitive setting with a suitable energy $G$; an algorithm with parameter $x$ (government aid threshold) aims to separate poverty index scores into ones which qualify for aid and ones that do not. In this setting, we keep the algorithm fixed as the population adjusts, that is, $\ddt x = 0$. Each family aims to be classified as qualifying for aid, regardless of their true label.
\begin{figure}
    \centering
    \begin{subfigure}[b]{0.47\textwidth}
        \includegraphics[width=\textwidth]{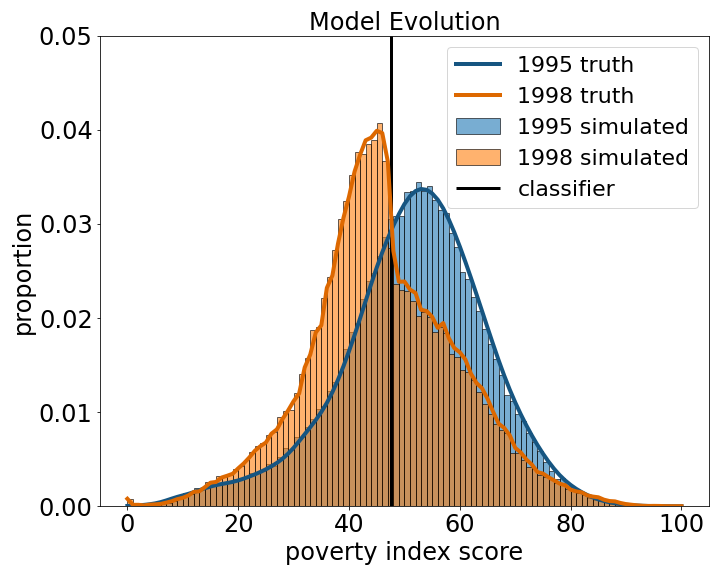}
    \end{subfigure}
    \begin{subfigure}[b]{0.47\textwidth}
        \includegraphics[width=\textwidth]{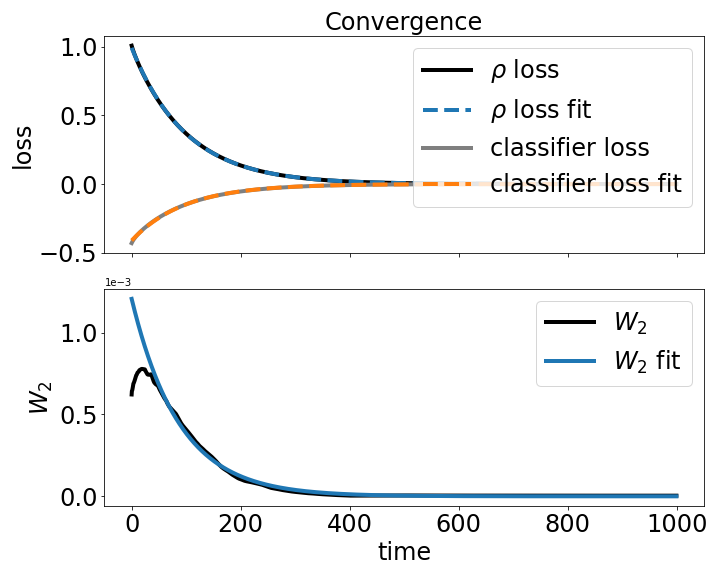}
    \end{subfigure}
    \caption{After the criteria for government aid was released in 1997, local officials misreported income data to increase the number of constituents qualifying for aid. The PDE \eqref{eq:dynamics_competitive} is able to capture the sharp drop at the classifier threshold. The convergence rate for the loss of the population and algorithm are $0.00995$ and $0.0102$; the convergence rate for $\Wass_2(\rho_t,\rho^{(98)})$, where $\rho^{(98)}$ is the steady state distribution, is also $0.0114$ which is similar to the expected rate of $0.01$. The expected rate is computed using convexity properties of the KL term.}
    \label{fig:colombia}
\end{figure}
The distribution of poverty index scores for families whose true poverty index is qualifying them for aid is assumed stationary and given by $\pi$; families whose true poverty index does not qualify them for aid is given by $\rho$, which evolves according to \eqref{eq:dynamics_competitive} and represents a strategic population. The threshold is given by $x(t)=47$ for all time.

The initial condition for the strategic population is set to $\rho_0 =\N(54, 10)$. The stationary distribution of families that should qualify is given by the data distribution in the year 1995, denoted $\rho^{(95)}$, minus the strategic population, which we assume is half of the total population: $\pi = 2\rho^{(95)}-\N(54,10)$. The utility functions are $f_1(z,x)=1-q(z,x)-lz$ and $f_2(z,x)=q(z,x)$, where $q(z,x)=(1+\exp(-a(z-x)))^{-1}$. We use $\alpha=0.1$, $a=2$, $l=0.06$, $\rhot=\rho_0$ and $W=0$. Here, $1-q(z,x)$ is the probability that the classifier assigns a label of "qualified" to a family with attributes $z$ and classifier parameters $x$. Families aim to maximize their probability of such a classification. The $lz$ term models a preference for a lower poverty index score, regardless of the classifier parameters.
Using the notation from Section~\ref{sec:application}, the resulting energy functional is
\begin{align*}
    G(\rho,x) = \int f_1(z,x) \d \rho(z) + \int f_2(z,x) \d \pi(z) - \alpha \int \rho(z) \log \rho(z) \d z\,,
\end{align*}
with $\ddt x(t) = 0$.

The potentials appearing in the functional $G$ are chosen so that the global minimizer is $\rho^{(98)}$. In Figure \ref{fig:colombia} (left), the model distribution plotted is $(\rho + \pi)/2$, with $1e6$ samples drawn to generate the plot. We observe that the model is able to capture the sharp drop on the right side of the qualifying threshold due to the steep classifier $f_1$, as well as the curvature of the distribution close to the threshold. In Figure~\ref{fig:colombia}, we also plot the loss of the classifier and population (top right), and the Wasserstein distance between the data from 1998, denoted $\rho^{(98)}$, and $\rho_t$ (bottom right), and fit exponential functions to estimate the rate of decay. The expected rate of decay is specified by the rate given in Theorem~\ref{thm:PDE_aligned} since the classifier is stationary. Because $f_1$ is such that $\lambda_{f,1}=-\Lambda_1=0$,
the rate is generated by the convexity of $\rhot$, which here we set to $\rho_0$; the expected rate is $0.01$ since $\rho_0 \sim \N(54,10)$.
The convergence rate for the loss of the population and algorithm and for $\Wass_2(\rho_t,\rho^{(98)})$ are close to the theoretical value; see Figure~\ref{fig:colombia} for details.

\subsection{Loan Applications: Feature Modification}\label{subsec:loan_applications}
In settings such as loan applications, agents not eligible for a loan (label 0) aim to be misclassified to receive a more desirable outcome, such as qualifying for a loan (algorithm predicts label 1). In this numerical experiment, we consider real loan application data from \cite{cukierski_give_2011} and allow label-0 agents to manipulate two out of their eleven features. We selected the two features that, pairwise, gave the lowest classification loss, which are ``age" and ``number of times the borrower has been 90 days or more past due." The agents have a penalty for deviating from the initial condition, as enforced by the KL divergence term with $\rhot=\rho_0$, and the potential function is the negative of the probability of being classified as a label-1 agent. While the agents can only manipulate two features, we allow the classifier to update based on all eleven features. The agents with true label 1 do not manipulate any features.
\begin{figure}
    \centering
    \includegraphics[scale=0.3]{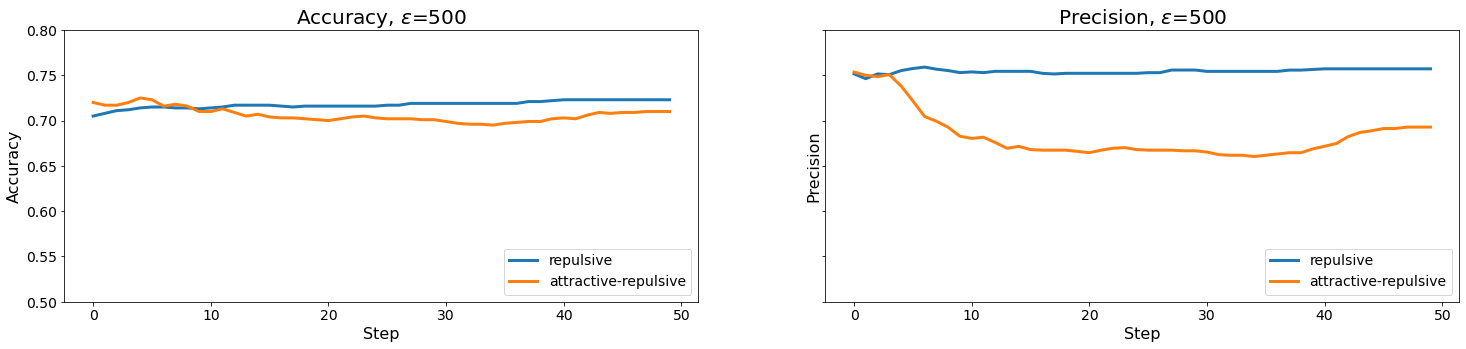}
    \caption{While the accuracy of the classifier is similar under both interaction models, the precision differs; this indicates that understanding the intra-agent interactions is important for understanding how errors impact subpopulations, in this case, those agents with algorithm label ``qualified."}
    \label{fig:kernel_comparison_accuracy_precision}
\end{figure}
\begin{figure}
    \centering
    \begin{subfigure}[b]{0.47\textwidth}
    \includegraphics[trim= 8 5 0 40, clip, width=\textwidth]{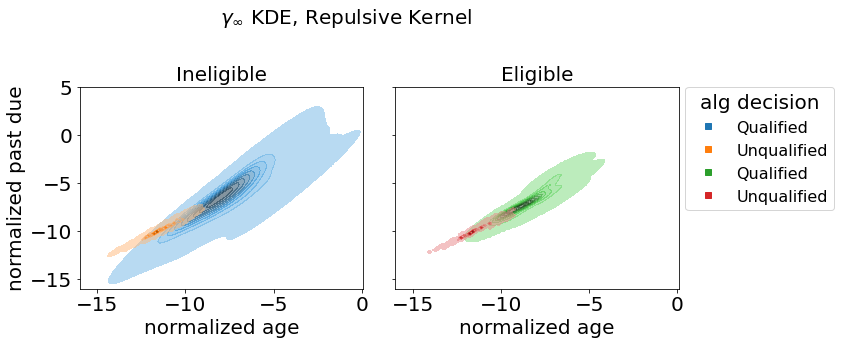}
        \caption{Repulsive Kernel}
    \end{subfigure}
    \begin{subfigure}[b]{0.47\textwidth}
    \includegraphics[trim= 8 5 0 40, clip, width=\textwidth]{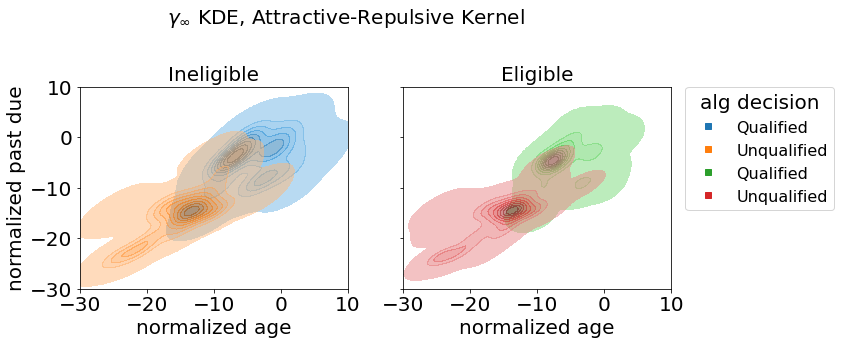}
        \caption{Attractive-repulsive kernel}
    \end{subfigure}
    \caption{Agent densities are split based on their true label (eligible vs ineligible). The color denotes the algorithm output regarding loan qualification (qualified vs unqualified). The repulsive kernel causes the agents to spread apart along attributes 1 (normalized age) and 2 (normalized past due), while the attractive-repulsive kernel causes more swarm-like behavior. This is not evident from classifier performance only, indicating the importance of understanding the population dynamics explicitly.}
    \label{fig:kernel_comparison_density}
\end{figure}
For this application, we consider interactions between agents assuming that people exchange information about their loan applications and application outcomes. We compare the evolution of the population for $N=1000$ agents under two different interaction kernels:
\begin{align*}
    W_r(z) &= -|z| &\quad \text{repulsive}\,, \\
    W_a(z) &= 4e^{-5|z|} - 2e^{-|z|/4} &\quad \text{attractive-repulsive}\,,
\end{align*}
implemented via the underlying particle system given by
\begin{align*}
    \d z_t^{(i)} = -\nabla_z f_1(z_t^{(i)},x)\d t - \sum_{j\ne i} \nabla_z W(|z_t^{(i)}-z_t^{(j)}|) \d t +\alpha\nabla\log \rho_0(z_t^{(i)}) + \sqrt{2 \alpha}\d B_t\,, \quad z_t^{(i)}\in\R^2\,,
\end{align*}
with $W=W_a$ and $W=W_r$ for each setting.
The repulsive interaction kernel encourages agents to move away from their neighbors, while the attractive-repulsive kernel encourages swarm-like behavior by pushing neighboring agents apart while attracting agents that are far from each other. The energy functional which corresponds to this example is
\begin{align*}
    G(\rho,x) &= \int f_1(z,x) \d \rho(z) + \int f_2(z,x) \d \pi(z) - \iint W(z-\tilde z) \d \rho(z) \d \rho(\tilde z) 
    - \alpha KL(\rho\, | \rhot) + \frac{\beta}{2} \norm{x-x_0}^2 \,. 
\end{align*}
We update the agent data and linear classifier using code adapted from \cite{miller2020whynot}, which uses a particle-based gradient descent scheme in which the agents take 500 update steps per 1 update step for the algorithm. In Figure \ref{fig:kernel_comparison_accuracy_precision}, we observe that under the same training scheme, the accuracy for the classifier is about the same for both interaction kernels. However, the precision, which is the number of true positives over the number of all positives, is not the same. We compute the accuracy for a subpopulation of the label-0 population: those agents initially classified as 0 and those initially (mis)classified as 1. Under the repulsive kernel, these agents are classified with 96\% and 20\% accuracy at the steady state; in contrast, the agents are classified with 90\% and 9\% accuracy under the attractive-repulsive kernel. 
Although we do not have convergence guarantees in this setting, this example highlights the importance of modeling intra-species interactions for applications in which non-asymptotic behavior is relevant.

\sloppy This indicates that the initially-mislabeled subpopulation benefits from attractive-repulsive interactions more than repulsive interactions. In Figure \ref{fig:kernel_comparison_density}, we plot the density estimate and observe that the kernels induce different feature distributions, even though the accuracy is similar, indicating that observing only the performance of the classifier fails to indicate important details about specific subpopulations. 

\subsection{Performative Prediction}
A current state-of-the-art model for updating an algorithm in the face of distribution shift is given in \cite{miller_outside_2021}; the algorithm perturbs the $x$ parameter randomly and records the mean of the population distribution after it responds to the perturbation. Then a linear model is fit between the perturbation direction and the distribution mean, and the algorithm is set to the minimizer of the linear model.
\begin{figure}[!ht]
    \centering
    \begin{subfigure}[b]{0.28\textwidth}
        \includegraphics[scale=0.25]{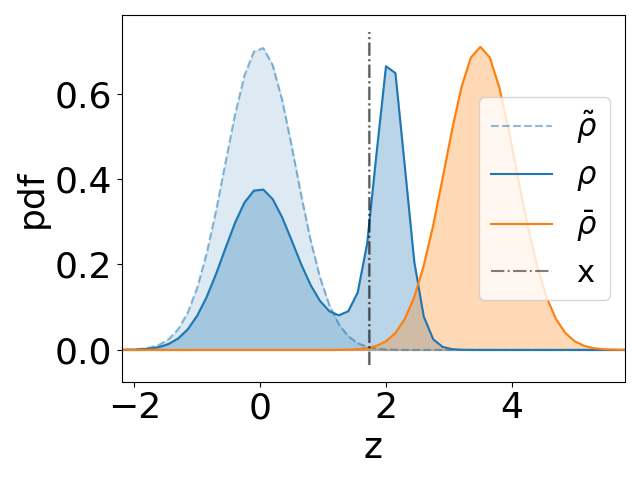}
        \caption{Learning a linear mapping yields a loss of $-1.66$. }
        \label{fig:perf_pred}
    \end{subfigure}
    \hspace{0.8cm}
    \begin{subfigure}[b]{0.28\textwidth}
        \includegraphics[scale=0.25]{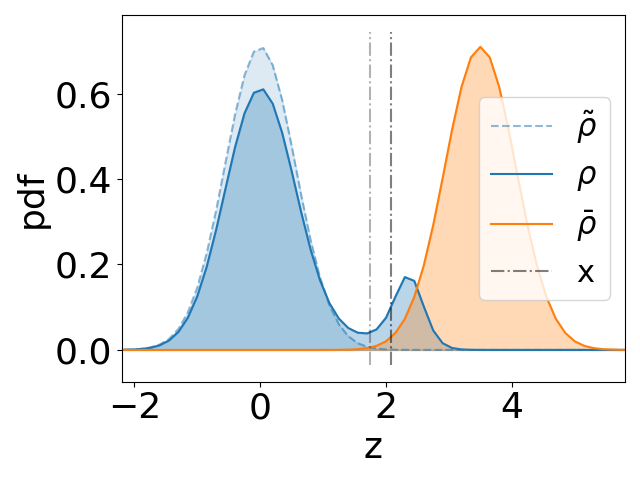}
        \caption{Doing na\"ive gradient descent leads to a loss of $-1.89$.}
        \label{fig:grad_desc}
    \end{subfigure}
    \hspace{0.8cm}
    \begin{subfigure}[b]{0.28\textwidth}
        \includegraphics[scale=0.25]{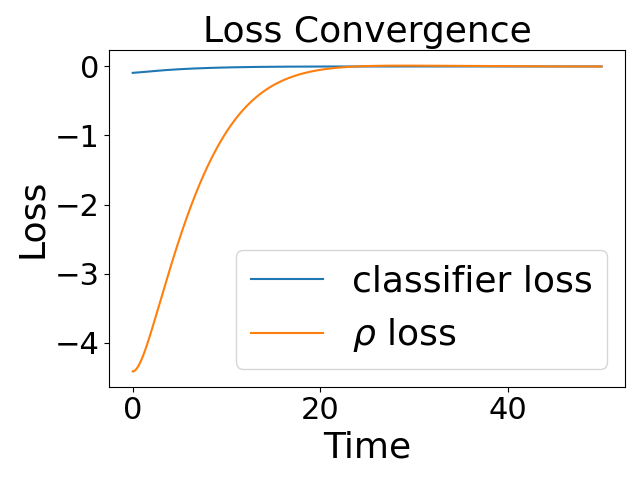}
        \caption{In setting (b), the losses evolve according to expected rates.} \label{fig:perf_pred_rates}
    \end{subfigure}
    \caption{A gradient descent approach outperforms a state-of-the-art technique of learning and using a linear mapping between the classifier parameters and the mean of the strategic distribution, illustrating the importance of having more detailed population models.}
    \label{fig:SOA}
\end{figure}
We simulate the strategic distribution using our PDE model in the competitive setting, comparing classifier performance under two update strategies: (1) the perturbation method described above, and (2) na\"ive gradient descent, specifically  Wasserstein gradient descent in the dynamics \eqref{eq:dynamics_competitive}. The energy used in the example is the same function as in Example~\ref{subsec:loan_applications}, with $W=0$ and rescaled parameters $\alpha$, $\beta$, $a$, $l$, and $\rhot$.
The resulting distribution shifts and algorithm updates are shown in Figure \ref{fig:SOA}.
In Figure \ref{fig:perf_pred}, we observe that the optimization problem from the mean shift model \cite{miller_outside_2021} sets the classifier to $x(t)\approx 0$ for all time, while in Figure \ref{fig:grad_desc}, the classifier moves to the right, resulting in better performance. This difference occurs because the perturbation method detects only a mean shift, not that two modes are appearing, and underestimates the impact of the population mass that splits from the main population. This illustrates that modeling the population with more fine-grained detail than just mean information is critical for selecting algorithm update strategies that are most effective. In Figure~\ref{fig:perf_pred_rates}, the convergence of the loss for the population $\rho$ and classifier $x$ are shown, where we subtracted the value at steady state from the energy functional so that the energy converges to zero. The rates are given by $0.192$ and $0.192$ for the population and classifier (resp.). The theoretical rate from Theorem~\ref{thm:convergence_Wbar_competitive} is $\lambda_c= \min\{0.05,1-1.483 \}=-0.483$, which indicates that the theorem conditions are not satisfied, since $\lambda_c <0$. However, the condition is not tight (for example, when the mass is not concentrated around the worst-case convexity given by $\Lambda_1$), and we observe convergence empirically. The loss for $\rho$ is increasing because $\rho$ is maximizing $G$, and although $x$ is minimizing $G$, the evolution of $\rho$ causes the loss of $x$ to increase slightly.

\section{Cooperative Setting (Proof of Theorem~\ref{thm:PDE_aligned})}\label{sec:proof_thm_aligned}
 Denote
\begin{align*}
    \V(\rho,\mu) &= \iint f(z,x)\d \rho(z) \d \mu(x)+\int V_1(z)\d\rho(z) + \int V_2(x) \d \mu(x)\,,\\
    \W(\rho,\mu) &= \frac{1}{2}\int (W_1 \ast \rho)(z)\,\d \rho(z) + \frac{1}{2}\int (W_2 \ast \mu)(x)\d\mu(x) \,,
\end{align*}
so that the functional $F_a$ is given by 
\begin{align*}
    F_a(\rho,\mu) = \V(\rho,\mu) + \alpha H(\rho) + \beta H(\mu) + \W(\rho,\mu)\,.
\end{align*}

In order to prove the existence of a unique ground state for $F_a$, a natural approach is to consider the corresponding Euler-Lagrange equations
\begin{subequations}\label{eq:EL-Ga}
\begin{align}
    \alpha \log\rho(z) + \int f(z,x)\d\mu(x) + V_1(z) + (W_1 \ast \rho)(z)  &= c_1[\rho,\mu]\quad \text{ for all } z\in\supp(\rho)\,, \label{eq:EL-Ga-rho}\\
    \beta \log \mu(x) + \int f(z,x)\d\rho(z) + V_2(x) + (W_2 \ast \mu)(x) &= c_2[\rho,\mu] \quad \text{ for all } x\in\supp(\mu)\,, \label{eq:EL-Ga-mu}
\end{align}
\end{subequations}
where $c_1,c_2$ are constants that may differ on different connected components of $\supp(\rho)$ and $\supp(\mu)$. These equations are not easy to solve explicitly, and we are therefore using general non-constructive techniques from calculus of variations. We first show continuity and $\lambda_a$-convexity properties for the functional $F_a$ (Lemma~\ref{lem:lsc} and Proposition~\ref{prop:dipl-convexity}), where
$$\lambda_a\coloneqq \lambda_{f}+\min\{\lambda_{V,1},\lambda_{V_2}\}\,;$$
essential properties that will allow us to deduce existence and uniqueness of ground states using the direct method in the calculus of variations (Proposition~\ref{prop:existenceGa}). Using the Euler-Lagrange equation~\eqref{eq:EL-Ga}, we then prove properties on the support of the ground state (Corollary~\ref{cor:ground-states-supp-Ga}).
To obtain convergence results, we apply the HWI method: we first show a general 'interpolation' inequality among the energy, the energy dissipation and the metric (Proposition~\ref{prop:HWI}); this fundamental inequality will then imply a generalized logarithmic Sobolev inequality (Corollary~\ref{cor:logSob}) relating the energy to the energy dissipation, and a generalized Talagrand inequality (Corollary~\ref{cor:Talagrand}) that  translates convergence in energy into convergence in metric. Putting all these ingredients together will then allow us to conclude the statements in Theorem~\ref{thm:PDE_aligned}. Throughout this section, we assume $\lambda_a>0$ and that Assumptions~\ref{assump:f_lower}(i), \ref{assump:V_lower} and \ref{assump:W_lower} hold.

\subsection{Ground States and Steady States}
\begin{lemma}[Lower semi-continuity]\label{lem:lsc}
 The functional $F_a:\P\times\P\to \R$ is lower semi-continuous with respect to the weak topology.
\end{lemma}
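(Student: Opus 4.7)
The plan is to decompose $F_a$ into its individual constituent terms and prove lower semi-continuity for each piece separately, then conclude since a finite sum of lsc functionals is again lsc. The decomposition is
\begin{align*}
    F_a(\rho,\mu) &= \iint f\,d\rho\,d\mu + \int V_1\,d\rho + \int V_2\,d\mu \\
    &\quad + \tfrac{1}{2}\iint W_1(z-z')\,d\rho(z)d\rho(z') + \tfrac{1}{2}\iint W_2(x-x')\,d\mu(x)d\mu(x') \\
    &\quad + \alpha\Hc(\rho) + \beta\Hc(\mu)\,,
\end{align*}
and I verify lsc of each term with respect to the weak topology of Definition~\ref{def:weakcv}.

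The linear potential terms are handled directly by the Portmanteau theorem: since $V_i\ge 0$ is continuous (Assumption~\ref{assump:V_lower}), narrow convergence $\eta_n\rightharpoonup\eta$ yields $\int V_i\,d\eta\le\liminf_n\int V_i\,d\eta_n$. For the coupling term $\iint f\,d\rho\,d\mu$, the first step is to observe that weak convergence of the pair $(\rho_n,\mu_n)\rightharpoonup(\rho,\mu)$ implies narrow convergence of the product measures $\rho_n\otimes\mu_n\rightharpoonup\rho\otimes\mu$ on $\R^{d_1}\times\R^{d_2}$. This is standard: tightness of the marginals (via Prokhorov applied to each component) yields tightness of the joint laws, and identifying the limit on tensor-product test functions $a(z)b(x)$ pins down the product measure. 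Portmanteau in the product space, applied to the nonnegative continuous function $f$ (Assumption~\ref{assump:f_lower}(i)), then yields lsc. The same argument works verbatim for each self-interaction term using $W_i\ge 0$ continuous (Assumption~\ref{assump:W_lower}) together with narrow convergence of $\eta_n\otimes\eta_n$.

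The entropy terms require a little more care. I would rely on the classical fact that the Boltzmann entropy $\Hc$ is lsc with respect to narrow convergence on $\P_2(\R^{d_i})$, a standard ingredient in the gradient-flow literature on optimal transport. One textbook route is via the Dunford--Pettis theorem: a narrowly convergent sequence with uniform bounds on both the second moment and the entropy is tight and uniformly integrable with respect to Lebesgue on each bounded set, hence converges weakly in $L^1_{loc}$ to its narrow limit. Lower semi-continuity of $\eta\mapsto\int\eta\log\eta\,dz$ along weak $L^1$ convergence (a consequence of convexity and superlinearity of $s\log s$ via Ioffe's theorem) is then transferred to the narrow setting. The uniform second-moment control required in the Dunford--Pettis step is precisely what is built into the paper's weak topology.

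Summing the seven lower semi-continuous ingredients gives lsc of $F_a$ with respect to the weak topology. The only genuinely delicate point is the entropy, where narrow convergence alone would not suffice without the uniform second-moment bound encoded in Definition~\ref{def:weakcv}; every other term reduces cleanly to a direct application of Portmanteau, possibly after passing to product measures. I expect no hidden obstacles beyond this one, since each hypothesis in Assumptions~\ref{assump:f_lower}(i), \ref{assump:V_lower}, and \ref{assump:W_lower} (nonnegativity plus continuity of $f$, $V_i$, $W_i$) is tailor-made for the Portmanteau step.
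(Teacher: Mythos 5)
Your decomposition-and-term-by-term strategy is exactly the paper's: the paper splits $F_a$ into the entropy terms, the interaction energy $\W$, and the joint potential energy $\V$, and cites lower-semicontinuity results from Santambrogio's book for $\W$ and $\V$ (after noting $f$, $V_i$, $W_i$ are lsc and bounded below) and a separate entropy lsc lemma (their Lemma~\ref{lem:lsc_entropy_santambrogio}, which converts the second-moment bound into a first-moment bound and invokes a result from \cite{santambrogio_dealing_2015}). Your version is correct and simply unpacks those black boxes — making explicit the product-measure convergence $\rho_n\otimes\mu_n\rightharpoonup\rho\otimes\mu$ for the coupling and interaction terms, and substituting a Dunford--Pettis/Ioffe argument for the paper's direct citation on the entropy — but the underlying route and the role of the uniform second-moment bound in Definition~\ref{def:weakcv} are the same.
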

\begin{proof}
    We split the energy $F_a$ into three parts: (i) $\alpha \Hc(\rho)+\beta \Hc(\mu)$, (ii) $\W(\rho,\mu)$, and (iii) the joint potential energy $\V(\rho,\mu)$. For (i), Lemma~\ref{lem:lsc_entropy_santambrogio} gives lower-semicontinuity for $\rho\in\P$ with respect to the weak topology. \cite[Proposition 7.2]{Santambrogio} provides lower-semicontinuity for (ii) because $W_i$ is continuous.
    For (iii), note that $f$ is lower semi-continuous and bounded below thanks to Assumptions~\ref{assump:f_lower}(i) and \ref{assump:V_lower}, and so the result follows from \cite[Proposition 7.1]{Santambrogio}.
\end{proof}

\begin{proposition}[Uniform displacement convexity]\label{prop:dipl-convexity}
Fix $\gamma_0, \gamma_1\in\P_2\times\P_2$. Along any geodesic $(\gamma_s)_{s\in[0,1]}\in\P_2\times\P_2$ connecting $\gamma_0$ to $\gamma_1$, we have for all $s\in[0,1]$
    \begin{align}\label{eq:ddssG}
    \ddss F_a(\gamma_s) \geq \lambda_a \Wbar(\gamma_0,\gamma_1)^2 \,.
\end{align}
As a result, the functional $F_a:\P\times\P\to \R$ is uniformly displacement convex with constant $\lambda_a>0$. 
\end{proposition}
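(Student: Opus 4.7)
The plan is to differentiate $F_a$ twice along a geodesic in $(\P_2\times\P_2,\Wbar)$ and bound each contribution separately. Because the squared joint metric splits as $\Wbar^2=\Wass_2^2+\Wass_2^2$, constant-speed geodesics in the product space are exactly products of $\Wass_2$-geodesics. By Brenier/McCann, for generic $\gamma_0=(\rho_0,\mu_0)$, $\gamma_1=(\rho_1,\mu_1)$ there exist convex potentials $\psi,\phi$ with $\rho_s=[T_\rho^s]_\#\rho_0$, $T_\rho^s(z)=(1-s)z+s\nabla\psi(z)$, and $\mu_s=[T_\mu^s]_\#\mu_0$, $T_\mu^s(x)=(1-s)x+s\nabla\phi(x)$; moreover $\Wass_2(\rho_0,\rho_1)^2=\int\|\nabla\psi-\id\|^2\,\d\rho_0$ and analogously for $\mu$. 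When neither endpoint is absolutely continuous, one first works with regularized atomless approximations and passes to the limit using Lemma~\ref{lem:lsc}.

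I would then treat the six pieces of $F_a$ separately. For the coupling, writing $a(z):=\nabla\psi(z)-z$ and $b(x):=\nabla\phi(x)-x$, the chain rule combined with Assumption~\ref{assump:f_lower}(i) yields
$$\ddss f\bigl(T_\rho^s(z),T_\mu^s(x)\bigr)=\bigl(a(z),b(x)\bigr)^{\!\top}\Hess{f}\bigl(T_\rho^s(z),T_\mu^s(x)\bigr)\bigl(a(z),b(x)\bigr)\ge \lambda_f\bigl(\|a(z)\|^2+\|b(x)\|^2\bigr),$$
so integrating against $\d\rho_0\,\d\mu_0$ gives $\ddss\iint f\,\d\rho_s\d\mu_s\ge\lambda_f\Wbar(\gamma_0,\gamma_1)^2$. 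For the external potentials, Assumption~\ref{assump:V_lower} together with the same chain-rule computation produces $\ddss\int V_1\,\d\rho_s\ge\lambda_{V,1}\Wass_2(\rho_0,\rho_1)^2$ and $\ddss\int V_2\,\d\mu_s\ge\lambda_{V,2}\Wass_2(\mu_0,\mu_1)^2$. The two entropies are handled by McCann's classical theorem: $\Hc$ is displacement convex on each $\P_2(\R^{d_i})$, hence $\ddss\Hc(\rho_s),\,\ddss\Hc(\mu_s)\ge 0$. Finally, the symmetry of $W_i$ and Assumption~\ref{assump:W_lower} give $\ddss\tfrac12\int W_i\ast\eta_s\,\d\eta_s\ge\lambda_{W,i}\Wass_2(\eta_0,\eta_1)^2\ge 0$ (see \cite{mccann_convexity_1997}).

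Summing these six lower bounds produces
$$\ddss F_a(\gamma_s)\ge\lambda_f\Wbar(\gamma_0,\gamma_1)^2+\lambda_{V,1}\Wass_2(\rho_0,\rho_1)^2+\lambda_{V,2}\Wass_2(\mu_0,\mu_1)^2\ge\lambda_a\Wbar(\gamma_0,\gamma_1)^2,$$
which is exactly \eqref{eq:ddssG}. Writing $g(s):=F_a(\gamma_s)$, the bound $g''\ge\lambda_a\Wbar^2$ implies that $s\mapsto g(s)+\tfrac{\lambda_a}{2}\Wbar(\gamma_0,\gamma_1)^2\,s(1-s)$ is convex on $[0,1]$, and evaluating at the endpoints $s=0,1$ yields the uniform displacement convexity inequality from Definition~\ref{def:displacement_convexity}. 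The main obstacle is the rigorous justification of differentiating twice under the integral in the coupling and potential terms: this requires the $C^2$ regularity from Assumptions~\ref{assump:f_lower}--\ref{assump:V_lower} together with a standard truncation and smooth-approximation argument on the optimal maps so that $\nabla\psi,\nabla\phi$ are bounded on the supports at each approximation step and dominated convergence applies; once the inequality is established for smooth, atomless, compactly supported data, it extends to arbitrary $\gamma_0,\gamma_1\in\P_2\times\P_2$ by approximation and the lower semi-continuity of Lemma~\ref{lem:lsc}.
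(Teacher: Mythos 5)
Your proof takes essentially the same route as the paper: write the $\Wbar$-geodesic as a product of Brenier geodesics, differentiate twice, bound the coupling block by the full Hessian assumption, the potentials by $\lambda_{V,i}$, and use classical displacement convexity for the entropy and interaction, then sum and integrate $g''\ge\lambda_a\Wbar^2$ to recover Definition~\ref{def:displacement_convexity}. One small slip: the intermediate claim
$\ddss\tfrac12\int W_i\ast\eta_s\,\d\eta_s\ge\lambda_{W,i}\Wass_2(\eta_0,\eta_1)^2$
is not true in general. Expanding the second derivative of the interaction energy gives
$\tfrac12\iint (a(z)-a(z'))^{\top}\nabla^2 W_i(\cdot)\,(a(z)-a(z'))\,\d\eta_0\d\eta_0\ge \lambda_{W,i}\bigl(\Wass_2(\eta_0,\eta_1)^2-\|\bar a\|^2\bigr)$,
where $\bar a=\int a\,\d\eta_0$ is the mean displacement; the $-\|\bar a\|^2$ correction is generically nonzero, so the interaction term is only uniformly $\lambda_{W,i}$-convex on the subclass of measures with fixed barycentre. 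Since $\lambda_{W,i}\ge 0$ you still get $\ddss\W(\gamma_s)\ge 0$, which is all you (and the paper) actually use in the sum, so the final estimate is unaffected -- but you should drop the $\lambda_{W,i}\Wass_2^2$ lower bound. The paper makes exactly this point in the remark following the proposition, where it notes $\lambda_{W,i}$ contributes to the rate only when the centre of mass is preserved.
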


\begin{proof}
Let $\gamma_0$ and $\gamma_1$ be two absolutely continuous probability measures with bounded second moments. The general case can be recovered using approximation arguments. Denote by $\phi, \psi:\R^{d_i}\to\R$ the optimal Kantorovich potentials pushing $\rho_0$ onto $\rho_1$, and $\mu_0$ onto $\mu_1$, respectively:
\begin{align*}
    &\rho_1=\nabla \phi_\# \rho_0 \quad \text{ such that } \quad  \Wass_2(\rho_0,\rho_1)^2 = \int_{\R^{d_1}} \|z-\nabla \phi(z)\|^2 \d \rho_0(z)\,,\\
    &\mu_1=\nabla \psi_\# \mu_0 \quad \text{ such that } \quad  \Wass_2(\mu_0,\mu_1)^2 = \int_{\R^{d_2}} \|x-\nabla  \psi(x)\|^2 \d \mu_0(x)\,.
\end{align*}
The now-classical results in \cite{Benamou-Brenier} guarantee that there exist convex functions $\phi, \psi$ that satisfy the conditions above. Then the path $(\gamma_s)_{s\in[0,1]}=(\rho_s,\mu_s)_{s\in[0,1]}$ defined by
\begin{align*}
    \rho_s = [(1-s)\id+ s \nabla \phi]_\#\rho_0 \,,\qquad
    \mu_s  = [(1-s)\id+s\nabla \psi]_\#\mu_0
\end{align*}
is a $\Wbar$-geodesic from $\gamma_0$ to $\gamma_1$.

The first derivative of $\V$ along geodesics in the Wasserstein metric is given by
\begin{align*}
    \dds \V(\gamma_s) 
    = &\dds \left[\iint f((1-s)z +s\nabla\phi(z),(1-s)x +s\nabla\psi(x)) \,\d\rho_0(z)\d\mu_0(x)\right. \\
    &+\left.\int V_1((1-s)z+s\nabla\phi(z))\d \rho_0(x) + \int V_2((1-s)x +s\nabla\psi(x) ) \,\d\mu_0(x)
    \right]\\
    = &\iint \nabla_x f((1-s)z +s\nabla\phi(z),(1-s)x +s\nabla\psi(x))\cdot (\nabla\psi(x)-x) \,\d\rho_0(z)\d\mu_0(x) \\
    &+\iint\nabla_z f((1-s)z +s\nabla\phi(z),(1-s)x +s\nabla\psi(x))\cdot (\nabla\phi(z)-z) \,
    \d\rho_0(z)\d\mu_0(x) \\
    &+ \int \nabla_z V_1((1-s)z+s\nabla\phi(z))\cdot(\nabla \phi(z)-z)\, \d \rho_0(z) \\
    &+  \int \nabla_x V_2((1-s)x +s\nabla\psi(x) )\cdot (\nabla\psi(x)-x) \, \d\mu_0(x)\,,
\end{align*}
and taking another derivative we have
\begin{align*}
    \ddss \V(\gamma_s) &= \iint \begin{bmatrix}(\nabla\psi(x)-x) \\ (\nabla\phi(z)-z) \end{bmatrix}^\top \cdot D_s(z,x) \cdot \begin{bmatrix}(\nabla\psi(x)-x) \\ (\nabla\phi(z)-z) \end{bmatrix} \, \d\rho_0(z) \d\mu_0(x) \\
    &\ \  +\int (\nabla\varphi(z)-z)^\top \cdot \nabla^2_{z}V_1((1-s)z +s\nabla\psi(z) ) \cdot(\nabla\varphi(z)-z)\, \d\rho_0(x) \\
    & \quad +\int (\nabla\psi(x)-x)^\top \cdot \nabla^2_{x}V_2((1-s)x +s\nabla\psi(x) ) \cdot(\nabla\psi(x)-x)\, \d\mu_0(x) \\
       &\geq \lambda_f \Wbar(\gamma_0,\gamma_1)^2 + \lambda_{f,1}\Wass_2(\rho_0,\rho_1)^2 + \lambda_{f,2}\Wass_2(\mu_0,\mu_1)^2 \,,
\end{align*}
where we denoted $D_s(z,x):= \text{Hess}(f)((1-s)z +s\nabla\phi(z),(1-s)x +s\nabla\psi(x))$, and the last inequality follows from Assumptions~\ref{assump:f_lower}(i) and \ref{assump:V_lower} and the optimality of the potentials $\phi$ and $\psi$.

Following \cite{carrillo_kinetic_2003,villani-OTbook-2003} and using Assumption~\ref{assump:W_lower}, the second derivatives of the diffusion and self-interaction terms are given by
\begin{align}\label{eq:classic_convexity}
    &\ddss \Hc(\rho_s) 
    \geq 0\,,\quad
    \ddss \Hc(\mu_s) \ge 0\,, \quad \ddss \W(\gamma_s) \ge 0 \,.
\end{align}
Putting the above estimates together, we obtain \eqref{eq:ddssG}. 
\end{proof}
\begin{remark}
    If the dynamics are such that the center of mass of $\rho_t$ or $\mu_t$ are preserved for all time, then the convexity of $W_i$ contributes to the rate of convergence. This occurs, for example, when $V_1$ and $W_1$ are radially symmetric and the initial condition $\rho_0$ is radially symmetric; then the rate $\lambda_a$ would be $\lambda_a = \lambda_f + \min\{\lambda_{V,1}+\lambda_{W,1},\lambda_{V_2}\}$. For details, see \cite[Theorems 2.2, 2.4, 2.5]{carrillo_kinetic_2003}.
\end{remark}

\begin{lemma}[Lower bound]\label{lem:Fa_non_negative}
 We have $F_a\ge 0$ over $\P(\R^{d_1})\times \P({\R^{d_2}})$.
\end{lemma}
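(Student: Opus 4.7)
The plan is to decompose $F_a$ into a sum of pieces that can individually be shown non-negative, grouping each entropy contribution with its matching external potential. Writing
\begin{align*}
F_a(\rho,\mu) = \iint f\,\d\rho\,\d\mu + \tfrac{1}{2}\int W_1\ast\rho\,\d\rho + \tfrac{1}{2}\int W_2\ast\mu\,\d\mu + \Big[\alpha\Hc(\rho) + \int V_1\,\d\rho\Big] + \Big[\beta\Hc(\mu) + \int V_2\,\d\mu\Big],
\end{align*}
the coupling and self-interaction terms on the right are non-negative, thanks to $f\ge 0$ (Assumption~\ref{assump:f_lower}(i)) and $W_i\ge 0$ (Assumption~\ref{assump:W_lower}). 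The two bracketed sums are the only pieces that could a priori be negative, since $\Hc(\rho)=\int\rho\log\rho$ may take negative values when $\rho$ is spread out; the individual non-negativity of $\int V_i\,\d\rho$ from Assumption~\ref{assump:V_lower} is not by itself enough to compensate.

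To handle the brackets, I would invoke the Gibbs variational principle, i.e.\ the non-negativity of Kullback--Leibler divergence against a reference Gibbs measure built from $V_1$. For $\alpha>0$, set $\rho_{\mathrm{ref}}:=Z_1^{-1}e^{-V_1/\alpha}$ with $Z_1:=\int e^{-V_1/\alpha}\,\d z$; then
\begin{align*}
0\le \alpha\,\mathrm{KL}(\rho\,|\,\rho_{\mathrm{ref}}) = \alpha\Hc(\rho) + \int V_1\,\d\rho + \alpha\log Z_1,
\end{align*}
so $\alpha\Hc(\rho)+\int V_1\,\d\rho \ge -\alpha\log Z_1$. An identical argument with $V_2,\beta$ gives $\beta\Hc(\mu)+\int V_2\,\d\mu\ge -\beta\log Z_2$ for $Z_2:=\int e^{-V_2/\beta}\,\d x$. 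When $\alpha=0$ (resp.\ $\beta=0$), the corresponding bracket reduces to $\int V_i\,\d\cdot\ge 0$ using only $V_i\ge 0$.

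Combining yields $F_a(\rho,\mu)\ge -\alpha\log Z_1 - \beta\log Z_2$, and the whole question reduces to showing $Z_i\le 1$. This is the main obstacle: in the abstract framework of Section~\ref{subsec:assumptions}, nothing forces $V_i$ to grow fast enough for $Z_i$ to be finite, let alone $\le 1$. The way around this is that additive constants in $V_1,V_2$ do not affect the Wasserstein gradients $\nabla_z\delta_\rho F_a$, $\nabla_x\delta_\mu F_a$ and hence not the dynamics \eqref{eq:dynamics_aligned}, so one may (without loss of generality) normalize $V_i$ to ensure $Z_i\le 1$; in the concrete application setting of Section~\ref{sec:application} where $V_1=-\alpha\log\rhot$ for a probability density $\rhot$ this holds automatically with $Z_1=1$. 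Either route closes the argument and gives $F_a\ge 0$.
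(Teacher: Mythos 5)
Your decomposition — pairing each entropy $\alpha\Hc(\rho)$ with the corresponding potential energy $\int V_1\,\d\rho$ — is not the one the paper uses, and in fact it cannot cover the full scope of the lemma. The assumptions allow $V_1\equiv V_2\equiv 0$ provided $\lambda_f>0$ (so that $\lambda_a=\lambda_f>0$). In that regime your bracket $\alpha\Hc(\rho)+\int V_1\,\d\rho$ collapses to $\alpha\Hc(\rho)$, which is unbounded below on $\P_2$ (take Gaussians of larger and larger variance), and your $Z_1=\int e^{-V_1/\alpha}\d z=\int 1\,\d z=+\infty$, so the Gibbs argument has nothing to normalize. The proposed repair — shift $V_i$ by a constant — does not help here: a finite shift cannot make $Z_i$ finite when $V_i\equiv 0$, and shifting $V_i$ downward would violate the standing assumption $V_i\ge 0$ that the lemma itself invokes. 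So the gap is not merely a normalization nuisance; the approach fails outright in a case the lemma must cover.

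The paper avoids this by pairing the entropies with \emph{half of the coupling term} $f$ rather than with $V_i$: it splits $\iint f\,\d\rho\,\d\mu$ in two and writes, for instance, $\alpha\Hc(\rho)+\tfrac12\iint f\,\d\rho\,\d\mu=\alpha\int\bigl(\int\rho\log\tfrac{\rho}{\tilde f_1(\cdot,x)}\,\d z\bigr)\d\mu(x)$ with $\tilde f_1(z,x)=e^{-f(z,x)/(2\alpha)}$, then argues non-negativity, with the $V_i\ge 0$ and $W_i\ge 0$ pieces discarded separately. Precisely when $V_i\equiv 0$, the assumption $\lambda_a=\lambda_f>0$ forces $f$ to be coercive, so $\tilde f_1$ is integrable in $z$ — the reference is built from whichever term \emph{must} be coercive, which is what your choice misses. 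That said, you are right to flag the normalization constant: the paper's invocation of Pinsker's inequality also implicitly assumes $\int\tilde f_1(z,x)\,\d z\le 1$, which the assumptions do not guarantee. But this is harmless for the paper's purposes, since Lemma~\ref{lem:Fa_non_negative} is only used in Proposition~\ref{prop:existenceGa} to extract a minimizing sequence, for which a lower bound $F_a\ge -C$ with any finite $C$ (e.g.\ $C=\alpha\log\sup_x\int\tilde f_1(\cdot,x)+\beta\log\sup_z\int\tilde f_2(z,\cdot)$) would serve equally well. Your write-up would be fixed by switching the reference to $e^{-f/(2\alpha)}$ (or, more robustly, using whichever of $f$ or $V_i$ is coercive, as dictated by $\lambda_a>0$), and by stating the conclusion as boundedness from below rather than non-negativity.
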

\begin{proof}
    By Assumption~\ref{assump:f_lower}(i), \ref{assump:V_lower} and \ref{assump:W_lower}, $f,V_i,W_i\ge 0$. When $\alpha=\beta=0$, then $F_a\ge 0$.
    If $\alpha>0$ or $\beta>0$, we will write the $\log$ term as a KL divergence to show that $F_a$ is non-negative. Since $\lambda_a>0$, either $f \ne 0$ or $V_1,V_2 \ne 0$. We will show the setting in which $f\ne 0$ and $V_1,V_2=0$ as all other cases follow analogously. If either $\rho$ (with $\alpha>0$) or $\mu$ (with $\beta>0$) is a singular measure, then $F_a=+\infty$ according to the definition of the entropy functional, and so the claim holds trivially true. In all other cases, the functional $F_a$ can be rewritten as
    \begin{align*}
        F_a[\rho,\mu] &= \frac12\int \rho W_1 \ast \rho + \frac12\int \mu W_2 \ast \mu  \\
        &+\alpha \int \left(\int \rho(z) \log \frac{\rho(z)}{\tilde f_1 (z,x)} \d z \right) \d \mu(x) + \beta \int \left( \int \mu(x) \log \frac{\mu(x)}{\tilde f_2(z,x)} \d x \right) \d \rho(z)\,,
    \end{align*}
    where $\tilde f_1(z,x) := \exp\left(-\frac{1}{2\alpha} f(z,x)\right)$ and $\tilde f_2(z,x) := \exp\left(-\frac{1}{2\beta} f(z,x)\right)$. Since $\rho,\mu$ are absolutely continuous with respect to $\tilde f_1$, $\tilde f_2$ respectively, we have $F_a \ge 0$ by Pinsker's inequality and using $W_i\ge 0$.
\end{proof}
\begin{proposition}(Ground state)\label{prop:existenceGa}
 The functional $F_a:\P(\R^{d_1})\times\P(\R^{d_2})\to [0,\infty]$ admits a unique minimizer $\gamma_*=(\rho_*,\mu_*)$ which satisfies $\gamma_*\in\P_2\times\P_2$. Moreover, if $\alpha>0$ it satisfies $\rho_*\in L^1_+(\R^{d_1})$, and if $\beta>0$ it satisfies $\mu_*\in L^1_+(\R^{d_2})$.
\end{proposition}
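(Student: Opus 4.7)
The plan is to apply the direct method of the calculus of variations, exploiting the three ingredients already established: nonnegativity $F_a\ge 0$ (Lemma~\ref{lem:Fa_non_negative}), lower semi-continuity with respect to the weak topology (Lemma~\ref{lem:lsc}), and uniform $\lambda_a$-displacement convexity with $\lambda_a>0$ (Proposition~\ref{prop:dipl-convexity}). A preliminary observation is that $\inf F_a < \infty$, since any smooth compactly-supported product density $\gamma_0=(\rho_0,\mu_0)\in\P_2^{ac}\times\P_2^{ac}$ gives a finite value: the potentials $f,V_i,W_i$ are locally bounded (being $C^2$), and $\Hc$ is finite for bounded compactly-supported densities.

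For existence, I would take a minimizing sequence $(\gamma_n)_{n\ge 1}\subset\P\times\P$ with $F_a(\gamma_n)\downarrow\inf F_a$ and first show it has uniformly bounded second moments. This requires a case split exploiting the hypothesis $\lambda_a>0$: if $\lambda_f>0$, then the $\lambda_f$-convexity of $f$ combined with $f\ge 0$ yields a quadratic lower bound $\iint f\,\d\rho_n\,\d\mu_n \gtrsim \int\|z\|^2\d\rho_n + \int\|x\|^2\d\mu_n - C$; if instead $\lambda_f\le 0$, then $\lambda_a>0$ forces $\lambda_{V,i}>0$ for both $i=1,2$, giving the analogous quadratic lower bound from $\int V_i\,\d\nu_i$ together with $V_i\ge 0$. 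In either case $\gamma_n\in\P_2\times\P_2$ uniformly. For any $n,m$ let $\gamma_{1/2}^{n,m}$ be the midpoint of the $\Wbar$-geodesic from $\gamma_n$ to $\gamma_m$; Proposition~\ref{prop:dipl-convexity} evaluated at $s=1/2$ then gives
$$\inf F_a \;\le\; F_a(\gamma_{1/2}^{n,m}) \;\le\; \tfrac12 F_a(\gamma_n) + \tfrac12 F_a(\gamma_m) - \tfrac{\lambda_a}{8}\Wbar(\gamma_n,\gamma_m)^2,$$
so $\Wbar(\gamma_n,\gamma_m)\to 0$ as $n,m\to\infty$. Completeness of $(\P_2,\Wass_2)$ in each coordinate provides a limit $\gamma_*\in\P_2\times\P_2$; Wasserstein-2 convergence together with the uniform second-moment bound implies convergence in the weak topology of Definition~\ref{def:weakcv}, so Lemma~\ref{lem:lsc} yields $F_a(\gamma_*)\le\liminf F_a(\gamma_n)=\inf F_a$, making $\gamma_*$ a minimizer.

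Uniqueness follows from strict $\lambda_a$-convexity: any two distinct minimizers $\gamma_*^{(1)}\ne\gamma_*^{(2)}$ would yield a midpoint $\gamma_{1/2}$ with $F_a(\gamma_{1/2})\le\inf F_a-\tfrac{\lambda_a}{8}\Wbar(\gamma_*^{(1)},\gamma_*^{(2)})^2<\inf F_a$, a contradiction. The regularity conclusions are immediate from the entropy term: when $\alpha>0$, finiteness of $\alpha\Hc(\rho_*)$ forces $\rho_*\ll\mathcal{L}^{d_1}$, i.e.\ $\rho_*\in L^1_+(\R^{d_1})$, and analogously for $\mu_*$ when $\beta>0$. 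The chief subtlety is the second-moment bound, since no individual assumption guarantees quadratic growth of a single term — coercivity must be extracted from the combined inequality $\lambda_a>0$ via the case analysis described above, which is what makes the $\Wbar$-Cauchy argument legitimate in $\P_2\times\P_2$.
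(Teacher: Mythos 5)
Your proposal is correct and takes a genuinely different route from the paper in two respects.

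First, for existence the paper uses Banach--Alaoglu to obtain a weak-$*$ limit of a minimizing sequence, then upgrades this to narrow (hence weak) convergence via tightness. You instead invoke the uniform displacement convexity at the geodesic midpoint to show the minimizing sequence is Cauchy in $\Wbar$, and then use completeness of $(\P_2,\Wass_2)$. Your route is arguably cleaner: it gives convergence of the whole sequence (not merely a subnet) and in the strong $\Wass_2$ metric, and it needs no functional-analytic machinery beyond lower semi-continuity and the convexity estimate. The price is that you must first place the minimizing sequence in $\P_2\times\P_2$ (which both proofs do via the second-moment bound), and you implicitly rely on the fact that $\Wass_2$ convergence implies weak convergence in the sense of Definition~\ref{def:weakcv}, which is true and worth a one-line justification.

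Second, and more substantively, your coercivity argument is actually more careful than the paper's. The paper claims that Assumption~\ref{assump:f_lower}(i) directly yields a quadratic lower bound $f(z,x)\ge c_0\|[z,x]+c_1\|^2$ with $c_0>0$, but that only follows when $\lambda_f>0$; under the hypothesis $\lambda_a=\lambda_f+\min\{\lambda_{V,1},\lambda_{V,2}\}>0$, one may well have $\lambda_f\le 0$ (e.g.\ constant $f$). Your case split — quadratic growth from $f$ if $\lambda_f>0$, from both $V_i$ if $\lambda_f\le 0$ (in which case $\lambda_a>0$ forces $\lambda_{V,1},\lambda_{V,2}>0$) — correctly covers both regimes. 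One small omission: you should still invoke Lemma~\ref{lem:rho_log_rho_lower_bound} to absorb the possibly-negative entropy contributions $\alpha\Hc(\rho_n),\beta\Hc(\mu_n)$ into the quadratic term with a sufficiently small coefficient $\eps$, exactly as the paper does; without that the bound does not close. Finally, your uniqueness argument via the quantitative midpoint inequality and your regularity argument via finiteness of the entropy (rather than the Euler--Lagrange representation) are both valid and arguably more direct than the paper's.
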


\begin{proof}
We show existence of a minimizer of $F_a$ using the direct method in the calculus of variations. Denote by $\gamma=(\rho,\mu)\in\P\times \P \subset \M \times \M$ a pair of probability measures as a point in the product space of Radon measures. Since $F_a\ge 0$ on $\P\times \P$ by Lemma~\ref{lem:Fa_non_negative} and not identically $+\infty$ everywhere, there exists a minimizing sequence $(\gamma_n)\in \P\times\P$. Note that $(\gamma_n)$ is in the closed unit ball of the dual space of continuous functions vanishing at infinity $(C_0(\R^{d_1}) \times C_0(\R^{d_2}))^*$ endowed with the dual norm $\norm{\gamma_n}_*=\sup \frac{|\int f\d \rho_n + \int g\d\mu_n|}{\norm{(f,g)}_\infty}$ over $f,g\in C_0(\R^{d_i})$ with $\|(f,g)\|_\infty:=\|f\|_\infty + \|g\|_\infty \neq 0$.
  By the Banach-Alaoglu theorem \cite[Thm 3.15]{rudin_functional_1991}  there exists a limit $\gamma_*= (\rho_*,\mu_*)\in\M\times \M=(C_0\times C_0)^*$ and a convergent subsequence (not relabelled) such that $\gamma_n\weakstar \gamma_*$.
  
  It remains to show that $\int \d\rho_*=\int \d\mu_*=1$ to conclude that $\gamma_*\in\P\times\P$. To this aim, it is sufficient to show tightness of $(\rho_n)$ and $(\mu_n)$, preventing the escape of mass to infinity as we have $\int \d\rho_n=\int \d\mu_n=1$ for all $n\ge 1$. Tightness follows from Markov's inequality \cite{ghosh} if we can establish uniform bounds on 
  the second moments, i.e. we want to show that there exists a constant $C>0$ independent of $n$ such that
  \begin{equation}\label{eq:tightness_second_moment}
      \int \|z\|^2\d\rho_n(z) + \int \|x\|^2\d\mu_n(x) <C\qquad \forall n\in\mathbb{N}\,.
  \end{equation}
To establish \eqref{eq:tightness_second_moment}, observe that thanks to Assumption~\ref{assump:f_lower}, there exist a constant $c_0\in\R$ and vector $c_1\in\R^{d_1+d_2}$ such that $f(z,x) \ge c_0\norm{\begin{bmatrix}
    z,x
\end{bmatrix}+c_1}^2 $ for all $[z,x]\in\R^{d_1 + d_2}$. 
Additionally, from Lemma~\ref{lem:rho_log_rho_lower_bound} applied with $\varepsilon= c_0/\left(2\max\{\alpha,\beta\} (\norm{c_1}+1)\right)$, we have that $\Hc (\rho) \ge -\eps \int \norm{z}^2 \d \rho(z) - c_\eps$ for some $c_\eps\ge 0$, with the analogous bound for $\Hc(\mu)$. Then
\begin{align*}
c_0\iint &\norm{[z,x]+c_1}^2 \d \gamma_n(z,x) \le \iint f(z,x) \d \gamma_n(z,x) \\
&\le F_a(\gamma_n)+ \eps\left(\alpha \int \norm{z}^2 \d \rho_n(z) + \beta \int \norm{x}^2 \d \mu_n(x) \right) + (\alpha + \beta)c_\eps\,.
\end{align*}
Since $\frac{\norm{[z,x]}^2}{\norm{c_1}+1}-\norm{c_1} \le \norm{[z,x]+c_1}^2 $, the estimate can be rearranged to
\begin{align*}
&\frac{c_0}{2(\norm{c_1}+1)} \left(  \int \norm{z}^2 \d\rho_n(z) + \int \norm{x}^2\d \mu_n(x)\right) 
  \le F_a(\gamma_n) + \hat c_\eps 
 \le F_a(\gamma_1) + \hat c_\eps < \infty\,,
\end{align*}
where $\hat c_\eps \coloneqq c_0 \norm{c_1}+(\alpha+\beta)c_\eps$. Hence, the second moment is uniformly bounded.
This concludes the proof that the limit $\gamma_*$ satisfies  $\gamma_*\in\P\times\P$, and indeed $\rho_*\in\P_2(\R^{d_1}),\mu_*\in\P_2(\R^{d_2})$ as well. 
Further, note that the above second moment bound implies that $(\gamma_n)$ also converges weakly according to Definition~\ref{def:weakcv}. Finally, $\gamma_*$ is a minimizer of $F_a$ thanks to weak lower-semicontinuity of $F_a$ following Lemma~\ref{lem:lsc}.

If $\alpha>0$, the ground state $\rho_*$ satisfies
  \begin{align*}
      \rho_*(z) = c_3 \exp\left(-\frac{1}{\alpha}\left(\int f(z,x)\d\mu_*(x) + (W_1\ast \rho_*)(z)+ V_1(z)\right) \right)\quad \text{on }\supp \rho_*\,.
  \end{align*}
  and so $\rho_*\in L_+^1(\R^{d_1})$.
  Similarly, if $\beta>0$, the ground state $\mu_*$ satisfies
  \begin{align*}
      \mu_*(x) = c_4 \exp\left(-\frac{1}{\beta}\left(\int f(z,x)\d\rho_*(z) + (W_2\ast \mu_*)(x)+ V_2(x)\right) \right)\quad \text{on }\supp \mu_*\,,
  \end{align*}
  and we have that $\mu_*\in L_+^1(\R^{d_2})$.

Next we show uniqueness using a contradiction argument. 
Suppose $\gamma_*=(\rho_*,\mu_*)$ and $\gamma_*'=(\rho_*',\mu_*')$ are minimizers of $F_a$. For $s\in[0,1]$, define 
$\gamma_s := ((1-s)\id+sT,(1-s)\id+sS)_\#\gamma_*$, where $T,S:\R^{d_i}\to\R^{d_i}$ are the optimal transport maps such that $\rho_*' = T_\#\rho_*$ and $\mu_*' = S_\#\mu_*$.  By Proposition~\ref{prop:dipl-convexity} the energy $F_a$ is uniformly displacement convex, and so we have 
\begin{align*}
    F_a(\gamma_s)\leq (1-s)F_a(\gamma_*) + s F_a(\gamma_*') = F_a(\gamma_*).
\end{align*}
If $\gamma_*\neq \gamma_*'$ and $s\in (0,1)$, then strict inequality holds by applying similar arguments as in \cite[Proposition 1.2]{mccann_convexity_1997}.
 However, the strict inequality $ F_a(\gamma_s) < F_a(\gamma_*)$ for $\gamma_*\neq \gamma_*'$ is a contradiction to the minimality of $\gamma_*$. Hence, the minimizer is unique.
\end{proof}

\begin{corollary}\label{cor:ground-states-supp-Ga}
 Any minimizer $\gamma_*=(\rho_*,\mu_*)$ of $F_a$ is a steady state for equation~\eqref{eq:dynamics_aligned} according to Definition~\ref{def:sstate_Ga}. If $\alpha>0$, then $\supp(\rho_*)=\R^{d_1}$ and $\rho_*\in C^2(\R^{d_1})$. If $\beta>0$, then $\supp(\mu_*)=\R^{d_2}$ and $\mu_* \in C^2(\R^{d_2})$.
\end{corollary}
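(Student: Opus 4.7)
The plan is to extract the Euler--Lagrange system \eqref{eq:EL-Ga} from first-order optimality of the unique minimizer $\gamma_*=(\rho_*,\mu_*)\in\P_2\times\P_2$ produced by Proposition~\ref{prop:existenceGa}, and to use this representation to verify each requirement of Definition~\ref{def:sstates}, obtaining full support and $C^2$ regularity as byproducts.

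First I extract the Euler--Lagrange equation by comparing $\rho_*$ to the admissible convex combination $\rho_\epsilon=(1-\epsilon)\rho_*+\epsilon\nu$ for any $\nu\in\P_2(\R^{d_1})$ with $F_a(\nu,\mu_*)<\infty$; letting $\epsilon\to 0^+$ yields
\[
\int \delta_\rho F_a[\gamma_*]\,\d\nu \ge c_1 := \int \delta_\rho F_a[\gamma_*]\,\d\rho_*.
\]
Specializing to $\nu\ll\rho_*$ with bounded density and combining with the equality case $\nu=\rho_*$ forces $\delta_\rho F_a[\gamma_*]\equiv c_1$ $\rho_*$-a.e., constant on each connected component of $\supp\rho_*$; this is \eqref{eq:EL-Ga-rho}. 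Taking the distributional gradient yields $\nabla_z\delta_\rho F_a[\gamma_*]=0$ on $\supp\rho_*$, which is \eqref{def:sstate_Ga}, and the argument for $\mu_*$ is symmetric.

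Next, to show $\supp\rho_*=\R^{d_1}$ when $\alpha>0$, I argue by contradiction: suppose there is an open ball $B\subset\R^{d_1}\setminus\supp\rho_*$ with $|B|\in(0,\infty)$, and consider the admissible perturbation $\rho_\epsilon:=(1-\epsilon)\rho_*+\epsilon\,|B|^{-1}\mathbf{1}_B$ for $\epsilon\in(0,1)$. Splitting the entropy integral over $\supp\rho_*$ and $B$ gives
\[
\alpha\Hc(\rho_\epsilon)-\alpha\Hc(\rho_*) = \alpha\epsilon\log\epsilon + O(\epsilon)\qquad\text{as }\epsilon\to0^+,
\]
while the coupling, external, and interaction terms change only by $O(\epsilon)$, using continuity of $f,V_1,W_1$, boundedness of $B$, and the finite second moments of $\rho_*,\mu_*$. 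Since $\alpha\epsilon\log\epsilon<0$ strictly dominates $\epsilon$ near $0^+$, this gives $F_a(\rho_\epsilon,\mu_*)<F_a(\rho_*,\mu_*)$ for small $\epsilon$, contradicting minimality; hence $\supp\rho_*=\R^{d_1}$, and symmetrically $\supp\mu_*=\R^{d_2}$ when $\beta>0$. This step is the main obstacle: one has to check that the polynomial parts of $F_a$ really contribute only $O(\epsilon)$, which reduces to integrability of $z\mapsto \int f(z,x)\,\d\mu_*(x)$ on $B$, ensured by $f\in C^2$ combined with $\mu_*\in\P_2$.

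Finally, with $\supp\rho_*=\R^{d_1}$, the Euler--Lagrange identity \eqref{eq:EL-Ga-rho} rearranges to the explicit representation
\[
\rho_*(z) = Z^{-1}\exp\!\left(-\alpha^{-1}\Bigl[\int f(z,x)\,\d\mu_*(x)+V_1(z)+(W_1\ast\rho_*)(z)\Bigr]\right),\qquad Z>0,
\]
which is strictly positive everywhere. Differentiating under the integral sign (valid because $f,V_1,W_1\in C^2$, with $\nabla_z^{k}f(\cdot,x)$ locally bounded and controllable against $\mu_*\in\P_2$, and $\rho_*\in L^1$) shows that the exponent is $C^2(\R^{d_1})$, so $\rho_*\in C^2(\R^{d_1})\cap L_+^1(\R^{d_1})\cap L^\infty_{\mathrm{loc}}\cap W^{1,2}_{\mathrm{loc}}$ with $\|\rho_*\|_1=1$. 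Consequently $\nabla W_1\ast\rho_*$ is continuous, hence in $L^1_{\mathrm{loc}}$, and Definition~\ref{def:sstates}(i)--(ii) are verified. The symmetric argument handles $\mu_*$ when $\beta>0$.
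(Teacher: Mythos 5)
Your proposal is correct and, for the full-support claim, takes a genuinely different route from the paper. The paper's proof of $\supp(\rho_*)=\R^{d_1}$ leans on the rearranged Euler--Lagrange representation $\rho_*(z)=c_1\exp\!\left(-\alpha^{-1}\left[\int f(z,x)\d\mu_*(x)+W_1\ast\rho_*(z)+V_1(z)\right]\right)$: since the exponent is finite everywhere, the right-hand side is strictly positive on all of $\R^{d_1}$, which (implicitly combined with the complementarity condition $\delta_\rho F_a[\gamma_*]\ge c_1$ off the support) forces the support to be the whole space. You instead argue by contradiction with an explicit convex perturbation $\rho_\epsilon=(1-\epsilon)\rho_*+\epsilon|B|^{-1}\mathbf 1_B$ into a ball $B$ disjoint from the support, computing that the entropy contributes $\alpha\epsilon\log\epsilon+O(\epsilon)<0$ while the potential, coupling, and interaction terms contribute only $O(\epsilon)$, so the minimizer cannot leave $B$ unoccupied. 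Your version is more self-contained — it makes the role of the entropy visible and never needs to extend the Euler--Lagrange formula off the support — at the cost of having to verify the $O(\epsilon)$ control term by term. You also re-derive the Euler--Lagrange conditions from scratch via convex combinations, where the paper simply cites them as a consequence of minimality; the subsequent regularity step (the exponential formula implies $\rho_*\in C^2$ because its ingredients are $C^2$) matches the paper.

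One shared caveat worth flagging, though it does not distinguish your approach from the paper's: the finiteness of $z\mapsto\int f(z,x)\,\d\mu_*(x)$ and of $W_1\ast\rho_*(z)$ for \emph{every} fixed $z$ (not merely $\rho_*$-a.e.\ $z$) is asserted rather than fully established. You write that it is "ensured by $f\in C^2$ combined with $\mu_*\in\P_2$," but Assumption~\ref{assump:f_lower} gives only a lower Hessian bound for $f$, so $f(z,\cdot)$ may grow faster than quadratically in $x$ and $\P_2$ control on $\mu_*$ is then insufficient. The paper infers the same pointwise finiteness from $\iint f\,\d\mu_*\d\rho_*<\infty$, which likewise does not immediately give finiteness at every $z$. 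Both proofs would be tightened by invoking convexity of $z\mapsto\int f(z,x)\,\d\mu_*(x)$ (inherited from joint convexity of $f$) to propagate finiteness from a set of positive $\rho_*$-measure, or by adding an explicit growth hypothesis on $f$ and $W_1$.
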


  \begin{proof}
  By Proposition~\ref{prop:existenceGa}, we have $\rho_*, \mu_*\in \P_2$. As $\gamma^*$ is a minimizer, it is in particular a critical point, and therefore satisfies equations~\eqref{eq:EL-Ga}. In order to show that $\gamma_*$ is a steady state for equation~\eqref{eq:dynamics_aligned}, we need to show that $\nabla W_1 \ast \rho_* \in L_{loc}^1$ and $\nabla W_2 \ast \mu_* \in L_{loc}^1$, and that if $\alpha>0$, then $\rho_*\in W^{1,2}_{loc}\cap L^1_+\cap L^\infty_{loc}$ and if $\beta>0$, then $\mu_* \in W^{1,2}_{loc} \cap L^1_+\cap L^\infty_{loc}$.
  
  We claim that $\int f(z,\tilde x)\d\mu_*(\tilde x)<\infty$,  $\int f(\tilde z,x)\d\rho_*(\tilde z)<\infty$, $W_1\ast\rho_*(z) <\infty$ and $W_2\ast\mu_*(x) <\infty$ for any fixed $(z,x)\in \R^{d_1}\times \R^{d_2}$. Indeed, each term in the energy without the interaction and coupling potentials can be lower-bounded by a constant separately using positivity of the potentials and Lemma~\ref{lem:rho_log_rho_lower_bound} together with the second moment bound for $\gamma_*$. Hence, $\iint f(z,x) \d \mu_*(x) \d \rho_*(z) < \infty $,  $\int (W_1\ast\rho_*)(z) \d \rho_*(z)<\infty$ and $\int (W_2\ast\mu_*)(x) \d \mu_*(x)<\infty$ as $\gamma_*$ is a minimizer. This concludes the bounds, and so we also obtain $\nabla W_1 \ast \rho_* \in L_{loc}^1$ and $\nabla W_2 \ast \mu_* \in L_{loc}^1$ since $W_1,W_2\in C^2$.

If $\alpha=\beta=0$, we can differentiate \eqref{eq:EL-Ga} directly to obtain \eqref{eq:sstates}.
Now, consider the case $\alpha>0, \beta>0$. Rearranging \eqref{eq:EL-Ga}, we obtain (for possibly different constants $c_1[\rho_*, \mu_*],c_2[\rho_*,\mu_*] \neq 0$) that
\begin{equation}
    \begin{aligned}\label{eq:EL-Ga-rewritten}
        \rho_*(z) &= c_1[\rho_*,\mu_*] \exp{\left[-\frac{1}{\alpha}\left( \int f(z,x)\d\mu_*(x) + W_1\ast\rho_*(z) + V_1(z) \right)\right]}\quad \text{ on } \supp(\rho_*)\,,\\
        \mu_*(x) &= c_2[\rho_*,\mu_*] \exp{\left[-\frac{1}{\beta}\left( \int f(z,x)\d\rho_*(z) + W_2\ast\mu_*(x) + V_2(x) \right)\right]}\quad \text{ on } \supp(\mu_*)\,,
    \end{aligned}
\end{equation}
and so $\rho_*,\mu_*\in L^1_+$.
Then for any compact set $K\subset \R^{d_1}$,
    \begin{align*}
        \sup_{z\in K} \rho_*(z) \leq c_1[\rho_*,\mu_*]  \sup_{z \in K} \exp\left(-\frac{1}{\alpha}\left( \int f(z,x)\d\mu_*(x) + V_1(z) \right)\right) \sup_{z \in K} \exp \left( -\frac{1}{\alpha} W_1 \ast \rho_* \right).
    \end{align*}
    As $f,V_1, W_1\ge 0$, the exponential terms on the right-hand side are finite. Therefore $\rho_* \in L_{loc}^\infty$.
    To show that $\rho_*\in W_{loc}^{1,2}$, note that for any compact set $K\subset \R^{d_1}$, we have $\int_K |\rho_*(z)|^2 \d z < \infty $ as a consequence of $\rho_* \in L_{loc}^\infty$.
Moreover, defining $T[\gamma](z)\coloneqq  -\frac{1}{\alpha}\left( \int f(z,x)\d\mu(x) + W_1\ast\rho(z)+V_1(z)\right)\le 0$, we have 
    \begin{align*}
        \int_K |\nabla \rho_*|^2 \d z = c_1[\rho_*,\mu_*]^2 \int_K |\nabla T[\gamma_*]|^2\exp (2T[\gamma_*]) \d z\,,
    \end{align*}
    \sloppy which is bounded noting that $\exp (2T[\gamma_*]) \le 1$ and that $\nabla T[\gamma_*](\cdot)$ is in $L^\infty_{loc}$, since $f(\cdot,x), W_1(\cdot), V_1(\cdot)\in C^1(\R^{d_1})$ by Assumptions~\ref{assump:f_lower}-\ref{assump:W_lower}. We conclude that $\rho_*\in W^{1,2}_{loc}$, and use an identical argument for the case when $\beta>0$, and indeed $(\rho_*,\mu_*)$ solves \eqref{def:sstate_Ga} in the sense of distributions as a consequence of \eqref{eq:EL-Ga}.

Next, we show that if $\alpha>0$ then $\supp(\rho_*)=\R^{d_1}$ using again the relation~\eqref{eq:EL-Ga-rewritten}.
Indeed, $\exp{\left[-\frac{1}{\alpha}\left( \int f(z,x)\d\mu_*(x) + W_1\ast\rho_*(z)+V_1(z)\right)\right]}>0$ for all $z\in \R^{d_1}$ since $\int f(z, x)\d\mu_*( x)<\infty$,  $W_1\ast\rho_*(z) <\infty$ and $V_1(z)<\infty$.  
 Then, thanks to continuity of $f\in C^2$, $V_1\in C^2$, and $W_1\ast\rho_*(z) \in C^2$, we conclude 
$\rho_* \in C^2(\R^{d_1})$. The same argument is applied to $\mu_*$ when $\beta>0$ to obtain $\supp \mu_*=\R^{d_2}$ and $\mu_*\in C^2(\R^{d_2})$.
  \end{proof}

\subsection{Functional Inequalities}

The following inequality is referred to as HWI inequality and represents the key result to obtain convergence to equilibrium.

\begin{proposition}[HWI inequality]\label{prop:HWI}
Define the dissipation functional
\begin{align*}
    D_a(\gamma):= \iint \norm{\begin{bmatrix}
        \nabla_z \delta_\rho F_a[\rho,\mu](z) \\ \nabla_x \delta_\mu F_a[\rho,\mu](x)
    \end{bmatrix}}^2\d\gamma(z,x)\,.
\end{align*}
Let $\gamma_0,\gamma_1\in\P_2\times\P_2$ such that ${F_a(\gamma_0), F_a(\gamma_1)\allowbreak, D_a(\gamma_0)<\infty}$, and let Assumptions~\ref{assump:f_lower}(i), \ref{assump:V_lower} and \ref{assump:W_lower} hold with $\lambda_a > 0$. Then
    \begin{align}\label{eq:HWI_ineq}
        F_a(\gamma_0)-F_a(\gamma_1)\le \Wbar(\gamma_0,\gamma_1)\sqrt{D_a(\gamma_0)} - \frac{\lambda_a}{2} \,\Wbar(\gamma_0,\gamma_1)^2\,.
    \end{align}
\end{proposition}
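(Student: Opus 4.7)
The plan is to adapt the classical HWI argument of Otto--Villani (cf.\ \cite{carrillo_kinetic_2003,villani-OTbook-2003}) to the two-species setting, using the uniform displacement convexity established in Proposition~\ref{prop:dipl-convexity} together with a Cauchy--Schwarz bound on the first variation along the joint Wasserstein geodesic.

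First, construct the $\Wbar$-geodesic $(\gamma_s)_{s\in[0,1]}$ from $\gamma_0=(\rho_0,\mu_0)$ to $\gamma_1=(\rho_1,\mu_1)$ exactly as in the proof of Proposition~\ref{prop:dipl-convexity}: choose the optimal Kantorovich maps $\nabla\phi$ and $\nabla\psi$ pushing $\rho_0\to\rho_1$ and $\mu_0\to\mu_1$, and set $\rho_s=((1-s)\id+s\nabla\phi)_\#\rho_0$, $\mu_s=((1-s)\id+s\nabla\psi)_\#\mu_0$. By Proposition~\ref{prop:dipl-convexity}, $s\mapsto F_a(\gamma_s)$ is $C^2$ along this curve with $\ddss F_a(\gamma_s)\ge \lambda_a \Wbar(\gamma_0,\gamma_1)^2$. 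A standard Taylor expansion around $s=0$ together with integration of this second-derivative bound then yields
\begin{equation}\label{eq:HWI-taylor}
F_a(\gamma_1)\ge F_a(\gamma_0)+\frac{d}{ds}\bigg|_{s=0^+}F_a(\gamma_s)+\frac{\lambda_a}{2}\Wbar(\gamma_0,\gamma_1)^2.
\end{equation}

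Next, compute the right-derivative explicitly. Using the first-variation formulas for each building block of $F_a$ (entropy, interaction, external potential, and coupling), a direct calculation analogous to the one in the proof of Proposition~\ref{prop:dipl-convexity} gives
\begin{align*}
\frac{d}{ds}\bigg|_{s=0^+}F_a(\gamma_s)
&=\int \nabla_z\delta_\rho F_a[\gamma_0](z)\cdot(\nabla\phi(z)-z)\,\d\rho_0(z)\\
&\quad +\int \nabla_x\delta_\mu F_a[\gamma_0](x)\cdot(\nabla\psi(x)-x)\,\d\mu_0(x).
\end{align*}
The finiteness of this expression is guaranteed by the assumption $D_a(\gamma_0)<\infty$ together with the second moment bounds coming from $\gamma_0,\gamma_1\in\P_2\times\P_2$. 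Applying Cauchy--Schwarz to each integral separately and then the scalar inequality $ab+cd\le\sqrt{a^2+c^2}\sqrt{b^2+d^2}$ yields
\begin{align*}
\left|\frac{d}{ds}\bigg|_{s=0^+}F_a(\gamma_s)\right|
\le \sqrt{D_a(\gamma_0)}\cdot\sqrt{\Wass_2(\rho_0,\rho_1)^2+\Wass_2(\mu_0,\mu_1)^2}
=\sqrt{D_a(\gamma_0)}\,\Wbar(\gamma_0,\gamma_1),
\end{align*}
where the identities $\Wass_2(\rho_0,\rho_1)^2=\int\|\nabla\phi(z)-z\|^2\d\rho_0$ and $\Wass_2(\mu_0,\mu_1)^2=\int\|\nabla\psi(x)-x\|^2\d\mu_0$ have been used. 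Substituting this bound (with the appropriate sign) into \eqref{eq:HWI-taylor} and rearranging gives exactly \eqref{eq:HWI_ineq}.

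The main technical obstacle lies in justifying the first-variation computation rigorously when $\alpha>0$ or $\beta>0$, since the entropy term $\Hc$ is only finite on absolutely continuous measures and one needs enough regularity of $\rho_s,\mu_s$ to differentiate in $s$. The standard workaround, as in \cite{carrillo_kinetic_2003,villani-OTbook-2003}, is to approximate $\gamma_0,\gamma_1$ by smooth, compactly supported densities along a regularizing sequence (e.g.\ by convolution with a mollifier and truncation), verify the identity on the approximations using that the displacement interpolant is then a classical $C^1$ curve in $s$, and pass to the limit using lower semi-continuity from Lemma~\ref{lem:lsc} and the dissipation bound. The remaining terms (coupling $f$, interaction $W_i$, and external $V_i$) are all $C^2$ with controlled growth, so the limiting passage for those is routine.
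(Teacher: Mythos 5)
Your proposal is correct and follows essentially the same route as the paper: pass to the $\Wbar$-geodesic between $\gamma_0$ and $\gamma_1$, Taylor-expand $s\mapsto F_a(\gamma_s)$ around $s=0$ using the uniform convexity bound from Proposition~\ref{prop:dipl-convexity} for the second-order remainder, lower-bound the first derivative at $s=0$ by $-\sqrt{D_a(\gamma_0)}\,\Wbar(\gamma_0,\gamma_1)$ via Cauchy--Schwarz, and rearrange. The only cosmetic difference is that you apply Cauchy--Schwarz twice (once per species, then the scalar inequality), while the paper does a single vector-valued Cauchy--Schwarz over the product measure; both the structure of the argument and the approximation-by-smooth-densities caveat match the paper's proof.
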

\begin{proof}
    For simplicity, consider $\gamma_0,\gamma_1$ that have smooth Lebesgue densities of compact support. The general case can be recovered using approximation arguments. Let $(\gamma_s)_{s\in[0,1]}$ denote a $\Wbar$-geodesic between $\gamma_0,\gamma_1$. Following similar arguments as in \cite{carrillo_kinetic_2003} and \cite[Section 5]{otto_generalization_2000} and making use of the calculations in the proof of Proposition~\ref{prop:dipl-convexity}, we have
    \begin{align*}
    \left.\dds F_a(\gamma_s)\right|_{s=0} 
    &= \iint 
     \begin{bmatrix} \xi_1(z) \\ \xi_2(x)\end{bmatrix} \cdot
        \begin{bmatrix} (\nabla\phi(z)-z) \\ (\nabla\psi(x)-x)\end{bmatrix}\,\d\gamma_0(z,x)\,,
    \end{align*}
    where
    \begin{align*}
        \xi_1[\gamma_0](z)&:=  \int\nabla_z f(z,x)\d\mu_0(x) + \alpha \nabla_z\log\rho_0(z) + \nabla W_1 \ast \rho_0(z) + \nabla_z V_1(z)=\nabla_z \delta_\rho F_a[\gamma_0](z)\,,\\
        \xi_2[\gamma_0](x)&:= \int\nabla_x f(z,x)\d\rho_0(z) + \beta \nabla_x\log\mu_0(x) + \nabla W_2 \ast \mu_0(x) + \nabla_x V_2(x) =\nabla_x \delta_\mu F_a[\gamma_0](x)\,.
    \end{align*}
    Note that the dissipation functional can then be written as 
    $$
    D_a(\gamma_0)=\iint \left(\norm{\xi_1(z)}^2 + \norm{\xi_2(x)}^2\right)\d\gamma_0(z,x)\,.
    $$
    Using the double integral Cauchy-Schwarz inequality \cite{steele_cauchy-schwarz_2004}, we obtain
\begin{align*}
    \left.\dds F_a(\gamma_s)\right|_{s=0} 
    &\ge
 -\left(\sqrt{ \iint \norm{\begin{bmatrix}
        \xi_1 \\ \xi_2
    \end{bmatrix}}^2 \d \gamma_0}\right)\left(\sqrt{ \iint \norm{\begin{bmatrix}
        \nabla \phi(z)-z \\ \nabla\psi(x)-x 
    \end{bmatrix}}^2\d \gamma_0} \right)\\
    &= -\sqrt{D_a(\gamma_0)}\,\sqrt{\int \|\nabla\phi(z)-z\|^2 \d \rho_0 + \int \|\nabla\psi(x)-x\|^2 \d \mu_0  } \\
    &= -\sqrt{D_a(\gamma_0)} \,\Wbar(\gamma_0,\gamma_1)\,.
\end{align*}
Next, we compute a Taylor expansion of $F_a(\gamma_s)$ when considered as a function in $s$ and use the bound on $\ddss F_a$ from \eqref{eq:ddssG}:
\begin{align*}
    F_a(\gamma_1) &= F_a(\gamma_0) + \left.\dds F_a(\gamma_s)\right|_{s=0} + 
    \int_0^1 (1-t) \left.\left(\ddss F_a(\gamma_s)\right)\right|_{s=t}\,\d t\\
    &\geq F_a(\gamma_0) - \sqrt{D_a(\gamma_0)}\, \Wbar(\gamma_0,\gamma_1) + \frac{\lambda_a}{2} \Wbar(\gamma_0,\gamma_1)^2\,.
\end{align*}
\end{proof}

\begin{remark}
    The HWI inequality in Proposition~\ref{prop:HWI} immediately implies uniqueness of minimizers for $F_a$ in the set $\left\{\gamma\in\P\times\P\,:\, D_a(\gamma)<+\infty\right\}$. Indeed, if $\gamma_0$ is such that $D_a(\gamma_0)=0$, then for any other minimizer $\gamma_1$ in the above set we have $F_a(\gamma_0)\le F_a(\gamma_1)$ with equality if and only if $\Wbar(\gamma_0,\gamma_1)=0$.
\end{remark}

\begin{corollary}[Generalized Log-Sobolev inequality]\label{cor:logSob}
    Denote by $\gamma_*$ the unique minimizer of $F_a$. With Assumptions~\ref{assump:f_lower}(i), \ref{assump:V_lower} and \ref{assump:W_lower}, any $\gamma\in\P_2\times\P_2$ such that $F_a(\gamma), D_a(\gamma)<\infty$ satisfies
    \begin{equation}\label{eq:logSob}
        D_a(\gamma)\ge 2\lambda_a \,F_a(\gamma\,|\,\gamma_*)\,.
    \end{equation}
\end{corollary}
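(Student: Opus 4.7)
The plan is to deduce the generalized logarithmic Sobolev inequality directly from the HWI inequality of Proposition~\ref{prop:HWI} by making the canonical choice of second argument and then optimising the resulting elementary quadratic. This is the standard route from HWI to log-Sobolev, adapted here to the joint Wasserstein metric $\Wbar$ on $\P_2(\R^{d_1})\times \P_2(\R^{d_2})$.

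\textbf{Step 1: verify applicability of HWI with $\gamma_1=\gamma_*$.} The unique minimizer $\gamma_*$ produced in Proposition~\ref{prop:existenceGa} lies in $\P_2\times \P_2$ and satisfies $F_a(\gamma_*)<\infty$ (by minimality, together with $F_a\ge 0$ from Lemma~\ref{lem:Fa_non_negative} and the existence of a finite-energy competitor). Combined with the hypotheses $F_a(\gamma)<\infty$ and $D_a(\gamma)<\infty$, all the integrability assumptions of Proposition~\ref{prop:HWI} are met.

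\textbf{Step 2: apply HWI and optimise in the metric.} Setting $\gamma_0=\gamma$ and $\gamma_1=\gamma_*$ in \eqref{eq:HWI_ineq} and writing $w:=\Wbar(\gamma,\gamma_*)\ge 0$ yields
\begin{equation*}
F_a(\gamma\,|\,\gamma_*) \;\le\; w\sqrt{D_a(\gamma)} \;-\; \frac{\lambda_a}{2}\,w^2\,.
\end{equation*}
The right-hand side is a concave quadratic in $w$. Completing the square gives
\begin{equation*}
w\sqrt{D_a(\gamma)} \;-\; \frac{\lambda_a}{2}\,w^2 \;=\; \frac{D_a(\gamma)}{2\lambda_a} \;-\; \frac{\lambda_a}{2}\!\left(w-\frac{\sqrt{D_a(\gamma)}}{\lambda_a}\right)^{\!2} \;\le\; \frac{D_a(\gamma)}{2\lambda_a}\,,
\end{equation*}
which is valid because $\lambda_a>0$ by hypothesis. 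Rearranging yields $2\lambda_a F_a(\gamma\,|\,\gamma_*)\le D_a(\gamma)$, which is exactly \eqref{eq:logSob}.

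\textbf{Main obstacle and comments.} There is essentially no analytic obstacle once Proposition~\ref{prop:HWI} is in hand; the only thing to check carefully is that the hypotheses under which HWI was proven really do transfer to the pair $(\gamma,\gamma_*)$. In particular, one wants $\gamma_*\in\P_2\times\P_2$ with $F_a(\gamma_*)<\infty$, which is supplied by Proposition~\ref{prop:existenceGa} and Lemma~\ref{lem:Fa_non_negative}. No assumption on $D_a(\gamma_*)$ is required, since HWI only demands finiteness of the dissipation at the base point $\gamma_0=\gamma$. Note that the bound is tight at $w=\sqrt{D_a(\gamma)}/\lambda_a$, which is consistent with the sharp quadratic structure of the HWI inequality in the $\lambda_a$-displacement convex setting.
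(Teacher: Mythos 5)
Your proof is correct and follows essentially the same route as the paper: both set $\gamma_0=\gamma$, $\gamma_1=\gamma_*$ in the HWI inequality of Proposition~\ref{prop:HWI} and then maximise the resulting concave quadratic in $w=\Wbar(\gamma,\gamma_*)$ to obtain the bound $D_a(\gamma)/(2\lambda_a)$. Your Step 1, which explicitly checks that $\gamma_*\in\P_2\times\P_2$ and $F_a(\gamma_*)<\infty$, is a useful sanity check the paper leaves implicit, but it is the same argument.
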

\begin{proof}
    This statement follows immediately from Proposition~\ref{prop:HWI}. Indeed, let $\gamma_1=\gamma_*$ and $\gamma_0=\gamma$ in \eqref{eq:HWI_ineq}. Then
    \begin{align*}
        F_a(\gamma\,|\,\gamma_*)&\le \Wbar(\gamma,\gamma_*)\sqrt{D_a(\gamma)} - \frac{\lambda_a}{2} \,\Wbar(\gamma,\gamma_*)^2\\
        &\le \max_{t\ge 0} \left(\sqrt{D_a(\gamma)} t - \frac{\lambda_a}{2} \,t^2\right) = \frac{D_a(\gamma)}{2\lambda_a}\,.
    \end{align*}
\end{proof}

\begin{corollary}[Talagrand inequality]\label{cor:Talagrand} 
Denote by $\gamma_*$ the unique minimizer of $F_a$. With Assumptions~\ref{assump:f_lower}(i), \ref{assump:V_lower} and \ref{assump:W_lower} and $\lambda_a > 0$, it holds
        $$\Wbar(\gamma,\gamma_*)^2 \leq \frac{2}{\lambda_a} F_a(\gamma\,|\,\gamma_*)$$
for any $\gamma\in\P_2\times\P_2$ such that $F_a(\gamma)<\infty$.
\end{corollary}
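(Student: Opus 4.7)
The plan is to derive the Talagrand inequality directly from the HWI inequality of Proposition~\ref{prop:HWI} by choosing the second argument to be the ground state $\gamma_*$. The key observation is that since $\gamma_*$ is a minimizer of $F_a$ which, by Corollary~\ref{cor:ground-states-supp-Ga}, is also a steady state for \eqref{eq:dynamics_aligned}, both Wasserstein gradients $\nabla_z \delta_\rho F_a[\gamma_*]$ and $\nabla_x \delta_\mu F_a[\gamma_*]$ vanish $\gamma_*$-almost everywhere on the respective supports, so that the dissipation $D_a(\gamma_*) = 0$.

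First, I would verify that the HWI inequality can indeed be applied with $\gamma_0 = \gamma_*$ and $\gamma_1 = \gamma$. This requires $F_a(\gamma_*), F_a(\gamma), D_a(\gamma_*) < \infty$. Finiteness of $F_a(\gamma_*)$ follows from $\gamma_*$ being a minimizer of a nonnegative functional that is not identically $+\infty$ (Proposition~\ref{prop:existenceGa} and Lemma~\ref{lem:Fa_non_negative}), $F_a(\gamma) < \infty$ is assumed, and $D_a(\gamma_*) = 0$ is the observation above.

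Second, applying \eqref{eq:HWI_ineq} with $\gamma_0 = \gamma_*$ and $\gamma_1 = \gamma$ and using $\sqrt{D_a(\gamma_*)} = 0$ yields
\begin{equation*}
    F_a(\gamma_*) - F_a(\gamma) \;\le\; -\frac{\lambda_a}{2}\,\Wbar(\gamma_*,\gamma)^2\,,
\end{equation*}
which rearranges to
\begin{equation*}
    \frac{\lambda_a}{2}\,\Wbar(\gamma,\gamma_*)^2 \;\le\; F_a(\gamma) - F_a(\gamma_*) \;=\; F_a(\gamma\,|\,\gamma_*)\,,
\end{equation*}
and dividing by $\lambda_a/2 > 0$ gives the claim.

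The only nontrivial point is the justification that $D_a(\gamma_*) = 0$. I would extract this from Corollary~\ref{cor:ground-states-supp-Ga}: there we already established that $\nabla_z \delta_\rho F_a[\gamma_*](z) = 0$ on $\supp(\rho_*)$ and $\nabla_x \delta_\mu F_a[\gamma_*](x) = 0$ on $\supp(\mu_*)$ in the sense of distributions. Since the integrand in $D_a$ is supported where $\rho_*$ and $\mu_*$ live, this forces $D_a(\gamma_*) = 0$. There is no real obstacle beyond this short verification; the heavy lifting was done in proving Proposition~\ref{prop:HWI}.
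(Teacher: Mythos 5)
Your proof is correct and follows the same route as the paper: apply the HWI inequality of Proposition~\ref{prop:HWI} with $\gamma_0=\gamma_*$, $\gamma_1=\gamma$, use $D_a(\gamma_*)=0$, and rearrange. You spell out the verification that $D_a(\gamma_*)=0$ and that the hypotheses of the HWI inequality hold in slightly more detail than the paper does, but the argument is the same.
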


\begin{proof}
    This is also a direct consequence of Proposition~\ref{prop:HWI} by setting $\gamma_0=\gamma_*$ and $\gamma_1=\gamma$. Then $F_a(\gamma_*)<\infty$ and $D_a(\gamma_*)=0$, and the result follows.
\end{proof}

\begin{proof}[Proof of Theorem~\ref{thm:PDE_aligned}]
The entropy terms $\Hc(\rho)$ and $\Hc(\mu)$ produce diffusion in $\rho$ and $\mu$ for the corresponding PDEs in \eqref{eq:dynamics_aligned}. As a consequence, if $\alpha>0$ (resp. $\beta>0$) a solution $\rho_t$ (resp. $\mu_t$) to \eqref{eq:dynamics_aligned} and minimizer $\rho^*$ (resp. $\mu^*$) for $F_a$ has to be an $L^1$ function.
Result (a) corresponds to the statements in Proposition~\ref{prop:existenceGa} and Corollary~\ref{cor:ground-states-supp-Ga}. To obtain (b), we differentiate the energy $F_a$ along solutions $\gamma_t$ to the equation \eqref{eq:dynamics_aligned}:
\begin{align*}
    \frac{\d}{\d t} F_a(\gamma_t)
    &= \int \delta_\rho F_a [\gamma_t](z)\partial_t\rho_t\d z
    + \int \delta_\mu F_a [\gamma_t](x)\partial_t\mu_t\d x\\
    &= -\int \left\|\nabla_z\delta_\rho F_a [\gamma_t](z)\right\|^2\d\rho_t(z)
    -\int \left\|\nabla_x\delta_\mu F_a [\gamma_t](x)\right\|^2\d\mu_t(x)\\
    &= -D_a(\gamma_t) 
    \le -2\lambda_a F_a(\gamma_t\,|\,\gamma_*)\,,
\end{align*}
where the last bound follows from Corollary~\ref{cor:logSob}. Applying Gr\"onwall's inequality, we immediately obtain decay in energy,
\begin{equation*}
    F_a(\gamma_t\,|\,\gamma_*)\le e^{-2\lambda_a t} F_a(\gamma_0\,|\,\gamma_*)\,.
\end{equation*}
Finally, applying Talagrand's inequality (Corollary~\ref{cor:Talagrand}), the decay in energy implies decay in the product Wasserstein metric,
\begin{equation*}
    \Wbar(\gamma_t,\gamma_*) \le c e^{-\lambda_a t}\,,
\end{equation*}
where $c>0$ is a constant only depending on $\gamma_0$, $\gamma_*$ and the parameter $\lambda_a$. It follows immediately that $\gamma_*$ is a steady state; see Corollary~\ref{cor:ground-states-supp-Ga}.
\end{proof}

\section{Competitive Setting (Proof of Theorem~\ref{thm:convergence_Wbar_competitive})}\label{sec:competitive_proof}
In the competitive setting, the dynamics are parameterized by the energy functional
\begin{align*}
    F_c(\rho,\mu) = \iint f(z,x) \d \rho(z) \d \mu(x) - \cR(\rho)+\U(\mu)
\end{align*}
where $\cR(\rho) =  \alpha \Hc(\rho) + \frac{1}{2} \int (W_1 \ast \rho)(z)\,\d\rho(z) + \int V_1(z) \d \rho(z) $ and $
    \U(\mu) = \beta \Hc(\mu) + \frac{1}{2} \int (W_2 \ast \mu)(x)\,\d\mu(x) + \int V_2(x) \d \mu(x)$. For all results in this section, let Assumptions~\ref{assump:f_lower}(ii), \ref{assump:V_lower}, and \ref{assump:W_lower} hold. We define two convexity coefficients, 
    $$\lambda_{c,1}\coloneqq \lambda_{f,1}+\lambda_{V,1}\,,\qquad \lambda_{c,2}\coloneqq \lambda_{f,2}+\lambda_{V,2}$$ 
    where $\lambda_{c,1}$ is the displacement concavity coefficient of $F_c$ with respect to $\rho$, and $\lambda_{c,2}$ is the displacement convexity coefficient of $F_c$ with respect to $\mu$. We assume that $\lambda_{c,1},\lambda_{c,2}>0$ in order to obtain convergence. The rate of convergence depends on the species with weaker convexity; the convergence rate is given by
    \begin{align*}
        \lambda_c = \min\{\lambda_{c,1},\lambda_{c,2}\}>0\,.
    \end{align*}
\begin{lemma}[Concavity-Convexity of $F_c$]\label{lem:Gc_convex_concave} 
    The functional $F_c$ is uniformly displacement $\lambda_{c,2}$-convex in $\mu$ for any fixed $\rho\in\P_2$ and uniformly displacement $\lambda_{c,1}$-concave in $\rho$ for any fixed $\mu\in\P_2$. 
\end{lemma}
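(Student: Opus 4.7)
The plan is to rerun the one-variable analogue of the second-derivative calculation already carried out in Proposition~\ref{prop:dipl-convexity}, keeping careful track of signs because $\cR$ enters $F_c$ with a minus sign. Both statements follow by freezing one argument and evaluating $\tfrac{\d^2}{\d s^2} F_c$ along a Wasserstein-2 geodesic in the other. As in Proposition~\ref{prop:dipl-convexity}, it is enough to verify the second-derivative inequalities for absolutely continuous $\rho_0,\rho_1$ (resp.\ $\mu_0,\mu_1$) with compact support; the general case follows by the standard approximation arguments used there.

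\textbf{Convexity in $\mu$ for fixed $\rho$.} Pick $\mu_0,\mu_1\in\P_2$ with optimal Brenier potential $\psi$ and form the geodesic $\mu_s=[(1-s)\id+s\nabla\psi]_{\#}\mu_0$. The only terms of $F_c(\rho,\mu)$ that depend on $\mu$ are the coupling $\iint f(z,x)\d\rho(z)\d\mu(x)$ and $\U(\mu)$; the term $-\cR(\rho)$ is constant along this path. Differentiating twice exactly as in Proposition~\ref{prop:dipl-convexity}, using $\nabla_x^2 f(z,x)\succeq \lambda_{f,2}\Id_{d_2}$ (Assumption~\ref{assump:f_lower}(ii)) and $\nabla^2 V_2\succeq\lambda_{V,2}\Id_{d_2}$ (Assumption~\ref{assump:V_lower}), together with $\tfrac{\d^2}{\d s^2}\Hc(\mu_s)\ge 0$ and the $\lambda_{W,2}\ge 0$ convexity of the interaction term $\tfrac12\int W_2\ast\mu_s\,\d\mu_s$ from \eqref{eq:classic_convexity}, we obtain
\begin{equation*}
\tfrac{\d^2}{\d s^2} F_c(\rho,\mu_s)\ \ge\ (\lambda_{f,2}+\lambda_{V,2}+\lambda_{W,2})\,\Wass_2(\mu_0,\mu_1)^2\ \ge\ \lambda_{c,2}\,\Wass_2(\mu_0,\mu_1)^2\,,
\end{equation*}
which upon integrating twice in $s$ yields the uniform displacement $\lambda_{c,2}$-convexity.

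\textbf{Concavity in $\rho$ for fixed $\mu$.} With Brenier potential $\phi$ and geodesic $\rho_s=[(1-s)\id+s\nabla\phi]_{\#}\rho_0$, the $\rho$-dependent part of $F_c$ is $\iint f\,\d\rho\,\d\mu-\cR(\rho)$. Now Assumption~\ref{assump:f_lower}(ii) gives $\nabla_z^2 f(z,x)\preceq -\lambda_{f,1}\Id_{d_1}$, so the same second-derivative computation for the coupling produces $-\lambda_{f,1}\Wass_2(\rho_0,\rho_1)^2$ as an upper bound. The potential contribution $-\int V_1\,\d\rho_s$ is bounded above by $-\lambda_{V,1}\Wass_2(\rho_0,\rho_1)^2$ via Assumption~\ref{assump:V_lower}, and the two remaining terms $-\alpha\Hc(\rho_s)$ and $-\tfrac12\int W_1\ast\rho_s\,\d\rho_s$ have nonpositive second derivatives by \eqref{eq:classic_convexity} (in fact contributing an extra $-\lambda_{W,1}\Wass_2^2$, which only helps). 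Summing,
\begin{equation*}
\tfrac{\d^2}{\d s^2} F_c(\rho_s,\mu)\ \le\ -(\lambda_{f,1}+\lambda_{V,1})\,\Wass_2(\rho_0,\rho_1)^2\ =\ -\lambda_{c,1}\,\Wass_2(\rho_0,\rho_1)^2\,,
\end{equation*}
which is equivalent to uniform displacement $\lambda_{c,1}$-concavity in $\rho$.

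\textbf{Comment on difficulty.} There is no real obstacle: the computation is mechanical and reuses machinery already established in Proposition~\ref{prop:dipl-convexity}. The only point that requires attention is tracking the sign flip on $\cR$, so that the entropy and interaction terms which helped convexity in the cooperative setting now help \emph{concavity} in $\rho$ here; and the observation that $\lambda_{W,i}\ge 0$ plus displacement convexity of $\Hc$ means we always collect at least $\lambda_{c,1}$ and $\lambda_{c,2}$, with the contributions of $W_i$ and entropy going in the favorable direction on both sides.
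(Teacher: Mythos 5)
Your proposal is correct and takes essentially the same route as the paper, which simply points back to the second-derivative computation in Proposition~\ref{prop:dipl-convexity} without writing out the details. One small imprecision: in your intermediate bound you credit the interaction term with $\lambda_{W,i}\,\Wass_2^2$, but \eqref{eq:classic_convexity} only gives $\ddss\W(\gamma_s)\ge 0$; the stronger contribution requires a fixed center of mass (as the paper notes in the remark following Proposition~\ref{prop:dipl-convexity}). Since you immediately drop $\lambda_{W,i}\ge 0$ to land on $\lambda_{c,i}\,\Wass_2^2$, the final conclusion is unaffected.
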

\begin{proof}
    For a fixed $\mu$, the $\lambda_{c,1}$ concavity of the functional $F_c(\rho,\mu)$ can be computed as in Proposition \ref{prop:dipl-convexity}.
    For a fixed $\rho$, the $\lambda_{c,2}$ convexity of the functional $F_c(\rho,\mu)$ can be computed as in Proposition~\ref{prop:dipl-convexity}.
\end{proof}

To show contraction, we  apply \cite[Theorem 23.9]{Villani07}, which provides an expression for the time derivative of $\Wbar(\gamma_t,\gamma_t')^2$. This theorem requires that the velocities of the trajectories are in $L^2$, which we show in the following lemma using the dissipation functional $D_c: \tilde \P \times \tilde \P \to \R \cup \{+\infty\}$,
\begin{align*}
    D_c(\gamma):= \iint \norm{\begin{bmatrix}
        \nabla_z \delta_\rho F_c[\gamma](z) \\ \nabla_x \delta_\mu F_c[\gamma](x)
    \end{bmatrix}}^2\d\gamma(z,x)\,.
\end{align*}
\begin{lemma}\label{lem:velocities_in_L2}
    Let $\gamma_t$ be a solution of the dynamics \eqref{eq:dynamics_competitive}, with initial condition $\gamma_0\in\P_2(\R^{d_1})\times\P_2(\R^{d_2})$ such that $D_c(\gamma_0)<\infty$. 
    Then
    \begin{align*}
        D_c(\gamma_t)\le e^{-2\lambda_c t} D_c(\gamma_0)\quad \forall\ t\ge 0\,.
    \end{align*}
\end{lemma}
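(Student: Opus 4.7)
The plan is to prove the differential inequality $\frac{d}{dt}D_c(\gamma_t)\le -2\lambda_c D_c(\gamma_t)$ by a direct Bakry-Emery-type computation along the flow, and then conclude by Grönwall's inequality. Writing $\xi=\nabla_z\phi$ and $\eta=\nabla_x\psi$ with $\phi=\delta_\rho F_c[\gamma_t]$ and $\psi=\delta_\mu F_c[\gamma_t]$, we have $D_c(\gamma_t)=\int|\xi|^2\,\d\rho+\int|\eta|^2\,\d\mu$. Using the continuity forms $\partial_t\rho=-\div{\rho\xi}$ and $\partial_t\mu=\div{\mu\eta}$ together with two integrations by parts (exploiting $\nabla|\xi|^2=2\nabla^2\phi\,\xi$ since $\xi$ is a gradient), a direct calculation gives
\begin{equation*}
\frac{d}{dt}\!\int\!|\xi|^2\,\d\rho = 2\!\int\!\partial_t\phi\,\partial_t\rho\,\d z + 2\!\int\!\xi^\top\nabla^2\phi\,\xi\,\d\rho,
\end{equation*}
together with the analogous identity for the $\mu$-part, both terms of which carry an overall sign flip reflecting that $\mu$ is transported with velocity $-\eta$.

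The central structural observation is that the coupling cross-contributions cancel by the symmetry of mixed second derivatives. The coupling piece of $\partial_t\phi$ equals $-\!\int\!\nabla_x f\cdot\eta\,\d\mu$ (coming from $\partial_t\mu=\div{\mu\eta}$ and an integration by parts in $x$); pairing it with $\partial_t\rho=-\div{\rho\xi}$ and integrating in $z$ yields the cross-term $-2\iint\xi^\top\nabla^2_{zx}f\,\eta\,\d\rho\,\d\mu$. The mirror computation on the $\mu$-side yields $+2\iint\xi^\top\nabla^2_{zx}f\,\eta\,\d\rho\,\d\mu$ after using $\nabla^2_{xz}f=(\nabla^2_{zx}f)^\top$, so the two cancel exactly. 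This is the infinite-dimensional analog of the standard cancellation of mixed Hessian terms when differentiating $|\nabla_x F|^2+|\nabla_y F|^2$ along gradient descent-ascent on a finite-dimensional concave-convex function. What remains in $\dot D_c$ are self-contributions involving only $\int\nabla^2_z f\,\d\mu$, $\nabla^2 V_1$, and $\nabla^2 W_1\ast\rho$ (plus the $\mu$-analogues), together with Fisher-information-type pieces produced by the $-\alpha\nabla\log\rho$ and $\beta\nabla\log\mu$ terms. Following the classical Bakry-Emery argument (see \cite{carrillo_kinetic_2003, otto_generalization_2000}), a further round of integration by parts reorganises the Fisher pieces into the non-negative Hessian-squared quantities $\int|\nabla^2\phi|^2\,\d\rho$ and $\int|\nabla^2\psi|^2\,\d\mu$, which may be discarded. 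The surviving curvature terms are bounded under Assumptions~\ref{assump:f_lower}(ii), \ref{assump:V_lower}, \ref{assump:W_lower} by $-2\lambda_{c,1}\!\int|\xi|^2\,\d\rho-2\lambda_{c,2}\!\int|\eta|^2\,\d\mu\le -2\lambda_c D_c(\gamma_t)$, where the $W$-contributions can be dropped since $\lambda_{W,i}\ge 0$ only strengthens the inequality and does not enter $\lambda_c$.

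The main obstacle is making the integration-by-parts chain rigorous. In the non-degenerate case $\alpha,\beta>0$, this requires enough spatial regularity of the densities; one works with a smooth approximating sequence of solutions (cf.\ the Cauchy-problem remark in Section~\ref{sec:results}) and passes to the limit in the final inequality, as in \cite{carrillo_kinetic_2003, otto_generalization_2000}. In the degenerate case $\alpha=0$ (resp.\ $\beta=0$), the hypothesis $\rho_0=\delta_{z_0}$ forces $\rho_t=\delta_{z(t)}$ for a curve $z(t)$ solving an ODE driven by $\xi(z(t),t)$, so $\int|\xi|^2\,\d\rho$ reduces to a single pointwise evaluation, and the argument collapses to the explicit finite-dimensional concave-convex Hessian-cancellation computation sketched above.
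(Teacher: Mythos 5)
Your proposal is correct and follows essentially the same route as the paper: differentiate $D_c(\gamma_t)$ along the flow, observe that the coupling cross-terms involving $\nabla^2_{zx}f$ cancel owing to the zero-sum (ascent--descent) structure, bound the surviving curvature contributions using the displacement concavity/convexity of $F_c$ in each variable, and close with Gr\"onwall. The paper packages the curvature bound by recognising (its Eq.~\eqref{eq:coupling-cancels}, which is exactly your mixed-Hessian cancellation) that $\frac{\d}{\d t}D_c$ equals the sum of two \emph{frozen-variable} second time-derivatives $\frac{\d^2}{\d t^2}F_c[\rho_t,\mu_\tau]\big|_{\tau=t}-\frac{\d^2}{\d\tau^2}F_c[\rho_t,\mu_\tau]\big|_{\tau=t}$ and then invoking the single-species Bakry--\'Emery inequality implied by Lemma~\ref{lem:Gc_convex_concave}, rather than carrying out the explicit $\Gamma_2$-type reorganisation of the diffusion and interaction pieces that you sketch---the same computation in a slightly cleaner wrapper.
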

\begin{proof}
Denote
\begin{align*}
    h(t)&:=\int\norm{\nabla_z \delta_\rho F_c[\rho_t,\mu_t](z)}^2 \d \rho_t(z) + \int \norm{\nabla_x \delta_\mu F_c[\rho_t,\mu_t](x) }^2 \d \mu_t(x) = D_c(\gamma_t) \,.
\end{align*}
By direct differentiation, we can write $h(t)$ as the difference of two dissipations,
\begin{align*}
    h(t)&= \left[ \frac{\d }{\d t} F_c[\rho_t,\mu_\tau] - \frac{\d}{\d\tau} F_c[\rho_\tau,\mu_t] \right]\bigg|_{\tau = t}\,.
\end{align*}
We  use that the coupling term cancels when differentiating $h(t)$ so that
\begin{align}\label{eq:coupling-cancels}
    \frac{\d}{\d t} h(t) = \left[ \frac{\d^2}{\d t^2} F_c[\rho_t,\mu_\tau]\right]\bigg|_{\tau=t} - \left[\frac{\d^2}{\d\tau^2} F_c[\rho_t,\mu_\tau]\right] \bigg|_{\tau=t} \,.
\end{align}
To show \eqref{eq:coupling-cancels}, recall from the definition of the energy that
    $$-\nabla_z \delta_\rho F_c[\rho_t,\mu_t](z) = \nabla_z \left(-\int f(z,x)\mu_t(x)\d x + \alpha\log\rho_t(z) + V_1(z) + W_1 \ast \rho_t(z)\right).$$
    Computing the time derivative of the weighted $L^2$ norm of this velocity gives
    \begin{align*}
        &\ddt \int \norm{\nabla_z\delta_\rho F_c[\rho_t,\mu_t](z)}^2 \d \rho_t(z) =\int \norm{\nabla_z \delta_\rho F_c[\rho_t,\mu_t](z)}^2 \partial_t \rho_t \d z 
        \\
        &\qquad -2\iint \< \nabla_z \delta_\rho F_c[\rho_t,\mu_t](z),\nabla W_1(z-z')\partial_t \rho_t(z') + \alpha \nabla(\partial_t \rho_t/\rho_t) > \d \rho_t(z) \d z' \\
        &\qquad + 2\iint \< \nabla_z \delta_\rho F_c[\rho_t,\mu_t](z),\nabla_z f(z,x) \partial_t \mu_t(x)> \d \rho_t(z) \d x \,. 
    \end{align*}
    For the diffusion term,
    \begin{align*}
        -2\alpha\int \< \nabla_z \delta_\rho F_c[\rho_t,\mu_t](z),\alpha \nabla (\partial_t \rho_t / \rho_t)>\d \rho_t(z) &= 2\alpha\int \div{\rho_t \nabla_z \delta_\rho F_c[\rho_t,\mu_t](z)} \partial_t \rho(z)  \\
        &= -2 \alpha \int \left| \div{\rho_t \nabla_z \delta_\rho F_c[\rho_t,\mu_t](z)} \right|^2 \d z\,.
    \end{align*}
    Considering each of the three remaining terms individually, we start with
    \begin{align*}
        &\int \norm{\nabla_z \delta_\rho F_c[\rho_t,\mu_t](z)}^2 \partial_t \rho_t \d z  = \int \nabla_z \norm{\nabla_z \delta_\rho F_c[\rho_t,\mu_t](z)}^2 \nabla_z \delta_\rho F_c[\rho_t,\mu_t](z) \d \rho_t(z) \\
        &= 2\int\<\nabla_z \delta_\rho F_c[\rho_t,\mu_t](z),\nabla^2_z \delta_\rho F_c[\rho_t,\mu_t](z) \cdot \nabla_z \delta_\rho F_c[\rho_t,\mu_t](z)> \d \rho_t(z)\,.
    \end{align*}
    For the second term, we obtain
    \begin{align*}
        &-2\iint \< \nabla_z \delta_\rho F_c[\rho_t,\mu_t](z),\nabla W_1(z-z')  \partial_t \rho_t(z') > \d \rho_t(z) \d z' \\
        &= -2\iint \big \langle \nabla_z \delta_\rho F_c[\rho_t,\mu_t](z),\nabla^2_{zz'} W_1(z-z') \cdot \nabla_z \delta_\rho F_c[\rho_t,\mu_t](z') \big\rangle \d \rho_t(z) \d \rho_t(z') \, .
    \end{align*}
    The third term is 
    \begin{align*}
        &\iint \< \nabla_z \delta_\rho F_c[\rho_t,\mu_t](z),\nabla_z f(z,x) \partial_t \mu_t(x)> \d \rho_t(z) \d x \\
        &\quad = -\iint \<\nabla_z \delta_\rho F_c[\rho_t,\mu_t](z),\nabla_{zx}^2 f(z,x) \cdot \nabla_x \delta_\mu F_c[\rho_t,\mu_t](x)> \d \rho_t(z) \d \mu_t(x) \,.
    \end{align*}
    Likewise, we compute the time derivative of the term $\int \norm{\nabla_x \delta_\mu F_c[\rho_t,\mu_t](x)}^2 \d \mu_t(x)$ which is nearly identical to that of the $\rho_t$ velocity term, where the diffusion term again can be bounded above by zero. Further, note that the expression for the coupling term is exactly the same as for $\rho_t$, just with the opposite sign. Due to the zero-sum structure, when both velocities are summed, this term cancels. Thus, we obtain
    \begin{align}
        &\ddt h(t)
        = 2 \int\<\nabla_z \delta_\rho F_c[\rho_t,\mu_t](z),\nabla^2_z \delta_\rho F_c[\rho_t,\mu_t](z) \cdot \nabla_z \delta_\rho F_c[\rho_t,\mu_t](z)> \d \rho_t(z) \notag\\
        &\quad -2 \int\< \nabla_x \delta_\mu F_c[\rho_t,\mu_t](x), \nabla_x^2 \delta_\mu F_c[\rho_t,\mu_t](x) \cdot  \nabla_x \delta_\mu F_c[\rho_t,\mu_t](x)> \d \mu_t(x) \notag\\
        &\quad -2\iint \< \nabla_z \delta_\rho F_c[\rho_t,\mu_t](z),\nabla^2_{zz'} W_1(z-z') \cdot \nabla_z \delta_\rho F_c[\rho_t,\mu_t](z') > \d \rho_t(z) \d \rho_t(z') \notag\\
        & \quad -2\iint \big\langle \nabla_x \delta_\mu F_c[\rho_t,\mu_t](x),\nabla^2_{xx'} W_2(x-x')  \cdot \nabla_x \delta_\mu F_c[\rho_t,\mu_t](x') \big\rangle \d \mu_t(x) \d \mu_t(x') \notag \\
        & \quad -2 \alpha \int \left| \div{\rho_t \nabla_z \delta_\rho F_c[\rho_t,\mu_t](z)} \right|^2 \d z -2 \beta \int \left| \div{\mu_t \nabla_x \delta_\mu F_c[\rho_t,\mu_t](x)} \right|^2 \d x\,. \label{eq:h-dot}
    \end{align}
    This expression is equivalent to 
        \begin{align*}%
 \left[\ddt \int \norm{\nabla_z\delta_\rho F_c[\rho_t,\mu_\tau](z)}^2 \d \rho_t(z) + \frac{\d}{\d\tau}\int \norm{\nabla_x\delta_\mu F_c[\rho_t,\mu_\tau](x)}^2 \d \mu_\tau(x) \right]\bigg|_{\tau=t}\,,
    \end{align*}
    which proves \eqref{eq:coupling-cancels}.
    
    By Lemma~\ref{lem:Gc_convex_concave}, we have $F_c[\rho,\mu]$ with fixed $\mu$ is $\lambda_{c,1}$-concave in $\rho$ and $F_c[\rho,\mu]$ with fixed $\rho$ is $\lambda_{c,2}$-displacement convex in $\mu$. Therefore, we have for all $t,\tau\ge 0$,
    \begin{align*}
        \frac{\d^2}{\d t^2 } F_c[\rho_t,\mu_\tau] \le -2\lambda_{c,1} \ddt F_c[\rho_t,\mu_\tau] \,, \qquad \frac{\d^2}{\d \tau^2} F_c[\rho_t,\mu_\tau] \ge -2\lambda_{c,2} \frac{\d }{\d \tau} F_c[\rho_t,\mu_\tau]\,.
    \end{align*}
    This allows us to use a Bakry-Emry type approach for deriving a decay estimate for $h(t)$. In particular, using \eqref{eq:coupling-cancels}, the fact that $\ddt F_c[\rho_t,\mu_\tau]\ge 0$, $\frac{\d }{\d \tau} F_c[\rho_t,\mu_\tau]\le 0$ and $\lambda_c:=\min\{\lambda_{c,1},\lambda_{c,2}\}$ we have
    \begin{align*}
    \frac{\d}{\d t} h(t) &= \left[ \frac{\d^2}{\d t^2} F_c[\rho_t,\mu_\tau]\right]\bigg|_{\tau=t} - \left[\frac{\d^2}{\d\tau^2} F_c[\rho_t,\mu_\tau]\right] \bigg|_{\tau=t} \\
    &\le \left[ - 2\lambda_{c,1}\frac{\d }{\d t} F_c[\rho_t,\mu_\tau] + 2\lambda_{c,2}\frac{\d}{\d\tau} F_c[\rho_t,\mu_\tau] \right]\bigg|_{\tau = t} \le -2\lambda_c h(t)\,.
    \end{align*}
    We conclude using Gr\"onwall's estimate. 
\end{proof}
\begin{proposition}[Contraction]\label{prop:contraction}
    Fix $T>0$.
    Let $\gamma_t$ and $\gamma_t'$ be any two solutions of the dynamics \eqref{eq:dynamics_competitive}, with initial conditions $\gamma_0,\gamma_0'\in\P_2^{ac}(\R^{d_1})\times\P_2^{ac}(\R^{d_2})$ such that $D_c(\gamma_0)<\infty$ and $D_c(\gamma_0')<\infty$. Assume $\gamma_t,\gamma_t'\in\P_2^{ac}(\R^{d_1})\times\P_2^{ac}(\R^{d_2})$ for all $t\in [0,T)$ and  $\nabla_z\delta_\rho F_c[\gamma_t](z), \nabla_z\delta_\rho F_c[\gamma_t'](z),\nabla_x\delta_\mu F_c[\gamma_t](x),\nabla_x\delta_\mu F_c[\gamma_t'](x)  $ are locally Lipschitz in $z,x$ for all $t\in [0,T)$.
    Then $\gamma_t$ and $\gamma_t'$ satisfy
    \begin{align*}
        \Wbar(\gamma_t,\gamma_t') \le e^{-\lambda_c t} \Wbar(\gamma_0,\gamma_0') \quad \text{ for all } t\in [0,T)\,.
    \end{align*}
\end{proposition}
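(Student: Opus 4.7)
The plan is to differentiate $\tfrac{1}{2}\Wbar(\gamma_t,\gamma_t')^2$ using the derivative formula \cite[Theorem 23.9]{Villani07}, break the resulting expression into its coupling, entropy, self-interaction, and external-potential pieces, and show that the sum is bounded above by $-\lambda_c\Wbar(\gamma_t,\gamma_t')^2$. The local Lipschitz hypothesis and Lemma~\ref{lem:velocities_in_L2}, applied to both solutions, give the regularity and the uniform $L^2$ bounds on the velocity fields required by that formula. Writing $\pi_t^\rho$ and $\pi_t^\mu$ for the optimal plans between $(\rho_t,\rho_t')$ and $(\mu_t,\mu_t')$ respectively, and noting that $\rho$ ascends while $\mu$ descends $F_c$, one obtains
\begin{align*}
\ddt\tfrac{1}{2}\Wbar(\gamma_t,\gamma_t')^2
&= \int \< \nabla_z\delta_\rho F_c[\gamma_t](z) - \nabla_{z'}\delta_\rho F_c[\gamma_t'](z'),\,z-z'>\d\pi_t^\rho \\
&\quad - \int \<\nabla_x\delta_\mu F_c[\gamma_t](x) - \nabla_{x'}\delta_\mu F_c[\gamma_t'](x'),\,x-x'>\d\pi_t^\mu.
\end{align*}
Inserting the explicit first variations, the entropy pieces (coefficients $-\alpha$ and $-\beta$) are nonpositive by the standard displacement convexity of $\Hc$ along optimal plans; the self-interaction pieces are nonpositive since $W_1,W_2$ are convex by Assumption~\ref{assump:W_lower}; and the external-potential pieces are at most $-\lambda_{V,1}\Wass_2(\rho_t,\rho_t')^2$ and $-\lambda_{V,2}\Wass_2(\mu_t,\mu_t')^2$ respectively by Assumption~\ref{assump:V_lower}.

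The heart of the proof is the coupling term, which I would handle via the tensor-product plan $\Pi:=\pi_t^\rho\otimes\pi_t^\mu$. The two coupling pieces combine into
\begin{align*}
\iiiint \Bigl[\,\<\nabla_z f(z,x)-\nabla_{z'} f(z',x'),\,z-z'>\,-\,\<\nabla_x f(z,x)-\nabla_{x'} f(z',x'),\,x-x'>\,\Bigr]\d\Pi.
\end{align*}
Setting $z_s:=(1-s)z'+sz$, $x_s:=(1-s)x'+sx$ and $A(s):=\<\nabla_z f(z_s,x_s),z-z'>-\<\nabla_x f(z_s,x_s),x-x'>$, the integrand above equals $A(1)-A(0)=\int_0^1 A'(s)\,\d s$. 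Symmetry of mixed partials, $\nabla_{zx}^2 f=(\nabla_{xz}^2 f)^\top$, makes the two cross-Hessian contributions to $A'(s)$ identical in value and opposite in sign, so they cancel, leaving
\begin{align*}
A'(s) = (z-z')^\top \nabla_z^2 f(z_s,x_s)(z-z') - (x-x')^\top \nabla_x^2 f(z_s,x_s)(x-x') \le -\lambda_{f,1}\|z-z'\|^2 - \lambda_{f,2}\|x-x'\|^2
\end{align*}
by Assumption~\ref{assump:f_lower}(ii). Integrating in $s\in[0,1]$, then against $\Pi$, and using optimality of $\pi_t^\rho,\pi_t^\mu$ to recognize the marginal second moments as $\Wass_2^2$, the coupling integral is bounded above by $-\lambda_{f,1}\Wass_2(\rho_t,\rho_t')^2-\lambda_{f,2}\Wass_2(\mu_t,\mu_t')^2$.

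Combining all contributions yields
\begin{align*}
\ddt\tfrac{1}{2}\Wbar(\gamma_t,\gamma_t')^2 \le -\lambda_{c,1}\Wass_2(\rho_t,\rho_t')^2 - \lambda_{c,2}\Wass_2(\mu_t,\mu_t')^2 \le -\lambda_c\,\Wbar(\gamma_t,\gamma_t')^2,
\end{align*}
and Gr\"onwall's inequality then gives the stated exponential contraction. The main obstacle is the coupling step: the opposite signs with which the two marginal derivatives of $\Wass_2^2$ enter is precisely what makes the cross-Hessian terms in $A'(s)$ cancel, so that the concave-convex structure of $f$ --- without any regularity assumption on $\nabla_{zx}^2 f$ --- suffices to deliver the clean rate $\lambda_c$. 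Without this cancellation one would have to absorb an uncontrolled cross-term of size $\|\nabla_{zx}^2 f\|_2\,\Wass_2(\rho_t,\rho_t')\Wass_2(\mu_t,\mu_t')$, which cannot be handled by $\lambda_{c,1},\lambda_{c,2}$ alone.
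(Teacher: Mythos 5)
Your proof is correct and follows essentially the same route as the paper: both differentiate $\Wbar^2$ via \cite[Theorem~23.9]{Villani07} (using Lemma~\ref{lem:velocities_in_L2} for the required $L^2$ velocity bounds), split into entropy, self-interaction, external-potential and coupling pieces, and exploit exactly the same cross-Hessian cancellation for the coupling term that the paper obtains by symmetrizing the block quadratic form in Lemma~\ref{lem:Wbar_upper_bd}. Your tensor-product plan $\Pi=\pi_t^\rho\otimes\pi_t^\mu$ together with the scalar function $A(s)$ is just a tidier bookkeeping of that same Taylor-expansion-and-antisymmetry argument, not a different mechanism.
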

\begin{remark}\label{remark:P_ac}
    Although the contraction theorem is stated for measures that are absolutely continuous, the result \cite[Theorem 23.9]{Villani07}  on the time derivative of $\Wbar$ can be generalized to the case where $\rho_t=\delta_{z(t)},\rho_t'=\delta_{z'(t)}$ or $\mu_t=\delta_{x(t)},\mu_t'=\delta_{x'(t)}$ for all times $t\ge 0$. For details, see Lemma~\ref{lem:propagating_dirac_contraction} for the setting in which $\beta=0$ with $\mu_0=\delta_{x(0)}$,  and $\alpha>0$ with $\rho_0 \in\P_2^{ac}$.
\end{remark}
\begin{proof}
    Define $\nabla \varphi_t(z)$ and $\nabla\psi_t(x)$ so that $\rho_t={\nabla \varphi_t}_\# \rho_t'$ and $\mu_t = {\nabla \psi_t}_\# \mu_t'$. Due 
    to Lemma~\ref{lem:velocities_in_L2},
    \begin{align*}
        \int \norm{\nabla_z\delta_\rho F_c[\rho_t,\mu_t](z)}^2 \d \rho_t(z) + \int \norm{\nabla_x \delta_\mu F_c[\rho_t,\mu_t](x)}^2 \d \mu_t(x) < \infty \quad \forall\ t\ge 0\,,
    \end{align*}
    with the same holding for $\gamma'$.
    From \cite[Theorem 23.9]{Villani07}, the time derivative of the joint metric along solutions to \eqref{eq:dynamics_competitive} can be controlled by 
    \begin{align*}
        \ddt \Wbar(\gamma_t,\gamma_t')^2 &= 2\iint \begin{bmatrix}
            \nabla \varphi_t^{-1}(z) - z  \\
            \nabla \psi_t^{-1}(x)- x
        \end{bmatrix}
        \cdot
        \begin{bmatrix}
            -\nabla \delta_\rho F_c[\gamma_t](z) \\
            \nabla \delta_\mu F_c[\gamma_t](x)
        \end{bmatrix} \d \gamma_t(z,x)\\
        & \quad +
        2 \iint \begin{bmatrix}
            \nabla \varphi_t(z) - z  \\
            \nabla \psi_t(x) - x
        \end{bmatrix}
        \cdot
        \begin{bmatrix}
            -\nabla \delta_\rho F_c[\gamma_t'](z) \\
            \nabla \delta_\mu F_c[\gamma_t'](x)
        \end{bmatrix} \d \gamma_t'(z,x) \\
        & = 2\iint \begin{bmatrix}
            \nabla \varphi_t(z) - z  \\
            \nabla \psi_t(x) - x
        \end{bmatrix}\cdot
        \begin{bmatrix}
            \nabla \delta_\rho F_c[\gamma_t](\nabla \varphi_t(z))-\nabla \delta_\rho F_c[\gamma_t'](z) \\
            \nabla \delta_\mu F_c[\gamma_t'](x) - \nabla \delta_\mu F_c[\gamma_t](\nabla \psi(x))
        \end{bmatrix} \d \gamma_t'(z,x) \\
        &\le -2\lambda_c \Wbar(\gamma_t,\gamma_t')^2 \quad \forall\ t\in (0,T)\,,
    \end{align*}
    where the last inequality follows from Lemma~\ref{lem:Wbar_upper_bd}. By Gr\"onwall's lemma, exponential convergence follows. For time $t=0$, the inequality holds by definition since $e^{-\lambda_c t}|_{t=0}=1$.
\end{proof}
Next, our goal is to show that the semigroup for the dynamics \eqref{eq:dynamics_competitive} maps to $\P_2 \times \P_2$. The key ingredient is control of second moments along the evolution. More precisely, we will show that the second moments converge exponentially to a ball and remain in that ball for all time. This result will then allow us to apply a contractive inequality to prove the existence of a steady state for the dynamics \eqref{eq:dynamics_competitive}.

\begin{proposition}[Uniformly Bounded Second Moments]\label{prop:2ndmoment} 
Let $\gamma_t$ be a solution to \eqref{eq:dynamics_competitive} with $\gamma_0\in\P_2\times \P_2$ such that $D_c(\gamma_0)<\infty$. If $\alpha=0$, assume $\rho_0=\delta_{z_0}$ for some $z_0\in\R^{d_1}$. If $\beta=0$, assume $\mu_0=\delta_{x_0}$ for some $x_0\in\R^{d_2}$. Then $\Wbar(\gamma_t,\bar \delta)^2$ satisfies
    \begin{align*}
        \ddt \Wbar(\gamma_t,\bar \delta)^2 \le -\lambda_c \Wbar(\gamma_t,\bar \delta)^2 + 2\hat c\,,
    \end{align*}
    for some $\hat c \ge 0$, where $\bar\delta(z,x)=\delta_{(0,0)}(z,x)$.
    For any time $t\ge 0$, it holds that 
    \begin{align*}
        \int \norm{z}^2 \d \rho_t(z) + \int \norm{x}^2 \d \mu_t(x)\le K\coloneqq\max\left\{\Wbar(\gamma_0,\bar\delta),\frac{2\hat c}{\lambda_c} \right\}\,.
    \end{align*}
\end{proposition}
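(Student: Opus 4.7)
The plan is to derive the stated differential inequality by differentiating $\Wbar(\gamma_t,\bar\delta)^2=\int\|z\|^2\d\rho_t+\int\|x\|^2\d\mu_t$ directly along the flow and then applying Gr\"onwall's lemma. Writing the dynamics in continuity-equation form, $\partial_t\rho_t+\div{\rho_t\nabla\delta_\rho F_c[\gamma_t]}=0$ and $\partial_t\mu_t-\div{\mu_t\nabla\delta_\mu F_c[\gamma_t]}=0$, and justifying the differentiation under the integral by the $L^2$ bound on the velocities from Lemma~\ref{lem:velocities_in_L2}, I would obtain
\begin{align*}
    \ddt\Wbar(\gamma_t,\bar\delta)^2 = 2\int z\cdot \nabla_z\delta_\rho F_c[\gamma_t]\d\rho_t - 2\int x\cdot \nabla_x\delta_\mu F_c[\gamma_t]\d\mu_t\,.
\end{align*}
In the Dirac sub-cases ($\alpha=0$ with $\rho_0=\delta_{z_0}$, or $\beta=0$ with $\mu_0=\delta_{x_0}$), the corresponding equation reduces to an ODE for the atom (cf.~Remark~\ref{remark:P_ac}), the self-interaction term vanishes through $\nabla W_i(0)=0$, and the same identity holds with the entropy contribution absent.

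The central estimate is a saddle-type Taylor inequality for the coupling. Setting $q(z,x):=z\cdot\nabla_z f(z,x)-x\cdot\nabla_x f(z,x)$, a first application of the first-order characterisation of $\lambda_{f,1}$-convexity of $-f$ in $z$ and $\lambda_{f,2}$-convexity of $f$ in $x$ between $(z,x)$ and $(0,x),(z,0)$ gives $q(z,x)\le f(z,0)-f(0,x)-\tfrac{\lambda_{f,1}}{2}\|z\|^2-\tfrac{\lambda_{f,2}}{2}\|x\|^2$; a second application between $(z,0),(0,x)$ and $(0,0)$ produces
\begin{align*}
    q(z,x) \le \nabla_z f(0,0)\cdot z - \nabla_x f(0,0)\cdot x - \lambda_{f,1}\|z\|^2 - \lambda_{f,2}\|x\|^2\,.
\end{align*}
This reduces all the $(\rho,\mu)$-dependence of the coupling term to quadratic moments plus constants. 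Analogous $\lambda_{V,i}$-convexity bounds on the potentials $V_i$ contribute $-\lambda_{V,i}$ times the corresponding second moment plus a linear residue $-\nabla V_i(0)\cdot\text{(mean)}$; symmetry of $W_i$ plus $\nabla W_i(0)=0$ and convexity give $\iint(z-z')\cdot\nabla W_i(z-z')\d\rho\otimes\d\rho\ge 0$ with the favourable sign; integration by parts on the entropy terms contributes $+2\alpha d_1+2\beta d_2$. Combining yields
\begin{align*}
    \ddt\Wbar^2 \le -2\lambda_{c,1}\int\|z\|^2\d\rho - 2\lambda_{c,2}\int\|x\|^2\d\mu + 2C_\rho\cdot\bar z - 2C_\mu\cdot\bar x + 2(\alpha d_1+\beta d_2)\,,
\end{align*}
with $C_\rho:=\nabla_z f(0,0)-\nabla V_1(0)$, $C_\mu:=\nabla_x f(0,0)+\nabla V_2(0)$ fixed vectors and $\bar z,\bar x$ the first moments of $\rho_t,\mu_t$.

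To absorb the linear-in-mean cross terms I would invoke Young's inequality with weights $\lambda_{c,1},\lambda_{c,2}$ together with Cauchy-Schwarz $\|\bar z\|^2\le\int\|z\|^2\d\rho$ (and analogously for $\bar x$), which eats exactly half of each dissipation and leaves $-\lambda_{c,1}\int\|z\|^2\d\rho-\lambda_{c,2}\int\|x\|^2\d\mu\le -\lambda_c\Wbar^2$ on the dissipative side. This produces the stated differential inequality with $2\hat c:=\|C_\rho\|^2/\lambda_{c,1}+\|C_\mu\|^2/\lambda_{c,2}+2(\alpha d_1+\beta d_2)\ge 0$. A Gr\"onwall-style argument on $y(t):=\Wbar(\gamma_t,\bar\delta)^2$ then closes the proof: whenever $y>2\hat c/\lambda_c$ we have $y'<0$, so $y(t)\le\max\{y(0),2\hat c/\lambda_c\}$ for all $t\ge 0$, which is exactly the asserted moment bound. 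The main obstacle is that Assumption~\ref{assump:f_lower}(ii) gives no control over the cross-derivative $\nabla_{xz}^2 f$, so term-by-term estimates that retain $\nabla_z f(0,x)$ or $\nabla_x f(z,0)$ would leave residuals with uncontrolled dependence on moments of $\mu$ and $\rho$; the two-step saddle Taylor expansion at $(0,0)$ is the essential manoeuvre that circumvents this and reduces all relevant quantities to constants and quadratic moments.
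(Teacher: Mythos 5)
Your proof is correct and follows the same high-level plan as the paper's: differentiate the second moments along the flow, estimate term-by-term using the convexity/concavity structure, and close with a Gr\"onwall-type phase-line argument. The substantive difference is how the bounded constant offset $\hat c$ is produced. The paper performs a single first-order Taylor/convexity step (comparing $(z,x)$ with $(0,x)$ and $(z,0)$), leaving residual terms of the form $f(z,0)-V_1(z)$ and $-f(0,x)-V_2(x)$, and observes these are strongly concave on $\R^{d_1}$ resp.\ $\R^{d_2}$ and hence uniformly bounded above, so one can simply take a supremum to define a constant $c$. You instead iterate the expansion a second time around $(0,0)$, reducing everything to a linear-in-mean residual $C_\rho\cdot \bar z - C_\mu\cdot\bar x$ plus quadratics with doubled coefficient $-\lambda_{c,i}$, and then spend half of that dissipation via Young's inequality together with Jensen ($\|\bar z\|^2 \le \int\|z\|^2\d\rho$) to absorb the linear term. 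Both routes terminate at the same differential inequality $\ddt\Wbar^2\le -\lambda_c\Wbar^2 + 2\hat c$; yours yields an explicit, computable $\hat c$ in terms of $\nabla_z f(0,0)$, $\nabla_x f(0,0)$, $\nabla V_i(0)$, while the paper's max-of-strongly-concave-function argument is a line shorter but produces a less explicit constant. Your handling of the Dirac sub-cases ($\alpha=0$ or $\beta=0$) is also a touch cleaner than the paper's (which slightly garbles the role of the second moment there) — the key observation in both is that the dynamics reduce to an ODE for the atom and the entropy term disappears while $\nabla W_i(0)=0$ kills the self-interaction. No gap.
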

\begin{proof}
If $\alpha=0$ ($\beta=0$), then $\rho_t$ ($\mu_t$) remains a Dirac Delta for all times, and so its second moment vanishes. Let us assume $\alpha,\beta>0$. Thanks to the diffusion, $\gamma_t\in\P^{ac}\times\P^{ac}$ for all $t>0$, and thanks to Lemma~\ref{lem:velocities_in_L2} we have $D_c(\gamma_t)<\infty$ for all $t\ge 0$. 
The sum of the squared second moments can be written as $\Wbar(\gamma_t,\bar \delta)^2$.
Directly differentiating the second moments along solutions to \eqref{eq:dynamics_competitive}, 
we have
\begin{align}\label{eq:ddt_W2}
    \ddt \Wbar(\gamma_t,\bar \delta)^2 \le 2 \iint \begin{bmatrix}
        z \\ x
    \end{bmatrix} \cdot
    \begin{bmatrix}
        \nabla_z \delta_\rho F_c[\gamma_t](z) \\ -\nabla_x \delta_\mu F_c[\gamma_t](x)
    \end{bmatrix} \d \rho_t(z) \d \mu_t(x) \,.
\end{align}
Our goal is to upper-bound the right-hand side in terms of $\Wbar(\gamma_t,\bar \delta)^2$. We will compute the terms in $F_c$ separately, starting with the entropy terms. We have 
\begin{align*}
    -\int z \cdot \nabla_z \delta_\rho \Hc(\rho_t) \d \rho_t(z) &= -\int z \cdot \nabla \log \rho_t \d \rho_t(z) = -\int z \cdot \nabla_z \rho_t(z) \d z= d_1\int \d \rho_t(z)  = d_1\,,\\
    -\int x \cdot \nabla_x \delta_\mu \Hc(\mu_t) \d \mu_t(x) &= -\int x \cdot \nabla \log \mu_t \d \mu_t(x) = -\int x \cdot \nabla_x \mu_t(x) \d x= d_2\int \d \mu_t(x) = d_2.
\end{align*}
For the remaining terms, we will use the convexity inequality
\begin{align*}
    f(y) \ge f(y') + \nabla f(y')\cdot (y-y') + \frac{\lambda}{2}\norm{y-y'}^2 \quad \forall y,y'\in\R^d\,.
\end{align*}
Applying this to $W_1$, we use a change of variables on half of the integral and use the symmetry of $W_1$, in particular that $\nabla W_1(z-z')=-\nabla W_1(z'-z)$, to rewrite 
\begin{align*}
    -\iint z \cdot \nabla W_1(z-z') \d \rho_t(z) \d \rho_t(z') 
    &= -\frac{1}{2} \int (z-z') \cdot \nabla W_1(z-z')  \d \rho_t(z) \d \rho_t(z')\,.
\end{align*}
Selecting $y'=z-z'$ and $y=0$ results in
\begin{align*}
    -\frac{1}{2} \iint (z-z')& \cdot \nabla W_1(z-z')  \d \rho_t(z) \d \rho_t(z') \le \iint -\frac{\lambda_{W,1}}{4} \norm{z-z'}^2 \d \rho_t(z) \d\rho_t(z')  \\
    &\quad +\iint [ W_1(0) - W_1(z-z')  ] \d \rho_t(z) \d\rho_t(z') \le 0 \,,
\end{align*}
since $W_1(0)\le W_1(z)$ for all $z$ by symmetry and convexity of $W_1$. A similar estimate can be computed for $W_2$. For the cross-term and potential terms, we will use the convexity inequality for each species, with $y=0$ and $y'=z$ for $z\cdot \nabla_z f(z,x)$ and $y=0$ and $y'=x$ for $-x\cdot \nabla_x f(z,x)$,
\begin{align*}
    \iint & \begin{bmatrix}
        z \\ x
    \end{bmatrix}\cdot
    \begin{bmatrix}
        \nabla_z ( f(z,x)-V_1(z)) \\ -\nabla_x (f(z,x)-V_2(x))
    \end{bmatrix} \d \rho_t(z) \d \mu_t(x) \le -\int \frac{\lambda_{c,1}}{2}\norm{z}^2 \d \rho_t(z) -\int \frac{\lambda_{c,2}}{2}\norm{x}^2 \d \mu_t(x) \\
    &+ \iint (-f(0,x) + f(z,0) + V_1(0)-V_1(z) + f(z,x)-f(z,x) + V_2(0)-V_2(x) )\d \gamma_t(z,x)\,.
\end{align*} 
Next, we use that $f(z,0)-V_1(z)$ is $\lambda_{c,1}$ concave in $z$, $f(0,x)+V_2(x)$ is $\lambda_{c,2}$ convex in $x$, and continuity to define
\begin{align*}
    c := V_1(0)+V_2(0) + \max_{z\in\R^{d_1}} f(z,0)-V_1(z) + \max_{x\in\R^{d_2}} -f(0,x)-V_2(x) < \infty\,,
\end{align*}
which gives an upper-bound for the cross term
\begin{align*}
    \iint \begin{bmatrix}
        z \\ x
    \end{bmatrix} \cdot
    \begin{bmatrix}
        \nabla_z f(z,x) \\ -\nabla_x f(z,x)
    \end{bmatrix} &\d \rho_t(z) \d \mu_t(x) \le -\int \frac{\Lambda_{f,1}}{2}\norm{z}^2 \d \rho_t(z) -\int \frac{\lambda_{f,2}}{2}\norm{x}^2 \d \mu_t(x) + c\,.
\end{align*}
Note that $c\ge 0$ because $\max_{z\in\R^{d_1}} f(z,0)-V_1(z) \ge f(0,0)-V_1(0)$ and $\max_{x\in\R^{d_2}}-f(0,x)-V_2(x) \ge -f(0,0)-V_2(0)$.
Combining all terms gives
\begin{align*}
    \frac{1}{2}\ddt \Wbar(\gamma_t,\bar \delta)^2 \le -\frac{\lambda_c}{2} \Wbar(\gamma_t,\bar\delta)^2 + \hat c\,,
\end{align*}
where $\hat c = c+ \alpha d_1 + \beta d_2$. It holds that $\hat c\ge 0$ because all terms are non-negative. 
The solution to this ODE satisfies 
\begin{align*}
    \Wbar(\gamma_t,\bar \delta)^2 \le \max\left\{\Wbar(\gamma_0,\bar\delta)^2,\frac{2\hat c}{\lambda_c} \right \} \quad \forall t\ge 0\,.
\end{align*}
\end{proof}

\begin{proposition}[Existence and Uniqueness of Steady States]\label{prop:competitive_ground_state}
  There exists a unique steady state $\gamma_\infty=(\rho_\infty,\mu_\infty)\in \tP_2 \times \tP_2$ of equation~\eqref{eq:dynamics_competitive} according to Definition \ref{def:sstates} with $\int \rho_\infty = 1,\int \mu_\infty=1$. This steady state is a Nash equilibrium for $F_c$. Additionally,
    \begin{itemize}
        \item if $\alpha>0$, the steady state $\gamma_\infty$ satisfies $\rho_\infty\in L_+^1(\R^{d_1}) \cap C^2(\R^{d_1})$ with $\norm{\rho_\infty}_1 = 1$ and $\supp(\rho_\infty)=\R^{d_1}$;
        \item if $\beta>0$, the steady state $\gamma_\infty$ satisfies $\mu_\infty\in L_+^1(\R^{d_2})\cap C^2(\R^{d_2})$ with $\norm{\mu_\infty}_1 = 1$ and $\supp(\mu_\infty)=\R^{d_2}$.
    \end{itemize}   

\end{proposition}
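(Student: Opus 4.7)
The plan is to construct the steady state as the long-time limit of the semigroup of \eqref{eq:dynamics_competitive} and then identify it as the unique Nash equilibrium using the convex-concave structure of $F_c$. Fix any admissible initial datum $\gamma_0$ and let $\gamma_t$ denote the corresponding solution. Applying Proposition~\ref{prop:contraction} to the pair $(\gamma_t,\gamma_{t+s})$ gives
\begin{align*}
    \Wbar(\gamma_{t+s},\gamma_t) \le e^{-\lambda_c t}\,\Wbar(\gamma_s,\gamma_0)\,,
\end{align*}
and the right-hand side is uniformly bounded in $s$ because Proposition~\ref{prop:2ndmoment} yields $\Wbar(\gamma_s,\gamma_0)\le 2\sqrt{K}$. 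Hence $\{\gamma_t\}_{t\ge 0}$ is Cauchy in $\Wbar$ and converges to a limit $\gamma_\infty=(\rho_\infty,\mu_\infty)\in\tP_2\times\tP_2$. In the degenerate cases ($\alpha=0$ or $\beta=0$), the corresponding component stays a Dirac along the sequence by the Dirac-preserving assumption in the theorem, so the limit inherits that structure; otherwise absolute continuity will be recovered below via Euler-Lagrange.

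Next I would verify that $\gamma_\infty$ is a steady state. Letting $\tilde\gamma_t$ solve \eqref{eq:dynamics_competitive} with initial datum $\gamma_\infty$, Proposition~\ref{prop:contraction} combined with the triangle inequality yields
\begin{align*}
    \Wbar(\tilde\gamma_t,\gamma_\infty) \le \Wbar(\tilde\gamma_t,\gamma_{t+s}) + \Wbar(\gamma_{t+s},\gamma_\infty) \le e^{-\lambda_c t}\,\Wbar(\gamma_\infty,\gamma_s) + \Wbar(\gamma_{t+s},\gamma_\infty)\,,
\end{align*}
and letting $s\to\infty$ gives $\tilde\gamma_t\equiv\gamma_\infty$ for all $t\ge 0$. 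Hence $\partial_t\gamma_t\equiv 0$ at $\gamma_\infty$, forcing $\nabla_z\delta_\rho F_c[\gamma_\infty]=0$ on $\supp(\rho_\infty)$ and $\nabla_x\delta_\mu F_c[\gamma_\infty]=0$ on $\supp(\mu_\infty)$, which is exactly condition (iv) of Definition~\ref{def:sstates}. Uniqueness then follows from contraction applied to any other steady state $\gamma_*'$ (which induces a constant trajectory): $\Wbar(\gamma_\infty,\gamma_*')\le e^{-\lambda_c t}\,\Wbar(\gamma_\infty,\gamma_*')$ for all $t$, forcing equality.

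The Nash property is an immediate consequence of Lemma~\ref{lem:Gc_convex_concave}: since $F_c(\cdot,\mu_\infty)$ is uniformly displacement $\lambda_{c,1}$-concave and $\rho_\infty$ is a critical point of it, $\rho_\infty$ is its unique global maximizer, giving \eqref{eq:nash_rho}; symmetrically $\mu_\infty$ is the unique global minimizer of $F_c(\rho_\infty,\cdot)$, giving \eqref{eq:nash_mu}. For the remaining regularity claims I would mimic the argument in Corollary~\ref{cor:ground-states-supp-Ga}: if $\alpha>0$, the Euler-Lagrange identity on $\supp(\rho_\infty)$ can be rearranged to
\begin{align*}
    \rho_\infty(z) = c_1\,\exp\!\left(\tfrac{1}{\alpha}\left[\int f(z,x)\,\d\mu_\infty(x) - V_1(z) - W_1\ast\rho_\infty(z)\right]\right)\quad\text{on }\supp(\rho_\infty)\,,
\end{align*}
and the same chain of estimates (finiteness of $\int f(z,x)\,\d\mu_\infty(x)$, $W_1\ast\rho_\infty(z)$, $V_1(z)$ for every $z$) forces positivity everywhere, membership in $L^1_+\cap L^\infty_{loc}\cap W^{1,2}_{loc}\cap C^2$, and $\supp(\rho_\infty)=\R^{d_1}$; the analogous argument applies to $\mu_\infty$ when $\beta>0$.

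The main obstacle is the application of Proposition~\ref{prop:contraction} with $\tilde\gamma_0=\gamma_\infty$ as initial condition, since this requires the regularity hypotheses of that proposition (finiteness of $D_c(\gamma_\infty)$ and local Lipschitzness of $\nabla\delta_\rho F_c[\tilde\gamma_t],\nabla\delta_\mu F_c[\tilde\gamma_t]$) to hold at a limit point constructed only in $\Wbar$. One route is to invoke the Cauchy-problem theory referenced in the Remark preceding Section~\ref{subsec:assumptions}; a cleaner alternative is to bypass re-solving from $\gamma_\infty$ and instead pass to the limit directly in the exponential decay $D_c(\gamma_t)\le e^{-2\lambda_c t}D_c(\gamma_0)$ from Lemma~\ref{lem:velocities_in_L2}, using lower semi-continuity of $D_c$ to obtain $D_c(\gamma_\infty)=0$ and thereby the critical point condition at $\gamma_\infty$.
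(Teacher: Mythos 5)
Your construction of $\gamma_\infty$ as a $\Wbar$-Cauchy limit of the trajectory $(\gamma_t)$ is essentially the Banach-fixed-point argument that the paper outsources to \cite[Lemma 7.3]{carrillo_contractive_2007}, so that part of the two proofs is morally the same. Where you genuinely diverge from the paper is in how you identify the limit as a critical point and Nash equilibrium. You try to argue directly at $\gamma_\infty$: either re-solve from $\gamma_\infty$ and invoke Proposition~\ref{prop:contraction}, or pass to the limit in $D_c(\gamma_t)\le e^{-2\lambda_c t}D_c(\gamma_0)$ to get $D_c(\gamma_\infty)=0$, and then use uniform displacement concavity/convexity of $F_c(\cdot,\mu_\infty)$ and $F_c(\rho_\infty,\cdot)$ to upgrade vanishing dissipation to the Nash inequalities. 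The paper instead constructs an auxiliary pair $(\rho_\dagger,\mu_\dagger)$ as the maximizer/minimizer of the \emph{uncoupled} problems $\sup_\rho F_c(\rho,\mu_\infty)$, $\inf_\mu F_c(\rho_\infty,\mu)$ (whose regularity can be extracted from the Euler--Lagrange equations \`a la Proposition~\ref{prop:existenceGa}), observes that both $\gamma_\dagger$ and $\gamma_\infty$ are fixed points of the uncoupled semigroups $S_1,S_2$, and uses the contraction lemma again to conclude $\gamma_\dagger=\gamma_\infty$. This detour is precisely what lets the paper sidestep needing any a priori regularity or dissipation bound at the $\Wbar$-limit $\gamma_\infty$.

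The obstacle you flag at the end is real and your two proposed fixes do not close it. For the first route, well-posedness of the PDE from the initial datum $\gamma_\infty$ — and in particular that $D_c(\gamma_\infty)<\infty$ and the locally Lipschitz hypotheses of Proposition~\ref{prop:contraction} hold — is not available from the $\Wbar$-Cauchy construction alone. For the second route, lower semi-continuity of $D_c$ along $\Wbar$-convergent sequences is asserted but not established; $D_c$ is not a pure Fisher information, and the cross terms (e.g.\ $\nabla W_1\ast\rho$, $\int\nabla_z f\,\d\mu$) together with sign cancellations in $\nabla\delta_\rho F_c$ make the l.s.c.\ nontrivial. Even granting $D_c(\gamma_\infty)=0$, you would still need to promote the $\rho_\infty$-a.e.\ vanishing of $\nabla_z\delta_\rho F_c[\gamma_\infty]$ to the full steady-state regularity (i)--(iii) of Definition~\ref{def:sstates} \emph{before} the Euler--Lagrange rewriting you invoke is available — the rewriting presupposes $\rho_\infty$ has a density. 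In contrast, $\gamma_\dagger$ in the paper's proof carries this regularity by construction. Also note the paper states uniqueness of the Nash equilibrium as a separate Proposition~\ref{prop:unique_Nash} using Lemma~\ref{lem:Wbar_upper_bd}, rather than via the trajectory contraction you use; your version needs the same regularity hypotheses on the hypothetical competitor $\gamma_*'$ that one is trying to prove.
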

\begin{proof}
We split the proof into three steps.
First, (i) we prove there exists $\gamma_\infty \in\P_2\times \P_2$ satisfying $\Wbar(\gamma_\infty,\gamma(t))=0$ for all $t\ge0$ with $\gamma(t)$ the solution to \eqref{eq:dynamics_competitive} with initial condition $\gamma_\infty$. Then (ii) we will show that $\gamma_\infty$ is also a critical point; specifically it is a Nash equilibrium. 
Lastly, (iii) we show $\gamma_\infty$ has the required regularity properties to satisfy Definition \ref{def:sstates}. 

(i) Fix $\gamma_0\in\P_2 \times \P_2$ such that $D_c(\gamma_0)<\infty$. If $\alpha=0$, assume $\rho_0=\delta_{z_0}$ for some $z_0\in\R^{d_1}$. If $\beta=0$, assume $\mu_0=\delta_{x_0}$ for some $x_0\in\R^{d_2}$. By Proposition~\ref{prop:2ndmoment}, we know that $\gamma_t\in\P_2\times\P_2$ for all $t\ge 0$. If $\alpha,\beta>0$, we have $\gamma(t)\in \P^{ac}_2 \times \P^{ac}_2$ thanks to the diffusion.
Contraction in $\P^{ac}_2 \times \P^{ac}_2$ follows from Proposition~\ref{prop:contraction}. If $\alpha=0$ or $\beta=0$, contraction is in $\P_2$ instead of $\P_2^{ac}$ (see Remark~\ref{remark:P_ac}).  
Let $T(t):\P_2 \times \P_2 \to \P_2 \times \P_2$ be the semigroup for \eqref{eq:dynamics_competitive}. Using a contraction result, \cite[Lemma 7.3]{carrillo_contractive_2007}, there exists a unique steady state $\gamma_\infty\in \P_2^{ac} \times \P_2^{ac}$ (or $\P_2 \times \P_2$ resp.), that is,
\begin{align*}
    \Wbar(\gamma_\infty, T(t)\gamma_\infty) = 0 \,.
\end{align*}
(ii) To show that $\gamma_\infty$ is a critical point, we consider the optimization problems:
\begin{align*}
    c_1 = \sup_{\rho\in\P_2} F_c(\rho,\mu_\infty)\,, \qquad 
    c_2 = \inf_{\mu\in\P_2} F_c(\rho_\infty,\mu)\,.
\end{align*}
First, note that the supremum $c_1$ and infimum $c_2$ are in fact attained by a unique maximizer $\rho_\dagger\in\P_2$ and unique minimizer $\mu_\dagger\in\P_2$. This can be shown using similar arguments as those in Proposition \ref{prop:existenceGa}; concavity of $F_c(\cdot,\mu_\infty)$ and convexity of $F_c(\rho_\infty,\cdot)$ follow from Lemma~\ref{lem:Gc_convex_concave}. The functional $F_c(\cdot,\mu_\infty)$ has an upper-bound due to concavity via a similar argument as in Lemma~\ref{lem:Fa_non_negative} and Lemma~\ref{lem:upper_lower_bounds_f}, and $F_c(\rho_\infty,\cdot)$ has a lower-bound using the same argument with convexity. Upper and lower-semicontinuity complete the set of required ingredients for the proof of Proposition \ref{prop:existenceGa}. We can then write the well-defined optimization problem
\begin{align}\label{eq:EL_steady_state_arg}
    \rho_\dagger = \amax_{\rho\in\P_2} F_c(\rho,\mu_\infty)\,, \qquad 
    \mu_\dagger = \amin_{\mu\in\P_2} F_c(\rho_\infty,\mu)\,.
\end{align}
As a maximizer and minimizer respectively, they satisfy the Euler-Lagrange (EL) conditions
\begin{align*}
    \delta_\rho F_c[\rho_\dagger,\mu_\infty](z)=c_1\, \forall z\in\supp \rho_\dagger\,, \quad \delta_\mu F_c[\rho_\infty,\mu_\dagger](x)=c_2\, \forall x\in\supp\mu_\dagger \,.
\end{align*}
The EL condition for $\rho_\dagger$ is
\begin{align*}
    \delta_\rho F_c[\rho_\dagger,\mu_\infty](z) =-\alpha\log \rho_\dagger(z)+\int f(z,x)\d \mu_\infty(x) -V_1(z) -  (W_1 \ast \rho_\dagger)(z) =\tilde c_1\,\,\forall z\in\supp \rho_\dagger \,.
\end{align*}
Note that the left-hand side of the EL condition is finite for any fixed $z$.
Rearranging, in the case when $\alpha>0$, we have 
\begin{align}\label{eq:EL-Gc-rewritten}
    \rho_\dagger(z) = \exp\left( \frac{1}{\alpha}\left[-\tilde c_1+\int f(z,x)\d \mu_\infty(x)-W_1\ast\rho_\dagger(z) -V_1(z) \right]\right)\,.
\end{align}
We have $W_1\ast\rho_\dagger \in C^2$ due to $W_1\in C^2$, $\int f(z,x)\d\mu_\infty(x) \in C^2$ because $f(\cdot,x)\in C^2$, and $V_1\in C^2$ from Assumptions~\ref{assump:f_lower}(ii), \ref{assump:V_lower}, and \ref{assump:W_lower}. Therefore $\rho_\dagger\in C^2$. If $\alpha=0$, then $\delta_\rho F_c[\rho_\dagger,\mu_\infty](\cdot)\in C^2$ follows immediately. We conclude that $\delta_\rho F_c[\rho_\dagger,\mu_\infty](\cdot)\in C^2$ and $\div{\rho_\dagger \nabla \delta_\rho F_c[\rho_\dagger,\mu_\infty](z)} = 0$ pointwise a.e. Similarly, $\div{\mu_\dagger \nabla \delta_\mu F_c[\rho_\infty,\mu_\dagger](x)} = 0$ pointwise a.e., and if $\beta>0$, then $\mu_\dagger\in C^2$.

Define $S_1(t):\P_2\to\P_2$ and $S_2(t):\P_2\to\P_2$ as the semigroups for the uncoupled dynamics given by
\begin{align}\label{eq:uncoupled_dynamics}
    \partial_t \hat \rho = - \div{\hat \rho \nabla \delta_\rho F_c[\hat \rho,\mu_\infty](z)}\,, \quad \partial_t \hat \mu = \div{\hat \mu \nabla \delta_\mu F_c[\rho_\infty,\hat \mu](x)}\,,
\end{align}
respectively. Note that $S_i(t)$ maps to $\P_2$ by the same argument as in Proposition~\ref{prop:2ndmoment}. Due to the regularity of $\rho_\dagger$ and $\mu_\dagger$ and the fact that they are critical points, they are also steady states of \eqref{eq:uncoupled_dynamics}
\begin{align*}
    \Wass_2(\rho_\dagger,S_1(t)\rho_\dagger) = 0\,, \qquad \Wass_2(\mu_\dagger,S_2(t) \mu_\dagger) = 0\,.
\end{align*}
Since $\gamma_\infty$ is a steady state of \eqref{eq:dynamics_competitive},
\begin{align*}
    \Wass_2(\rho_\infty,S_1(t) \rho_\infty)=0\,, \qquad \Wass_2(\mu_\infty ,S_2(t) \mu_\infty) =0\,.
\end{align*}
Next, we again apply the contraction theorem \cite[Lemma 7.3]{carrillo_contractive_2007} to \eqref{eq:uncoupled_dynamics}, which states that there is a unique steady state for $S_1(t)$ and for $S_2(t)$. Therefore, 
\begin{align*}
    \Wbar(\gamma_\dagger,\gamma_\infty) = 0\,.
\end{align*}

(iii) If $\alpha, \beta>0$, then $\gamma_\dagger\in C^2 \times C^2$. Since equality of $\gamma_\dagger$ with $\gamma_\infty$ holds almost everywhere with respect to the Lebesgue measure, it holds that $\gamma_\infty$ is a weak solution to \eqref{eq:dynamics_competitive}, such that $\gamma_\infty \in (W_{loc}^{1,2} \cap L_{loc}^\infty ) \times (W_{loc}^{1,2} \cap L_{loc}^\infty)$.
When $\alpha>0$, the argument for $\supp(\rho_\infty)=\R^{d_1}$ and $\rho_\infty \in C^2(\R^{d_1})$ follows as in Corollary~\ref{cor:ground-states-supp-Ga}, where instead of showing that $\int f(z,x)\d\mu_\infty(x)+V_1(z) < \infty$ for any fixed $z\in\R^{d_1}$, showing $-\int f(z,x)\d\mu_\infty(x) + V_1(z) < \infty$ for any fixed $z\in\R^{d_1}$ gives the same result. This holds because $-\int f(z,x)\d\mu_\infty(x) + V_1(z)$ is continuous in $z$, just as $\int f(z,x)\d\mu_\infty(x)+V_1(z)$ is continuous in $z$ in the setting of Corollary~\ref{cor:ground-states-supp-Ga}. The setting when $\beta>0$ follows similarly to $\alpha>0$. 
\end{proof}

\begin{proposition}[Uniqueness of Nash Equilibrium]\label{prop:unique_Nash}
    There exists at most one Nash equilibrium in $\tP_2\times \tP_2$ for the energy functional $F_c$.
\end{proposition}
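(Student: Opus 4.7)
The plan is to combine the saddle-point characterisation of a Nash equilibrium with the strict displacement concavity-convexity of $F_c$ established in Lemma~\ref{lem:Gc_convex_concave}. Suppose $\gamma_1=(\rho_1,\mu_1)$ and $\gamma_2=(\rho_2,\mu_2)$ are two Nash equilibria in $\tP_2\times\tP_2$; the goal is to conclude $\gamma_1=\gamma_2$.

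The first step is to play each equilibrium's inequalities \eqref{def:F_nash} against the other to obtain the chain
\[
F_c(\rho_2,\mu_1)\;\le\; F_c(\rho_1,\mu_1)\;\le\; F_c(\rho_1,\mu_2)\;\le\; F_c(\rho_2,\mu_2)\;\le\; F_c(\rho_2,\mu_1),
\]
where the first two inequalities use that $\gamma_1$ is Nash (testing with $\rho=\rho_2$ and $\mu=\mu_2$) and the last two use that $\gamma_2$ is Nash (testing with $\rho=\rho_1$ and $\mu=\mu_1$). All four expressions must therefore coincide. In particular $F_c(\rho_1,\mu_2)=F_c(\rho_2,\mu_2)$, so $\rho_1$ attains the maximum of $F_c(\cdot,\mu_2)$ as well, and symmetrically $\mu_1$ attains the minimum of $F_c(\rho_1,\cdot)$.

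The second step exploits strict concavity. Since Lemma~\ref{lem:Gc_convex_concave} guarantees that $F_c(\cdot,\mu_2)$ is uniformly displacement $\lambda_{c,1}$-concave with $\lambda_{c,1}>0$, evaluating the displacement concavity inequality along a Wasserstein-$2$ geodesic $\rho_s$ from $\rho_1$ to $\rho_2$ together with $F_c(\rho_1,\mu_2)=F_c(\rho_2,\mu_2)$ yields
\[
F_c(\rho_1,\mu_2)\;\ge\; F_c(\rho_s,\mu_2)\;\ge\; F_c(\rho_1,\mu_2)+s(1-s)\tfrac{\lambda_{c,1}}{2}\Wass_2(\rho_1,\rho_2)^2
\]
for $s\in(0,1)$, which forces $\Wass_2(\rho_1,\rho_2)=0$, i.e.\ $\rho_1=\rho_2$. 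Running the symmetric argument with the $\lambda_{c,2}$-convexity of $F_c(\rho_1,\cdot)$ applied to the pair $\mu_1,\mu_2$ gives $\mu_1=\mu_2$, completing the proof.

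The main point to check is that Definition~\ref{def:displacement_convexity} is phrased for atomless measures, while elements of $\tP_2$ may be Dirac masses in the degenerate cases $\alpha=0$ or $\beta=0$. I would dispose of this by noting that the second-derivative computation driving Proposition~\ref{prop:dipl-convexity}, and hence Lemma~\ref{lem:Gc_convex_concave}, extends directly to Wasserstein geodesics whose endpoints are Diracs (such a geodesic is a translating Dirac, and the pointwise Hessian bounds on $f$, $V_1$, $V_2$ supply the same lower bound on the second derivative along it). Once this mild extension is in place, the chain of inequalities and the strict concavity-convexity arguments above deliver uniqueness in $\tP_2\times\tP_2$ immediately.
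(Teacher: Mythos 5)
Your argument is correct and takes a genuinely different route from the paper's. The paper's proof supposes a second Nash equilibrium, writes down the first-order Euler--Lagrange conditions $\nabla_z\delta_\rho F_c[\gamma](z)=0$ on $\supp\rho$ (and likewise in $\mu$) for both equilibria, and substitutes them into the monotonicity estimate of Lemma~\ref{lem:Wbar_upper_bd} to obtain $0 \ge \lambda_c\,\Wbar(\gamma_*,\gamma_\infty)^2$, a contradiction; that argument is tightly coupled to the contraction machinery of Section~\ref{sec:competitive_proof} and implicitly relies on the first-order conditions being available for any Nash pair. You instead run the classical four-term saddle-point chain to show that $\rho_1$ also maximizes $F_c(\cdot,\mu_2)$ and $\mu_1$ also minimizes $F_c(\rho_1,\cdot)$, then invoke strict uniform displacement concavity/convexity from Lemma~\ref{lem:Gc_convex_concave} directly. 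This is more elementary, bypasses the Euler--Lagrange conditions entirely, and requires only what is already proved at the variational level. One small tightening: your final paragraph only treats geodesics whose endpoints are both Diracs, whereas $\tP_2$ in principle admits the mixed case (one competitor Dirac, the other absolutely continuous). This is ruled out automatically when $\alpha>0$, since the entropy renders $F_c(\delta_z,\mu)=-\infty$ so a Dirac cannot be a Nash maximizer in the $\rho$-slot; and when $\alpha=0$ the McCann interpolant from an absolutely continuous measure to a Dirac is driven by a genuine (constant) Brenier map, so the second-derivative computation of Proposition~\ref{prop:dipl-convexity} applies verbatim on $s\in[0,1)$ and passes to $s=1$ by continuity. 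With that noted, the argument delivers uniqueness cleanly.
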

\begin{proof}
    Suppose there exists two equilibria, $\gamma_*\in\tP_2\times\tP_2$ and $\gamma_\infty\in\tP_2\times\tP_2$ such that $\Wbar(\gamma_\infty,\gamma_*)>0$. In the weak sense, the following hold: 
 \begin{align*}
     \nabla_z\delta_\rho F_c[\gamma_\infty](z)&=0 \quad \forall\ z\in\supp\rho_\infty\,, \qquad \nabla_z \delta_\rho F_c[\gamma_*](z) = 0 \quad\, \forall z\in\supp \rho_*\,, \\
     \nabla_x\delta_\mu F_c[\gamma_\infty](x)&=0 \quad \forall\ x\in\supp\mu_\infty\,, \qquad \nabla_x \delta_\mu F_c[\gamma_*](x) = 0 \quad \forall\, x\in\supp \mu_*\,.
 \end{align*}
 We plug $\gamma_*$ and $\gamma_\infty$ into the inequality proved in Lemma~\ref{lem:Wbar_upper_bd}, with $\rho_*=\nabla \varphi_\# \rho_\infty$ and $\mu_*=\nabla \psi_\# \mu_\infty$. Note that both $\nabla \varphi$ and $\nabla \psi$ exist and are invertible (on the support of $\rho_\infty$ and $\mu_\infty$ resp.) because the measures are either absolutely continuous or propagating as Diracs, and so the left-hand side in Lemma~\ref{lem:Wbar_upper_bd} is well-defined. Hence,
 \begin{align*}
        \iint &\begin{bmatrix}
              z - \nabla \varphi(z) \\
              x - \nabla \psi(x)
          \end{bmatrix} \cdot \begin{bmatrix}
              \nabla_z\delta_\rho F_c[\gamma_*](\nabla \varphi(z) )  \\
               - \nabla_x \delta_\mu F_c[\gamma_*](\nabla \psi(x))
          \end{bmatrix} \d \rho_\infty(z) \d \mu_\infty(x) \\
          &= \iint \begin{bmatrix}
              (\nabla \varphi)^{-1}(z) - z \\
              (\nabla \psi)^{-1}(x) - x
          \end{bmatrix} \cdot \begin{bmatrix}
              \nabla_z\delta_\rho F_c[\gamma_*](z )  \\
               - \nabla_x \delta_\mu F_c[\gamma_*](x)
          \end{bmatrix} \d \rho_*(z) \d \mu_*(x)= 0 \ge \lambda_c\Wbar(\gamma_\infty,\gamma_*)^2 \,,
    \end{align*}
    which is a contradiction and therefore the Nash equilibrium is unique. 
\end{proof}

Now we present the proof of Theorem~\ref{thm:convergence_Wbar_competitive}.
 \begin{proof}[Proof of Theorem~\ref{thm:convergence_Wbar_competitive}]
 From Proposition~\ref{prop:competitive_ground_state} we have existence and uniqueness of steady states of \eqref{eq:dynamics_competitive} in $\tP_2\times\tP_2$,
 and that the steady state is a Nash equilibrium for $F_c$. This Nash equilibrium is unique from Proposition~\ref{prop:unique_Nash}. This concludes the proof of (a).

    For (b), the uniform bound on the second moment follows from Proposition~\ref{prop:2ndmoment}. Further, let $(\rho_0,\mu_0)\in\P_2^{ac}(\R^{d_1})\times \P_2^{ac}(\R^{d_2})$, $\rho_\infty = \nabla \varphi_{\#} \rho_t$ and $\mu_\infty=\nabla \psi_{\#} \mu_t$. We compute the time derivative of the joint Wasserstein-2 metric using \cite[Theorem 23.9]{Villani07}.
    We have
     \begin{align*}
          \ddt& \Wbar(\gamma_t,\gamma_\infty)^2 
          = 2\iint \begin{bmatrix}
              \nabla \varphi(z) - z \\
              \nabla \psi(x) - x
          \end{bmatrix} \cdot \begin{bmatrix}
              -\nabla_z \delta_\rho F_c[\rho_t,\mu_t](z) \\
              \nabla_x \delta_\mu F_c[\rho_t,\mu_t](x)
          \end{bmatrix} \d \gamma_t(z,x) \\
          &= 2\iint \begin{bmatrix}
              \nabla \varphi(z) - z \\
              \nabla \psi(x) - x
          \end{bmatrix} \cdot \begin{bmatrix}
              \nabla_z \delta_\rho F_c[\rho_\infty,\mu_\infty](\nabla \varphi(z))-\nabla_z \delta_\rho F_c[\rho_t,\mu_t](z) \\
              \nabla_x \delta_\mu F_c[\rho_t,\mu_t](x)-\nabla_x \delta_\mu F_c[\rho_\infty,\mu_\infty](\nabla \psi(x))
          \end{bmatrix} \d \gamma_t(z,x)\,,
     \end{align*}
     where we can add the second term because $\gamma_\infty$ is a steady state, so $$\int\nabla_z \delta_\rho F_c[\rho_\infty,\mu_\infty](\nabla \varphi(z))\d\rho_t(z)=0\,.$$ This can be shown by rewriting the support of the steady state using the pushforward from $\rho_\infty$ to $\rho_t$. A similar argument applies to the first variation in $\mu$, and in the setting in which the measure propagating is a Dirac.
    Now we can apply Lemma~\ref{lem:Wbar_upper_bd}
    to obtain $\ddt \Wbar(\gamma_t,\gamma_\infty)^2 \le -2\lambda_c \Wbar(\gamma_t,\gamma_\infty)^2$ which gives exponential convergence with rate $\lambda_c$ using Gr\"onwall's Lemma, completing the proof of (b).
 \end{proof}

\appendix

\section*{Appendix}

The appendices contain proofs in the timescale-separated settings as well as supporting results.
The first two appendices focus on the competitive setting: Appendix~\ref{sec:fast_x_proof} treats the case of a fast algorithm (proof of Theorem~\ref{thm:convergence_fast_mu}), and Appendix~\ref{sec:fast_rho_proof} of a fast population (proof of Theorem~\ref{thm:convergence_fast_rho}). In both cases, the corresponding objective functional is given by 
$G(\rho,x):\P(\R^d) \times \R^d \to [-\infty,\infty]$ as follows:
\begin{align*}
    G(\rho,x) &= \int f_1(z,x) \d \rho(z) + \int f_2(z,x) \d \pi(z)  +\frac{\kappa}{2} \norm{x-x_0}^2 -\cR(\rho)\,,
\end{align*}
where
\begin{align*}
 \cR(\rho)&:=
 \begin{cases}
     \alpha KL(\rho\,|\,\rhot) + \frac{1}{2}\int  W \ast \rho\,\d\rho\,, &\text{ if } \rho\ll \rhot\,,\\
     +\infty  &\text{ else }\,,
 \end{cases}
\end{align*}
where $\alpha>0$ and the reference measure $\rhot$ satisfies $\rhot\in\P(\R^d)\cap L^1(\R^d)$, $\log\rhot\in C^2(\R^d)$ and $\supp\rhot=\R^d$.  
Throughout Appendices~\ref{sec:fast_x_proof} and \ref{sec:fast_rho_proof}, we will assume that Assumptions~\ref{assump:f_lower}(ii), \ref{assump:V_lower} and \ref{assump:W_lower} hold such that
\begin{align*}
    \lambda_b:=\alpha\lambdat -\Lambda_1 >0\qquad \text{ and }
\qquad    \lambda_d:=\lambda_1+\lambda_2+\kappa = \lambda_{V,2}+\lambda_{f,2}>0\,.
\end{align*}

Appendix~\ref{sec:extra_lemmas} contains auxiliary lemmas used throughout the paper.

\section{Competitive Objective, Fast Algorithm (Proof of Theorem~\ref{thm:convergence_fast_mu})}\label{sec:fast_x_proof}
Define
$$G_b(\rho) \coloneqq G (\rho,\bx(\rho))\,,$$
where the best response $\bx(\rho)$ of the classifier for any $\rho\in\P(\R^d)$ with finite energy is given by 
\begin{equation}\label{eq:bx}
    \bx(\rho) \coloneqq \amin_{\bar{x}\in\R^d} G(\rho,\bar{x})\,.
\end{equation}
Since for such fixed $\rho\in\P(\R^d)$, the energy $G(\rho,\cdot)$ is strictly $\lambda_d$-convex in $x$, it has a unique minimizer, and so the best response is well defined. If $\rho\in\P(\R^d)$ is such that $G(\rho,x_1)=\pm\infty$ for some $x_1\in\R^d$, then $G(\rho,x)=\pm\infty$ for any $x\in\R^d$ and we define $b(\rho):=0$ in that case.
We begin with auxiliary results computing the first variations of the best response $\bx$ and then the different terms in the first variation of $G_b(\rho)$. The following result can be deduced directly from Definition~\ref{def:sstates}.

\begin{lemma}[Steady states for \eqref{eq:dynamics_competitive_x_fast}]\label{lem:sstate_x_fast}
    Let $\rho_\infty\in L^1_+(\R^d)\cap L^\infty_{loc}(\R^d)$ with $\|\rho_\infty\|_1=1$. Then $\rho_\infty$ is a steady state for the system \eqref{eq:dynamics_competitive_x_fast} if $\rho_\infty\in W^{1,2}_{loc}(\R^d)$, $\nabla W\ast \rho_\infty \in L^1_{loc}(\R^d)$, $\rho_\infty$ is absolutely continuous with respect to $\rhot$, and $\rho_\infty$ satisfies 
\begin{align}\label{eq:ss-rho-xfast}
   &\nabla_z \left(f_1(z,b(\rho_\infty)) - \alpha\log\left(\frac{\rho_\infty(z)}{\rhot(z)}\right) - W\ast\rho_\infty(z) \right) = 0 \,,\qquad \text{ on }\supp(\rho_\infty)\,,
\end{align}
in the sense of distributions, where $\bx(\rho_\infty) \coloneqq \amin_{x} G(\rho_\infty,x)$.
\end{lemma}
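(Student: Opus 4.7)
The plan is to read the statement off from Definition~\ref{def:sstates} applied to the pair $(\rho_\infty,\mu_\infty) := (\rho_\infty,\,\delta_{\bx(\rho_\infty)})$, which is the natural lifting corresponding to the instantaneously slaved dynamics \eqref{eq:dynamics_competitive_x_fast}. Under the identifications listed at the start of Section~\ref{sec:application} (namely $f=f_1$, $W_1=W$, $W_2=0$, $\alpha>0$, $\beta=0$, $V_1=-\alpha\log\rhot$, and $V_2(x)=\int f_2(z,x)\d\pi(z)+\tfrac{\kappa}{2}\|x-x_0\|^2$), one verifies by direct substitution that $F_c(\rho,\delta_x) = G(\rho,x)$, so that the Euler--Lagrange system \eqref{def:sstate_Gc} for the pair $(\rho_\infty,\delta_{\bx(\rho_\infty)})$ collapses precisely onto the equation \eqref{eq:ss-rho-xfast} in the statement.

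The first step is to check the regularity requirements (i)--(iii) of Definition~\ref{def:sstates}. Condition~(i) on $\nabla W_2\ast\mu_\infty$ is trivial because $W_2\equiv 0$, and the remaining half $\nabla W\ast\rho_\infty\in L^1_{loc}$ is an explicit hypothesis of the lemma. Condition~(iii) is vacuous since $\beta=0$, noting in passing that $\delta_{\bx(\rho_\infty)}\in \P_2$ holds trivially. Condition~(ii) is precisely the regularity package $\rho_\infty\in W^{1,2}_{loc}\cap L^1_+\cap L^\infty_{loc}$ together with unit mass assumed in the lemma. Absolute continuity of $\rho_\infty$ with respect to $\rhot$ is required to make $\log(\rho_\infty/\rhot)$ well-defined pointwise, and combined with $\rho_\infty\in L^\infty_{loc}$ and $\log\rhot\in C^2$ it places this expression in $L^1_{loc}$, so that the resulting Euler--Lagrange equation holds in the distributional sense on $\supp(\rho_\infty)$.

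The second step verifies condition~(iv), i.e.\ both identities in \eqref{def:sstate_Gc}. The $\mu$-variation is immediate: since $\delta_\mu F_c[\rho_\infty,\mu](x) = \int f_1(z,x)\d\rho_\infty(z) + V_2(x)$, the defining optimality of $\bx(\rho_\infty) = \amin_{x\in\R^d} G(\rho_\infty,x)$ combined with strict $\lambda_d$-convexity of $G(\rho_\infty,\cdot)$ yields $\nabla_x \delta_\mu F_c[\rho_\infty,\delta_{\bx(\rho_\infty)}](\bx(\rho_\infty)) = 0$ at the unique point of $\supp(\delta_{\bx(\rho_\infty)})$. The $\rho$-variation evaluates (absorbing additive constants into $V_1=-\alpha\log\rhot$) to
\begin{equation*}
\delta_\rho F_c[\rho_\infty,\delta_{\bx(\rho_\infty)}](z) = f_1(z,\bx(\rho_\infty)) - \alpha\log\!\left(\frac{\rho_\infty(z)}{\rhot(z)}\right) - W\ast\rho_\infty(z)\,,
\end{equation*}
so that taking $\nabla_z$ and requiring it to vanish on $\supp(\rho_\infty)$ is exactly \eqref{eq:ss-rho-xfast}.

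No real obstacle is anticipated, since the lemma is flagged as a direct consequence of the definition. The only point requiring care is the bookkeeping between the additive constants coming from the first variation of the entropy $\alpha\mathcal{H}(\rho)$ (which contributes $\alpha\log\rho + \alpha$) and of the potential $V_1=-\alpha\log\rhot$, which combine into the relative-entropy gradient $-\alpha\nabla\log(\rho_\infty/\rhot)$; the leftover additive constant drops out upon taking $\nabla_z$, which is all that enters the steady-state equation.
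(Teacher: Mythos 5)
Your argument is correct and matches the paper's (implicit) reasoning exactly: the paper offers no proof beyond the remark that the lemma ``can be deduced directly from Definition~\ref{def:sstates},'' and your verification — lifting to $(\rho_\infty,\delta_{\bx(\rho_\infty)})$, checking conditions (i)--(iv) under the identifications $f=f_1$, $W_1=W$, $W_2=0$, $\beta=0$, $V_1=-\alpha\log\rhot$, and noting that the $\mu$-variation vanishes by optimality of $\bx(\rho_\infty)$ while the $\rho$-variation reduces to \eqref{eq:ss-rho-xfast} after the additive constants drop under $\nabla_z$ — is precisely what that deduction consists of.
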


\begin{lemma}[First variation of the best response]\label{lem:first_variation_b}
    The first variation of the best response of the classifier at $\rho$ (if it exists) is 
    \begin{align*}
        \delta_\rho \bx[\rho](z) = -Q(\rho)^{-1} \nabla_x f_1(z,\bx(\rho)) \quad \text{ for almost every } z\in \R^d\,,
    \end{align*}
    where $Q(\rho)\succeq (\kappa+\lambda_1+\lambda_2)\Id_d$ is a symmetric matrix, constant in $z$ and $x$, defined as $$Q(\rho)\coloneqq \kappa \Id_d+\int \nabla_x^2 f_1(z,\bx(\rho)) \d \rho(z)  +\int \nabla_x^2 f_2(z,\bx(\rho))\d\pi(z)\,.$$
    In particular, we then have for any $\psi\in C_c^\infty(\R^d)$ with $\int\psi\,\d z=0$ that
    \begin{equation*}
        \lim_{\eps\to 0} \frac{1}{\eps} \left\| \bx[\rho+\eps\psi] - \bx[\rho]-\eps \int \delta_\rho \bx[\rho](z)\psi(z)\d z\right\| = 0\,.
    \end{equation*}
\end{lemma}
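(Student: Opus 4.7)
The plan is to apply the implicit function theorem to the first-order optimality condition characterizing $\bx(\rho)$. Since $G(\rho,\cdot)$ is strictly convex with Hessian bounded below by $\lambda_d\,\Id_d > 0$, the minimizer $\bx(\rho)$ is unique, and the only part of $G(\rho,x)$ that depends on $x$ gives the implicit equation
\begin{equation*}
H(\rho,\bx(\rho)) \;:=\; \int \nabla_x f_1(z,\bx(\rho))\,\d\rho(z) + \int \nabla_x f_2(z,\bx(\rho))\,\d\pi(z) + \kappa\bigl(\bx(\rho)-x_0\bigr) \;=\; 0\,.
\end{equation*}
Note that $\cR(\rho)$ has no $x$-dependence and drops out. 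Differentiating $H$ in the $x$-variable yields exactly the matrix
\begin{equation*}
\partial_x H(\rho,x) \;=\; \kappa\,\Id_d + \int \nabla_x^2 f_1(z,x)\,\d\rho(z) + \int \nabla_x^2 f_2(z,x)\,\d\pi(z)\,,
\end{equation*}
so $\partial_x H(\rho,\bx(\rho)) = Q(\rho)$. Symmetry is immediate (Hessians of scalar functions are symmetric). The lower bound $Q(\rho) \succeq (\kappa+\lambda_1+\lambda_2)\Id_d = \lambda_d\Id_d$ follows from Assumption~\ref{assump:f_lower}(ii) ($\nabla_x^2 f_1 \succeq \lambda_1 \Id_d$) and the lower bound $\nabla_x^2 f_2 \succeq \lambda_2 \Id_d$ used to package Assumption~\ref{assump:V_lower} in the application section. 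In particular $Q(\rho)$ is invertible with $\|Q(\rho)^{-1}\|_2 \leq \lambda_d^{-1}$.

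Next, I would fix $\psi \in C_c^\infty(\R^d)$ with $\int \psi\,\d z = 0$ and set $\rho_\eps := \rho + \eps\psi$, viewed as a signed measure. For $|\eps|$ small, $\rho_\eps$ still produces a strictly convex, coercive objective $G(\rho_\eps,\cdot)$ with Hessian uniformly bounded below by $\lambda_d\Id_d/2$, say (by the compact support of $\psi$ and the fact that the Hessian terms change continuously in $\eps$). Hence the unique minimizer $x(\eps) := \bx(\rho_\eps)$ exists, and it satisfies $H(\rho_\eps,x(\eps))=0$. The map $\eps \mapsto H(\rho_\eps,x)$ is smooth (in fact linear in $\eps$), and $\partial_x H(\rho_\eps,x)$ is continuous and invertible near $(0,\bx(\rho))$. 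The implicit function theorem therefore gives that $\eps\mapsto x(\eps)$ is $C^1$ near $0$, with
\begin{equation*}
x'(0) \;=\; -\,[\partial_x H(\rho,\bx(\rho))]^{-1}\,\partial_\eps H(\rho_\eps,\bx(\rho))\big|_{\eps=0} \;=\; -\,Q(\rho)^{-1} \int \nabla_x f_1(z,\bx(\rho))\,\psi(z)\,\d z\,.
\end{equation*}
Pulling the matrix $Q(\rho)^{-1}$ inside the integral (it is constant in $z$) and reading off the integrand against $\psi$ identifies
\begin{equation*}
\delta_\rho \bx[\rho](z) \;=\; -\,Q(\rho)^{-1}\,\nabla_x f_1(z,\bx(\rho))\,.
\end{equation*}
The final Taylor-remainder statement $\tfrac{1}{\eps}\|\bx[\rho+\eps\psi]-\bx[\rho]-\eps\int \delta_\rho \bx[\rho]\psi\,\d z\| \to 0$ is then the differentiability of $\eps\mapsto x(\eps)$ at $0$ delivered by the implicit function theorem.

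The main obstacle is verifying the hypotheses of the implicit function theorem in a way consistent with the (possibly signed) perturbation $\rho+\eps\psi$; in particular one has to check that the integrals $\int\nabla_x^2 f_1(z,x)\,\d\rho_\eps(z)$ and $\int\nabla_x f_1(z,x)\,\d\rho_\eps(z)$ are well-defined and continuously differentiable in $(\eps,x)$. This is where the assumption $\psi\in C_c^\infty(\R^d)$ is used crucially: on the compact set $\supp\psi$, the $C^2$-regularity of $f_1$ in Assumption~\ref{assump:f_lower}(ii) gives uniform bounds on $\nabla_x f_1$ and $\nabla_x^2 f_1$, so dominated convergence and the standard $C^1$-version of the implicit function theorem apply without difficulty. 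The remaining details (symmetry of $Q$, bound $\|Q^{-1}\|_2 \le \lambda_d^{-1}$, and the pointwise identification of $\delta_\rho \bx$) are then routine.
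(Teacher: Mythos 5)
Your proof is correct and follows essentially the same route as the paper: both start from the first-order optimality condition $\nabla_x G(\rho+\eps\psi,\bx(\rho+\eps\psi))=0$, differentiate in $\eps$, and invert $\nabla_x^2 G=Q(\rho)$ to solve for $\delta_\rho\bx$. The only (cosmetic) difference is that you package the computation as an explicit invocation of the implicit function theorem and spell out the dominated-convergence/compact-support details needed to justify differentiating under the integral, whereas the paper differentiates the optimality condition directly and is more terse about these regularity points; your extra care is welcome but does not change the substance of the argument.
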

\begin{proof}
Let $\psi\in C_c^\infty(\R^d)$ with $\int\psi\,\d z=0$ and fix $\eps>0$. Any minimizer of $G(\rho+\eps\psi,x)$ for fixed $\rho$ must satisfy 
\begin{align*}
    \nabla_x G(\rho+\eps \psi,b(\rho+\eps \psi)) = 0\,.
\end{align*} 
Differentiating in $\eps$, we obtain
\begin{align}\label{eq:depsGc}
   \int \delta_\rho \nabla_x G[\rho+\eps\psi,b(\rho+\eps\psi)]\psi(z)\,\d z 
   +  \nabla_x^2 G(\rho+\eps\psi,b(\rho+\eps\psi)) \int \delta_\rho b[\rho+\eps\psi](z) \psi(z)\,\d z= 0\,.
\end{align} 
Next, we explicitly compute all terms involved in \eqref{eq:depsGc}.
Computing the derivatives yields
\begin{align*}
     \nabla_x G(\rho,x) &= \int \nabla_x f_1(z,x) \d \rho(z)  +\int \nabla_x f_2(z,x) \d \pi(z) + \kappa( x- x_0) \\
    \delta_\rho \nabla_x G [\rho,x] (z) &= \nabla_x f_1(z,x) \\
    \nabla_x^2 G(\rho,x) &=\int \nabla_x^2 f_1(z,x) \d \rho(z)  + \int \nabla_x^2 f_2(z,x) \d \pi(z) + \kappa \Id_d.
\end{align*}
Note that $\nabla_x^2 G$ is invertible
because $\lambda_1 + \lambda_2 + \kappa > 0$.
Inverting this term and substituting these expressions into \eqref{eq:depsGc} for $\eps=0$ gives 
\begin{align*}
    \int \delta_\rho b[\rho](z)\psi(z)\,\d z&= -\bigg[\kappa \Id_d+\int \nabla_x^2 f_1(z,\bx(\rho)) \d \rho(z) 
    + \int \nabla_x^2 f_2(z,\bx(\rho)) \d \pi(z) \bigg]^{-1}  \int \nabla_x f_1(z,\bx(\rho))\psi(z)\,\d z \\
    &=-\int Q(\rho)^{-1}  \nabla_x f_1(z,\bx(\rho))\psi(z)\,\d z \,.
\end{align*}
Finally, the lower bound on $Q(\rho)$ follows thanks to Assumptions~\ref{assump:f_lower}(ii) and \ref{assump:V_lower}.
\end{proof}

\begin{lemma}[First variation of $G_b$]\label{lem:first_variations}
The first variation of $G_b$ is given by
\begin{equation*}
    \delta_\rho G_b[\rho](z) = h_1(z)+h_2(z)+\kappa h_3(z) - \delta_\rho \cR[\rho](z)\,,
\end{equation*}
where
    \begin{align*}
        &h_1(z):=\frac{\delta}{\delta \rho} \left(\int f_1(\tilde z,\bx(\rho)) \d \rho(\tilde z)\right)(z)
        = \< \int \nabla_x f_1(\tilde z,\bx(\rho))\d \rho(\tilde z) , \frac{\delta b}{\delta \rho}[\rho](z)> + f_1(z,\bx(\rho))\,,\\
        &h_2(z):=\frac{\delta}{\delta \rho} \left(\int f_2(\tilde z,\bx(\rho)) \d \pi(\tilde z)\right)(z)
        =\<\int \nabla_x f_2(\tilde z,\bx(\rho)) \d \pi(\tilde z), \frac{\delta b}{\delta \rho}[\rho](z)>\,,\\
        &h_3(z):=\frac{1}{2}\frac{\delta}{\delta \rho} \|\bx(\rho)-x_0\|^2
        =\<\bx(\rho)-x_0,\frac{\delta b}{\delta \rho}[\rho](z)>\,,
    \end{align*}
    and 
    \begin{align*}
        \delta_\rho \cR[\rho](z) = \alpha\log (\rho(z)/\rhot(z)) + (W \ast \rho)(z)\,.
    \end{align*}
\end{lemma}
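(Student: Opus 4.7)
The plan is to compute the first variation of $G_b$ by directly applying the chain rule, decomposing $G_b$ into its four constituent contributions and differentiating each in turn. For any $\psi\in C_c^\infty(\R^d)$ with $\int\psi\,\d z=0$, I would write
\begin{align*}
G_b(\rho + \eps\psi) &= \int f_1(z, \bx(\rho + \eps\psi))\,\d(\rho+\eps\psi)(z) + \int f_2(z, \bx(\rho+\eps\psi))\,\d\pi(z) \\
&\quad + \frac{\kappa}{2}\norm{\bx(\rho+\eps\psi) - x_0}^2 - \cR(\rho + \eps\psi)\,,
\end{align*}
and differentiate at $\eps = 0$. The crucial ingredient is the first-order expansion of $\bx(\rho+\eps\psi)$ provided by Lemma~\ref{lem:first_variation_b}, which gives $\bx(\rho+\eps\psi) = \bx(\rho) + \eps \int \delta_\rho \bx[\rho](z)\psi(z)\,\d z + o(\eps)$.

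First I would handle the contribution $\int f_1(z, \bx(\rho))\,\d\rho(z)$: differentiating in $\eps$ yields both an \emph{explicit} contribution from the perturbation of $\rho$, producing the term $f_1(z, \bx(\rho))$ in $h_1$, and an \emph{implicit} contribution through $\bx(\rho)$, producing the inner-product term $\la \int \nabla_x f_1(\tilde z, \bx(\rho))\d\rho(\tilde z), \delta_\rho \bx[\rho](z)\ra$. The analogous chain-rule calculation gives $h_2$ and $\kappa h_3$, both of which only carry the implicit dependence through $\bx$ since $\pi$ and $x_0$ are fixed and $\tfrac{1}{2}\partial_x\norm{x-x_0}^2 = (x-x_0)$. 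The fourth term $\cR(\rho)$ has no dependence on $\bx$, and its first variation is the standard one for the relative entropy plus interaction energy: using that $\rho \ll \rhot$ and the symmetry of $W$, we obtain $\delta_\rho \cR[\rho](z) = \alpha\log(\rho(z)/\rhot(z)) + (W*\rho)(z)$, where the additive constant $\alpha$ coming from $\d/\d\rho[\rho\log\rho]$ is absorbed by the constraint $\int\psi\,\d z = 0$.

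The main technical obstacle is justifying the interchange of differentiation and integration. For the terms depending on $\bx$, Lemma~\ref{lem:first_variation_b} provides quantitative control $\|\bx(\rho+\eps\psi) - \bx(\rho) - \eps \int \delta_\rho \bx[\rho]\psi\,\d z\| = o(\eps)$, which, combined with the $C^2$ regularity of $f_1, f_2$ from Assumption~\ref{assump:f_lower}(ii) and the local boundedness of $\nabla_x f_i$ uniformly on compact neighborhoods of $\bx(\rho)$, permits a dominated convergence argument. Since $\psi$ has compact support, the measure $\rho+\eps\psi$ is nonnegative for small $\eps$, remains absolutely continuous with respect to $\rhot$ (using $\supp\rhot = \R^d$), and has uniformly bounded $L^1$-mass, so the relative entropy term is differentiable by standard arguments.

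Finally, I would sum the four contributions and collect the terms into the stated expression. I note in passing that, once the first-order optimality condition $\nabla_x G(\rho, \bx(\rho))=0$ from the definition of $\bx$ is invoked, the sum of the $\delta_\rho \bx$-dependent pieces in $h_1+h_2+\kappa h_3$ collapses to zero — giving the Danskin-type identity $\delta_\rho G_b[\rho](z) = f_1(z,\bx(\rho)) - \delta_\rho\cR[\rho](z)$ — but the present lemma only states the raw chain-rule expression, so this simplification is deferred.
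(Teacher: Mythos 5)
Your proposal follows essentially the same approach as the paper: decompose $G_b$ into its four contributions, apply the chain rule using the first-order expansion of $\bx(\rho+\eps\psi)$ from Lemma~\ref{lem:first_variation_b}, and compute each of $h_1, h_2, h_3, \delta_\rho\cR$ term by term. The only difference is that you add explicit remarks about justifying the differentiation-under-the-integral steps via dominated convergence, which the paper carries out formally; your closing observation about the Danskin-type collapse correctly anticipates Proposition~\ref{prop:first_variation_Gb}.
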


\begin{proof}
We begin with general expressions for the Taylor expansions of $\bx:\P(\R^d)\to\R^d$ and $f_i(z,\bx(\cdot)):\P(\R^d)\to\R$ for $i=1, 2$ around $\rho$. 
Let $\psi \in \mathcal{T}$ with $\mathcal{T}=\{\psi:\int \psi(z) \d z =0\}$. Then
\begin{align}\label{eq:Taylor1}
    b(\rho+\eps \psi) = \bx(\rho) + \eps \int \frac{\delta b}{\delta \rho}[\rho](z')\psi(z') \d z' + O(\eps^2)
\end{align}
and 
\begin{align}\label{eq:Taylor2}
    f_i(z,b(\rho+\eps \psi)) = f_i(z,\bx(\rho)) + \eps \<\nabla_x f_i(z,\bx(\rho)),\int \frac{\delta b}{\delta \rho}[\rho](z')\psi(z') \d z'> + O(\eps^2)\,.
\end{align}
We compute explicitly each of the first variations:
\begin{enumerate}
    \item[(i)] Using \eqref{eq:Taylor2}, we have
    \begin{align*}
    \int \psi(z)& h_1(z) \d z = \lim_{\eps \rightarrow 0} \frac{1}{\eps} \bigg[ \int f_1(z, b(\rho + \eps \psi))(\rho(z) + \eps \psi(z))\d z-\int f_1(z,\bx(\rho))\rho(z) \d z \bigg] \\
    &=  \< \int \nabla_x f_1 (z,\bx(\rho)) \d \rho(z) ,\int \frac{\delta \bx(\rho)}{\delta \rho}[\rho](z')\psi(z')\d z' > + \int f_1(z,\bx(\rho)) \psi(z) \d z \\
    &= \int\< \int \nabla_x f_1 (z,\bx(\rho)) \d \rho(z) , \frac{\delta \bx(\rho)}{\delta \rho}[\rho](z') > \psi(z')\d z' + \int f_1(z,\bx(\rho)) \psi(z) \d z \\
    \Rightarrow
    h_1(z)&= \< \int \nabla_x f_1(\tilde z,\bx(\rho))\d \rho(\tilde z) , \frac{\delta b}{\delta \rho}[\rho](z)> + f_1(z,\bx(\rho))\,.
\end{align*}
\item[(ii)] Similarly, using again \eqref{eq:Taylor2},
\begin{align*}
     \int \psi(z) h_2(z) \d z 
     &= \lim_{\eps \rightarrow 0} \frac{1}{\eps} \bigg[ \int f_2(z, b(\rho + \eps \psi))\d\pi(z)-\int f_2(z,\bx(\rho))\pi(z) \d z \bigg] \\
     &= \int \<\int \nabla_x f_2(\tilde z,\bx(\rho)) \d \pi(\tilde z), \frac{\delta b}{\delta \rho}[\rho](z)> \psi(z)\d z\\
     \Rightarrow h_2(z)&= \<\int \nabla_x f_2(\tilde z,\bx(\rho)) \d \pi(\tilde z), \frac{\delta b}{\delta \rho}[\rho](z)>\,.
\end{align*}
\item[(iii)] Finally, from \eqref{eq:Taylor1} it follows that
\begin{align*}
    \int \psi(z) h_3(z) \d z &= \lim_{\eps \rightarrow 0} \frac{1}{2\eps} \bigg[  \<b(\rho+\eps \psi)-x_0,b(\rho+\eps \psi)-x_0> -  \<\bx(\rho)-x_0,\bx(\rho)-x_0> \bigg] \\
    &= \int  \<\bx(\rho)-x_0,\frac{\delta b}{\delta \rho}[\rho](z)> \psi(z)\d z \\
    \Rightarrow h_3(z)&= \<\bx(\rho)-x_0,\frac{\delta b}{\delta \rho}[\rho](z)>\,.
\end{align*}
\end{enumerate}
Finally, the expression for $\delta_\rho \cR[\rho]$ follows by direct computation \eqref{eq:bx}.
\end{proof}

\begin{proposition}[Danskin-Type Result]\label{prop:first_variation_Gb}
\sloppy Denote $G_b(\rho) \coloneqq G (\rho,\bx(\rho))$ with $\bx(\rho)$ given by \eqref{eq:bx}. Then $$\delta_\rho G_b[\rho] = \left.\delta_\rho G[\rho] \right|_{x=\bx(\rho)}.$$
\end{proposition}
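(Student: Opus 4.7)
The plan is to combine the explicit first-variation formula from Lemma~\ref{lem:first_variations} with the first-order optimality condition for the best response $b(\rho)$. The statement is essentially the classical Danskin/envelope principle: the chain-rule contribution coming from the implicit dependence of $b(\rho)$ on $\rho$ must vanish because we are differentiating a minimum in the inner variable.

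First, I would write
\begin{equation*}
\delta_\rho G_b[\rho](z) \;=\; h_1(z) + h_2(z) + \kappa\, h_3(z) - \delta_\rho \cR[\rho](z)
\end{equation*}
using Lemma~\ref{lem:first_variations}, and extract the terms that contain $\delta_\rho b[\rho](z)$. These three inner-product terms regroup into
\begin{equation*}
\left\langle \int \nabla_x f_1(\tilde z,\bx(\rho))\,\d\rho(\tilde z) + \int \nabla_x f_2(\tilde z,\bx(\rho))\,\d\pi(\tilde z) + \kappa\bigl(\bx(\rho)-x_0\bigr),\; \delta_\rho b[\rho](z)\right\rangle,
\end{equation*}
whose first slot is exactly $\nabla_x G(\rho,\bx(\rho))$.

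Next, I would invoke the first-order optimality condition. Since $\lambda_d = \kappa + \lambda_1 + \lambda_2 > 0$ under Assumptions~\ref{assump:f_lower}(ii) and \ref{assump:V_lower}, the map $x\mapsto G(\rho,x)$ is strongly convex, so $\bx(\rho)$ is the unique minimizer and satisfies $\nabla_x G(\rho,\bx(\rho)) = 0$. This annihilates all three contributions involving $\delta_\rho b[\rho]$. What is left is
\begin{equation*}
\delta_\rho G_b[\rho](z) \;=\; f_1(z,\bx(\rho)) - \delta_\rho \cR[\rho](z),
\end{equation*}
which is precisely $\delta_\rho G[\rho,x](z)$ evaluated at $x=\bx(\rho)$ (noting that $\int f_2(z,x)\,\d\pi(z)$ and $\tfrac{\kappa}{2}\|x-x_0\|^2$ do not depend on $\rho$ at fixed $x$).

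The only genuine subtlety is ensuring that $\delta_\rho b[\rho]$ is a legitimate object and that the formal chain rule embedded in Lemma~\ref{lem:first_variations} is rigorous. Both points are already handled by Lemma~\ref{lem:first_variation_b}, whose proof relies crucially on invertibility of $\nabla_x^2 G(\rho,\bx(\rho))$, guaranteed again by $\lambda_d>0$. Apart from this regularity check, the proof is a short algebraic collapse, so I do not expect any real obstacle beyond carefully tracking the inner-product structure.
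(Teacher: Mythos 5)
Your proof is correct and follows essentially the same route as the paper: apply Lemma~\ref{lem:first_variations}, regroup the three $\delta_\rho b$ terms into an inner product whose first slot is $\nabla_x G(\rho,\bx(\rho))$, annihilate it via the first-order optimality condition defining $\bx(\rho)$, and identify the remainder with $\delta_\rho G[\rho,x]|_{x=\bx(\rho)}$. Your remark on the legitimacy of $\delta_\rho b[\rho]$ via Lemma~\ref{lem:first_variation_b} and the invertibility of $\nabla_x^2 G$ is a sensible addition, consistent with the paper's reliance on $\lambda_d > 0$.
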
\label{proof_lemma1}
\begin{proof}
    We start by computing $\delta_\rho G(\cdot,x)[\rho](z)$ for any $z,x\in\R^d$:
\begin{align}\label{eq:first-variation-Gc}
    \delta_\rho G(\cdot,x)[\rho](z) 
    &= f_1(z,x) - \delta_\rho \cR[\rho](z).
\end{align}
Next, we compute $\delta_\rho G_b$. Using Lemma~\ref{lem:first_variations}, the first variation of $G_b$ is given by
\begin{align*}
    \delta_\rho G_b[\rho](z) &= h_1(z)+h_2(z)+\kappa h_3(z) - \delta_\rho \cR[\rho](z) \\
    &=  -\<\left[\int\nabla_x f_1(\tilde z,\bx(\rho))\d \rho(\tilde z)+ \int \nabla_x f_2(\tilde z,\bx(\rho)) \d \pi(\tilde z)+\kappa(\bx(\rho)-x_0)\right],\delta_\rho b[\rho](z)>\\&\quad + f_1(z,\bx(\rho)) - \delta_\rho \cR[\rho](z) \,.
\end{align*}
Note that 
\begin{align}\label{eq:gradx-Gc}
    \nabla_x G(\rho,x) = \int\nabla_x f_1(\tilde z,x)\d \rho(\tilde z)+ \int \nabla_x f_2(\tilde z,x) \d \pi(\tilde z)+\kappa(x-x_0)\,,
\end{align}
and by the definition of the best response $\bx(\rho)$, we have $\nabla_x G(\rho,x)|_{x=\bx(\rho)}=0$. Substituting into the expression for $\delta_\rho G_b$ and using \eqref{eq:first-variation-Gc}, we obtain
\begin{align*}
    \delta_\rho G_b[\rho](z) 
    = f_1(z,\bx(\rho)) - \delta_\rho \cR[\rho](z)
    = \delta_\rho G(\cdot,x)[\rho](z) \bigg|_{x=\bx(\rho)}\,. %
\end{align*}
This concludes the proof.
\end{proof}

\begin{proposition}[Displacement concavity]\label{prop:Gb_concave}
Fix $\rho_0,\rho_1\in\P_2(\R^d)$. Along any geodesic $(\rho_s)_{s\in[0,1]}\in\P_2(\R^d)$ connecting $\rho_0$ to $\rho_1$, we have for all $s\in[0,1]$
    \begin{align}\label{eq:ddssGb}
    \ddss G_b(\rho_s) \le - \lambda_b \Wass_2(\rho_0,\rho_1)^2 \,, \qquad 
   \lambda_b:= \alpha\tilde\lambda - \Lambda_1\,.
\end{align}
As a result, the functional $G_b:\P_2(\R^d)\to [-\infty,+\infty]$ is uniformly displacement concave with constant $\lambda_b> 0$. 
\end{proposition}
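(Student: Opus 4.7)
The plan is to bound $\ddss G_b(\rho_s)$ along a Wasserstein-$2$ geodesic $(\rho_s)_{s\in[0,1]}$ via a Danskin-type argument that combines the envelope structure of the best response $\bx(\rho)$ with uniform displacement concavity of $G(\,\cdot\,,x)$ in $\rho$ for each fixed $x$. I view $g(s):=G_b(\rho_s)=G(\rho_s,\bx(\rho_s))$ as a composition and exploit the defining relation $\nabla_x G(\rho_s,\bx(\rho_s))=0$. Writing $F(s,x):=G(\rho_s,x)$, the chain rule together with the vanishing of $\partial_x F$ at $x=\bx(\rho_s)$ gives
\begin{equation*}
g'(s)=\partial_s F(s,\bx(\rho_s)),\qquad g''(s)=\partial_s^2 F(s,\bx(\rho_s))+\partial_{sx} F(s,\bx(\rho_s))\cdot\tfrac{d}{ds}\bx(\rho_s).
\end{equation*}
Differentiating the optimality condition in $s$ solves $\tfrac{d}{ds}\bx(\rho_s)=-[\nabla_x^2 G(\rho_s,\bx(\rho_s))]^{-1}\partial_{sx}F$. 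Substituting shows the correction term is a non-positive quadratic form because Assumptions~\ref{assump:f_lower}(ii) and \ref{assump:V_lower} force $\nabla_x^2 G\succeq\lambda_d\Id_d\succ 0$. Hence $g''(s)\le \partial_s^2 F(s,\bx(\rho_s))$.

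Next, I would bound $\partial_s^2 F(s,x)$ uniformly in $x$, proceeding term by term exactly as in the proof of Proposition~\ref{prop:dipl-convexity}. The contribution of $\int f_1(z,x)\,\d\rho_s(z)$ is at most $\Lambda_1\Wass_2(\rho_0,\rho_1)^2$ by the Hessian upper bound $\nabla_z^2 f_1\preceq \Lambda_1\Id_d$ of Assumption~\ref{assump:f_lower}(ii). The terms $\int f_2(z,x)\,\d\pi(z)$ and $\tfrac{\kappa}{2}\|x-x_0\|^2$ contribute zero since they do not depend on $\rho$. The interaction term $-\tfrac12\int \rho_s\,W\ast\rho_s$ has non-positive second derivative along geodesics because $W$ is convex (Assumption~\ref{assump:W_lower}). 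Finally, splitting $-\alpha KL(\rho_s\,|\,\rhot)=-\alpha\Hc(\rho_s)+\alpha\int\log\rhot\,\d\rho_s$, the entropy piece contributes non-positively by the standard McCann result, while the log-reference piece contributes at most $-\alpha\tilde\lambda\Wass_2(\rho_0,\rho_1)^2$ since $-\log\rhot$ is $\tilde\lambda$-convex. Summing yields
\begin{equation*}
\partial_s^2 F(s,x)\le(\Lambda_1-\alpha\tilde\lambda)\Wass_2(\rho_0,\rho_1)^2=-\lambda_b\Wass_2(\rho_0,\rho_1)^2
\end{equation*}
uniformly in $x$, and combined with the envelope inequality this gives \eqref{eq:ddssGb}. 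Uniform displacement concavity then follows by integrating twice in $s$.

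The main subtlety is the rigorous justification of the implicit differentiation of $s\mapsto \bx(\rho_s)$, which relies on the strict convexity $\nabla_x^2 G\succeq\lambda_d\Id_d$ and on enough regularity of the geodesic. If this regularity step is awkward, I would bypass the pointwise differential argument entirely: the term-by-term computation above in fact shows that $G(\,\cdot\,,x)$ is $\lambda_b$-displacement concave with constant independent of $x$, so the displacement concavity inequality
\begin{equation*}
G(\rho_s,x)\ge(1-s)G(\rho_0,x)+sG(\rho_1,x)+s(1-s)\tfrac{\lambda_b}{2}\Wass_2(\rho_0,\rho_1)^2
\end{equation*}
holds for every $x\in\R^d$. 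Taking the infimum over $x$ on both sides and using $\inf_x[A(x)+B(x)]\ge\inf_x A(x)+\inf_x B(x)$ with $G_b=\inf_x G(\,\cdot\,,x)$ yields the desired $\lambda_b$-concavity of $G_b$ directly, from which \eqref{eq:ddssGb} follows wherever $G_b$ is twice differentiable along geodesics.
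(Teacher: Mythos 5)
Your main argument is essentially the paper's own proof restated in envelope-theorem language: you differentiate $G_b(\rho_s)$ twice, use $\nabla_x G(\rho_s,\bx(\rho_s))=0$ to kill the $\dds\bx(\rho_s)$ contribution in the first derivative, and then identify the $\bx$-dependent term in the second derivative as the Schur-complement quadratic form $-\langle \partial_{sx}F,\,[\nabla_x^2 G]^{-1}\partial_{sx}F\rangle\le 0$. In the paper this is exactly the decomposition $\ddss G_b=L_1+L_2-\ddss\cR$, where $L_2=-\langle u,Q^{-1}u\rangle\le 0$ is computed via Lemma~\ref{lem:first_variation_b}; your $\partial_{sx}F(s,\bx(\rho_s))$ is the paper's $u(\rho_s)$ and your $\nabla_x^2 G$ is the paper's $Q(\rho_s)$. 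So the computation is the same, modulo notation.

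Your final paragraph is a genuinely different and arguably cleaner route that the paper does not take. By observing that the term-by-term bounds are uniform in $x$, you conclude that $\rho\mapsto G(\rho,x)$ is uniformly $\lambda_b$-displacement concave with a constant independent of $x$, and then use that an infimum over $x$ of uniformly $\lambda_b$-concave functionals is again $\lambda_b$-concave (via $\inf_x[A(x)+B(x)]\ge\inf_x A(x)+\inf_x B(x)$ applied in the direction that respects the sign). This bypasses entirely the differentiability of $s\mapsto\bx(\rho_s)$ and hence the implicit-function/Lemma~\ref{lem:first_variation_b} machinery, at the cost of yielding the integrated displacement-concavity statement rather than the pointwise bound on $\ddss G_b$ (which you correctly flag requires a further regularity remark to recover). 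What the paper's approach buys over your alternative is the explicit form of $L_2$, which the paper then uses in Remark~\ref{rem:f1_f2_upper_bound} to sharpen the rate to $\lambda_b=\alpha\tilde\lambda+\sigma^2/(\kappa+\Lambda_{1u}+\Lambda_{2u})-\Lambda_1$ under additional assumptions; the infimum argument throws that improvement away. For the proposition as stated, both are correct.
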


\begin{proof}
Consider any $\rho_0,\rho_1\in\P_2(\R^d)$.
Then any $\Wass_2$-geodesic $(\rho_s)_{s\in[0,1]}$ connecting $\rho_0$ with $\rho_1$ solves the following system of geodesic equations:
\begin{equation}\label{eq:geodesics}
    \begin{cases}
        \partial_s \rho_s + \div{\rho_s v_s}=0\,,\\
        \partial_s(\rho_s v_s) + \div{\rho_s v_s \otimes v_s}=0\,,
    \end{cases}
\end{equation}
where $\rho_s:\R^d\to\R$ and $v_s:\R^d \to \R^d$ .
The first derivative of $G_b$ along geodesics can be computed explicitly as
\begin{align*}
    \dds G_b(\rho_s) &= \int \nabla_z f_1(z,b(\rho_s)) \cdot v_s(z) \rho_s(z) \d z - \dds \cR(\rho_s) \\
    &+ \<\bigg[\int \nabla_x f_1(z,x) \d \rho_s(z) + \int \nabla_x f_2(z,x) \d \pi(z) + \kappa(x-x_0)\bigg]\bigg|_{x=b(\rho_s)},\dds b(\rho_s) >.
\end{align*}
For the last term, the left-hand side of the inner product is zero by definition of the best response $\bx(\rho_s)$ to $\rho_s$, see \eqref{eq:gradx-Gc}. Therefore
\begin{align*}
    \dds G_b(\rho_s) = \int \nabla_z f_1(z,b(\rho_s)) \cdot v_s(z) \rho_s(z) \d z - \dds \cR(\rho_s)\,.
\end{align*}
Differentiating a second time, using \eqref{eq:geodesics} and integration by parts, we obtain
\begin{align*}
    \ddss G_b(\rho_s) = L_1(\rho_s) + L_2(\rho_s) - \ddss \cR(\rho_s)\,,
\end{align*}
where
\begin{align*}
    L_1(\rho_s) &:=  \int \nabla_z^2 f_1(z,b(\rho_s)) \cdot (v_s \otimes v_s)\, \rho_s \d z = \int \< v_s , \nabla_z^2 f_1(z,b(\rho_s)) \cdot v_s >\, \rho_s \d z\,,\\
    L_2(\rho_s) &:= \int \dds b(\rho_s) \cdot \nabla_{xz}^2 f_1(z,b(\rho_s)) \cdot v_s(z) \,\rho_s(z)\d z\,.
\end{align*}
From \eqref{eq:classic_convexity}, we have that 
$$\ddss \cR (\rho_s) \geq \alpha\tilde\lambda \, \Wass_2(\rho_0,\rho_1)^2\,,$$
and thanks to Assumption~\ref{assump:f_lower}(ii) it follows that
\begin{align*}
   L_1(s)
    &\leq \Lambda_1 \Wass_2(\rho_0,\rho_1)^2 .
\end{align*}
This leaves $L_2$ to bound; we first consider the term $\dds b(\rho_s)$:
\begin{align*}
    \dds b(\rho_s) &= \int \delta_\rho b[\rho_s](\tilde z)  \partial_s \rho_s(\tilde z ) \d \tilde z
    = -\int \delta_\rho b[\rho_s](\tilde z) \div{\rho_s v_s}(\tilde z) \d \tilde z \\
    &= \int \nabla_z \delta_\rho b[\rho_s] (\tilde z) \cdot v_s(\tilde z) \d \rho_s (\tilde z).
\end{align*}
Defining $u(\rho_s)\in\R^d$ by
\begin{align*}
    u(\rho_s) \coloneqq \int \nabla_{xz}^2 f_1(z,b(\rho_s)) \cdot v_s(z) \d \rho_s(z)\,,
\end{align*}
using the results from Lemma~\ref{lem:first_variation_b} for $\nabla_z \delta_\rho b[\rho_s]$
and the fact that $Q(\rho)$ is constant in $z$ and $x$, we have 
\begin{align*}
    L_2(\rho_s) &= -\iint \left[Q(\rho_s)^{-1} \nabla_{xz}^2 f_1(\tilde z,b(\rho_s)) \cdot v_s(\tilde z)\right] \cdot \nabla_{xz}^2 f_1(z,b(\rho_s)) \cdot v_s(z) \,\d\rho_s(z) \d\rho_s(\tilde z)\\
    &= -\<u(\rho_s),Q(\rho_s)^{-1} u(\rho_s) >  \le 0
\end{align*}

Combining all terms together, we obtain
\begin{align*}
    \ddss G_b(\rho_s) \leq -\left(\alpha\tilde\lambda  - \Lambda_1 \right) \Wass_2(\rho_0,\rho_1)^2\,.
\end{align*}
\end{proof}

\begin{remark}\label{rem:f1_f2_upper_bound}
Under some additional assumptions on the functions $f_1$ and $f_2$, we can obtain an improved convergence rate. In particular, assume that for all $z,x\in\R^d$,
\begin{itemize}
\item there exists a constant $\Lambda_{1u} \ge \lambda_1$ such that $\nabla_x^2 f_1(z,x) \preceq \Lambda_{1u} \Id_d$;
    \item there exists a constant $\Lambda_{2u}\ge \lambda_2$ such that $\nabla^2_x f_2(z,x) \preceq \Lambda_{2u} \Id_d$;
    \item there exists a constant $\sigma\ge 0$ such that
$\norm{\nabla_{xz}^2 f_1(z,x)}_2\ge \sigma$.
\end{itemize}
Then we have
$ -Q(\rho_s)^{-1}\preceq -1/(\kappa+\Lambda_{1u}+\Lambda_{2u})\Id_d$. 
Using Lemma~\ref{lem:first_variation_b}, we then obtain a stronger bound on $L_2$ as follows: 
\begin{align*}
    L_2(\rho_s)  &\leq -\frac{1}{\kappa+\Lambda_{1u}+\Lambda_{2u}} \norm{u(\rho_s)}^2 
    \\
    &\leq -\frac{1}{\kappa+\Lambda_{1u}+\Lambda_{2u}}\int \norm{\nabla_{xz}^2  f_1(z,b(\rho_s))}_2^2 \d \rho_s(z) \int \norm{v_s(z)}^2 \d \rho_s(z) \\
    &\leq -\frac{\sigma^2}{\kappa+\Lambda_{1u}+\Lambda_{2u}} \Wass_2(\rho_0,\rho_1)^2.
\end{align*}
This means we can improve the convergence rate in~\eqref{eq:ddssGb} to $\lambda_b:= \alpha\tilde\lambda +\frac{\sigma^2}{\kappa+\Lambda_{1u}+\Lambda_{2u}}-\Lambda_1 $.
\end{remark}

\begin{lemma}[Uniform boundedness of the best response]\label{lemma:x_bounded}
Let Assumption~\ref{assump:f_loose_upper} hold. Then
    for any $\rho\in\P(\R^d)$, we have
    \begin{equation*}
        \norm{\bx(\rho)}^2\leq \norm{x_0}^2 + \frac{2(a_1+a_2)}{\kappa}\,.
    \end{equation*}
\end{lemma}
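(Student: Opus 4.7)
The plan is to exploit the first-order optimality condition for the best response and combine it with the lower bound provided by Assumption~\ref{assump:f_loose_upper}. Since $G(\rho,\cdot)$ is strictly convex in $x$ (with modulus $\lambda_d = \kappa + \lambda_1 + \lambda_2 > 0$), the minimizer $\bx(\rho)$ satisfies
\begin{equation*}
    \nabla_x G(\rho,\bx(\rho)) = \int \nabla_x f_1(z,\bx(\rho))\,\d\rho(z) + \int \nabla_x f_2(z,\bx(\rho))\,\d\pi(z) + \kappa(\bx(\rho)-x_0) = 0\,.
\end{equation*}

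The key idea is to take the inner product of this identity with $\bx(\rho)$ itself. Assumption~\ref{assump:f_loose_upper} then yields $\bx(\rho)\cdot \nabla_x f_i(z,\bx(\rho)) \ge -a_i$ pointwise in $z$, and integrating against $\rho$ and $\pi$ respectively gives
\begin{equation*}
    \int \bx(\rho)\cdot \nabla_x f_1(z,\bx(\rho))\,\d\rho(z) \ge -a_1\,, \qquad \int \bx(\rho)\cdot \nabla_x f_2(z,\bx(\rho))\,\d\pi(z) \ge -a_2\,.
\end{equation*}
Substituting into the optimality condition tested against $\bx(\rho)$ produces the scalar inequality $\kappa\,\bx(\rho)\cdot(\bx(\rho)-x_0) \le a_1 + a_2$.

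The last step is to absorb the cross term $\bx(\rho)\cdot x_0$ using Young's inequality $\bx(\rho)\cdot x_0 \le \tfrac{1}{2}\|\bx(\rho)\|^2 + \tfrac{1}{2}\|x_0\|^2$, leading directly to $\tfrac{\kappa}{2}\|\bx(\rho)\|^2 \le \tfrac{\kappa}{2}\|x_0\|^2 + (a_1+a_2)$, which is the claimed bound. There is no real obstacle here; the only subtlety to mention is that well-posedness of $\bx(\rho)$ (so that the Euler--Lagrange identity makes sense) follows from strict $\lambda_d$-convexity of $G(\rho,\cdot)$ in $x$ as already noted around \eqref{eq:bx}, and that the integrability of $\nabla_x f_i$ against $\rho$ and $\pi$ is justified by $f_i\in C^2$ together with the uniform lower bound from Assumption~\ref{assump:f_loose_upper}.
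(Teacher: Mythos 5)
Your argument is exactly the paper's: test the first-order optimality condition for $\bx(\rho)$ against $\bx(\rho)$ itself, invoke Assumption~\ref{assump:f_loose_upper} to bound the integrals from below, and absorb the $x_0$ cross-term via Young's inequality. The only thing the paper adds is a one-line remark on the degenerate case where $G(\rho,\cdot)\equiv\pm\infty$, in which $\bx(\rho)$ is set to $0$ by convention and the bound is trivial; your well-posedness remark via $\lambda_d$-convexity does not quite cover that case, but it is a cosmetic omission.
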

\begin{proof}
The bound trivially holds if $\rho\in\P(\R^d)$ is such that $G(\rho,x)=\pm\infty$ for some $x\in\R^d$ since then we have $\bx(\rho)=0$. Else,
by definition of the best response $\bx(\rho)$, we have
    \begin{align*}
        \int \nabla_x f_1(z,\bx(\rho)) \d \rho(z) + \int \nabla_x f_2(z,\bx(\rho))\d \pi(z) + \kappa(\bx(\rho) - x_0) = 0\,.
    \end{align*}
    To show that that $\bx(\rho)$ is uniformly bounded, we take the inner product of the above expression with $\bx(\rho)$ itself
    \begin{align*}
        \kappa \|\bx(\rho)\|^2 = \kappa x_0 \cdot \bx(\rho) -  \int \nabla_x f_1(z,\bx(\rho)) \cdot \bx(\rho) \d \rho(z) - \int \nabla_x f_2(z,\bx(\rho)) \cdot \bx(\rho) \d \pi(z)\,.
    \end{align*}
    Using Assumption~\ref{assump:f_loose_upper} to bound the two integrals, together with using Young's inequality to bound the first term on the right-hand side, we obtain
    \begin{align*}
        \kappa \|\bx(\rho)\|^2 \leq \frac{\kappa}{2} \norm{x_0}^2 + \frac{\kappa}{2}\norm{\bx(\rho)}^2 + a_1+a_2\,.
    \end{align*}
\end{proof}

\begin{lemma}[Upper semi-continuity]\label{lem:continuity_fast_x}
Let Assumption
\ref{assump:f_loose_upper} hold. The functional $G:\P(\R^d)\times \R^d \to [-\infty,+\infty]$ is upper semi-continuous when $\P(\R^d)\times \R^d$ is endowed with the product topology of the weak topology and the Euclidean topology. Moreover, the functional $G_b:\P(\R^d)\to [-\infty,+\infty]$ is upper semi-continuous with respect to the weak topology. 
\end{lemma}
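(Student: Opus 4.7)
My strategy is to regroup $G(\rho,x) = \hat A(x) - \hat\cR(\rho,x)$ where
\begin{equation*}
    \hat A(x) := \int f_2(z,x)\d\pi(z) + \tfrac{\kappa}{2}\|x-x_0\|^2, \qquad \hat\cR(\rho,x) := \cR(\rho) - \int f_1(z,x)\d\rho(z).
\end{equation*}
This absorbs the linear coupling in $\rho$ into an entropy-like functional, which is natural because the strong convexity of $-\log\rhot$ dominates the upper Hessian bound on $f_1$: thanks to $\lambda_b = \alpha\lambdat - \Lambda_1 > 0$, the combined integrand $\phi_x(z):=-\alpha\log\rhot(z)-f_1(z,x)$ is jointly continuous and uniformly $\lambda_b$-convex in $z$, so $\phi_x(z)\ge \tfrac{\lambda_b}{2}\|z\|^2 + c(x)\cdot z + c_0(x)$ with $c,c_0\in C(\R^d)$.

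Continuity of $\hat A$ follows from $f_2(z,\cdot)\in C^2$, the bound~\eqref{eq:V_upper-b}, and dominated convergence. The substance is to show that $\hat\cR$ is lower semi-continuous on $\P(\R^d)\times\R^d$ in the product topology. Rewriting
\begin{equation*}
    \hat\cR(\rho,x) = \alpha\int\rho\log\rho\,\d z + \int \phi_x(z)\,\d\rho(z) + \tfrac{1}{2}\int (W\ast\rho)\rho\,\d z,
\end{equation*}
the first term is lsc in $\rho$ by Lemma~\ref{lem:lsc_entropy_santambrogio} and the last is lsc in $\rho$ by \cite[Proposition 7.2]{Santambrogio} (using $W\in C(\R^d)$, $W\ge 0$). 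For the middle term, given $(\rho_n,x_n)\to(\rho,x)$ in the product topology, I would subtract the quadratic-affine minorant of $\phi_{x_n}$ to isolate the nonnegative, jointly continuous remainder $\tilde\phi_{x_n}(z):=\phi_{x_n}(z)-\tfrac{\lambda_b}{2}\|z\|^2-c(x_n)\cdot z-c_0(x_n)\ge 0$, apply the narrow Fatou inequality for jointly lsc nonnegative integrands (via a Skorokhod representation) to obtain $\liminf_n\int\tilde\phi_{x_n}\d\rho_n\ge \int\tilde\phi_x\d\rho$, and add the minorant back term by term: the $\|z\|^2$ part using Fatou, the linear part using $c(x_n)\to c(x)$ combined with convergence of first moments (guaranteed by the uniform second-moment bound in Definition~\ref{def:weakcv}), and the constant part trivially.

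Combining these three estimates, $\hat\cR$ is lsc and hence $G=\hat A-\hat\cR$ is usc in the product topology. For the second claim, since $\bx(\rho)$ is by definition a minimizer of $G(\rho,\cdot)$, we have $G_b(\rho) = \inf_{x\in\R^d}G(\rho,x)$; a pointwise infimum of a family usc in $\rho$ is itself usc, because $\{\rho:G_b(\rho)<c\} = \bigcup_{x\in\R^d}\{\rho:G(\rho,x)<c\}$ is a union of open sets. The main technical obstacle is the quadratic growth of $\phi_x$: weak convergence with uniform second-moment bounds does not preserve $\int\|z\|^2\d\rho_n$, but the sign structure ($\lambda_b/2>0$ enters $\hat\cR$ positively) places this term on the correct side of Fatou, which is what makes the argument go through.
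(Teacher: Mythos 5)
Your proof is correct and takes a genuinely different, and in places more careful, route than the paper. For the joint upper semicontinuity of $G$, you bundle the coupling potential with the log-reference term into $\phi_x(z) = -f_1(z,x) - \alpha\log\rhot(z)$ and exploit that $\lambda_b = \alpha\lambdat - \Lambda_1 > 0$ makes $\phi_x$ uniformly convex in $z$; its quadratic-affine minorant then has a positive leading coefficient $\lambda_b/2$, which puts the quadratic tail on the favorable side of the Fatou inequality for lower semicontinuity of $\hat{\cR}$. The paper instead keeps $\int f_1\,\d\rho$ and $-\cR(\rho)$ as separate terms and asserts the former is ``continuous in $\rho$'' by Santambrogio's Proposition~7.1. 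However, when $\Lambda_1>0$ the functional $\rho\mapsto\int f_1\,\d\rho$ fails to be upper semicontinuous in the paper's weak topology: Definition~\ref{def:weakcv} gives only a uniform second-moment bound, not convergence of second moments, so quadratically growing integrands are not preserved (a small amount of mass escaping to $\|z_n\|\sim\sqrt{n}$ strictly increases $\limsup\int f_1\,\d\rho_n$). Upper semicontinuity of $G$ must be rescued precisely by the compensating growth of $-\alpha\log\rhot$, which is exactly what your regrouping captures; so your reorganization is not a stylistic variant but repairs a genuine imprecision. For the second claim, your observation that $G_b(\rho)=\inf_{x\in\R^d}G(\rho,x)$ is usc as a pointwise infimum of functions usc in $\rho$ (union of open sublevel sets) is shorter than the paper's route through Berge's maximum theorem, and notably it does not use Assumption~\ref{assump:f_loose_upper} at all---the paper invokes it only to compactify the correspondence via Lemma~\ref{lemma:x_bounded}. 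A cosmetic point: the Skorokhod representation you invoke for the joint Fatou step is unnecessary; since $\tilde\phi_{x_n}\ge 0$ has identically vanishing negative part and is hence trivially a.u.i., the paper's own Theorem~\ref{thm:fatou_weak_cv_meas} applies directly.
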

\begin{proof}
The functional $G:\P(\R^d)\times \R^d \to [-\infty,+\infty]$ is continuous in the second variable thanks to Assumption~\ref{assump:f_lower}(ii). Similarly, $\int f_1(z,x)\d\rho(z) + \int f_2(z,x)\d\pi(z)$ is continuous in $\rho$ thanks to \cite[Proposition 7.1]{Santambrogio} using the continuity of $f_1$ and $f_2$. Further, $-\cR$ is upper semi-continuous using Lemma~\ref{lem:lsc_entropy_santambrogio} and \cite[Proposition 7.2]{Santambrogio} thanks to Assumptions~\ref{assump:V_lower} and \ref{assump:W_lower}. This concludes the continuity properties for $G$.

The upper semi-continuity of $G_b$ then follows from a direct application of a version of Berge's maximum theorem \cite[Lemma 16.30]{aliprantis_correspondences_1999}. Let $R:= \norm{x_0}^2 + \frac{2(a_1+a_2)}{\kappa}>0$. We define $\varphi:(\P(\R^d),\Wass_2) \twoheadrightarrow \R^d$ as the correspondence that maps any $\rho\in\P(\R^d)$ to the closed ball $\overline{B_R(0)}\subset \R^d$. Then the graph of $\varphi$ is $\Gr \varphi = \P(\R^d)\times \{\overline{B_R(0)}\}$. With this definition of $\varphi$, the range of $\varphi$ is compact and $\varphi$ is continuous with respect to weak convergence, and so it is in particular upper hemicontinuous. Thanks to Lemma~\ref{lemma:x_bounded}, the best response function $\bx(\rho)$ is always contained in $\overline{B_R(0)}$ for any choice of $\rho\in\P(\R^d)$. As a result, maximizing $-G(\rho,x)$ in $x$ over $\R^d$ for a fixed $\rho\in\P(\R^d)$ reduces to maximizing it over $\overline{B_R(0)}$. Using the notation introduced above, we can restrict $G$ to $G:\Gr \varphi\rightarrow \R$ and write
    \begin{align*}
    G_b(\rho) \coloneqq \max_{\hat{x}\in\varphi(\rho)}-G(\rho,\hat{x}).
    \end{align*}
    Because $G(\rho,x)$ is upper semi-continuous when $\P(\R^d)\times \R^d$ is endowed with the product topology of the weak topology and the Euclidean topology, \cite[Lemma 16.30]{aliprantis_correspondences_1999} guarantees that $G_b(\cdot)$ is upper semi-continuous in the weak topology.
\end{proof}

\begin{proposition}[Ground state]\label{prop:existenceGb}
Let Assumption~\ref{assump:f_loose_upper} hold.
There exists a unique maximizer $\rho_*$ for the functional $G_b$ over $\P(\R^d)$, it satisfies $\rho_*\in\P_2(\R^d)\cap L^1(\R^d)$ and is absolutely continuous with respect to $\rhot$.
\end{proposition}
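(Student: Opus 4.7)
The plan is to apply the direct method in the calculus of variations, mirroring the structure of Proposition~\ref{prop:existenceGa} with inequalities reversed, and relying on the uniform displacement concavity from Proposition~\ref{prop:Gb_concave} together with the upper semi-continuity from Lemma~\ref{lem:continuity_fast_x}.

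First I would establish $M:=\sup_{\rho\in\P(\R^d)}G_b(\rho) < +\infty$. Fix a reference measure $\rho_\circ\in\P_2(\R^d)$ for which both $G_b(\rho_\circ)$ and the dissipation $D_b(\rho_\circ):=\int\norm{\nabla_z\delta_\rho G_b[\rho_\circ]}^2\,\d\rho_\circ$ are finite; the natural candidate is $\rho_\circ=\rhot$, which kills the $KL$ contribution and reduces $D_b(\rho_\circ)$ to an integral involving $\nabla_z f_1(\cdot,\bx(\rhot))$ and $\nabla W\ast\rhot$. Along any $\Wass_2$-geodesic from $\rho_\circ$ to $\rho\in\P_2(\R^d)$, the tangent-plane form of uniform displacement concavity combined with Cauchy--Schwarz applied to the slope at $s=0$ yields
\begin{equation*}
G_b(\rho) \le G_b(\rho_\circ) + \sqrt{D_b(\rho_\circ)}\,\Wass_2(\rho_\circ,\rho) - \frac{\lambda_b}{2}\Wass_2(\rho_\circ,\rho)^2\,,
\end{equation*}
and maximizing this concave quadratic over $\Wass_2(\rho_\circ,\rho)\ge 0$ gives $M\le G_b(\rho_\circ) + D_b(\rho_\circ)/(2\lambda_b)$. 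For $\rho\in\P(\R^d)\setminus\P_2(\R^d)$, the quadratic lower bound on $\alpha KL(\rho|\rhot)$ induced by $\tilde\lambda$-convexity of $-\log\rhot$ eventually dominates the at-most-quadratic growth of $\int f_1\,\d\rho$ thanks to $\lambda_b>0$, so $G_b(\rho)=-\infty$ in that case, and the supremum over $\P(\R^d)$ coincides with that over $\P_2(\R^d)$.

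Next I would take a maximizing sequence $(\rho_n)\subset\P_2(\R^d)$ with $G_b(\rho_n)\to M$. Rearranging the above inequality and using $G_b(\rho_n)\ge G_b(\rho_\circ)-1$ eventually forces uniform boundedness of $\Wass_2(\rho_\circ,\rho_n)$, hence of the second moments of $(\rho_n)$. Tightness then follows from Markov's inequality, and by Banach--Alaoglu one extracts a subsequence (not relabelled) such that $\rho_n\rightharpoonup\rho_*$ weakly in the sense of Definition~\ref{def:weakcv}, for some $\rho_*\in\P_2(\R^d)$. Applying upper semi-continuity from Lemma~\ref{lem:continuity_fast_x} then yields
\begin{equation*}
G_b(\rho_*) \ge \limsup_{n\to\infty}G_b(\rho_n) = M\,,
\end{equation*}
so $\rho_*$ is a maximizer.

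Uniqueness follows from strict displacement concavity: if $\rho_*\ne\rho_*'$ were two distinct maximizers in $\P_2(\R^d)$, the $\Wass_2$-midpoint $\rho_{1/2}$ would satisfy $G_b(\rho_{1/2})\ge M + \frac{\lambda_b}{8}\Wass_2(\rho_*,\rho_*')^2 > M$, contradicting maximality. For the regularity, $G_b(\rho_*)>-\infty$ forces $KL(\rho_*|\rhot)<+\infty$, whence $\rho_*\ll\rhot$; since $\rhot\in L^1(\R^d)$, the Lebesgue density of $\rho_*$ lies in $L^1(\R^d)$, and $\rho_*\in\P_2(\R^d)$ is inherited from the uniform moment bound on the maximizing sequence. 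The main obstacle is the first step: producing a reference $\rho_\circ$ with finite $D_b(\rho_\circ)$ hinges on applying Lemma~\ref{lem:first_variations} at $\rho_\circ=\rhot$ and verifying square-integrability of the resulting integrand under Assumptions~\ref{assump:f_lower}(ii)--\ref{assump:W_lower}; a secondary technical point is rigorously justifying $G_b(\rho)=-\infty$ for $\rho\notin\P_2(\R^d)$ via the quadratic lower bound on $KL(\rho|\rhot)$ combined with Lemma~\ref{lem:rho_log_rho_lower_bound} to absorb the $\rho\log\rho$ contribution.
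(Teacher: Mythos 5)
Your overall architecture matches the paper's (direct method, upper semi-continuity via Lemma~\ref{lem:continuity_fast_x}, uniqueness via strict displacement concavity, and $\rho_*\ll\rhot$ from finiteness of $KL(\rho_*\,|\,\rhot)$), but the way you establish boundedness from above and moment control of the maximizing sequence is genuinely different, and it has a gap you flag but do not close. You route both steps through a tangent-plane/HWI estimate $G_b(\rho)\le G_b(\rho_\circ)+\sqrt{D_b(\rho_\circ)}\,\Wass_2(\rho_\circ,\rho)-\tfrac{\lambda_b}{2}\Wass_2(\rho_\circ,\rho)^2$, which requires exhibiting a reference $\rho_\circ$ with $D_b(\rho_\circ)<\infty$. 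Taking $\rho_\circ=\rhot$, the dissipation is $D_b(\rhot)=\int\|\nabla_z f_1(z,b(\rhot))-(\nabla W\ast\rhot)(z)\|^2\,\d\rhot(z)$. Under the assumptions in force for Theorem~\ref{thm:convergence_fast_mu}---namely Assumptions~\ref{assump:f_lower}(ii), \ref{assump:V_lower}, \ref{assump:W_lower}, \ref{assump:f_loose_upper}---there is no control on the \emph{lower} Hessian bound of $f_1$ in $z$ (that is Assumption~\ref{assump:f_upper}(a), which is not assumed here) and no upper Hessian bound on $W$ (Assumption~\ref{assump:W_upper}, also not assumed here). So $\nabla_z f_1(\cdot,b(\rhot))$ and $\nabla W\ast\rhot$ need not be in $L^2(\rhot)$, and finiteness of $D_b(\rhot)$ is not available. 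Without it, both the upper bound $M<\infty$ and the uniform second-moment bound on the maximizing sequence in your proposal are unjustified.

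The paper avoids this entirely by arguing pointwise: the one-sided Hessian bounds $\nabla_z^2 f_1\preceq\Lambda_1\Id$ and $-\nabla_z^2\log\rhot\succeq\tilde\lambda\Id$ give $f_1(z,x)+\alpha\log\rhot(z)\le c_0(x)-\tfrac{\alpha\tilde\lambda-\Lambda_1}{4}\|z\|^2$; the uniform bound $\|b(\rho)\|\le R$ from Lemma~\ref{lemma:x_bounded} makes $c_0(b(\rho))$ and $\int f_2(z,b(\rho))\,\d\pi(z)$ uniformly bounded; and $-\alpha\int\rho\log\rho$ is absorbed into the negative quadratic via Lemma~\ref{lem:rho_log_rho_lower_bound}. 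This gives both $\sup G_b<\infty$ and, after a small rearrangement, the uniform second-moment bound for the maximizing sequence, with no dissipation ever appearing. If you want to keep your HWI-flavoured argument you would need to strengthen the hypotheses (Assumptions~\ref{assump:f_upper}(a) and \ref{assump:W_upper} would suffice to make $D_b(\rhot)<\infty$); as written, the paper's direct pointwise estimates are strictly more economical in assumptions. The remaining steps of your proposal (uniqueness by midpoint, $\rho_*\ll\rhot$ hence $\rho_*\in L^1$, $\rho_*\in\P_2$) are correct and coincide with the paper.
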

\begin{proof}
    Uniqueness of the maximizer (if it exists) is guaranteed by the uniform concavity provided by Lemma~\ref{prop:Gb_concave}.
To show existence of a maximizer, we use the direct method in the calculus of variations, requiring the following key properties for $G_b$: (1) boundedness from above, (2) upper semi-continuity, and (3) tightness of any maximizing sequence.
   To show (1), note that $\nabla_z^2 (f_1(z,x)+\alpha \log\rhot(z))\preceq -(\alpha\tilde\lambda-\Lambda_1)\Id_d$ for all $z,x\in \R^d\times\R^d$ by Assumptions~\ref{assump:f_lower}(ii) and \ref{assump:V_lower}, and so
   \begin{equation}\label{eq:largerhob}
       f_1(z,x)+\alpha \log\rhot(z) \leq c_0(x) - \frac{(\alpha\tilde\lambda-\Lambda_1)}{4} \|z\|^2 \qquad \forall (z,x)\in\R^d\times\R^d
   \end{equation}
   with $c_0(x):= f_1(0,x)+\alpha \log\rhot(0) + \frac{1}{\alpha\tilde\lambda-\Lambda_1} \|\nabla_z\left[f_1(0,x)+\alpha \log\rhot(0) \right]\|^2$. Therefore,
   \begin{align*}
       G_b(\rho) 
       &= \int \left[f_1(z,\bx(\rho))+\alpha \log\rhot(z)\right] \d\rho(z) +\int f_2(z,\bx(\rho))\d\pi(z)
       +\frac{\kappa}{2}\|\bx(\rho)-x_0\|^2\\
       &\qquad 
       -\alpha \int\rho\log\rho - \frac{1}{2}\int \rho W\ast \rho\\
        &\le c_0(\bx(\rho)) 
        +\int f_2(z,\bx(\rho))\d\pi(z)
       +\frac{\kappa}{2}\|\bx(\rho)-x_0\|^2\,.
   \end{align*}
   To estimate each of the remaining terms on the right-hand side, denote $R:= \norm{x_0}^2 + \frac{2(a_1+a_2)}{\kappa}$ and recall that $\|\bx(\rho)\|\le R$ for any $\rho\in\P(\R^d)$ thanks to Lemma~\ref{lemma:x_bounded}. By continuity of $f_1$ and $\log\rhot$, there exists a constant $c_1\in \R$ such that
   \begin{align}\label{eq:c0bound}
      \sup_{x\in B_R(0)} c_0(x)=\sup_{x\in B_R(0)} \left[f_1(0,x)+\alpha \log\rhot(0) + \frac{1}{\alpha\tilde\lambda-\Lambda_1} \left\|\nabla_z\left(f_1(0,x)+\alpha \log\rhot(0) \right)\right\|^2\right]\le c_1\,.
   \end{align}
   The term $\int f_2(z,b(\rho))\d \pi(z)$ is controlled by $c_2$ thanks to
  \eqref{eq:V_upper-b}.
   The third term can be bounded directly to obtain
      \begin{align*}
       G_b(\rho) 
        &\le c_1+c_2 + \kappa (R^2 + \|x_0\|^2)\,.
   \end{align*}
   This concludes the proof of (1). Statement (2) was shown in Lemma~\ref{lem:continuity_fast_x} using Assumption~\ref{assump:f_loose_upper}. Then we obtain a maximizing sequence $(\rho_n)\in \P(\R^d)$ which is in the closed unit ball of $C_0(\R^d)^*$ and so the Banach-Anaoglu theorem \cite[Theorem 3.15]{rudin_functional_1991} there exists a limit $\rho_*$ in the Radon measures and a subsequence (not relabeled) such that $\rho_n \weakstar \rho_*$. In fact, $\rho_*$ is absolutely continuous with respect to $\rhot$ as otherwise $G_b(\rho_*)=-\infty$, which contradicts that $G_b(\cdot)>-\infty$ somewhere. We conclude that $\rho_*\in L^1(\R^d)$ since $\rhot\in L^1(\R^d)$. To ensure $\rho_*\in\P(\R^d)$, we require (3) tightness of the minimizing sequence $(\rho_n)$. By Markov's inequality \cite{ghosh} it is sufficient to establish a uniform bound on the second moments:
 \begin{equation}\label{eq:tightness-Gb}
      \int \|z\|^2\d\rho_n(z) <C\qquad \forall n\in\mathrm{N}\,.
  \end{equation}
To see this we proceed in a similar way as in the proof of Proposition~\ref{prop:existenceGa}. Defining
    \begin{align*}
        K(\rho) := -\int \left[f_1(z,\bx(\rho))+\alpha \log \rhot(z)\right] \d \rho(z) + \alpha \int \rho \log \rho \, \d z + \frac{1}{2}\int \rho W \ast \rho \, \d z\,,
    \end{align*}
    we have $K(\rho) = -G_b(\rho) + \int f_2(z,\bx(\rho))\d\pi(z) + \frac{\kappa}{2}\norm{\bx(\rho)-x_0}^2$. Then using again the bound on $\bx(\rho)$ from Lemma~\ref{lemma:x_bounded},
    \begin{align*}
        K(\rho) &\le -G_b(\rho) + \sup_{x\in B_R(0)} \int f_2(z,x)\d\pi(z) + \kappa \left(R^2 + \norm{x_0}^2\right)\\
        &\le -G_b(\rho) + c_2 + \kappa \left(R^2 + \norm{x_0}^2\right)\,,
    \end{align*}
 where the last inequality is thanks to  \eqref{eq:V_upper-b}. 
 Hence, using the estimates \eqref{eq:largerhob} and \eqref{eq:c0bound} from above,  
   \begin{align*}
         \frac{(\alpha\tilde\lambda-\Lambda_1)}{4} \int \norm{z}^2 \d \rho_n(z) 
         &\le c_1 -\int \left[f_1(z,\bx(\rho_n))+\alpha \log \rhot(z)\right] \d \rho_n(z) \,.
    \end{align*}
Applying Lemma~\ref{lem:rho_log_rho_lower_bound} with $\eps= \frac{\alpha \lambdat-\Lambda_1}{8\alpha} $ in the same fashion as in the proof of Proposition~\ref{prop:existenceGa} and noting that the sequence $(\rho_n)$ is minimizing $(-G_b)$,
    \begin{align*}
        \frac{(\alpha\tilde\lambda-\Lambda_1)}{4} \int \norm{z}^2 \d \rho_n(z)  & \le c_1 + K(\rho_n) + \alpha\eps \int \norm{z}^2 \d \rho_n(z) + \alpha c_\eps
         \\
        \Rightarrow \frac{(\alpha\tilde\lambda-\Lambda_1)}{8} \int \norm{z}^2 \d \rho_n(z)  &\le c_1 -G_b(\rho_n) + c_2 + \kappa \left(R^2 + \norm{x_0}^2\right) + \alpha c_\eps \\
        &\le c_1 -G_b(\rho_1) + c_2 + \kappa \left(R^2 + \norm{x_0}^2\right)  + \alpha c_\eps <\infty\,,
    \end{align*}
    which uniformly bounds the second moments of $(\rho_n)$. This concludes the proof for the estimate \eqref{eq:tightness-Gb} and also ensures that $\rho_*\in\P_2(\R^d)$.
\end{proof}

    \begin{corollary}\label{cor:ground-states-supp-Gb}
        Any maximizer $\rho_*$ of $G_b$ is a steady state for equation~\eqref{eq:dynamics_competitive_x_fast}, and satisfies $\supp(\rho_*)=\supp(\rhot)=\R^d$ and $\rho_*\in C^2(\R^d)$.
    \end{corollary}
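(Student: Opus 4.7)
The plan is to follow the same general scheme as in Corollary~\ref{cor:ground-states-supp-Ga}: exploit the Euler–Lagrange equation associated with the maximization problem, together with the Danskin-type identity from Proposition~\ref{prop:first_variation_Gb}, to obtain an explicit exponential representation of $\rho_*$, from which the support, regularity, and steady-state properties follow.

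First, since $\rho_*$ is a maximizer of $G_b$ over $\P(\R^d)$ (provided by Proposition~\ref{prop:existenceGb}) and $\rho_*\in L^1(\R^d)\cap\P_2(\R^d)$ is absolutely continuous with respect to $\rhot$, it satisfies the Euler–Lagrange condition $\delta_\rho G_b[\rho_*](z)=c$ on $\supp(\rho_*)$ for some constant $c\in\R$ (possibly different on different connected components of $\supp(\rho_*)$, though we will see the support is all of $\R^d$). By Proposition~\ref{prop:first_variation_Gb} together with Lemma~\ref{lem:first_variations}, this reads
\begin{equation*}
    f_1(z,\bx(\rho_*))-\alpha\log\!\left(\frac{\rho_*(z)}{\rhot(z)}\right)-(W\ast\rho_*)(z)=c\qquad \text{ on }\supp(\rho_*)\,.
\end{equation*}
Before rearranging, I would verify that each term on the left is finite pointwise. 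Because $G_b(\rho_*)>-\infty$ and every other contribution to $G_b(\rho_*)$ is uniformly bounded in absolute value (as in the bound derived in the proof of Proposition~\ref{prop:existenceGb}, using $W,f_1,f_2\ge 0$ or the upper estimates already established, $\|\bx(\rho_*)\|\le R$, and Lemma~\ref{lem:rho_log_rho_lower_bound} applied to $\rho_*\log(\rho_*/\rhot)$), we obtain $\int \rho_* (W\ast\rho_*)\,\d z<\infty$. Combined with symmetry and non-negativity of $W$, Fubini then gives $W\ast\rho_*(z)<\infty$ for $\rho_*$-a.e.\ $z$, and the continuity of $W\in C^2$ together with boundedness of $\rho_*$ (which we establish below) extends this to every $z\in\R^d$; similarly $f_1(z,\bx(\rho_*))$ and $\log\rhot(z)$ are finite for every $z$ by the standing regularity hypotheses on $f_1$ and $\rhot$.

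Second, rearranging the Euler–Lagrange equation yields
\begin{equation*}
    \rho_*(z)=\rhot(z)\exp\!\left(\tfrac{1}{\alpha}\bigl[f_1(z,\bx(\rho_*))-(W\ast\rho_*)(z)-c\bigr]\right)\qquad \text{ on }\supp(\rho_*)\,.
\end{equation*}
The right-hand side is strictly positive everywhere that $\rhot>0$, and since $\supp(\rhot)=\R^d$ by the standing assumption on $\rhot$ stated at the beginning of the appendix, the identity forces $\supp(\rho_*)=\supp(\rhot)=\R^d$. Consequently $\rho_*$ is given on all of $\R^d$ by the exponential formula above. From this representation I read off the regularity: $f_1(\cdot,\bx(\rho_*))\in C^2(\R^d)$ by Assumption~\ref{assump:f_lower}(ii), $W\ast\rho_*\in C^2(\R^d)$ since $W\in C^2$ with $\rho_*\in L^1(\R^d)$ (differentiation under the integral is justified since $\nabla W$ and $\nabla^2 W$ have polynomial growth controlled against the second moment bound on $\rho_*$), and $\log\rhot\in C^2(\R^d)$ by hypothesis. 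Hence $\rho_*\in C^2(\R^d)$, which in particular implies $\rho_*\in W^{1,2}_{loc}(\R^d)\cap L^\infty_{loc}(\R^d)$.

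Finally, to conclude that $\rho_*$ is a steady state of \eqref{eq:dynamics_competitive_x_fast}, I verify the hypotheses of Lemma~\ref{lem:sstate_x_fast}. We have $\rho_*\in L^1_+(\R^d)$ with $\|\rho_*\|_1=1$ (from Proposition~\ref{prop:existenceGb}), $\rho_*\in L^\infty_{loc}(\R^d)\cap W^{1,2}_{loc}(\R^d)$ (from the $C^2$ regularity just obtained), $\nabla W\ast\rho_*\in L^1_{loc}(\R^d)$ (since $W\in C^2$ and $\rho_*\in L^1$, as above), and $\rho_*\ll\rhot$. Taking the gradient of the Euler–Lagrange identity in $z$ (which is justified pointwise because both sides are in $C^2$) yields exactly \eqref{eq:ss-rho-xfast}. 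The main technical step, and the one I would spend most care on, is justifying the distributional Euler–Lagrange equation for the maximizer using the Danskin-type identity of Proposition~\ref{prop:first_variation_Gb} — once that identity is in hand, the remainder of the argument is a clean translation of the Gibbs-measure structure used in Corollary~\ref{cor:ground-states-supp-Ga}.
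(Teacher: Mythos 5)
Your proposal is correct and follows essentially the same route as the paper: obtain the Euler–Lagrange identity via the Danskin-type result of Proposition~\ref{prop:first_variation_Gb}, rearrange into the Gibbs-measure form, argue finiteness of $W\ast\rho_*$ from the energy bound, read off full support and $C^2$ regularity, and then verify the steady-state hypotheses of Lemma~\ref{lem:sstate_x_fast}. Your version is marginally more explicit about checking each hypothesis of Lemma~\ref{lem:sstate_x_fast}, whereas the paper defers this to the analogous argument in Proposition~\ref{prop:competitive_ground_state}; otherwise the two proofs coincide.
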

    \begin{proof}
To show that $\rho_*$ is a steady state according to Lemma~\ref{lem:sstate_x_fast} we can follow exactly the same argument as in the proof of Proposition~\ref{prop:competitive_ground_state}, just replacing $\int f_1(z,x)\,\mu_*(x)$ with $f_1(z,\bx(\rho_*))$. It remains to show that $\supp(\rho_*)=\supp(\rhot)$.
        As $\rho^*$ is a maximizer, it is in particular a critical point, and therefore satisfies that $\delta_\rho G_b[\rho_*](z)$ is constant on all connected components of $\supp(\rho_*)$. Thanks to Proposition~\ref{prop:first_variation_Gb}, this means there exists a constant $c[\rho_*]$ (which may be different on different components of $\supp(\rho_*)$) such that
\begin{align*}
    f_1(z,b(\rho_*)) - \alpha\log\left(\frac{\rho_*(z)}{\rhot(z)}\right) - W\ast\rho_*(z) = c[\rho_*] \qquad \text{ on } \supp(\rho_*)\,.
\end{align*}
Rearranging, we obtain (for a possibly different constant $c[\rho_*] \neq 0$)
\begin{align}\label{eq:EL-Gb-rewritten}
    \rho_*(z) = c[\rho_*] \rhot(z) \exp{\left[\frac{1}{\alpha}\left( f_1(z,b(\rho_*)) - W\ast\rho_*(z)\right)\right]}\qquad \text{ on } \supp(\rho_*)\,.
\end{align}
Firstly, $\supp(\rho_*)
\subset \supp(\rhot)$ since $\rho_*$ is absolutely continuous with respect to $\rhot$. Secondly, note that $\exp{ \frac{1}{\alpha}f_1(z,b(\rho_*))} >0$ for all $z\in\R^d$ since $f_1\in\R$.
Finally, we claim that $\exp{\left(-\frac{1}{\alpha} W\ast\rho_*(z)\right)}>0$ for $\rho_*$-a.e. $z\in \R^d$. In other words, we claim that $W\ast\rho_*(z) <\infty$ for $\rho_*$-a.e. $z\in \R^d$. 
 Else, if $W\ast\rho_*(z) =\infty$ on a set of non-zero measure, then $\int \rho_* W \ast \rho_* =+\infty$. Since all other terms are upper bounded due to the $x$-player minimization, this implies that $G_b(\rho_*)=-\infty$. Since $\rho^*$ is a maximizer of $G_b$, this would mean $G_b\equiv -\infty$, which is a contradiction.
We conclude that $\supp(\rho_*)=\supp(\rhot)$. 
Finally,  $\rho_*\in C^2(\R^d)$ thanks to $\rhot, f_1, W\in C^2(\R^d)$.
\end{proof}

\begin{remark}
Note that $\rhot \in L^\infty(\R^d)$ by Assumption~\ref{assump:V_lower}.
    If we have in addition that $f_1(\cdot,x)\in L^\infty(\R^d)$ for all $x\in \R^d$, then the maximizer $\rho_*$ of $G_b$ is in $L^\infty(\R^d)$ as well. This follows directly by bounding the right-hand side of \eqref{eq:EL-Gb-rewritten}.
\end{remark}

With the notion of steady state given in Lemma~\ref{lem:sstate_x_fast}, we can obtain improved regularity for any steady state $\rho_\infty$ under mild additional assumptions.

\begin{lemma}\label{lem:steady_states_in_C}
    Any steady state $\rho_\infty$ for equation \eqref{eq:dynamics_competitive_x_fast} has continuous Lebesgue density, $\rho_\infty\in C(\R^d)$.
\end{lemma}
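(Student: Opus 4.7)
The plan is to derive an explicit exponential representation of $\rho_\infty$ from the Euler-Lagrange condition satisfied by a steady state, and then read off continuity from the factors in this representation. By Lemma~\ref{lem:sstate_x_fast}, the identity
\[
    \nabla_z \left(f_1(z, b(\rho_\infty)) - \alpha \log(\rho_\infty(z)/\rhot(z)) - W\ast\rho_\infty(z)\right) = 0
\]
holds in the distributional sense on $\supp(\rho_\infty)$. Integrating on each connected component of the interior of the support, and using that $\rho_\infty \ll \rhot$, yields the local representation
\[
    \rho_\infty(z) = c\,\rhot(z)\,\exp\!\left(\tfrac{1}{\alpha}\left[f_1(z, b(\rho_\infty)) - W\ast\rho_\infty(z)\right]\right)
\]
on $\supp(\rho_\infty)$, for some constant $c>0$ (possibly depending on the component).

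Mirroring the argument of Corollary~\ref{cor:ground-states-supp-Gb}, I would next show that $\supp(\rho_\infty) = \R^d$ and that the above formula holds, with a single positive constant $c$, on all of $\R^d$: since $\supp(\rhot) = \R^d$ and $\iint W(z-y)\,\d\rho_\infty(z)\,\d\rho_\infty(y) < \infty$ (using $\rho_\infty \in \P_2$ together with the energy bound inherited from the steady state condition), the convolution $W\ast\rho_\infty(z)$ is finite on a set of full $\rho_\infty$-measure; the exponential factor is therefore strictly positive on that set, and strict positivity of $\rhot$ forces $\supp(\rho_\infty)=\R^d$. Continuity of $\rho_\infty$ then reduces to continuity of each factor on the right-hand side. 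The reference density $\rhot$ is $C^2$ because $\log\rhot \in C^2$ with $\rhot > 0$, and $z \mapsto f_1(z, b(\rho_\infty))$ is $C^2$ by Assumption~\ref{assump:f_lower}(ii), since $b(\rho_\infty)$ is a fixed vector.

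The remaining nontrivial step is continuity of $z \mapsto W\ast\rho_\infty(z)$. Fixing $z_0 \in \R^d$ and $\delta > 0$, I would exploit convexity of $W$ to produce the locally uniform dominating function $W_\delta^\sharp(u) := \sup_{\|v\|\le \delta} W(u + v)$, so that $W(z - y) \le W_\delta^\sharp(z_0 - y)$ for every $z \in B_\delta(z_0)$. Combining convexity of $W$, its $C^2$ regularity, and the second moment bound on $\rho_\infty$, one verifies $W_\delta^\sharp \ast \rho_\infty(z_0) < \infty$, after which dominated convergence delivers continuity of $W\ast\rho_\infty$ at $z_0$. The main obstacle is precisely this last step: Assumption~\ref{assump:W_lower} only imposes a \emph{lower} Hessian bound on $W$, so $W$ may grow faster than any polynomial, and producing a pointwise finite integrable majorant for $y \mapsto W(z - y)\rho_\infty(y)$ uniform in $z$ near $z_0$ requires carefully combining convexity of $W$, the local $C^2$ bounds on $W$, the second moment bound for $\rho_\infty$, and the pointwise finiteness of $W\ast\rho_\infty$ itself. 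Once continuity of $W\ast\rho_\infty$ is established, composition with the exponential and multiplication by the remaining continuous factors $\rhot$ and $z \mapsto \exp(f_1(z, b(\rho_\infty))/\alpha)$ yields $\rho_\infty \in C(\R^d)$.
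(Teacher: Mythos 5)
The paper's proof takes an entirely different route from yours. It treats the steady-state identity as a linear elliptic problem: multiplying the relation of Lemma~\ref{lem:sstate_x_fast} through by $\rho_\infty$ and setting $h := \rho_\infty\nabla_z[f_1(\cdot,\bx(\rho_\infty))+\alpha\log\rhot-W\ast\rho_\infty]$, one obtains $\alpha\Delta\rho_\infty=\div{h}$ weakly on all of $\R^d$. The regularity built into Definition~\ref{def:sstates} and Lemma~\ref{lem:sstate_x_fast} ($\rho_\infty\in L^1\cap L^\infty_{loc}\cap W^{1,2}_{loc}$, $\nabla W\ast\rho_\infty\in L^1_{loc}$) gives $h\in L^p_{loc}$ for every $p<\infty$, so $\div{h}\in W^{-1,p}_{loc}$; elliptic regularity upgrades $\rho_\infty$ to $W^{1,p}_{loc}$, and Morrey's embedding for $p>d$ yields Hölder continuity. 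Notably this argument never needs an explicit formula for $\rho_\infty$ nor any knowledge of the structure of $\supp(\rho_\infty)$.

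Your plan has two genuine gaps. First, passing from the distributional identity $\nabla_z\delta_\rho G_b[\rho_\infty]=0$ on $\supp(\rho_\infty)$ to the pointwise exponential representation requires integrating on a connected open region, but for a generic steady state the support is only known to be a closed set of positive measure; neither connectedness, nor nonempty interior, nor $\supp(\rho_\infty)=\R^d$ is given. Corollary~\ref{cor:ground-states-supp-Gb} establishes $\supp(\rho_*)=\supp(\rhot)=\R^d$ only for the \emph{maximizer}, using finiteness of $G_b(\rho_*)$ obtained from maximality; that hypothesis is not part of the steady-state definition, so ``mirroring Corollary~\ref{cor:ground-states-supp-Gb}'' does not apply to an arbitrary $\rho_\infty$. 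This is precisely the circularity Lemma~\ref{lem:steady_states_in_C} is designed to break: one needs continuity \emph{before} one can identify the steady state with the maximizer. Second, you flag but do not close the continuity of $z\mapsto W\ast\rho_\infty(z)$. Assumption~\ref{assump:W_lower} gives only a lower Hessian bound, so $W$ may grow superpolynomially and the majorant $W_\delta^\sharp$ may be substantially larger than $W$; a second-moment bound on $\rho_\infty$ does not yield $W_\delta^\sharp\ast\rho_\infty(z_0)<\infty$, and the ``energy bound inherited from the steady state condition'' you invoke is not part of Definition~\ref{def:sstates} or Lemma~\ref{lem:sstate_x_fast} — nothing there guarantees $\int(W\ast\rho_\infty)\,\d\rho_\infty<\infty$. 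Without that majorant the dominated-convergence step does not go through. The paper's elliptic-regularity route sidesteps both issues, which is why it is the more robust argument here.
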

\begin{proof}
    Thanks to our assumptions, we have $f_1(\cdot,b(\rhoinf)) + \alpha \log \rhot(\cdot) \in C^1$, which implies that $\nabla_z (f_1(\cdot,b(\rhoinf)) + \alpha \log \rhot(\cdot)) \in L_{loc}^\infty$. By the definition of a steady state, $\rho_\infty \in L^1 \cap L_{loc}^\infty$ and $\nabla W \ast \rhoinf \in L_{loc}^\infty$.
    Let 
    \begin{align*}
        h(z) \coloneqq  \rhoinf(z)\nabla_z \left[ f_1(z,\bx(\rhoinf))+ \alpha \log \rhot(z)-(W\ast\rhoinf)(z)\right]\,.
    \end{align*}
    Then by the aforementioned regularity, we obtain $h\in L_{loc}^1 \cap L_{loc}^\infty$.
    By interpolation, it follows that $h\in L_{loc}^p$ for all $1<p<\infty$. This implies that $\div{ h} \in W_{loc}^{-1,p}$. Since $\rhoinf$ is a weak $W_{loc}^{1,2}$-solution of \eqref{eq:ss-rho-xfast}, we have
    \begin{align*}
       \Delta \rhoinf= \div{h}\,,
    \end{align*}
and so by classic elliptic regularity theory we conclude $\rhoinf \in W_{loc}^{1,p}$. Finally, applying Morrey's inequality, we have $\rhoinf \in C^{0,k}$ where $k=\frac{p-d}{p}$ for any $d < p < \infty$. Therefore $\rhoinf \in C(\R^d)$ (after possibly being redefined on a set of measure zero).  
\end{proof}

 With the above preliminary results, we can now show the HWI inequality, which implies again a Talagrand-type inequality and a generalized logarithmic Sobolev inequality.

 \begin{proposition}[HWI inequalities]\label{prop:HWI-Gb}
Define the dissipation functional
\begin{align*}
    D_b(\gamma):= \iint \| \nabla_z \delta_\rho G_b[\rho](z) \|^2\d\rho(z)\,.
\end{align*}
Let Assumption~\ref{assump:f_loose_upper} hold.
Denote by $\rho_*$ the unique maximizer of $G_b$.
   \item[\textbf{(HWI)}] Let $\rho_0,\rho_1\in\P_2(\R^d)$ such that $G_b(\rho_0), G_b(\rho_1),D_b(\rho_0)<\infty$. Then 
    \begin{align}\label{eq:HWI_ineq-Gb}
        G_b(\rho_0)-G_b(\rho_1)\le \Wass_2(\rho_0,\rho_1)\sqrt{D_b(\rho_0)} - \frac{\lambda_b}{2} \,\Wass_2(\rho_0,\rho_1)^2
    \end{align}
    \item[\textbf{(LogSobolev)}] Any $\rho\in\P_2(\R^d)$ such that $G_b(\rho), D_b(\rho)<\infty$ satisfies
    \begin{equation}\label{eq:logSob-Gb}
        D_b(\rho)\ge 2\lambda_b \,G_b(\rho\, |\, \rho_*)\,.
    \end{equation}
    \item[\textbf{(Talagrand)}] For any $\rho\in\P_2(\R^d)$ such that $G_b(\rho)<\infty$, we have
    \begin{equation}\label{eq:Talagrand-Gb}
    \Wass_2(\rho,\rho_*)^2 \leq \frac{2}{\lambda_b} G_b(\rho\,|\,\rho_*)\,.
    \end{equation}
\end{proposition}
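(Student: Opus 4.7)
The plan is to mirror the argument in Proposition~\ref{prop:HWI} from the cooperative setting, with the displacement $\lambda_b$-concavity of $G_b$ from Proposition~\ref{prop:Gb_concave} replacing the uniform displacement convexity used there, and the Danskin-type identity $\delta_\rho G_b[\rho] = \delta_\rho G[\rho]|_{x=\bx(\rho)}$ from Proposition~\ref{prop:first_variation_Gb} ensuring that the dissipation $D_b$ plays its usual role. The key observation is that, although $G_b$ is defined through the implicit best-response map $\bx(\rho)$, its first variation takes the clean form of the partial first variation of $G(\cdot,x)$ evaluated at $x=\bx(\rho)$, so that $G_b$ can be treated as a standard displacement-concave energy functional with a well-defined Wasserstein gradient.

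For the HWI inequality, I take $\rho_0,\rho_1\in\P_2(\R^d)$ with $G_b(\rho_0),G_b(\rho_1),D_b(\rho_0)<\infty$ and, for simplicity, with smooth Lebesgue densities of compact support; the general case follows by approximation exactly as in Proposition~\ref{prop:HWI}. Let $(\rho_s)_{s\in[0,1]}$ denote the $\Wass_2$-geodesic from $\rho_0$ to $\rho_1$ generated by the optimal Brenier map $\nabla\phi$, so that $\rho_s=[(1-s)\id+s\nabla\phi]_\#\rho_0$. By Proposition~\ref{prop:first_variation_Gb},
\[
\left.\dds G_b(\rho_s)\right|_{s=0} = \int \nabla_z \delta_\rho G_b[\rho_0](z)\cdot (\nabla\phi(z)-z)\,\d\rho_0(z)\,,
\]
and Cauchy-Schwarz bounds this in absolute value by $\sqrt{D_b(\rho_0)}\,\Wass_2(\rho_0,\rho_1)$. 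A Taylor expansion of $s\mapsto G_b(\rho_s)$ around $s=0$, combined with the pointwise bound $\ddss G_b(\rho_s)\le -\lambda_b\,\Wass_2(\rho_0,\rho_1)^2$ from Proposition~\ref{prop:Gb_concave}, then yields \eqref{eq:HWI_ineq-Gb}.

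The generalized logarithmic Sobolev inequality \eqref{eq:logSob-Gb} follows from HWI by choosing one endpoint to be the unique maximizer $\rho_*$ from Proposition~\ref{prop:existenceGb} and maximizing the right-hand side quadratic $\sqrt{D_b(\rho)}\,t - \tfrac{\lambda_b}{2}t^2$ over $t\ge 0$, giving $G_b(\rho\,|\,\rho_*) \le D_b(\rho)/(2\lambda_b)$ exactly as in the proof of Corollary~\ref{cor:logSob}. The Talagrand inequality \eqref{eq:Talagrand-Gb} is obtained by placing $\rho_*$ at the other endpoint and using that $D_b(\rho_*)=0$: indeed, by Corollary~\ref{cor:ground-states-supp-Gb} and Proposition~\ref{prop:first_variation_Gb}, the maximizer $\rho_*$ is a critical point with $\nabla_z\delta_\rho G_b[\rho_*]\equiv 0$ on its support, so the first-order term in HWI vanishes.

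The main technical obstacle is justifying the Wasserstein calculus along geodesics for the composed functional $G_b$, since the implicit best-response $\bx(\rho)$ enters both the first and second derivatives. This has, however, already been dealt with at the level of the second variation in Proposition~\ref{prop:Gb_concave} and at the level of the first variation in Lemma~\ref{lem:first_variations} together with Proposition~\ref{prop:first_variation_Gb}; in particular, the cancellation $\nabla_x G(\rho_s,\bx(\rho_s))=0$ kills all terms involving $\dds\bx(\rho_s)$ in $\dds G_b(\rho_s)$, which is precisely what allows the dissipation $D_b$ to appear cleanly through Cauchy-Schwarz. The remaining work reduces to assembling the Taylor expansion and handling the smooth-compact-support approximation, both of which are routine and proceed exactly as in the proof of Proposition~\ref{prop:HWI}.
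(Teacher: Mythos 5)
Your proposal follows exactly the route the paper intends: the paper's own proof is a one-line reference to Propositions~\ref{prop:HWI}, Corollaries~\ref{cor:logSob} and~\ref{cor:Talagrand}, together with the displacement concavity from Proposition~\ref{prop:Gb_concave} and the ground state from Proposition~\ref{prop:existenceGb}, and you have correctly identified all these ingredients as well as the role of the Danskin identity in Proposition~\ref{prop:first_variation_Gb} and the fact that $D_b(\rho_*)=0$ via Corollary~\ref{cor:ground-states-supp-Gb}.

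One point where you should be more careful, though: carrying out the Taylor expansion around $s=0$ with $|\dds G_b(\rho_s)|_{s=0}|\le\sqrt{D_b(\rho_0)}\,\Wass_2(\rho_0,\rho_1)$ and $\ddss G_b(\rho_s)\le-\lambda_b\Wass_2(\rho_0,\rho_1)^2$ yields
\begin{align*}
    G_b(\rho_1)-G_b(\rho_0)\le \sqrt{D_b(\rho_0)}\,\Wass_2(\rho_0,\rho_1) - \frac{\lambda_b}{2}\Wass_2(\rho_0,\rho_1)^2\,,
\end{align*}
i.e., the Taylor expansion at $\rho_0$ controls $G_b(\rho_1)-G_b(\rho_0)$ by $D_b(\rho_0)$, not $G_b(\rho_0)-G_b(\rho_1)$ by $D_b(\rho_0)$ as written in the displayed HWI inequality \eqref{eq:HWI_ineq-Gb}. (The displayed form is the one appropriate for the convex energy $F_a$; for the concave $G_b$ the roles of the endpoints must be swapped.) This is consistent with the downstream consequences: LogSobolev requires the choice $\rho_1=\rho_*$, and Talagrand requires $\rho_0=\rho_*$, so in both cases the relative energy $G_b(\cdot\,|\,\rho_*)$ must be interpreted as $G_b(\rho_*)-G_b(\cdot)\ge 0$ for the inequalities to be non-trivial, opposite to the sign convention in Definition~\ref{def:relative_energy}. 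You implicitly inherit this ambiguity by asserting that the Taylor expansion "yields \eqref{eq:HWI_ineq-Gb}" without spelling out the final rearrangement; making the endpoint labeling and sign convention explicit would make the argument airtight.
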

\begin{proof}
    The proof for this result follows analogously to the arguments presented in the proofs of Proposition~\ref{prop:HWI}, Corollary~\ref{cor:logSob} and Corollary~\ref{cor:Talagrand}, using the preliminary results established in Proposition~\ref{prop:Gb_concave} and Proposition~\ref{prop:existenceGb}.
\end{proof}

\begin{proof}[Proof of Theorem~\ref{thm:convergence_fast_mu}]
Following the same approach as in the proof of Theorem~\ref{thm:PDE_aligned}, the results in Theorem~\ref{thm:convergence_fast_mu} immediately follow by combining Proposition~\ref{prop:existenceGb}, Corollary~\ref{cor:ground-states-supp-Gb} and Proposition~\ref{prop:HWI-Gb} applied to solutions of the PDE \eqref{eq:dynamics_competitive_x_fast}.
\end{proof}

\section{Competitive Objective, Fast Population (Proof of Theorem~\ref{thm:convergence_fast_rho})}\label{sec:fast_rho_proof}
In this section, let Assumptions~\ref{assump:f_lower}(ii), \ref{assump:V_lower} and \ref{assump:W_lower} hold throughout with $\alpha \lambdat > \Lambda_1$. The proof for Theorem~\ref{thm:convergence_fast_rho} uses similar strategies as that of Theorem \ref{thm:convergence_fast_mu}, but considers the evolution of an ODE rather than a PDE. 
Recall that for any $x\in\R^d$ the best response $\br(x)(\cdot)\in\P(\R^d)$ in \eqref{eq:dynamics_competitive_rho_fast} is defined as
$$
\br(x) \coloneqq \amax_{\hat{\rho}\in\P}G(\hat{\rho},x)\,,
$$
where the energy $G(\rho,x):\P(\R^d) \times \R^d \to [-\infty,\infty]$ is given as in Appendix~\ref{sec:fast_x_proof} by
\begin{align*}
    G(\rho,x) &= \int f_1(z,x) \d \rho(z) + \int f_2(z,x) \d \pi(z)  +\frac{\kappa}{2} \norm{x-x_0}^2 -\alpha KL(\rho\,|\,\rhot) - \frac{1}{2}\int (W \ast \rho)(z)\d \rho(z)\,.
\end{align*}
We start by showing that the best response $\br(x)$ exists and is uniquely defined for each $x\in\R^d$ (Proposition~\ref{lem:existence_of_maximizer}). In order to show convergence to equilibrium for the algorithm dynamics $x(t)$ (Theorem~\ref{thm:convergence_fast_rho}), we need to differentiate the energy 
$G_d(x) := \left.G(\rho,x)\right|_{\rho=\br(x)}$ in $x$, which is a non-trivial task given that it is not obvious that the best response $\br(x)$ is even differentiable in $x$. Our goal is therefore to achieve a Danskin-like result of the form
    \begin{align*}
        \nabla_x G_d(x) = \left.\left(\nabla_x G(\rho,x)\right)\right|_{\rho=\br(x)}
    \end{align*}
(Proposition~\ref{prop:danskin-br}). Danskin's theorem, also known as the envelope theorem, is a classical result in (Euclidean) game theory. In zero-sum games, the derivative of a cost function through the best response of the other player is equal to the derivative with the best response plugged in after differentiating, and is used to prove convexity through the implicitly-defined best response function; see \cite{basar_appendix_2008} for details.
To achieve this result, we will need to use that the best response $\br(x)$ is continuous in $x$ in $\Wass_2$ (Corollary~\ref{cor:br-cont}). For the rest of this section, we use the following notation: denote $\m(z,x) := -f_1(z,x) - \alpha \log \rhot(z)$ and 
    \begin{align*}
        F(\rho,x) :=  \alpha\int \rho(z) \log \rho(z) \,\d z+ \int \m(z,x) \d \rho(z) + \frac{1}{2}\int (W \ast \rho)(z) \d \rho(z)\,.
    \end{align*}
Then $G(\rho,x) =\int f_2(z,x) \d \pi(z)  +\frac{\kappa}{2} \norm{x-x_0}^2 - F(\rho,x)$, and so maximizing $G(\rho,x)$ in $\rho$ corresponds to minimizing $F(\rho,x)$ in $\rho$. Showing that the best response $\br(x)$ is continuous means that for any sequence $(x_n)_{n\in\nat}$ in $\R^d$ converging to $\bar x$ as $n\rightarrow \infty$, we have $\Wass_2(\br(x_n),\br(\bar x))\to 0$. 
For a sequence $x_n\to\bar x$, define
\begin{align}\label{eq:cv-defs}
    F_n(\rho) \coloneqq F(\rho,x_n)\,,\quad \bar F(\rho)\coloneqq F(\rho,\bar x)\,, \quad  \rho_n \coloneqq \amin_{\rho\in\P_2(\R^d)} F_n(\rho)\,, \quad m_n(z) \coloneqq m(z,x_n)\,.
\end{align}
To obtain the continuity of $\br(x)$, we first show that the sequence $(\rho_n)_{n\in\nat}$ has uniformly bounded second moments (obtained from convergence of the second moments Proposition~\ref{prop:cv_second_moments}) and that $F_n \xrightarrow{\Gamma} \bar F$ in $\Wass_2$ (Proposition~\ref{prop:gamma_cv}). For these results, we will need an extension of Fatou's Lemma (Theorem~\ref{thm:fatou_weak_cv_meas}) which gives conditions under which limits and integrals can be exchanged when not only the integrand but also the measure of integration depend on $n\to\infty$. The notion of \textit{asymptotically uniformly integrable (a.u.i)} (Definition~\ref{def:aui}) relaxes the notion of lower-boundedness of the integrand, appears as a condition in the extension of Fatou's Lemma (Theorem~\ref{thm:fatou_weak_cv_meas}), and is a key ingredient to show the $\Gamma$-convergence result in Proposition~\ref{prop:gamma_cv}.
\begin{proposition}\label{lem:existence_of_maximizer} 
For each $x\in\R^d$  there exists a unique maximizer $\rho_*:=\br(x)$ solving $\amax_{\hat{\rho}\in\P} G(\hat{\rho},x)$. Further, $\br(x)\in L^1(\R^d) \cap \P_2(\R^d)$, $\supp (\br(x)) =\supp(\rhot)$, and there exists a function $c:\R^d\to \R$ such that the best response $\rho_*(z)=\br(x)(z)$ solves the Euler-Lagrange equation
\begin{equation}\label{eq:EL-fastrho}
   \delta_\rho G[\rho_*,x](z):= \alpha \log\rho_*(z) +\m(z,x) + (W\ast\rho_*)(z)= c(x)
\end{equation}
 for all $(z,x)\in\supp(\rhot)\times\R^d$.
\end{proposition}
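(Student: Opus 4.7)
The plan is to apply the direct method of the calculus of variations to the auxiliary functional
\begin{equation*}
    F(\rho,x) = \alpha\int \rho\log\rho + \int m(z,x)\d\rho(z) + \tfrac12\int (W\ast\rho)(z)\,\d\rho(z)
\end{equation*}
for each fixed $x\in\R^d$, noting that maximizing $G(\cdot,x)$ is equivalent to minimizing $F(\cdot,x)$ since $\int f_2\,\d\pi+\tfrac{\kappa}{2}\|x-x_0\|^2$ does not depend on $\rho$. The key observation is that, by Assumptions~\ref{assump:f_lower}(ii) and \ref{assump:V_lower} combined with $\alpha\lambdat>\Lambda_1$, the integrand $m(\cdot,x)=-f_1(\cdot,x)-\alpha\log\rhot$ is uniformly $\lambda_b$-convex in $z$ with $\lambda_b>0$, so a completion-of-the-square argument yields $m(z,x)\ge C_1(x)+\tfrac{\lambda_b}{4}\|z\|^2$. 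Together with $W\ge 0$ and the entropy bound in Lemma~\ref{lem:rho_log_rho_lower_bound} applied with $\varepsilon=\lambda_b/(8\alpha)$, this gives
\begin{equation*}
    F(\rho,x) \ge \tfrac{\lambda_b}{8}\int\|z\|^2\d\rho(z) + C_2(x)\,,
\end{equation*}
which simultaneously provides a lower bound on $F(\cdot,x)$ and a uniform second moment bound for any minimizing sequence.

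Next, I would verify that $F(\cdot,x)$ is lower semi-continuous in the weak topology of Definition~\ref{def:weakcv}: the entropy term via Lemma~\ref{lem:lsc_entropy_santambrogio}, the linear term via \cite[Proposition~7.1]{Santambrogio} using the quadratic lower bound on $m(\cdot,x)$, and the interaction term via \cite[Proposition~7.2]{Santambrogio} since $W\in C^2$. Combining the a priori second moment bound with Markov's inequality for tightness and the Banach-Alaoglu theorem in $C_0(\R^d)^\ast$, a minimizing sequence admits a weak subsequential limit $\rho_\ast\in\P_2(\R^d)$, and lower semi-continuity ensures $\rho_\ast$ is a minimizer, with $\rho_\ast\ll\rhot$ automatic because $F(\rho_\ast,x)<+\infty$. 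Uniqueness follows from strict displacement convexity of $F(\cdot,x)$ with constant $\lambda_b$, established by the same geodesic computation as in Proposition~\ref{prop:dipl-convexity}: the entropy and interaction pieces contribute non-negatively via \eqref{eq:classic_convexity}, while $\int m\,\d\rho$ contributes $\lambda_b\Wass_2(\rho_0,\rho_1)^2$ along geodesics.

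Finally, I would derive the Euler-Lagrange equation \eqref{eq:EL-fastrho} by testing against admissible perturbations $\psi\in C_c^\infty(\R^d)$ with $\int\psi\,\d z=0$ supported in $\{\rho_\ast>0\}$, producing
\begin{equation*}
    \alpha\log\rho_\ast(z) + m(z,x) + (W\ast\rho_\ast)(z) = c(x)\quad\text{on }\supp(\rho_\ast)
\end{equation*}
for some $c(x)\in\R$, and equivalently the exponential representation $\rho_\ast(z)=\tilde c(x)\,\rhot(z)\exp\bigl(\tfrac{1}{\alpha}[f_1(z,x)-W\ast\rho_\ast(z)]\bigr)$ on $\supp(\rho_\ast)$. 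The $L^1$ membership and the identification $\supp(\rho_\ast)=\supp(\rhot)=\R^d$ then follow from $\rhot\in L^1(\R^d)$, strict positivity of $\rhot$, and finiteness of $W\ast\rho_\ast$ at $\rho_\ast$-a.e.\ point, which would otherwise contradict finiteness of the interaction energy at the minimizer. The main obstacle is precisely this last step: ruling out degeneracy of $\supp(\rho_\ast)$ inside $\supp(\rhot)$ by showing that the non-local convolution $W\ast\rho_\ast$ is everywhere finite on $\supp(\rhot)$, so that the representation formula extends to yield strict positivity of $\rho_\ast$ on all of $\R^d$.
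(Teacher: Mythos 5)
Your proposal takes essentially the same approach as the paper: for fixed $x$, reduce maximizing $G(\cdot,x)$ to minimizing $F(\cdot,x)$, run the direct method (coercivity from $\lambda_b>0$ together with Lemma~\ref{lem:rho_log_rho_lower_bound}, lower semi-continuity, second-moment tightness, Banach--Alaoglu), obtain uniqueness from uniform displacement convexity, and read off the Euler--Lagrange equation. The only structural difference is that the paper compresses the existence-and-uniqueness step into a citation of \cite[Theorem~2.1(i)]{carrillo_kinetic_2003} and points to Corollaries~\ref{cor:ground-states-supp-Ga}, \ref{cor:ground-states-supp-Gb} and Proposition~\ref{prop:existenceGb} for the support, Euler--Lagrange, and integrability claims, whereas you re-derive these explicitly; the underlying machinery is identical, so this is a faithful unpacking rather than a genuinely different route.

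Concerning the step you single out as the obstacle --- showing $W\ast\rho_*$ is finite at every point of $\supp(\rhot)$ rather than just $\rho_*$-a.e., so that the exponential representation forces $\supp(\rho_*)=\supp(\rhot)$ --- you have read the situation correctly, and you are right that your argument does not close it. Note, however, that the corollary the paper defers to, Corollary~\ref{cor:ground-states-supp-Gb}, makes the same move: it establishes $W\ast\rho_*(z)<\infty$ for $\rho_*$-a.e.\ $z$ from $\int \rho_* W\ast\rho_* <\infty$ at the optimizer, and then concludes $\supp(\rho_*)=\supp(\rhot)$ without spelling out why the convolution is finite off $\supp(\rho_*)$. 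Under Assumption~\ref{assump:W_upper}, the quadratic growth bound $W(z)\le \tilde c_1+\tilde c_2\|z\|^2$ gives everywhere-finiteness against any $\P_2$ measure, but that assumption is not in force for this proposition. Your proposal is therefore at the same level of rigor as the paper's treatment; you have simply made explicit a gap that the paper leaves implicit in the deferred corollary.
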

\begin{proof}
    Equivalently, consider the minimization problem for $F(\rho,x)$ for some fixed $x$. 
    Note that $\m(z,x)$ is strictly convex in $z$ for fixed $x$ by Assumptions~\ref{assump:f_lower}(ii) and \ref{assump:V_lower}. Together with Assumption~\ref{assump:W_lower}, we can directly apply the uniqueness and existence result from \cite[Theorem 2.1 (i)]{carrillo_kinetic_2003}.

    The result on the support of $\br(x)$ and the expression for the Euler-Lagrange equation follows by the same arguments as in Corollary~\ref{cor:ground-states-supp-Ga} and Corollary~\ref{cor:ground-states-supp-Gb}, using Assumption~\ref{assump:f_lower}(ii) and that $\alpha \lambdat > \Lambda_1$. The proof that $r(x)\in L_1(\R^d) \cap \P_2(\R^d)$ uses the same arguments as those in Proposition~\ref{prop:existenceGb}, where instead of using $x=b(\rho)$, $x$ is some fixed value.
\end{proof}

\begin{definition}[asymptotically uniformly integrable, \cite{feinberg_fatous_2019}]\label{def:aui}
    A sequence of measurable $(\R \cup \{\pm\infty\})$-valued functions $(f_n)_{n\in\nat}$ is called \emph{asymptotically uniformly integrable (a.u.i) with respect to a sequence of measures $( \mu_n)_{n\in\nat} \subset \P(\R^d)$} if 
    \begin{align*}
        \lim_{R\rightarrow+\infty} \limsup_{n\in\nat} \int_{\R^d} |f_n(z)| \mathbb{I}\{ z\in\R^d \ : \ |f_n(z) | \ge R \} \d\mu_n(z) = 0\,.
    \end{align*}
\end{definition}

\begin{theorem}[Theorem 2.4 in \cite{feinberg_fatous_2019}, Fatou's Lemma for Weakly Converging Measures]\label{thm:fatou_weak_cv_meas}
\sloppy    Let $\mathbf{S}$ be a metric space, $(\mu_n)_{n\in \nat}$ be a sequence of measures on $\mathbf{S}$ that converges narrowly to $\mu\in\M(\mathbf{S})$, and let $(f_n)_{n\in\nat}$ be a sequence of measurable $(\R\cup\{\pm\infty\})$-valued functions on $\mathbf{S}$ such that $\lim_{n\rightarrow\infty,s'\rightarrow s}f_n(s')$ exists for $\mu$-a.e. $s\in\mathbf{S}$. Denote $f_n^-(z)=-\min\{ f_n(z),0 \}$. Let $(f_n^-)_{n\in\nat}$ be a.u.i with respect to $(\mu_n )_{n\in\nat}$. Then
    \begin{align*}
        \liminf_{n\rightarrow\infty} \int_{\mathbf{S}} f_n(s) \mu_n(s) (\d s) \le \int_{\mathbf{S}} \liminf_{n \rightarrow \infty,s'\rightarrow s} f_n(s') \mu(\d s)\,.
    \end{align*}
\end{theorem}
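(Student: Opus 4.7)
The direction of the inequality as typeset in the statement is the reverse of Theorem~2.4 of \cite{feinberg_fatous_2019} in its standard orientation: under the stated a.u.i.\ hypothesis on the negative parts $(f_n^-)$, the natural pairing is with a lower-limit conclusion of the form $\int\liminf_{n,s'\to s}f_n(s')\,\d\mu(s)\le\liminf_n\int f_n\,\d\mu_n$, since controlling negative mass controls lower limits, not upper ones. Without a dual bound on $(f_n^+)$ the inequality as displayed fails: with $\mathbf{S}=[0,1]$ and Lebesgue measure $\mu_n=\mu$, taking $f_n(s)=n\,\mathbb{I}\{s\in[0,1/n]\}$ gives $f_n^-\equiv 0$ (trivially a.u.i.), $\liminf_n\int f_n\,\d\mu=1$, yet the joint lower limit at every $s$ equals $0$. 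The plan below therefore proves the standard Fatou direction from \cite{feinberg_fatous_2019}, which is what the excerpt almost certainly intends; the ``$\le$'' as displayed appears to be a transcription reversal.

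The core idea is a Skorokhod-type coupling that lifts $\mu_n\to\mu$ to almost-sure convergence of random variables. Construct a probability space $(\Omega,\mathcal{F},P)$ carrying random variables $Z_n\sim\mu_n$ and $Z\sim\mu$ with $Z_n\to Z$ $P$-almost surely. (On Polish $\mathbf{S}$ this is classical; for general metric $\mathbf{S}$ one first restricts to the separable support of $\mu$, using the tightness implied by narrow convergence.) Under this coupling $\int f_n\,\d\mu_n=\E[f_n(Z_n)]$, and measurability of the joint lower limit yields $\int\liminf_{n,s'\to s}f_n(s')\,\d\mu(s)=\E[\liminf_{n,s'\to Z}f_n(s')]$.

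On the $P$-full-measure event $\{Z_n\to Z\}$, the sequence $(n,Z_n(\omega))$ is admissible in the joint lower limit at $s=Z(\omega)$, yielding the pointwise bound
\begin{equation*}
\liminf_{n\to\infty,\,s'\to Z(\omega)}f_n(s')\;\le\;\liminf_n f_n(Z_n(\omega))\qquad P\text{-a.s.}
\end{equation*}
Apply scalar Fatou on $(\Omega,P)$ to $Y_n:=f_n(Z_n)$ to obtain $\E[\liminf_n Y_n]\le\liminf_n\E[Y_n]$. Integrating the pointwise bound against $P$ and chaining with Fatou produces the standard-direction inequality.

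The scalar Fatou step is the main obstacle, since $(Y_n)$ is not uniformly bounded below. The remedy is standard truncation: set $Y_n^{(R)}:=\max\{Y_n,-R\}$, apply the bounded-below Fatou lemma to $(Y_n^{(R)})$, and send $R\to\infty$. The a.u.i.\ condition (Definition~\ref{def:aui}) transports through $Z_n\sim\mu_n$ into $\lim_R\limsup_n\E[Y_n^-\,\mathbb{I}\{Y_n^-\ge R\}]=0$, which ensures $\sup_n\bigl|\E[Y_n]-\E[Y_n^{(R)}]\bigr|\to 0$, while monotone convergence gives $\liminf_n Y_n^{(R)}\nearrow\liminf_n Y_n$ inside the expectation; these two facts close the argument and deliver the inequality in the Feinberg--Kasyanov--Zadoianchuk orientation.
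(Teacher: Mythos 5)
You have correctly spotted that the inequality in this statement is transcribed backwards. The paper offers no proof of this result---it is imported from \cite{feinberg_fatous_2019}---and everywhere it is actually invoked (Proposition~\ref{prop:gamma_cv}, Lemma~\ref{lem:limsup_partial_f1_dxi}) the paper uses the standard orientation $\int_{\mathbf{S}}\liminf_{n,s'\to s}f_n(s')\,\mu(\d s)\le\liminf_{n}\int_{\mathbf{S}}f_n\,\d\mu_n$, which is what Theorem~2.4 of the cited reference asserts. Your counterexample is valid: with $f_n=n\,\mathbb{I}\{[0,1/n]\}$ and $\mu_n=\mu$ Lebesgue on $[0,1]$, the joint limit exists and equals $0$ at every $s>0$, the negative parts vanish identically, and the displayed inequality would read $1\le 0$. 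So the statement as printed is false and should be corrected to the reference's orientation.

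Your proof of the corrected statement is essentially sound and takes a genuinely different route from the source, which works directly at the level of the measures via semicontinuous envelopes and a portmanteau-type argument; you instead lift narrow convergence to almost-sure convergence by Skorokhod representation and reduce to the scalar Fatou lemma on a fixed probability space. The coupling buys a transparent pointwise step---$\liminf_{n,s'\to Z(\omega)}f_n(s')\le\liminf_n f_n(Z_n(\omega))$ because $(Z_n(\omega))$ is an admissible approximating sequence---at the cost of requiring separable support of $\mu$ for Skorokhod (automatic in the paper's applications, where $\mathbf{S}=\R^d$) and measurability of $s\mapsto\liminf_{n,s'\to s}f_n(s')$, which you should justify explicitly, e.g.\ as a countable supremum over $N$ and $\delta=1/k$ of the upper semicontinuous functions $s\mapsto\inf\{f_n(s'):n\ge N,\ s'\in B_\delta(s)\}$. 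Two small corrections to the truncation step: $\liminf_nY_n^{(R)}=\max\{\liminf_nY_n,-R\}$ \emph{decreases} (not increases) to $\liminf_nY_n$ as $R\to\infty$; but no monotone-convergence argument is needed at all, since for every fixed $R$ one has $\E[\liminf_nY_n]\le\E[\liminf_nY_n^{(R)}]\le\liminf_n\E[Y_n]+\limsup_n\E\bigl[Y_n^-\,\mathbb{I}\{Y_n^-\ge R\}\bigr]$, and the a.u.i.\ hypothesis, transported through $Z_n\sim\mu_n$, sends the error term to zero as $R\to\infty$.
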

\begin{remark}
    If instead the condition on $f_n$ is strengthened to $(f_n)_{n\in\nat}$ being a.u.i, then the limit bound holds with equality \cite[Corollary 2.8]{feinberg_fatous_2019}; this result will also be utilized.
\end{remark}

\begin{proposition}[$\Gamma$-Convergence of Energy]\label{prop:gamma_cv}
Let Assumption~\ref{assump:f_upper}(b)
hold.
For any sequence $(x_n)$ converging to some limit $\bar x$ in $\R^d$, we have $F_n \xrightarrow{\Gamma} \bar F$ as $n\rightarrow \infty$ in $\Wass_2$ for sequences $(\mu_n)\in\P_2^{ac}$. %
\end{proposition}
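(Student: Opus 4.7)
The plan is to verify the two defining conditions of $\Gamma$-convergence on $(\P_2^{ac}(\R^d),\Wass_2)$: (i) the liminf inequality $\liminf_n F_n(\mu_n)\ge \bar F(\bar\mu)$ for every sequence $\mu_n\to\bar\mu$ in $\Wass_2$, and (ii) the existence of a recovery sequence. Writing
\[
F_n(\mu) = \alpha\,\Hc(\mu) + \tfrac{1}{2}\int (W\ast \mu)\,\d\mu + \int m(z,x_n)\,\d\mu(z),
\]
only the last summand depends on $n$; the entropy and self-interaction pieces coincide with those appearing in $\bar F$. Consequently the entire issue reduces to understanding the convergence of the coupling integral $\int m(z,x_n)\,\d\mu_n(z)$. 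A preliminary consequence of Assumption~\ref{assump:f_upper}(b), obtained by integrating $\nabla_x f_1$ along the segment from $\bar x$ to $x_n$, is the estimate $|m(z,x_n)-m(z,\bar x)|\le (C+L\|z\|)\,\|x_n-\bar x\|$ for some constant $C$ depending on $\bar x$; this will be used repeatedly.

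For the recovery sequence I would simply take $\mu_n\equiv\bar\mu$. If $\bar F(\bar\mu)=+\infty$ the limsup inequality is vacuous; otherwise $\int m(z,\bar x)\,\d\bar\mu<\infty$, and the Lipschitz-type estimate above provides the $\bar\mu$-integrable dominating function $|m(z,\bar x)|+C+L\|z\|$ (integrability uses $\bar\mu\in\P_2$). Dominated convergence then gives $\int m(z,x_n)\,\d\bar\mu\to \int m(z,\bar x)\,\d\bar\mu$, so $F_n(\bar\mu)\to \bar F(\bar\mu)$.

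For the liminf inequality, the entropy piece is weakly lower semicontinuous by Lemma~\ref{lem:lsc_entropy_santambrogio}, and the interaction piece is weakly lower semicontinuous by \cite[Proposition~7.2]{Santambrogio} thanks to $W\in C^2$ and $W\ge 0$. The coupling piece would be handled by applying Theorem~\ref{thm:fatou_weak_cv_meas} to $f_n(z):=m(z,x_n)$ with the narrowly-converging sequence $(\mu_n)$: since $f_1$ and $\log\rhot$ belong to $C^2$, we have $\lim_{n\to\infty,\,z'\to z} m(z',x_n)=m(z,\bar x)$, and the theorem yields
\[
\liminf_n \int m(z,x_n)\,\d\mu_n(z) \;\ge\; \int m(z,\bar x)\,\d\bar\mu(z).
\]

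The main obstacle is verifying the single hypothesis of Theorem~\ref{thm:fatou_weak_cv_meas}, namely asymptotic uniform integrability of the negative part $m(z,x_n)^-$ with respect to $(\mu_n)$. Here the plan is to exploit that $m(\cdot,x_n)$ is $(\alpha\tilde\lambda-\Lambda_1)$-uniformly convex in $z$ (by Assumptions~\ref{assump:f_lower}(ii) and \ref{assump:V_lower}, using that $\alpha\tilde\lambda-\Lambda_1>0$). A Taylor expansion at $z=0$ followed by minimization in $z$ gives the pointwise lower bound
\[
m(z,x_n) \;\ge\; m(0,x_n) \,-\, \frac{\|\nabla_z m(0,x_n)\|^2}{2(\alpha\tilde\lambda-\Lambda_1)}.
\]
Since $m(0,\cdot)$ and $\nabla_z m(0,\cdot)$ are continuous on $\R^d$ and $(x_n)$ is bounded (as it converges to $\bar x$), the right-hand side is bounded below by a constant $-M$ independent of $n$. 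Hence $m(z,x_n)^-\le M$ uniformly in $z$ and $n$, which trivially gives a.u.i.\ (the truncation $\{m^-\ge R\}$ is empty for $R>M$). This closes the application of Theorem~\ref{thm:fatou_weak_cv_meas} and completes the liminf inequality, establishing $F_n\xrightarrow{\Gamma}\bar F$.
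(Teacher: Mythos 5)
Your proof is correct and follows the same overall skeleton as the paper's: the same split $F_n = H_0 + H_n$ with $H_0$ the $n$-independent entropy and interaction parts, lower semicontinuity of $H_0$ via \cite[Proposition~7.2]{Santambrogio} and Lemma~\ref{lem:lsc_entropy_santambrogio}, Theorem~\ref{thm:fatou_weak_cv_meas} for the coupling integral, and the constant recovery sequence $\mu_n\equiv\bar\mu$. Where you depart from the paper is in the two supporting estimates, and in both cases your route is simpler and more economical with hypotheses. For the a.u.i.\ hypothesis, the paper invokes Lemma~\ref{lem:f_aui}, which uses the implicit function theorem and the uniform Lipschitz bound $\|\nabla^2_{xz}\m\|_2\le L$ to show $z^*(x)$ varies continuously and deduce a uniform lower bound on $\m^*(x_n)$; you instead minimize the quadratic lower bound $\m(z,x_n)\ge \m(0,x_n)+\nabla_z\m(0,x_n)\cdot z+\tfrac{\lambda_b}{2}\|z\|^2$ directly in $z$ to get $\m(z,x_n)\ge \m(0,x_n)-\|\nabla_z\m(0,x_n)\|^2/(2\lambda_b)\ge -M$, which needs only the strong convexity of $\m(\cdot,x_n)$ and continuity of $\m$ and $\nabla_z\m$ at $z=0$ --- no implicit function theorem and no use of the cross-Hessian bound for this step. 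For the recovery sequence, the paper appeals to Fatou with the upper bound $\m(z,x_n)\le c_1+c_2\|z\|^2$ of Lemma~\ref{lem:upper_lower_bounds_f}(2), which actually requires Assumptions~\ref{assump:f_upper}(a) and \ref{assump:V_upper} beyond what the proposition states; your Lipschitz-in-$x$ estimate $|\m(z,x_n)-\m(z,\bar x)|\le (C+L\|z\|)\|x_n-\bar x\|$ followed by dominated convergence uses only Assumption~\ref{assump:f_upper}(b) together with $\bar\mu\in\P_2$, which is consistent with the proposition's stated hypotheses and is therefore a small but genuine improvement. Both of your auxiliary estimates are sound (the dominating function $|\m(z,\bar x)|+C+L\|z\|$ is $\bar\mu$-integrable because $\m$ is uniformly bounded below, so $\int \m(\cdot,\bar x)\,\d\bar\mu<\infty$ follows from $\bar F(\bar\mu)<\infty$ together with the lower bounds on $\Hc$ and $W$).
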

\begin{proof}
    We write
    \begin{align*}
        F_n(\rho) = H_0(\rho) + H_n(\rho)
    \end{align*}
    where we define 
    \begin{align*}
        H_0(\rho) &= \alpha \int \rho \log\rho + \frac{1}{2}\int \rho W \ast \rho\,,  \qquad
        H_n(\rho) 
        = \int m_n(z)\d\rho(z)\,. 
    \end{align*}
    where $m_n(z)=\m(z,x_n)$. Convergence in $\Wass_2$ is equivalent to narrow convergence $\mu_n\weakstar \bar \mu$ in $\P(\R^d)$ together with convergence of second moments $\int \norm{z}^2 \d \mu_n(z) \rightarrow \int \norm{z}^2 \d \bar \mu(z)$. This is equivalent to $\int \varphi(z) \d \mu_n(z) \rightarrow \int \varphi(z) \d\bar \mu(z)$ for all $\varphi$ with at most quadratic growth, and implies that $\mu_n$ also converges weakly. First, we will show that for every
    $(\mu_n)\in\P_2^{ac}$ converging to $\bar \mu$ in $\Wass_2$,
    we have
    \begin{align*}
        \bar F (\bar\mu) \le \li F_n (\mu_n)\,.
    \end{align*}
    We accomplish this by considering individual terms, noting that 
    \begin{align*}
        \li F_n (\mu_n) \geq \li H_0 (\mu_n) + \li H_n(\mu_n)\,,
    \end{align*}
    see \cite[Chapter 1, (1.2)]{braides_gamma-convergence_2002}, and thus lower bounds for each term sufficiently lower-bound $\li F_n(\mu_n)$. By Lemma~\ref{lem:lsc_entropy_santambrogio}, 
    \begin{align*}
        \li \int \mu_n \log \mu_n \ge \int \bar \mu \log \bar \mu\,.
    \end{align*}

    The interaction kernel term can be written as $\iint W(z-z') \d (\mu_n \times \mu_n) (z,z')$. Since $W\ge 0$ and is lower-semicontinuous due to Assumption~\ref{assump:W_lower}, applying \cite[Portmanteau Theorem 1.3.4 (iv)]{van_der_vaart_weak_2023} gives that
    \begin{align*}
        \li \int W(z-z') \d (\mu_n \times \mu_n)(z,z') \ge \int W(z-z') \d (\bar\mu \times \bar\mu)(z,z').
    \end{align*}
    This concludes the proof that for any sequence $(\mu_n)\in\P_2^{ac}$ such that $\Wass_2(\mu_n,\bar \mu) \rightarrow 0$, 
    we have
 \begin{align*}
    \li  H_0(\mu_n) \ge H_0(\bar \mu)\,.
 \end{align*}
For the second term $H_n(\mu_n)$, define 
\begin{align*}
    \overline{H}(\bar\mu) &\coloneqq -\int \m(z,\bar x) \d\bar\mu(z) \,.
\end{align*}
We now show that $\li H_n(\mu_n) \ge \overline{H}(\bar\mu)$ via Fatou's Lemma for narrowly converging measures, Theorem~\ref{thm:fatou_weak_cv_meas}. Note that $\nabla_z^2 \m(z,x) \succeq \lambda_b \Id_d$ for all $x,z\in\R^d$, which implies $\norm{\nabla_z^2 \m(z,x)}_2 \ge \lambda_b$ for all $x,z\in\R^d$.
We also have from Assumption~\ref{assump:f_upper}(b) that $\norm{\nabla_{xz}^2 \m(z,x)}_2 \le L$. Hence it follows from Lemma~\ref{lem:f_aui} that $\{m_n^-\}_{n\in\nat}$ is asymptotically uniformly integrable with respect to $(\mu_n)$.
This allows us to apply Theorem~\ref{thm:fatou_weak_cv_meas}, which gives us that
    \begin{align*}
        \li H_n(\mu_n) = \li \int m_n(z) \d \mu_n(z) &\ge \int\liminf_{n\rightarrow\infty,z'\rightarrow z} m_n(z') \bar\mu(z) \d z = \overline{H} (\bar\mu)\,.
    \end{align*}

    Secondly, we must show that for any measure $\bar\mu$ there exists a recovery sequence $(\mu_n)\in\P_2^{ac}$ with $\mu_n$ converging in $\Wass_2$ to $\bar \mu$, such that
    \begin{align*}
        \bar F(\bar\mu) \ge \ls F_n(\mu_n)\,.
    \end{align*}
    We select the constant sequence $\mu_n=\bar\mu$, and evaluate
    \begin{align*}
        \ls F_n(\bar\mu) &= \ls \left(H_0(\bar\mu) + H_n(\bar\mu) \right)\le 
         H_0(\bar\mu) +  \int (\ls m_n(z))\d \bar\mu(z) 
    \end{align*}
    using Fatou's Lemma with the uniform upper bound on $m_n(z)$ from Lemma~\ref{lem:upper_lower_bounds_f}.
    We conclude that
    \begin{align*}
        \ls F_n(\bar\mu) \le H_0(\bar\mu) + \overline{H}(\bar\mu) \,.
    \end{align*}
\end{proof}
  
\begin{proposition}[Uniform Second Moment Bound]\label{prop:rho_n_bdd_moment}
    Given a sequence $x_n\rightarrow \bar x$ in $\R^d$, the sequence $(\rho_n)$ as defined in \eqref{eq:cv-defs} has uniformly bounded second moments.
\end{proposition}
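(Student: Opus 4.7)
The plan is to sandwich $F_n(\rho_n)$ between a coercive lower bound in the second moment of $\rho_n$ and a uniform-in-$n$ upper bound coming from the minimizing property $F_n(\rho_n)\le F_n(\nu)$ evaluated at a well-chosen test measure $\nu$. For the upper bound, I would pick $\nu:=\br(\bar x)$, which belongs to $L^1(\R^d)\cap\P_2(\R^d)$ by Proposition~\ref{lem:existence_of_maximizer}. Only the coupling term depends on $n$, so
\[
F_n(\br(\bar x))-F(\br(\bar x),\bar x)=-\int\bigl(f_1(z,x_n)-f_1(z,\bar x)\bigr)\,\d\br(\bar x)(z).
\]
A mean-value argument in the $x$-variable combined with the cross-derivative bound $\|\nabla^2_{xz}f_1\|_2\le L$ from Assumption~\ref{assump:f_upper}(b) yields $|f_1(z,x_n)-f_1(z,\bar x)|\le (M+L\|z\|)\,\|x_n-\bar x\|$, with $M$ a uniform bound on $\|\nabla_x f_1(0,\cdot)\|$ over a compact set containing $(x_n)$ and $\bar x$. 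Integrating against $\br(\bar x)\in\P_2$ and using $x_n\to\bar x$ gives $F_n(\br(\bar x))\le C_1$ uniformly in $n$.

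For the coercive lower bound, I would exploit that $\m(\cdot,x)=-f_1(\cdot,x)-\alpha\log\rhot(\cdot)$ is uniformly $\lambda_b$-convex in $z$ with $\lambda_b=\alpha\tilde\lambda-\Lambda_1>0$ by Assumptions~\ref{assump:f_lower}(ii) and \ref{assump:V_lower}. Taylor-expanding $\m(\cdot,x_n)$ at $z=0$ and absorbing the linear term via Young's inequality gives
\[
\int\m(z,x_n)\,\d\rho_n(z)\ge \frac{\lambda_b}{4}\int\|z\|^2\,\d\rho_n(z)+c(x_n),
\]
where $c(x_n):=\m(0,x_n)-\lambda_b^{-1}\|\nabla_z\m(0,x_n)\|^2$ is continuous in $x_n$ and therefore uniformly bounded along the convergent sequence $(x_n)$. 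Combined with Lemma~\ref{lem:rho_log_rho_lower_bound} applied with $\eps=\lambda_b/(8\alpha)$ to lower-bound the entropy contribution (mirroring the estimate in Proposition~\ref{prop:existenceGa}) and the non-negativity $W\ge 0$, this delivers $F_n(\rho_n)\ge \tfrac{\lambda_b}{8}\int\|z\|^2\,\d\rho_n(z)-C_2$ uniformly in $n$. Comparing the two bounds then yields $\int\|z\|^2\,\d\rho_n(z)\le 8(C_1+C_2)/\lambda_b$ independently of $n$.

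The main technical obstacle lies in the upper bound: one needs $f_1(\cdot,x_n)-f_1(\cdot,\bar x)$ to have at most linear growth in $z$ so that its integral against $\br(\bar x)\in\P_2$ is finite and controlled uniformly. This is exactly what Assumption~\ref{assump:f_upper}(b) provides via a Lipschitz-in-$x$ modulus that is linear in $z$; without it, the test measure $\br(\bar x)$ would have to be replaced, for example by a compactly supported smooth density absolutely continuous with respect to $\rhot$, at the cost of a less transparent comparison argument.
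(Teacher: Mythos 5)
Your proof is correct, and it follows the same sandwich strategy as the paper's: a coercive lower bound on $F_n(\rho_n)$ in the second moment (via $\lambda_b$-convexity of $\m(\cdot,x_n)$, Young's inequality, Lemma~\ref{lem:rho_log_rho_lower_bound}, and $W\ge 0$), plus a uniform-in-$n$ upper bound from the minimizing property evaluated at a fixed test measure. The genuine difference is the test measure. You use $\nu=\br(\bar x)$ and control $F_n(\br(\bar x))-F(\br(\bar x),\bar x)$ via a Lipschitz-in-$x$ modulus built from the cross-derivative bound $\|\nabla^2_{xz} f_1\|_2\le L$ of Assumption~\ref{assump:f_upper}(b). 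The paper instead plugs in the fixed reference measure $\nu=\rhot$, rewrites the minimizing property of $\rho_n$ as $G(\rho_n,x_n)\ge G(\rhot,x_n)$, and bounds $-G(\rhot,x_n)\le\max_x\bigl(-G(\rhot,x)\bigr)$, which is finite because $x\mapsto G(\rhot,x)$ is a strongly $\lambda_d$-convex function on $\R^d$. The paper's choice is slightly more self-contained: it does not invoke the existence of $\br(\bar x)$ from Proposition~\ref{lem:existence_of_maximizer}, and the upper bound requires no differentiation of $f_1$ in $z$. Your choice is the more direct minimizer-to-minimizer comparison and, as you note, hinges on $f_1(\cdot,x_n)-f_1(\cdot,\bar x)$ having at most linear growth in $z$, which is exactly what Assumption~\ref{assump:f_upper}(b) buys you. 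A minor point in your favor: your lower bound keeps the $x$-dependence in the constant $c(x_n)=\m(0,x_n)-\lambda_b^{-1}\|\nabla_z\m(0,x_n)\|^2$ and invokes only continuity along the convergent sequence $(x_n)$, whereas the paper's appeal to Lemma~\ref{lem:upper_lower_bounds_f}(1) for the same step tacitly reuses the cross-derivative bound $L$ inside that lemma's proof, so neither route entirely avoids Assumption~\ref{assump:f_upper}(b). Both arguments are valid under the hypotheses of Theorem~\ref{thm:convergence_fast_rho}.
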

\begin{proof}
Applying Lemma~\ref{lem:upper_lower_bounds_f} (1) with $m(z,x):=-f_1(z,x)-\alpha\log\rhot(z)$, we have $m_n(z):=m(z,x_n) \ge -\hat c_1 + \hat c_2 \norm{z}^2$ for all $n\in\nat$ with $\hat c_2>0$ since $\lambda_b>0$.
Integrating the inequality with respect to $\rho_n$ results in 
\begin{align*}
    \hat c_2 \int &\norm{z}^2 \d\rho_n(z) \le  \hat c_1 + \int m_n(z) \d \rho_n(z)  \\
    & \le \hat c_1 +   \int m_n(z) \d \rho_n(z) +  \alpha\int \rho_n \log \rho_n  +  \frac{\hat c_2}{2}\int \norm{z}^2 \d \rho_n(z) + \hat c_3 + \frac{1}{2} \int \rho_n W \ast \rho_n 
\end{align*}
using Lemma~\ref{lem:rho_log_rho_lower_bound} with $\eps=\hat c_2/(2\alpha)$, denoting $\hat c_3=\alpha c_\eps$.
Using \eqref{eq:V_upper-b}, 
there exists $N>0$ and $c_2>0$
such that $\int f_2(z,x_n)\d\pi(z) \le c_2$ for all $n\ge N$. 
Then for all $n\ge N$,
\begin{align*}
    \frac{\hat c_2}{2}\int \norm{z}^2 \d\rho_n(z) & \le C + \int m_n(z)\d \rho_n(z) + \alpha\int \rho_n \log \rho_n  + \frac{1}{2}\int \rho_n W \ast \rho_n \\
    &\qquad - \frac{\kappa}{2 }\norm{x_n-x_0}^2 - \int f_2(z,x_n) \d \pi(z) \\
    &= C - G(\rho_n,x_n)
\end{align*}
where $C \coloneqq  \hat c_1 + \hat c_3+ c_2 + \max_{n\ge 1}\frac{\kappa}{2}\norm{x_n-x_0}^2$. 
Since $\rho_n$ maximizes $G$, we have $G(\rho_n,x_n)\ge G(\rhot,x_n)$ for any choice of $n\ge 0$. Then for all $n\ge N$,
\begin{align*}
    \frac{\hat c_2}{2} \int \norm{z}^2 \d \rho_n(z) &\le C - G(\rhot,x_n) 
    \le C +\max_{x} (-G(\rhot,x))\,.
\end{align*}
The right-hand side is finite and bounded by some constant independent of $n$ since $G(\rhot,\cdot)$ is twice continuously differentiable and strongly $\lambda_d$-convex over $\R^d$ (see proof of Lemma~\ref{lem:first_variation_b}).
\end{proof}

\begin{proposition}[Convergence of Second Moments]\label{prop:cv_second_moments}
    Let Assumptions
    \ref{assump:f_upper}-\ref{assump:W_upper} hold. If $x_n\rightarrow \bar x$ in $\R^d$, then the sequence $(\rho_n)$ as defined in \eqref{eq:cv-defs} satisfies $\int \norm{z}^2 \d\rho_n(z) \rightarrow \int \norm{z}^2 \d \rhob(z)$.
\end{proposition}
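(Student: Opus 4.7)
The strategy is to exploit the uniform $\lambda_b$-displacement convexity of each $F_n$ in $\rho$ together with the $\Gamma$-convergence already established in Proposition~\ref{prop:gamma_cv}. The entropy term $\alpha H$ and the interaction term $\rho \mapsto \tfrac{1}{2}\int \rho\,W \ast \rho$ are both displacement convex, and Assumptions~\ref{assump:f_lower}(ii) and \ref{assump:V_lower} imply $\nabla_z^2 m_n(z) \succeq \lambda_b \Id_d$ with $\lambda_b = \alpha\tilde\lambda - \Lambda_1 > 0$, so the potential term $\rho \mapsto \int m_n\,\d\rho$ is uniformly $\lambda_b$-displacement convex; hence so is $F_n$. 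Since $\rho_n$ is the unique minimizer of $F_n$, differentiating $s \mapsto F_n(\rho_s)$ along the $\Wass_2$-geodesic from $\rho_n$ to $\rhob$ and using that the right derivative at $s=0$ is non-negative by minimality produces the standard Talagrand-type bound
\begin{equation*}
\tfrac{\lambda_b}{2}\,\Wass_2(\rho_n,\rhob)^2 \;\le\; F_n(\rhob) - F_n(\rho_n)\,.
\end{equation*}
If the right-hand side tends to zero, then $\Wass_2(\rho_n,\rhob) \to 0$, which immediately implies convergence of second moments.

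The convergence $F_n(\rhob) \to \bar F(\rhob)$ is a direct application of dominated convergence: $m(z,x_n) \to m(z,\bar x)$ pointwise, and Assumption~\ref{assump:f_upper}(b) supplies the envelope $|m(z,x_n) - m(z,\bar x)| \le L\,|x_n - \bar x|\,(\|z\| + C)$ for $x_n$ in a bounded range, which is integrable against $\rhob \in \P_2$. The remaining terms of $F_n(\rhob)$ do not depend on $n$.

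For $F_n(\rho_n) \to \bar F(\rhob)$, Proposition~\ref{prop:rho_n_bdd_moment} gives uniform boundedness of the second moments of $(\rho_n)$, hence tightness via Markov's inequality. Extracting a narrowly convergent subsequence $\rho_{n_k} \weakstar \rho^*$, lower semicontinuity of the second moment places $\rho^* \in \P_2$. Minimality together with the previous paragraph gives $\limsup_k F_{n_k}(\rho_{n_k}) \le \lim_k F_{n_k}(\rhob) = \bar F(\rhob)$. Crucially, the liminf half of Proposition~\ref{prop:gamma_cv} applies already in the present situation: its proof invokes Theorem~\ref{thm:fatou_weak_cv_meas} through the a.u.i.\ estimate of Lemma~\ref{lem:f_aui} and the lower semicontinuity of the entropy and interaction terms, and each of these ingredients needs only narrow convergence plus a uniform second-moment bound, both of which are available here. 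Consequently $\bar F(\rho^*) \le \liminf_k F_{n_k}(\rho_{n_k})$, so $\bar F(\rho^*) \le \bar F(\rhob)$; uniqueness of the minimizer of $\bar F$ (Proposition~\ref{lem:existence_of_maximizer}) forces $\rho^* = \rhob$, and all inequalities become equalities, giving $F_{n_k}(\rho_{n_k}) \to \bar F(\rhob)$. Since every subsequence has the same unique limit, the full sequence satisfies $F_n(\rho_n) \to \bar F(\rhob)$, and combining with the convexity bound closes the argument.

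The main delicate point is verifying that the $\Gamma$-liminf inequality from Proposition~\ref{prop:gamma_cv}, stated for $\Wass_2$-convergent sequences, is truly applicable to a merely narrowly convergent subsequence. This is precisely what the uniform second-moment bound of Proposition~\ref{prop:rho_n_bdd_moment} unlocks, as it is the hypothesis that certifies asymptotic uniform integrability in Theorem~\ref{thm:fatou_weak_cv_meas}; once this is acknowledged, the remainder of the argument is bookkeeping.
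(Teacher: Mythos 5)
Your proof is correct, but it takes a genuinely different route from the paper. The paper argues ``locally'': it reads off a uniform Gaussian envelope $\rho_n(z)\le \tilde c_0\exp(\hat c_1-\hat c_2\|z\|^2)$ from the Euler--Lagrange relation (using the upper bound on $\m$ from Lemma~\ref{lem:upper_lower_bounds_f}(2), which needs Assumptions~\ref{assump:f_upper}(a) and~\ref{assump:V_upper}, plus the quadratic growth of $W$ from Assumption~\ref{assump:W_upper} to control the normalizing constant $\tilde c(x_n)$), then obtains equi-integrability of $\|z\|^2\rho_n$ and applies a uniform-integrability theorem. You argue ``variationally'': uniform $\lambda_b$-displacement convexity of $F_n$ plus minimality of $\rho_n$ gives the Talagrand-type bound $\frac{\lambda_b}{2}\Wass_2(\rho_n,\rhob)^2\le F_n(\rhob)-F_n(\rho_n)$; the right side goes to zero by combining the Lipschitz estimate from Assumption~\ref{assump:f_upper}(b) with the $\Gamma$-liminf (re-run for narrowly convergent, second-moment-bounded sequences, which is all its ingredients---Lemma~\ref{lem:lsc_entropy_santambrogio}, Portmanteau, and the a.u.i.\ estimate of Lemma~\ref{lem:f_aui}---actually require) and with uniqueness of the minimizer. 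Your argument buys two things: it bypasses Assumptions~\ref{assump:f_upper}(a),~\ref{assump:V_upper} and~\ref{assump:W_upper} entirely, and it delivers $\Wass_2(\rho_n,\rhob)\to 0$ directly, which is strictly stronger than second-moment convergence and would absorb much of the downstream Corollary~\ref{cor:br-cont}. What the paper's envelope argument buys instead is a pointwise uniform Gaussian bound on the whole sequence $(\rho_n)$, which controls all moments simultaneously and is the kind of quantitative concentration that a $\Gamma$-convergence argument does not see. The only step worth spelling out a bit more carefully in your write-up is the passage from ``every subsequence has a further narrowly convergent subsubsequence with limit $\rhob$'' to ``$F_n(\rho_n)\to\bar F(\rhob)$ along the whole sequence,'' and the check that $F_n(\rhob)$ and $F_n(\rho_n)$ are finite so that the Talagrand inequality is not vacuous (both follow from $\bar F(\rhob)<\infty$, the Lipschitz estimate, and minimality).
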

\begin{proof}
Note that $(\rho_n)\in\P_2^{ac}(\R^d)$ by Proposition~\ref{lem:existence_of_maximizer}. We will show convergence of the second moments by showing that $(\norm{z}^2\rho_n(z))$ is equi-integrable, that is,
\begin{align*}
    \lim_{K\rightarrow \infty} \sup_n \int \norm{z}^2 \rho_n(z) \mathbbm 1 \{ z : \norm{z}^2 \rho_n(z) \ge K \}\,\d z = 0\,.
\end{align*}
    Since $\rho_n$ is a minimizer of $F(\cdot,x_n)$, $\rho_n$ satisfies the re-arranged EL equation \eqref{eq:EL-fastrho}
    \begin{align*}
        \rho_n (z)= \tilde c(x_n) \exp\left(-\frac{1}{\alpha} \m(z,x_n) - \frac{1}{\alpha}W\ast\rho_n \right) \le \tilde c(x_n) \exp\left(- \frac{1}{\alpha}\m(z,x_n)  \right)\,,
    \end{align*}
    where we used that $\exp(- W\ast \rho) \le 1$ since $W \ast \rho\ge 0$. Using (1) from Lemma~\ref{lem:upper_lower_bounds_f} which provides a lower bound on $\m$, we obtain
    \begin{align*}
        \rho_n(z) \le \tilde c(x_n) \exp\left(\frac{\hat c_1 - \hat c_2 \norm{z}^2}{\alpha}\right)
    \end{align*}
    with $\hat c_2>0$ since $\lambda_b>0$.
    In order to obtain an upper-bound for $\rho_n$ independent of $x_n$, we show that $\sup_n \tilde c(x_n) < \infty$. Starting again from the Euler-Lagrange equation, using that $\int \rho_n=1$
    \begin{align*}
        &\tilde c(x_n) \int \exp\left(-\frac{1}{\alpha}\m(z,x_n)-\frac{1}{\alpha}W\ast \rho_n\right) \d z =1 \\ &\quad \Rightarrow  \ \tilde c(x_n) = \left( \int \exp(-\frac{1}{\alpha}\m(z,x_n)-\frac{1}{\alpha}W\ast\rho_n)\d z\right)^{-1}\,.
    \end{align*}
    Next, we upper-bound $\tilde c(x_n)$. Using the upper estimate (2) for $\m(z,x_n)$ from Lemma~\ref{lem:upper_lower_bounds_f}, which requires Assumptions~\ref{assump:f_upper} and \ref{assump:V_upper}, we obtain $\m(z,x_n)\le c_1+c_2\|z\|^2$ with $c_2>0$.
    To control the term with $W \ast \rho_n$, we use Assumption~\ref{assump:W_upper} to obtain a growth condition for $W$,
    \begin{align*}
        W(z) &= W(0) + \nabla W(0) \cdot z + \int_0^1 z^\top \nabla^2 W(sz) z \d s \le W(0) + \norm{\nabla W(0)}\norm{z} + \Lambda_{W} \norm{z}^2 \\
        &\le \tilde c_1 + \tilde c_2 \norm{z}^2 \,,
    \end{align*}
    and so
    \begin{align*}
        W\ast \rho_n(z) &= \int W(z-z')\rho_n(z') \d z' \le \tilde c_1 + \tilde c_2 \int \norm{z-z'}^2 \d \rho_n(z')\\
        &\le \tilde c_1 + 2\tilde c_2\int \norm{z'}^2 \d \rho_n(z') + 2 \tilde c_2 \norm{z}^2 \le \tilde C_1 + \tilde C_2 \norm{z}^2
    \end{align*}
    with $\tilde C_1, \tilde C_2>0$,
    using that the second moments of $\rho_n$ are uniformly bounded by Proposition~\ref{prop:rho_n_bdd_moment}. Therefore,
    \begin{align*}
         \tilde c(x_n) \le\left(\int\exp\left(-\frac{1}{\alpha}\left[c_1+ \tilde C_1+(c_2+\tilde C_2)\|z\|^2\right]\right)\d z\right)^{-1}=:\tilde c_0
    \end{align*} 
    The upper-bound on $\rho_n$ is therefore
    \begin{align}\label{eq:rho_n_upper_bd}
        \rho_n(z) \le \tilde c_0 \exp\left (\hat c_1 - \hat c_2\norm{z}^2\right) \quad \text{for a.e. } z\in\R^d\,, 
    \end{align}
    which shows that $\norm{z}^2\rho_n$ is equi-integrable since the Gaussian-like shape of the upper bound in \eqref{eq:rho_n_upper_bd} has a finite second moment. Applying \cite[Theorem 4.5.4]{bogachev_measure_2007} results in the convergence of the second moment.
\end{proof}

\begin{corollary}\label{cor:br-cont}
 Let Assumptions~\ref{assump:f_upper}-\ref{assump:W_upper} hold. The best response $\br(x)\in \P_2(\R^d)$ is $\Wass_2$-continuous in $x\in\R^d$, and $G_d:\R^d \to \R$ is continuous in $\R^d$.
\end{corollary}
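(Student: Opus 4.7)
The plan is to prove continuity of $\br(\cdot)$ via the standard $\Gamma$-convergence-plus-compactness argument, and then deduce continuity of $G_d$ from the continuity of $\br$ together with suitable continuity of $G$ in its two arguments. Fix any sequence $x_n\to\bar x$ in $\R^d$ and set $\rho_n:=\br(x_n)$ and $\bar\rho:=\br(\bar x)$, which are well-defined and unique by Proposition~\ref{lem:existence_of_maximizer}. The goal is to show $\Wass_2(\rho_n,\bar\rho)\to 0$.

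First I would establish narrow precompactness of $(\rho_n)$. Proposition~\ref{prop:rho_n_bdd_moment} gives a uniform bound on $\int\|z\|^2\d\rho_n(z)$, so by Markov's inequality the sequence is tight, and Prokhorov's theorem yields a narrowly convergent subsequence $\rho_{n_k}\weakstar \rho^*$ for some $\rho^*\in\P(\R^d)$. Combining narrow convergence with the uniform second moment bound yields weak convergence in the sense of Definition~\ref{def:weakcv}, and Proposition~\ref{prop:cv_second_moments} strengthens this to convergence in $\Wass_2$ along the subsequence, $\Wass_2(\rho_{n_k},\rho^*)\to 0$. Here I would check that Proposition~\ref{prop:cv_second_moments} applies to any narrowly convergent subsequence of minimizers; its proof only uses the Euler--Lagrange characterization of $\rho_{n_k}$ together with the uniform second moment bound, neither of which depends on choosing the whole sequence.

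Next I would use Proposition~\ref{prop:gamma_cv} to identify $\rho^*$. Since $F_{n_k}\xrightarrow{\Gamma}\bar F$ in $\Wass_2$ and $\rho_{n_k}$ minimizes $F_{n_k}$, a standard consequence of $\Gamma$-convergence (using the recovery sequence given by the constant sequence $\bar\rho$ in the proof of Proposition~\ref{prop:gamma_cv}) gives
\begin{equation*}
    \bar F(\rho^*)\le \liminf_{k\to\infty} F_{n_k}(\rho_{n_k}) \le \limsup_{k\to\infty} F_{n_k}(\bar\rho) \le \bar F(\bar\rho)\,.
\end{equation*}
Hence $\rho^*$ is a minimizer of $\bar F$, and by uniqueness from Proposition~\ref{lem:existence_of_maximizer} we conclude $\rho^*=\bar\rho$. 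Since every subsequence of $(\rho_n)$ admits a further subsequence converging in $\Wass_2$ to the same limit $\bar\rho$, the full sequence converges: $\Wass_2(\rho_n,\bar\rho)\to 0$, which proves $\Wass_2$-continuity of $\br(\cdot)$.

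For continuity of $G_d$, I would decompose
\begin{equation*}
G_d(x)=\int f_2(z,x)\,\d\pi(z)+\tfrac{\kappa}{2}\|x-x_0\|^2 -F(\br(x),x)\,.
\end{equation*}
The first two terms are continuous in $x$ directly, so it suffices to verify continuity of $x\mapsto F(\br(x),x)$. Given $x_n\to\bar x$, we have $\rho_n\to\bar\rho$ in $\Wass_2$, and the $\Gamma$-$\liminf$ bound above together with the $\Gamma$-$\limsup$ along the actual sequence $(\rho_n)$ in place of the recovery sequence yields $F_n(\rho_n)\to \bar F(\bar\rho)$; the main obstacle here is that $\Gamma$-convergence gives only $\liminf F_n(\rho_n)\ge \bar F(\bar\rho)$, so I would need the matching upper bound, which follows by using the minimizing property $F_n(\rho_n)\le F_n(\bar\rho)$ together with pointwise continuity $F_n(\bar\rho)\to \bar F(\bar\rho)$ (Fatou applied to the upper envelope $m_n(z)\to m(z,\bar x)$ on the fixed measure $\bar\rho$, using the uniform upper bound from Lemma~\ref{lem:upper_lower_bounds_f}). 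This gives $F(\rho_n,x_n)\to F(\bar\rho,\bar x)$ and hence continuity of $G_d$. The main conceptual hurdle is the interplay between the parameter dependence of both the integrand $\m(\cdot,x_n)$ and the measure $\rho_n$, which is exactly the obstacle already resolved by Propositions~\ref{prop:gamma_cv} and \ref{prop:cv_second_moments}; once those are in hand, the rest is a compactness-plus-uniqueness argument.
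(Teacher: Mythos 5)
Your proposal is correct and follows essentially the same route as the paper: uniform second moments (Prop.~\ref{prop:rho_n_bdd_moment}) plus Prokhorov for precompactness, the equi-integrability argument underlying Prop.~\ref{prop:cv_second_moments} to upgrade narrow to $\Wass_2$ convergence of the subsequence, and the $\Gamma$-convergence of Prop.~\ref{prop:gamma_cv} together with uniqueness of minimizers to identify the limit as $\br(\bar x)$; you simply unpack the fundamental theorem of $\Gamma$-convergence which the paper cites directly. The only cosmetic difference is in the $G_d$ continuity step, where the paper splits into lower semicontinuity (sup of continuous functions) and upper semicontinuity (via the $\Gamma$-liminf), whereas you prove $F_n(\rho_n)\to\bar F(\bar\rho)$ outright by squeezing between the $\Gamma$-liminf and the recovery bound $F_n(\rho_n)\le F_n(\bar\rho)$ — both are sound and rely on the same ingredients.
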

\begin{proof}
    Given any sequence $(x_n)_{n\in\nat}$ converging to $\bar x$ in $\R^d$ as $n\to \infty$, we have that $F_n \xrightarrow{\Gamma} \bar F$ in $\Wass_2$ for sequences such that $(\mu_n)\in\P_2^{ac}(\R^d)$ by Proposition~\ref{prop:gamma_cv}. 
    From Lemma~\ref{lem:existence_of_maximizer}, $\br(x_n)=\argmin_{\rho\in\P_2} F_n(\rho)$ exists and is unique for every $x_n$; similarly, $\br(\bar x)$ is the unique minimizer of $\bar F$. Uniform boundedness of second moments for $(\br(x_n))_{n\in\nat})$ follows from Proposition~\ref{prop:rho_n_bdd_moment},
    and tightness of $(\br(x_n))_{n\in\nat}$ follows using \cite[Proposition 7.1.5]{ags}. Then precompactness of $(\br(x_n))_{n\in\nat}$ in the narrow topology follows from \cite[Prokorov's Theorem 5.1.3]{ags}. Together with convergence of second moments (Proposition~\ref{prop:cv_second_moments}), we conclude that $\br(x_n)$ converges to $\br(\bar x)$ in $\Wass_2$ \cite[Theorem 2.10]{braides_handbook_2006}. In other words, the best response $\br(x)$ is continuous in $x$ in $\Wass_2$. 

    Recall that $G_d(x) \coloneqq \max_{\rho\in\P} G(\rho,x)$. For any fixed $\rho$, $G(x,\rho)$ is continuous in $x$ due to our assumptions. Because the maximum over continuous functions is lower-semicontinuous, $G_d(x)$ is lower-semicontinuous in $x$ with respect to $\R^d$ . From Proposition~\ref{prop:gamma_cv}, it holds that $F_n$ is lower-semicontinuous:
    \begin{align*}
        \liminf_{n\rightarrow\infty} F_n(\mu_n) \ge \bar F(\bar \mu)\,.
    \end{align*}
    Since $G_d(x_n) = G(\rho_n,x_n) = \frac{\kappa}{2}\norm{x-x_0}^2 + \int f_2(z,x) \d \pi(z) - F_n(\rho_n)$, $F_n$ is lower-semicontinuous, and all other terms in $G_d$ are continuous in $x_n$, we have
    \begin{align*}
        \limsup_{n\rightarrow \infty} G_d(x_n) = -\liminf_{n \rightarrow \infty } F_n(\rho_n) + \frac{\kappa}{2}\norm{\bar x-x_0}^2 + \int f_2(z,\bar x) \d \pi(z) \le G_d (\bar x)\,,
    \end{align*}    
    which means that $G_d$ is upper-semicontinuous in $x$ with respect to $\R^d$. Since both lower- and upper-semicontinuity have been shown, $G_d$ is continuous in $x$.
\end{proof}

\begin{lemma}\label{lem:limsup_partial_f1_dxi}
   Let Assumptions~\ref{assump:f_upper}-\ref{assump:W_upper} hold. Fix $i\in\{1,\cdots,d\}$ and let $x^{(h)}=\bar x +h e^{(i)}$ and $\rho_h = \amax_{\rho\in\P} G(\rho,x^{(h)})$, where $e^{(i)}$ denotes the $i$th standard unit vector in $\R^d$. Then for any sequence $y^{(h)}\rightarrow \bar x$ as $h\to 0$, we have
    \begin{align}\label{eq:lim_partial_G}
        \lim_{h \rightarrow 0} \partial_{x_i} G(\rho_h,y^{(h)}) = \partial_{x_i} G(\rhob,\bar x)\,, 
    \end{align}
    with $x_i$ denoting the $i$th coordinate of $x\in\R^d$ and $\bar\rho = \amax_{\rho\in\P} G(\rho,\bar x)$. Moreover for any sequence $x_n\rightarrow \bar x$,
    \begin{align}\label{eq:lim_f1_br}
        \lim_{n\rightarrow\infty} \int f_1(z,x_n)\d \br(x_n)(z) = \int f_1(z,\bar x) \d\br(\bar x)(z)\,.
    \end{align}
\end{lemma}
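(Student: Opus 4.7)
\emph{Set-up.} Explicit differentiation gives
\begin{equation*}
\partial_{x_i} G(\rho, x) = \int \partial_{x_i} f_1(z, x) \d\rho(z) + \int \partial_{x_i} f_2(z, x) \d\pi(z) + \kappa(x_i - x_{0,i}).
\end{equation*}
The second and third terms depend continuously on $x$ alone (the middle one by $f_2 \in C^2$ together with dominated convergence as $y^{(h)} \to \bar x$), so their values at $y^{(h)}$ converge to those at $\bar x$. The crux of \eqref{eq:lim_partial_G} is therefore to show
\begin{equation*}
\int \partial_{x_i} f_1(z, y^{(h)}) \d\rho_h(z) \xrightarrow{h \to 0} \int \partial_{x_i} f_1(z, \bar x) \d\bar\rho(z).
\end{equation*}
The key continuity input is Corollary~\ref{cor:br-cont}, which gives $\rho_h = \br(x^{(h)}) \to \br(\bar x) = \bar\rho$ in $\Wass_2$ as $h \to 0$.

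\emph{Strategy for \eqref{eq:lim_partial_G}.} Write the difference as $A_h + B_h$, where
\begin{equation*}
A_h := \int \bigl[\partial_{x_i} f_1(z, y^{(h)}) - \partial_{x_i} f_1(z, \bar x)\bigr] \d\rho_h(z), \qquad B_h := \int \partial_{x_i} f_1(z, \bar x) \,\d(\rho_h - \bar\rho)(z).
\end{equation*}
Assumption~\ref{assump:f_upper}(b) (i.e. $\|\nabla_{xz}^2 f_1\|_2 \le L$) gives $|\partial_{x_i} f_1(z, \bar x)| \le |\partial_{x_i} f_1(0, \bar x)| + L\|z\|$, so the test function has at most linear growth; the $\Wass_2$-convergence of $\rho_h$ then yields $B_h \to 0$ by standard duality characterizations of $\Wass_2$-convergence. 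For $A_h$, fix $R > 0$ and split the integral on $\{\|z\| \le R\}$ and $\{\|z\| > R\}$: uniform continuity of $\partial_{x_i} f_1$ on $\overline{B_R(0)} \times N$, with $N$ a compact neighborhood of $\bar x$, makes the bulk contribution $\le \varepsilon \cdot \rho_h(B_R) \le \varepsilon$ once $h$ is small, while the tail is bounded by $\int_{\|z\| > R}(C + 2L\|z\|) \d\rho_h \le CK/R^2 + 2LK/R$ via the uniform second-moment bound $K$ from Proposition~\ref{prop:rho_n_bdd_moment} together with Markov's inequality (and $\|z\| \le \|z\|^2/R$ on the tail). Choosing $R$ large first and then $h$ small gives $A_h \to 0$.

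\emph{Strategy for \eqref{eq:lim_f1_br} and main obstacle.} The same splitting works with $\partial_{x_i} f_1$ replaced by $f_1$ and $h \to 0$ replaced by $n \to \infty$. For the analog of $A_n$, the mean value theorem combined with the bound $\|\nabla_x f_1(z, x)\| \le \|\nabla_x f_1(0, x)\| + L\|z\|$ (again from Assumption~\ref{assump:f_upper}(b)) yields $|f_1(z, x_n) - f_1(z, \bar x)| \le (C + L\|z\|)\|x_n - \bar x\|$ for $x_n$ in a neighborhood of $\bar x$, so integrating against $\br(x_n)$ and using Proposition~\ref{prop:rho_n_bdd_moment} gives $|A_n| \le (C + L\sqrt{K})\|x_n - \bar x\| \to 0$. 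For the analog of $B_n$, the two-sided Hessian bounds from Assumptions~\ref{assump:f_lower}(ii) and \ref{assump:f_upper}(a) give $|f_1(z, \bar x)| \le C_1(1 + \|z\| + \|z\|^2)$ via Taylor expansion, i.e. at most quadratic growth, which is precisely the regime in which $\Wass_2$-convergence of $\br(x_n) \to \br(\bar x)$ implies convergence of integrals, so $B_n \to 0$. The main obstacle throughout is extracting sharp enough polynomial growth envelopes for $f_1$ and its $x$-derivatives from the one-sided convexity in Assumption~\ref{assump:f_lower}(ii) and the cross-Lipschitz estimate in Assumption~\ref{assump:f_upper}(b), and matching them to the quadratic-growth tolerance of $\Wass_2$-convergence; this is exactly where Assumptions~\ref{assump:f_upper}(a)--(b) play a decisive role.
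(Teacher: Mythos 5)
Your proof is correct and establishes the same conclusions as the paper, but via a genuinely different route. The paper's proof verifies that the integrands $\left(\partial_{x_i}f_1(\cdot,y^{(h)})\right)_h$ and $\left(f_1(\cdot,x_n)\right)_n$ are asymptotically uniformly integrable (Definition~\ref{def:aui}) with respect to the moving measures, and then invokes the Fatou-type theorem for weakly converging measures (Theorem~\ref{thm:fatou_weak_cv_meas} and \cite[Corollary 2.8]{feinberg_fatous_2019}) to pass to the limit. You instead decompose the error as $A + B$: a term $A$ where only the $x$-argument of the integrand moves but the measure is fixed at the moving sequence, and a term $B$ where the integrand is frozen at $\bar x$ and only the measure moves. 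The $B$-terms are dispatched by the standard duality characterization of $\Wass_2$-convergence (convergence of integrals of continuous test functions with at most quadratic growth) applied to $\br(x_n)\to\br(\bar x)$ from Corollary~\ref{cor:br-cont}, and the $A$-terms by either an $R$-truncation with uniform continuity on compacts plus the uniform second-moment bound (for $\partial_{x_i}f_1$), or a Lipschitz-in-$x$ estimate coupled to the uniform first moment (for $f_1$). Both approaches rest on identical growth estimates---linear growth of $\partial_{x_i}f_1$ from Assumption~\ref{assump:f_upper}(b), quadratic growth of $f_1$ from Assumptions~\ref{assump:f_lower}(ii) and \ref{assump:f_upper}(a)---and on the moment control from Propositions~\ref{prop:rho_n_bdd_moment} and \ref{prop:cv_second_moments}. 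What your argument buys is self-containedness: it replaces the a.u.i. machinery and external citation with a transparent truncate-and-estimate computation; the paper's approach is somewhat more modular (once a.u.i. is verified, the conclusion is immediate) but needs the imported Fatou-type theorem. The two proofs are logically equivalent in strength.
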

\begin{proof}
To show that \eqref{eq:lim_partial_G} holds, we want to compute
    \begin{align*}
        \lim_{h\rightarrow 0}& \left( \int \partial_{x_i} f_1(z,y^{(h)}) \d \rho_h + \int \partial_{x_i} f_2(z,y^{(h)}) \d \pi(z) + \kappa (y_i^{(h)}-(x_0)_i) \right) \\
        &=
       \lim_{h \rightarrow 0} \left(\int \partial_{x_i} f_1(z,y^{(h)}) \d \rho_h(z) \right)+ \int \partial_{x_i} f_2(z,\bar x) \d \pi(z) + \kappa (\bar x_i-(x_0)_i)\,,
    \end{align*}
    where the limiting value of the last two terms follows immediately from the assumption that $f_2\in C^2$.
    From a corollary of Fatou's lemma for weakly converging measures \cite[Corollary 2.8]{feinberg_fatous_2019}, it is sufficient to show that $(\partial_{x_i} f_1(z,y^{(h)}))_{h\ge 0}$ is a.u.i with respect to $(\rho_h)_{h\ge 0}$. 
    Using the Taylor expansion around $z=0$ with remainder gives
       \begin{align*}
         \partial_{x_i} f_1(z,y^{(h)}) 
        &= \partial_{x_i} f_1(0,y^{(h)}) + \int_0^1 \nabla_z \partial_{x_i} f_1(tz,y^{(h)}) \cdot z\, \d t \,.
    \end{align*}
    From Assumption~\ref{assump:f_upper}(b), we obtain
    \begin{align*}
       \left| \int_0^1 \nabla_z \partial_{x_i} f_1(t z,y^{(h)})\cdot z \,\d t \right| \le L  \norm{z}\,,
    \end{align*}
 and since $\pxi f_1$ is continuous in $x$, we have
 \begin{align*}
     |g_h(z)| \le L\norm{z} + C_0
 \end{align*}
 for $g_h(z)\coloneqq\pxi f_1(z,y^{(h)})$ and for some constant $C_0>0$ depending only on $\bar x$.
For $K>C_0$, the key term for the a.u.i condition can be bounded by
\begin{align*}
    \int |g_h(z)| \mathbbm{1}\{z\, :\, |g_h(z)|\ge K\} \d \rho_h(z) \le \int_{B_{\frac{K-C_0}{L}}^c} (L\norm{z} + C_0)\d \rho_h(z)\,,
\end{align*}
where $B_{\frac{K-C_0}{L}}$ denotes the open ball of radius $\frac{K-C_0}{L}$ centered at zero. Taking the limit as $h\rightarrow 0$ results in
\begin{align*}
    \lim_{h\rightarrow 0} \int |g_h(z)| \mathbbm{1}\{z\, :\, |g_h(z)|\ge K\} \d \rho_h(z) &\le \lim_{h\rightarrow 0} \int_{B_{\frac{K-C_0}{L}}^c} (L\norm{z} + C_0)\d \rho_h(z) \\&=  \int_{B_{\frac{K-C_0}{L}}^c} (L\norm{z}+C_0) \d \rhob(z)\,,
\end{align*}
    where convergence to $\rhob$ is due to $\rho_h \rightarrow \rhob$ in $\Wass_2$ by Corollary~\ref{cor:br-cont}. Now taking $K$ to infinity,
    \begin{align*}
        \lim_{K\rightarrow \infty} \lim_{h\rightarrow 0} &\int |g_h(z)| \mathbbm{1}\{z\, :\, |g_h(z)|\ge K\} \d \rho_h(z) \\ &\le \lim_{K\rightarrow \infty} L\int_{B_{\frac{K-C_0}{L}}^c}\norm{z} \rhob(z)\d z + \lim_{K\rightarrow \infty} C_0 \int_{B_{\frac{K-C_0
        }{L}}^c} \rhob(z)  = 0\,,
    \end{align*}
    since $\int \|z\|\rhob(z)\d z <\infty$ and $\int \rhob(z)\d z = 1$.
    Therefore $\partial_{x_i} f_1(z,y^{(h)})$ is a.u.i. with respect to $(\rho_h)$. We conclude that $$\lim_{h\rightarrow 0} \int g_h(z) \d \rho_h(z) = \int \left(\lim_{h\rightarrow 0 }g_h(z) \right)\d \rhob(z) =\int \pxi f_1(z,\bar x) \d \rhob(x)$$ by applying Corollary~\cite[Corollary 2.8]{feinberg_fatous_2019} and the limit $\lim_{h \rightarrow 0} \partial_{x_i} f_1(z,y^{(h)})$ exists because $f_1$ is $C^2$ in $x$.

    To show that \eqref{eq:lim_f1_br} holds, it is sufficient to show that $(f_1(z,x_n))_{n\in\nat}$ is a.u.i with respect to $(\rho_n)$. Note that $\rho_h$ denotes the best response to $x^{(h)}=\bar x + h e_i$, whereas $\rho_n$ is the best response to \textit{any} sequence $x_n\rightarrow \bar x$. Again using the Taylor expansion around $z=0$,
    \begin{align*}
        f_1(z,x_n) & = f_1(0,x_n) + \nabla_z f_1(0,x_n) \cdot z + \int_0^1 z^\top \nabla^2_z f_1(tz,x_n) z \,\d t\,.
    \end{align*}
    Using that $f_1(0,\cdot)\in C^2$ and applying Assumptions~\ref{assump:f_lower}(ii) and \ref{assump:f_upper}(a) results in
    \begin{align*}
        |f_1(z,x_n)| \le c_1 + c_2\norm{z}^2\,, \quad \forall \ z\in\R^d\,,\, n\in\nat\,,
    \end{align*}
    for some $c_1,c_2\in\R_+$.
    The a.u.i condition, with $\tilde B^c\coloneqq B^c_{\sqrt{\frac{K-c_1}{c_2}}}$ for $K>c_1$, is given by
    \begin{align*}
        \int |f_1(z,x_n)|\mathbbm{1}\{z\, :\, |f_1(z,x_n)| \ge K\}\d \rho_n(z) &\le \int |f_1(z,x_n)|\mathbbm{1}\{z\,:\, c_1 + c_2\|z\|^2 \ge K\} \d \rho_n(z) \\
        & \le  \int_{\tilde B^c} (c_1 + c_2\|z\|^2) \d \rho_n(z)\,.
    \end{align*}
    Taking the limit as $n \rightarrow\infty$ using the convergence of second moments given by Proposition~\ref{prop:cv_second_moments} results in
    \begin{align*}
        \lim_{n\rightarrow \infty} \left( c_1\int_{\tilde B^c} \d \rho_n(z) + c_2 \int_{\tilde B^c} \|z\|^2 \d \rho_n(z) \right) = c_1 \int_{\tilde B^c}\d \rhob(z) + c_2 \int_{\tilde B^c} \|z\|^2 \d \rhob(z)\,.
    \end{align*}
    Now taking the limit as $K$ goes to $\infty$,
    \begin{align*}
        \lim_{K\rightarrow\infty} \left( c_1 \int_{\tilde B^c}\d \rhob(z) + c_2 \int_{\tilde B^c} \|z\|^2 \d \rhob(z)\right) = 0\,,
    \end{align*}
    since $\int \rhob(z)=1$ and $\int \|z\|^2 \d \rhob(z)<\infty$. Therefore $f_1$ is a.u.i with respect to $\rho_n$, and applying \cite[Corollary 2.8]{feinberg_fatous_2019} results in
    \begin{align*}
        \lim_{n\rightarrow\infty} \int f_1(z,x_n) \d \br(x_n)(z) =\int f_1(z,\bar x) \d \br(\bar x)(z)\,.
    \end{align*}
    \end{proof}

\begin{proposition}[Version of Danskin's Theorem]\label{prop:danskin-br}
    Let Assumptions~\ref{assump:f_upper}-\ref{assump:W_upper} hold. Let $\br(x)$ be as defined in \eqref{eq:dynamics_competitive_rho_fast}. Then $G_d\in C^1(\R^d)$ and
    \begin{align*}
        \nabla_x G_d(x) = \left.\left(\nabla_x G(\rho,x)\right)\right|_{\rho=\br(x)}\,.
    \end{align*}
\end{proposition}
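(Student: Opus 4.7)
The plan is to establish the Danskin-type formula coordinate-by-coordinate via a standard envelope sandwich and then promote pointwise differentiability to $C^1$ regularity by invoking continuity of the best response. Fix $\bar x\in\R^d$ and $i\in\{1,\dots,d\}$, and for $h\in\R\setminus\{0\}$ write $x^{(h)}=\bar x+h e^{(i)}$, $\rho_h=\br(x^{(h)})$, $\bar\rho=\br(\bar x)$. Since $\rho_h$ maximises $G(\cdot,x^{(h)})$ and $\bar\rho$ maximises $G(\cdot,\bar x)$, I would write the two-sided envelope inequality
\begin{equation*}
G(\bar\rho,x^{(h)})-G(\bar\rho,\bar x) \le G_d(x^{(h)})-G_d(\bar x) \le G(\rho_h,x^{(h)})-G(\rho_h,\bar x).
\end{equation*}
Because $G(\rho,\cdot)\in C^1(\R^d)$ for every fixed $\rho$ (all ingredients $f_1(z,\cdot)$, $f_2(z,\cdot)$ and $\tfrac{\kappa}{2}\|\cdot-x_0\|^2$ are $C^2$ by Assumptions~\ref{assump:f_lower}(ii) and \ref{assump:V_lower}), the mean value theorem gives points $y_1^{(h)},y_2^{(h)}$ on the segment $[\bar x,x^{(h)}]$ such that, after dividing by $h>0$,
\begin{equation*}
\partial_{x_i}G(\bar\rho,y_1^{(h)}) \le \frac{G_d(x^{(h)})-G_d(\bar x)}{h} \le \partial_{x_i}G(\rho_h,y_2^{(h)}),
\end{equation*}
with the inequalities reversed for $h<0$; in either case $y_1^{(h)},y_2^{(h)}\to \bar x$ as $h\to 0$.

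The upper bound converges to $\partial_{x_i}G(\bar\rho,\bar x)$ directly by continuity of $\nabla_x G(\bar\rho,\cdot)$. The lower bound (from the \emph{other} side when $h<0$) is exactly the statement of Lemma~\ref{lem:limsup_partial_f1_dxi}: passing to the limit $h\to 0$ yields $\partial_{x_i}G(\rho_h,y_2^{(h)})\to \partial_{x_i}G(\bar\rho,\bar x)$, using that $\rho_h\to \bar\rho$ in $\Wass_2$ by Corollary~\ref{cor:br-cont}. Sandwiching shows the difference quotient in direction $e^{(i)}$ exists and equals $\partial_{x_i}G(\bar\rho,\bar x)$, which gives the Danskin formula
\begin{equation*}
\nabla_x G_d(\bar x) = \left.(\nabla_x G(\rho,\bar x))\right|_{\rho=\br(\bar x)}.
\end{equation*}

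To upgrade this to $G_d\in C^1(\R^d)$, I would fix an arbitrary sequence $\bar x_n\to \bar x$ and show $\nabla_x G_d(\bar x_n)\to\nabla_x G_d(\bar x)$. Each coordinate of $\nabla_x G_d(\bar x_n)$ is
\begin{equation*}
\int \partial_{x_i} f_1(z,\bar x_n)\,\d\br(\bar x_n)(z) + \int \partial_{x_i} f_2(z,\bar x_n)\,\d\pi(z) + \kappa\left(\bar x_{n,i}-(x_0)_i\right),
\end{equation*}
and the last two terms pass to the limit by continuity of $\partial_{x_i}f_2(z,\cdot)$ together with the uniform control on $\pi$. For the first term I apply Lemma~\ref{lem:limsup_partial_f1_dxi} again, now with the \emph{arbitrary} sequence $\bar x_n\to \bar x$ in place of $y^{(h)}\to \bar x$, which is precisely the scope of \eqref{eq:lim_partial_G}: the key ingredient is $\br(\bar x_n)\to \br(\bar x)$ in $\Wass_2$ (Corollary~\ref{cor:br-cont}) combined with the asymptotic uniform integrability of $\partial_{x_i}f_1(\cdot,\bar x_n)$ against $\br(\bar x_n)$.

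The main obstacle is the exchange of limit and integral in $\int \partial_{x_i}f_1(z,y_2^{(h)})\,\d\rho_h(z)$, where both integrand and measure depend on $h$; the usual dominated convergence theorem does not apply because the dominating function would need to be integrable against every $\rho_h$ simultaneously. This is precisely what Lemma~\ref{lem:limsup_partial_f1_dxi} handles via Fatou's lemma for weakly converging measures, using Assumption~\ref{assump:f_upper}(b) to obtain the linear-growth bound $|\partial_{x_i}f_1(z,x)|\le L\|z\|+C_0$ and then the uniform second-moment control on $(\rho_h)$ from Proposition~\ref{prop:cv_second_moments}. Once the two-sided envelope and the continuity of $x\mapsto\nabla_x G(\br(x),x)$ are both in hand, the proposition follows immediately.
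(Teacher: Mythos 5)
Your proof is correct and takes essentially the same route as the paper: you establish the Danskin formula via an envelope sandwich around the difference quotient, use Lemma~\ref{lem:limsup_partial_f1_dxi} (via Fatou for weakly converging measures) to pass to the limit in the term involving $\partial_{x_i}G(\rho_h,\cdot)$, and then upgrade to $C^1$ using the explicit form of $\nabla_x G$ together with $\Wass_2$-continuity of the best response from Corollary~\ref{cor:br-cont}. The paper organizes the sandwich differently—it introduces four one-sided quantities $\partial_{x_i}^{\sup\pm}G_d$, $\partial_{x_i}^{\inf\pm}G_d$ and chains their inequalities—whereas you apply the mean value theorem to both sides of the two-sided envelope inequality and squeeze the difference quotient directly; your bookkeeping is slightly more compact but the underlying argument is the same. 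Two minor inaccuracies worth fixing: the uniform second-moment bound you invoke for the a.u.i.\ argument comes from Proposition~\ref{prop:rho_n_bdd_moment} (Proposition~\ref{prop:cv_second_moments} gives \emph{convergence} of second moments, used inside Corollary~\ref{cor:br-cont}), and in the $C^1$ step you claim \eqref{eq:lim_partial_G} applies to arbitrary sequences $\bar x_n\to\bar x$, whereas that statement as written fixes $\rho_h=\br(\bar x+he^{(i)})$ along a coordinate direction—the proof of the lemma extends verbatim to arbitrary sequences (it only uses $\Wass_2$-convergence of the best responses), but you are extending its stated scope rather than quoting it; the paper is no more careful on this point, so this is not a substantive gap.
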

\begin{proof}
    We will show that $G_d(x)$ is differentiable by showing that component-wise left and right derivatives coincide. More precisely, for any function $g:\R^d \to \R$ (not necessarily differentiable), we denote
    \begin{align*}
        \pxi^{\sup +}g(x)=\limsup_{h \downarrow 0}\frac{g(x+h e^{(i)})-g(x)}{h}\,, \qquad \pxi^{\inf +}g(x)=\liminf_{h \downarrow 0}\frac{g(x+h e^{(i)})-g(x)}{h}\,, \\
        \pxi^{\sup -}g(x)=\limsup_{h \uparrow 0}\frac{g(x+h e^{(i)})-g(x)}{h}\,,  \qquad \pxi^{\inf -}g(x)=\liminf_{h \uparrow 0}\frac{g(x+h e^{(i)})-g(x)}{h}\,.
    \end{align*}
    Fix $\tilde x \in \R^d$ and recall that $G_d(\tilde x)=G(\br(\tilde x),\tilde x)$. We begin with the $\liminf$ for $G$. Because $r(\tilde x)$ cannot make $G(\cdot,x)$ larger than $G(r(x),x)$ by definition of the best response, we have that  
    \begin{align*}
        G_d(x) \ge G(r(\tilde x),x) \quad \forall \, x\in\R^d\,.
    \end{align*}
    Then for any $h > 0$ and standard normal vector $e^{(i)}\in\R^d$,
    \begin{align*}
        \frac{G_d(\tilde x+he^{(i)})-G_d(\tilde x)}{h} \ge \frac{G(r(\tilde x),\tilde x+h e^{(i)}) - G(r(\tilde x),\tilde x)}{h}\,.
    \end{align*}
    Taking the $\liminf$ in $h>0$ on both sides, we have
    \begin{align*}
        \pxi^{\inf +} G_d(\tilde x) = \liminf_{h \downarrow 0} \frac{G_d(\tilde x+h e^{(i)}) - G_d(\tilde x)}{h} \ge \pxi^{\inf +} G(
        \rho,\tilde x)|_{\rho=r(\tilde x)}= \pxi G(\rho,\tilde x)|_{\rho=r(\tilde x)}\,,
    \end{align*}
    since $G(\rho,\cdot)\in C^2(\R^d)$ for any $\rho\in\P(\R^d)$.
    Likewise, for $h< 0$, 
    \begin{align*}
        \frac{G_d(\tilde x+he^{(i)})-G_d(\tilde x)}{h} \le \frac{G(r(\tilde x),\tilde x+h e^{(i)}) - G(r(\tilde x),\tilde x)}{h}\,,
    \end{align*}
    which, after taking $\limsup$ on both sides, results in
    \begin{align*}
        \pxi^{\sup -}G_d(\tilde x) \le  \pxi G(\rho,\tilde x) |_{\rho=r(\tilde x)}\,.
    \end{align*}
    Next, consider the inequality
    \begin{align*}
        G_d(\tilde x) \ge G(r(x),\tilde x) \quad \forall x\in\R^d
    \end{align*}
    and again setting $x=\tilde x + h e^{(i)}$, we have for any $h>0$, 
    \begin{align}\label{eq:lower_bound_Gd}
        \frac{G_d(\tilde x+h e^{(i)}) - G_d(\tilde x)}{h} \le \frac{G(r(\tilde x+h e^{(i)}),\tilde x + h e^{(i)}) - G(r(\tilde x+h e^{(i)}),\tilde x)}{h}\,.
    \end{align}
    Our goal is to apply Lemma~\ref{lem:limsup_partial_f1_dxi} so that we can take the $\limsup$ on the right-hand side.
     By the mean value theorem,
    \begin{align}\label{eq:taylor_expansion_Gc}
        G(\rho,\tilde x+he^{(i)}) = G(\rho,\tilde x) + h \pxi G(\rho,\xi^{(h)}) 
    \end{align}
    for some $\xi^{(h)} \in [\tilde x,\tilde x+h e^{(i)}]$ for any $\rho\in\P_2(\R^d)$. Taking the $\limsup$ on both sides of \eqref{eq:lower_bound_Gd} and using \eqref{eq:taylor_expansion_Gc}, we have
    \begin{align*}
    \pxi^{\sup +} G_d(\tilde x) &= \limsup_{h \downarrow 0} \frac{G_d(\tilde x+h e^{(i)}) - G_d(\tilde x)}{h}  \\
        &\leq \limsup_{h \downarrow 0} \frac{G(r(\tilde x+h e^{(i)}),\tilde x+h e^{(i)}) - G(r(\tilde x+he^{(i)}),\tilde x)}{h} \\
        &= \limsup_{h \downarrow 0} \pxi G(r(\tilde x+h e^{(i)}),\xi^{(h)})  = \pxi G(\rho,\tilde x)|_{\rho=r(\tilde x)}\,,
    \end{align*}
    where the last line follows from \eqref{eq:lim_partial_G} in Lemma~\ref{lem:limsup_partial_f1_dxi}.
    Now taking $h< 0$, we have
    \begin{align*}
        \frac{G_d(\tilde x+h e^{(i)}) - G_d(\tilde x)}{h} \ge \frac{G(r(\tilde x+h e^{(i)}),\tilde x + h e^{(i)}) - G(r(\tilde x+h e^{(i)}),\tilde x)}{h}\,.
    \end{align*}
    which results in, after taking the $\liminf$ and using the continuity of $r(\cdot)$ from Corollary~\ref{cor:br-cont},
    \begin{align*}
        \pxi^{\inf -}G_d(\tilde x) \ge \pxi G(\rho,\tilde x)|_{\rho=r(\tilde x)}\,.
    \end{align*}
Collecting inequalities, we have shown
    \begin{align*}
        \pxi^{\inf +} G_d(\tilde x) &\ge  \pxi G(\rho,\tilde x)|_{\rho=r(\tilde x)}\,,   &\pxi^{\sup -}G_d(\tilde x) &\le  \pxi G(\rho,\tilde x) |_{\rho=r(\tilde x)}\,, \\
        \pxi^{\sup +}G_d(\tilde x) &\le \pxi G(\rho,\tilde x)|_{\rho=r(\tilde x)}\,,  &\pxi^{\inf -}G_d(\tilde x) &\ge \pxi G(\rho,\tilde x)|_{\rho=r(\tilde x)\,.}
    \end{align*}
    Chaining the inequalities together, we obtain
    \begin{align*}
        \pxi G(\rho,\tilde x)|_{\rho=r(\tilde x)} &\leq \pxi^{\inf +} G_d(\tilde x) \le \pxi^{\sup +} G_d(\tilde x) \le \pxi G(\rho,\tilde x)|_{\rho=r(\tilde x)}\,, \\
        \pxi G(\rho,\tilde x)|_{\rho=r(\tilde x)} &\leq \pxi^{\inf -} G_d(\tilde x) \le \pxi^{\sup -} G_d(\tilde x) \le \pxi G(\rho,\tilde x)|_{\rho=r(\tilde x)}\,,
    \end{align*}
    and therefore $\pxi^{\inf\pm} G_d(\tilde x) = \pxi^{\sup\pm} G_d(\tilde x) = \pxi G_d(\tilde x)$ for any $\tilde x$ and so all partial derivatives of $G_d$ exist at any $x\in\R^d$ with partial derivative given by $\pxi G(\rho, x)|_{\rho=\br( x)}$. If $x_n \rightarrow \bar x$, then $\br(x_n) \rightarrow \br(\bar x)$ in $\Wass_2$ and $G_d(x_n) \rightarrow G_d(\bar x)$ by Corollary~\ref{cor:br-cont}.
    Further, from the expression for $\partial_{x_i} G(\rho,x)$ it is clear that $\partial_{x_i} G_d(x)$ is continuous for all $i$, and so we conclude that $G_d\in C^1(\R^d)$ and $\nabla_x G(\rho,x)|_{\rho=r(x)} = \nabla_x G_d( x)$.
\end{proof}

\begin{proof}[Proof of Theorem \ref{thm:convergence_fast_rho}]
As the energy $G(\rho,x)$ is strongly $\lambda_d$-convex in $x$ for each $\rho$, the energy $G_d(x)=\max_{\rho\in\P(\R^d)} G(\rho,x)$ is also strongly convex as a supremum of strongly convex functions. It follows that $G_d$ is coercive, and has a unique minimizer $x_\infty\in\R^d$. By Proposition~\ref{prop:danskin-br}, $G_d\in C^1(\R^d)$.
Convergence in norm then immediately follows from strong convexity of $G_d$: for solutions $x(t)$ to \eqref{eq:dynamics_competitive_rho_fast}, we have
\begin{align*}
    \frac{1}{2}\frac{\d}{\d t} \|x(t)-x_\infty\|^2 
    = -\nabla_x\left(G_d(x(t))-G_d(x_\infty)\right)\cdot (x(t)-x_\infty)
    \le -\lambda_d \|x(t)-x_\infty\|^2\,.
\end{align*}
A similar result holds for convergence in entropy using the Poly{\'a}k-{\L}ojasiewicz convexity inequality
\begin{align*}
    \frac{1}{2}\norm{\nabla G_d(x)}_2^2 \geq \lambda_d (G_d(x)-G_d(x_\infty))\,,
\end{align*}
which is itself a direct consequence of strong convexity of $G_d$. Then
\begin{align*}
    \frac{\d}{\d t} \left(G_d(x(t))-G_d(x_\infty)\right)
    &= \nabla_x G_d(x(t))\cdot \dot{x}(t) \\
    &= -\|\nabla_x G_d(x(t))\|^2
    \leq -2\lambda_d \left(G_d(x(t))-G_d(x_\infty)\right)\,,
\end{align*}
and so the result in Theorem~\ref{thm:convergence_fast_rho} follows.
\end{proof}

\section{Auxiliary Lemmas}\label{sec:extra_lemmas}

\begin{lemma}\label{lem:Wbar_upper_bd}
Let Assumptions \ref{assump:f_lower}(ii), \ref{assump:V_lower}, and \ref{assump:W_lower} hold with $\lambda_c\coloneqq \min \{\lambda_{f,1}+\lambda_{V,1},\lambda_{f,2}+\lambda_{V,2} \}>0$. 
    For any $\gamma,\tilde\gamma\in\P_2\times\P_2$ for which the left-hand side below is well-defined, the functional $F_c$ satisfies 
    \begin{align*}
        \iint \begin{bmatrix}
              z - \nabla \varphi(z) \\
              x - \nabla \psi(x)
          \end{bmatrix} \cdot \begin{bmatrix}
              \nabla_z\delta_\rho F_c[\tilde \rho,\tilde \mu](\nabla \varphi(z) )-\nabla_z \delta_\rho F_c[\rho,\mu](z)  \\
              \nabla_x \delta_\mu F_c[\rho,\mu](x) - \nabla_x \delta_\mu F_c[\tilde \rho,\tilde \mu](\nabla \psi(x))
          \end{bmatrix} \d \rho(z) \d \mu(x) \ge \lambda_c\Wbar(\gamma,\tilde \gamma)^2 \,,
    \end{align*}
    where $(\varphi, \psi)$ are optimal transport maps such that $\rhot = \nabla \varphi_\# \rho$ and $\tilde \mu = \nabla \psi_\# \mu$. 
\end{lemma}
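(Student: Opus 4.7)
The plan is to split the left-hand side $I$ into three pieces according to the decomposition $F_c(\rho,\mu) = \iint f\,\d\rho\,\d\mu - \cR(\rho) + \U(\mu)$: a coupling contribution $I^f$ coming from $\iint f\,\d\rho\,\d\mu$, a single-species contribution $I^{\cR}$ from $-\cR(\rho)$, and a single-species contribution $I^{\U}$ from $\U(\mu)$. The $\cR$ and $\U$ pieces will be handled by the standard monotonicity characterization of uniform displacement convexity, while the $f$-piece requires a direct Taylor-type computation where the decisive event is the cancellation of the mixed second derivatives $\nabla_{xz}^2 f$.

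For the single-species pieces, I would first recall (via Taylor expansion of $G(\rho_s)$ along the displacement geodesic $\rho_s = ((1-s)\id + s\nabla\varphi)_\#\rho$) that any $\lambda$-uniformly displacement convex $G:\P_2\to\R$ satisfies
\begin{equation*}
    \int \bigl(\nabla\varphi(z)-z\bigr)\cdot\bigl(\nabla\delta_\rho G[\tilde\rho](\nabla\varphi(z)) - \nabla\delta_\rho G[\rho](z)\bigr)\,\d\rho(z) \;\ge\; \lambda\,\Wass_2(\rho,\tilde\rho)^2\,.
\end{equation*}
Since $\cR$ and $\U$ are the same kind of functional treated in \cref{prop:dipl-convexity} (entropy plus interaction plus external potential), they are uniformly displacement convex with constants at least $\lambda_{V,1}$ and $\lambda_{V,2}$ respectively. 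Carefully tracking the minus sign in front of $\cR$ in $F_c$ against the minus sign in $(z-\nabla\varphi(z))$ appearing in the statement, one finds that $I^{\cR}$ equals the positive form of the inequality above applied to $\cR$, hence $I^{\cR}\ge\lambda_{V,1}\Wass_2(\rho,\tilde\rho)^2$, and analogously $I^{\U}\ge\lambda_{V,2}\Wass_2(\mu,\tilde\mu)^2$.

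For the coupling piece, the key move is to use the change of variables $\tilde\mu = \nabla\psi_\#\mu$ and $\tilde\rho = \nabla\varphi_\#\rho$ to rewrite the $\tilde\mu$- and $\tilde\rho$-integrals as $\mu$- and $\rho$-integrals, so that
\begin{align*}
I^{f} = \iint \Big[&(z-\nabla\varphi(z))\cdot\bigl(\nabla_z f(\nabla\varphi(z),\nabla\psi(x))-\nabla_z f(z,x)\bigr)\\
    &+ (x-\nabla\psi(x))\cdot\bigl(\nabla_x f(z,x)-\nabla_x f(\nabla\varphi(z),\nabla\psi(x))\bigr)\Big]\d\rho(z)\,\d\mu(x)\,.
\end{align*}
Writing each increment of $\nabla_z f$ and $\nabla_x f$ as the integral along the straight segment $t\mapsto((1-t)z+t\nabla\varphi(z),\,(1-t)x+t\nabla\psi(x))$ of the Hessian of $f$ acting on $(\nabla\varphi(z)-z,\nabla\psi(x)-x)$, the cross-derivative contributions $\nabla_{xz}^2 f$ appear in the two summands with opposite signs and cancel exactly. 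What survives is
\begin{equation*}
\iint\int_0^1\Bigl[-(\nabla\varphi(z)-z)^\top\nabla_z^2 f\,(\nabla\varphi(z)-z) + (\nabla\psi(x)-x)^\top\nabla_x^2 f\,(\nabla\psi(x)-x)\Bigr]\,dt\,\d\rho(z)\,\d\mu(x)\,,
\end{equation*}
which by \cref{assump:f_lower}(ii) is bounded below by $\lambda_{f,1}\Wass_2(\rho,\tilde\rho)^2 + \lambda_{f,2}\Wass_2(\mu,\tilde\mu)^2$. Adding the three contributions yields $I\ge \lambda_{c,1}\Wass_2(\rho,\tilde\rho)^2 + \lambda_{c,2}\Wass_2(\mu,\tilde\mu)^2 \ge \lambda_c\Wbar(\gamma,\tilde\gamma)^2$.

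The main obstacle is essentially bookkeeping: keeping the correct orientation of the differences $(z-\nabla\varphi(z))$ vs $(\nabla\varphi(z)-z)$ straight through the minus sign on $\cR$, and verifying that the mixed Hessian terms cancel with the right sign pattern dictated by the concave-convex (rather than fully convex) structure of $F_c$. Once the cancellation in the coupling piece is correctly identified, the remaining steps are routine applications of the monotonicity characterization of displacement convexity.
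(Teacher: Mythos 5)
Your proof is correct, and the treatment of the coupling term $I^f$ is essentially identical to the paper's $C_2$ estimate: both Taylor-expand $\nabla_z f$ and $\nabla_x f$ along the straight segment and observe that the mixed Hessian contributions cancel by symmetry of $\text{Hess}(f)$ (the paper phrases this as a symmetrization $C_2 = \tfrac12(C_2+C_2^\top)$, you phrase it as Schwarz's theorem, but it is the same cancellation), leaving the diagonal blocks $-\nabla_z^2 f$ and $\nabla_x^2 f$ which are bounded below via Assumption~\ref{assump:f_lower}(ii).

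Where you genuinely diverge from the paper is in the single-species pieces. The paper splits the remaining terms into $C_1$ (entropy), $C_3$ (interaction) and $C_4$ (external potential) and estimates each by a hands-on Taylor computation; in particular $C_1\ge 0$ is established by the somewhat intricate convex-dual argument $\Delta\varphi + \Delta\varphi^{*}\circ\nabla\varphi = \sum_i(\lambda_i+\lambda_i^{-1})\ge 2d$. You instead group everything into $\cR$ and $\U$ and invoke the monotonicity characterization of $\lambda$-uniform displacement convexity,
\begin{equation*}
\int \bigl(\nabla\varphi(z)-z\bigr)\cdot\bigl(\nabla\delta_\rho G[\tilde\rho](\nabla\varphi(z)) - \nabla\delta_\rho G[\rho](z)\bigr)\,\d\rho(z) \;\ge\; \lambda\,\Wass_2(\rho,\tilde\rho)^2\,,
\end{equation*}
which you correctly derive by integrating $\tfrac{\d^2}{\d s^2}G(\rho_s)\ge\lambda\Wass_2(\rho,\tilde\rho)^2$ over $s\in[0,1]$. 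This is shorter and more conceptual; it buys you the entropy and interaction bounds for free and isolates all sign bookkeeping into one spot. What it costs is that the endpoint derivatives $\tfrac{\d}{\d s}G(\rho_s)\big|_{s=0,1}$ must be justified — the differentiability at $s=1$, expressed through the pushforward, requires the same regularity the paper uses in its explicit $C_1$ computation; this is covered by the statement's ``for which the left-hand side is well-defined'' hypothesis, but you should make that dependence explicit if you want this to stand alone. Also, the constants you extract from $\cR$ and $\U$ are $\lambda_{V,1}$ and $\lambda_{V,2}$ respectively because, as the paper remarks after Proposition~\ref{prop:dipl-convexity}, the interaction terms $W_i$ do not contribute to the rate unless the centers of mass coincide — your estimate silently discards their nonnegative contribution, which is fine but worth flagging. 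Both proofs are valid; yours trades the paper's self-contained elementary computations for an appeal to the classical monotonicity inequality for geodesically convex functionals.
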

\begin{proof}
We break the left-hand side of the inequality into four parts; one with the diffusion terms, one with the coupling potential term, one with the convolution terms, and one with the external potentials:
\begin{align*}
    C_1(\gamma,\tilde \gamma) = &-\alpha\int (z-\nabla\varphi(z))\cdot(\nabla \log \rhot(\nabla \varphi(z)) - \nabla \log \rho(z)) \d \rho(z) \\
    &-\beta \int (x-\nabla \psi(x))\cdot(\nabla \log \tilde \mu(\nabla \psi(x))-\nabla \log \mu(x)) \d \mu(x), \\
    C_2(\gamma,\tilde \gamma) = & \iint (z-\nabla\varphi(z))\cdot(\nabla_1 f(\nabla \varphi(z),\nabla \psi(x)) - \nabla_1 f(z,x)) \d \gamma(z,x) \\
    &-\iint (x-\nabla \psi(x))\cdot(\nabla_2 f(\nabla \varphi(z),\nabla \psi(x)) - \nabla_2 f(z,x) ) \d \gamma(z,x) \\
    C_3(\gamma,\tilde\gamma) =& -\int (z-\nabla\varphi(z))\cdot[(\nabla W_1 \ast \rhot)(\nabla \varphi(z)) -(\nabla W_1 \ast \rho)(z)] \d \rho(z) \\
    &- \int (x-\nabla \psi(x))\cdot[(\nabla W_2 \ast \tilde \mu)(\nabla \psi (x)) -(\nabla W_2 \ast \mu) (x)]\d \mu(x)\,,
\end{align*}
and
\begin{align*}
    C_4(\gamma,\tilde\gamma)=&- \int (z-\nabla\varphi(z))\cdot(\nabla V_1(\nabla\varphi(z))-\nabla V_1(z)) \d \rho(z), \\
    &-\int (x-\nabla\psi(x))\cdot(\nabla V_2(\nabla \psi(x))-\nabla V_2(x))\d \mu(x)\,.
\end{align*}
where $\nabla_i$ is the gradient operator with respect to the $i^{th}$ argument. With the above definitions, it holds that 
\begin{align*}
    \sum_{i=1}^4 C_i(\gamma,\tilde \gamma) = 
    \iint \begin{bmatrix}
              z - \nabla \varphi(z) \\
              x - \nabla \psi(x)
          \end{bmatrix} \cdot \begin{bmatrix}
              \nabla_z\delta_\rho F_c[\tilde \rho,\tilde \mu](\nabla \varphi(z) )-\nabla_z \delta_\rho F_c[\rho,\mu](z)  \\
              \nabla_x \delta_\mu F_c[\rho,\mu](x) - \nabla_x \delta_\mu F_c[\tilde \rho,\tilde \mu](\nabla \psi(x))
          \end{bmatrix} \d \gamma(z,x)\,.
\end{align*}
We claim that $C_1 \ge 0 $ for all $\gamma,\tilde \gamma\in P^{ac}_2 \times \P^{ac}_2$\footnote{Note that absolute continuity is required for the left-hand side in Lemma~\ref{lem:Wbar_upper_bd} to be well defined if $\alpha,\beta>0$.}. We prove this claim  for the $\rho$-dependent terms in $C_1$, and the $\mu$-dependent term follows similarly. Let
\begin{align*}
    c_1 = \int \nabla\varphi(z) \cdot (\nabla \log \rhot(\nabla \varphi(z)) - \nabla \log \rho(z)) \d \rho(z)\,.
\end{align*}
We  use the pushforward maps to write
\begin{align*}
    c_1 &= \int \nabla \varphi(z) \cdot \frac{\nabla \rhot(\nabla \varphi(z))}{\rhot(\nabla \varphi(z))} \d \rho(z) - \int \nabla \varphi(z) \cdot \frac{\nabla \rho(z)}{\rho(z)} \d \rho(z) \\
    &= \int z \cdot \frac{\nabla \rhot(z)}{\rhot(z)} \d \rhot(z) - \int \nabla\varphi(z) \cdot \nabla \rho(z) \d z \\
    &= \int z \cdot \nabla\rhot(z) \d z + \int \Delta \varphi (z)  \rho \d z
     = -d+\int \Delta \varphi(z) \d \rho(z)\,,
\end{align*}
Let 
\begin{align*}
    c_2 = -\int z \cdot (\nabla \log \rhot(\nabla \varphi(z)) - \nabla \log \rho(z)) \d \rho(z)\,.
\end{align*}
We use the pushforward maps and the fact that the convex dual $\varphi^*$ of $\varphi$ satisfies $\nabla \varphi^* = (\nabla \varphi)^{-1}$ to write
\begin{align*}
    c_2& = -\int \nabla \varphi^{*}(\nabla\varphi(z)) \cdot \nabla  \log \rhot(\nabla \varphi(z))\d \rho(z) + \int z \cdot \frac{\nabla \rho(z)}{\rho(z)} \rho(z) \d z \\
    &= \int \Delta \varphi^{*}(z) \d \rhot(z) - \int \nabla \cdot z \d \rho(z) = \int \Delta \varphi^{*}(\nabla \varphi(z)) \d \rho(z) - d
\end{align*}
The first integral in the definition of $C_1(\gamma,\tilde\gamma)$ is then given by
\begin{align*}
    \alpha(c_1 + c_2) = \alpha\left(-2d + \int (\Delta \varphi(z) + \Delta \varphi^{*}(\nabla \varphi(z))) \d \rho(z)\right)\,.
\end{align*}
Since $\varphi$ is strictly convex, $\nabla^2 \varphi(z) \succ0$ for all $z\in \R^d$ and the eigenvalues $\lambda_i$ of $\nabla^2 \varphi(z)$ are strictly positive. Therefore, using $\nabla^2 \varphi^{*}(\nabla \varphi(z)) = (\nabla^2 \varphi(z))^{-1}$, we have
\begin{align*}
   \alpha( c_1 + c_2) = \alpha\left(-2d + \int \text{Tr}\left[\nabla^2 \varphi(z) + (\nabla^2 \varphi(z))^{-1}\right] \d \rho(z)\right) =\alpha \sum_{i=1}^d (\lambda_i + \lambda_i^{-1}) \ge \alpha(-2d + 2d) = 0\,.
\end{align*}
The last inequality follows from $\lambda+1/\lambda \ge 2$ for all $\lambda>0$. Therefore $C_1 \ge 0$. 
Next, we expand the expression for $C_2(\gamma,\tilde \gamma)$.
We show the exact form of the Taylor expansion of $\nabla_1 f$ and the expansion for $\nabla_2 f$ follows similarly. Define the stacked variable $y=[z,x]$ and let $y_s\coloneqq (1-s)[z,x] + s[\nabla \varphi(z),\nabla \psi(x)]$. Computing the Taylor expansion with respect to $y$ results in 
\begin{align*}
    \nabla_1 f(\nabla \varphi(z),\nabla \psi(x)) &= \nabla_1 f(z,x) + \int_0^1 \nabla_y \nabla_1 f(y_s) \cdot \begin{bmatrix}
        \nabla \varphi(z)-z \\ \nabla \psi(x)-x
    \end{bmatrix} \d s \\
    & = \nabla_1 f(z,x) - \int_0^1 \begin{bmatrix}
        \nabla_1^2 f(x_s,z_s) \\ \nabla_{12}^2 f(x_s,z_s)
    \end{bmatrix}\cdot \begin{bmatrix}
        z-\nabla \varphi(z) \\ x-\nabla \psi(x)
    \end{bmatrix} \d s \,.
\end{align*}
Plugging this expansion into $C_2$ results in
\begin{align*}
    C_2(\gamma,\tilde \gamma) = &  \int_0^1 \iint \begin{bmatrix}
        z-\nabla\varphi(z) \\ x-\nabla \psi(x)
    \end{bmatrix}^\top \cdot
    \begin{bmatrix}
        -\nabla_{1}^2 f(y_s) & -\nabla_{12}^2 f(y_s) \\
        \nabla_{12}^2 f(y_s) & \nabla_{2}^2 f(y_s)
    \end{bmatrix} \cdot
    \begin{bmatrix}
        z-\nabla\varphi(z) \\ x-\nabla \psi(x)
    \end{bmatrix} \d \gamma(z,x) \d s \,.
\end{align*}
Since $C_2$ is a scalar, we  use that $C_2(\gamma,\tilde\gamma)=\frac{1}{2} C_2(\gamma,\tilde \gamma)+\frac{1}{2}C_2(\gamma,\tilde\gamma)^\top$, giving
\begin{align*}
    C_2(\gamma,\tilde \gamma) = &  \int_0^1 \iint \begin{bmatrix}
        z-\nabla\varphi(z) \\ x-\nabla \psi(x)
    \end{bmatrix}^\top \cdot
    \begin{bmatrix}
        -\nabla_{1}^2 f(y_s) & 0 \\
        0 & \nabla_{2}^2 f(y_s)
    \end{bmatrix} \cdot
    \begin{bmatrix}
        z-\nabla\varphi(z) \\ x-\nabla \psi(x)
    \end{bmatrix} \d \gamma(z,x) \d s \,.
\end{align*}
By Assumption~\ref{assump:f_lower}(ii), we use that $-\nabla_z^2 f(z,x) \succeq \lambda_{f,1} \Id_{d_1}$ and $\nabla_x^2 f(z,x) \succeq \lambda_{f,2} \Id_{d_2}$ to obtain
\begin{align*}
    C_2(\gamma,\tilde\gamma) \ge \lambda_{f,1} \Wass_2(\rho,\rhot)^2 + \lambda_{f,2}\Wass_2(\mu,\tilde \mu)^2\,.
\end{align*}
Next, we show a lower-bound for $C_3$. We show the exact Taylor expansion calculation explicitly for the first term:
\begin{align*}
    (\nabla W_1 \ast \rhot)(\nabla \varphi(z)) -(\nabla W_1 \ast \rho)(z) &= \int \nabla W_1(\nabla \varphi(z)-z') \d \rhot(z') - \int \nabla W_1(z-z') \d \rho(z')\\
    &= \int [\nabla W_1(\nabla \varphi(z) -\nabla \varphi(z')) - \nabla W_1(z-z')]\d \rho(z')\,.
\end{align*}
Define $z_s\coloneqq (1-s)(z-z') + s (\nabla \varphi(z) - \nabla \varphi(z'))$. The expansion of $\nabla W_1$ around $z-z'$ is
\begin{align*}
    \nabla W_1(\nabla \varphi(z)-\nabla \varphi(z')) = \nabla W_1(z-z') + \int \nabla^2 W_1(z_s) (\nabla \varphi(z)-z)\d s\\
    - \int \nabla^2 W_1(z_s) (\nabla \varphi(z') - z') \d s\,,
\end{align*}
which, after integrating against $(z-\nabla \varphi(z))\rho(z')\rho(z)$ and 
relabeling, results in a quadratic and can be bound via Assumption~\ref{assump:W_lower} using that $\nabla^2 W_1$ is even:
\begin{align*}
    &\frac{1}{2}\iint\int_0^1 \begin{bmatrix}
        z-\nabla\varphi(z) \\ z'-\nabla\varphi(z')
    \end{bmatrix}^\top
    \begin{bmatrix}
        \nabla^2 W_1(z_s) &  -\nabla^2 W_1(z_s) \\  -\nabla^2 W_1(z_s) &  \nabla^2 W_1(z_s)
    \end{bmatrix}
    \begin{bmatrix}
        z-\nabla\varphi(z) \\ z'-\nabla\varphi(z')
    \end{bmatrix}\d s \d \rho(z') \d \rho(z) \ge 0\,,
\end{align*}
since the eigenvalues of $\begin{bmatrix}
    \nabla^2 W_1(z) & -\nabla^2 W_1(z) \\ -\nabla^2 W_1(z) & \nabla^2 W_1(z)
\end{bmatrix}$ have a tight lower-bound of zero $(\lambda_{W,1}\ge 0)$.
Computing a similar bound for the term dependent on $\mu$, we have
\begin{align*}
   C_3(\gamma,\tilde\gamma) \ge 0\,.
\end{align*}
Lastly, we show the bound for $C_4$. Computing for just the $\rho$-dependent term, we have
\begin{align*}
    \int (z-\nabla\varphi(z))&\cdot(\nabla V_1(\nabla\varphi(z))-\nabla V_1(z)) \d \rho(z)\\
    &=- \int \int_0^1 (z-\nabla \varphi(z)) \nabla^2 V_1((1-s)z+s\nabla \varphi(z)) (z-\nabla \varphi(z)) \d \rho(z) \\
    &\le - \lambda_{V,1} \int \norm{z-\nabla \varphi(z)}^2 \d \rho(z) = -\lambda_{V,1} \Wass_2(\rho,\rhot)^2\,,
\end{align*}
using Assumption~\ref{assump:V_lower}.
Computing similarly for the term dependent on $\mu$, we have $C_4(\gamma,\tilde \gamma) \ge \lambda_{V,1}\Wass(\rho,\rhot)^2 + \lambda_{V,2}\Wass(\mu,\tilde \mu)^2$.
We now have shown that $C_1(\gamma,\tilde\gamma)+C_2(\gamma,\tilde\gamma)+C_3(\gamma,\tilde\gamma)+C_4(\gamma,\tilde\gamma) \ge \lambda_c \Wbar(\gamma,\tilde \gamma)^2$, concluding the proof.
\end{proof}

\begin{lemma}\label{lem:propagating_dirac_contraction}
    Let Assumptions \ref{assump:f_lower}(ii), \ref{assump:V_lower}, and \ref{assump:W_lower} hold with $\lambda_c>0$. Fix $T>0$. Let $\gamma_t$ and $\gamma_t'$ be any two solutions of the dynamics \eqref{eq:dynamics_competitive}, with initial conditions $\rho_0,\rho_0'\in\P_2^{ac}(\R^{d_1})$ and $\mu_0=\delta_{x_0},\mu_0'=\delta_{x_0'}$, with $\alpha>0$ and $\beta=0$ such that $\rho_t,\rho_t'\in\P_2^{ac}(\R^{d_1})$ and $\mu_t=\delta_{x_t},\mu_t'=\delta_{x_t'}$ for all $t\in (0,T)$. Assume $D_c(\gamma_0)<\infty$, $D_c(\gamma_0')<\infty$ and  $\nabla_z\delta_\rho F_c[\gamma_t](z), \nabla_z\delta_\rho F_c[\gamma_t'](z)$ are locally Lipschitz in $z$ for all $t\in [0,T)$.
    Then $\gamma_t$ and $\gamma_t'$ satisfy
    \begin{align*}
        \Wbar(\gamma_t,\gamma_t') \le e^{-\lambda_c t} \Wbar(\gamma_0,\gamma_0') \quad \text{ for all } t\in [0,T)\,.
    \end{align*}
\end{lemma}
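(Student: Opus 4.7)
The strategy mirrors the proof of Proposition~\ref{prop:contraction}, with the simplification that $\Wass_2(\delta_{x_t},\delta_{x_t'})^2 = \|x_t - x_t'\|^2$, so
\[
\Wbar(\gamma_t,\gamma_t')^2 = \Wass_2(\rho_t,\rho_t')^2 + \|x_t - x_t'\|^2.
\]
The plan is to differentiate each summand, combine them, apply the key algebraic estimate of Lemma~\ref{lem:Wbar_upper_bd}, and conclude with Gronwall's inequality.

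Since $\beta = 0$ and $\mu_t = \delta_{x_t}$ for all $t \in [0,T)$, the evolution of $\mu_t$ reduces to the ODE
\[
\dot x_t = -\nabla_x \delta_\mu F_c[\rho_t, \delta_{x_t}](x_t) = -\int \nabla_x f(z, x_t)\,\d\rho_t(z) - \nabla V_2(x_t),
\]
where the self-interaction contribution $(\nabla W_2 \ast \delta_{x_t})(x_t) = \nabla W_2(0) = 0$ by the symmetry of $W_2$ (Assumption~\ref{assump:W_lower}). To apply \cite[Theorem~23.9]{Villani07} to $\Wass_2(\rho_t,\rho_t')^2$, we need the $\rho$-velocity $\nabla_z \delta_\rho F_c[\gamma_t]$ to lie in $L^2(\rho_t)$ on $[0,T)$, which follows from a Dirac-adapted version of Lemma~\ref{lem:velocities_in_L2}: the only place where absolute continuity of $\mu_t$ is used in that proof is in the coupling-cancellation step, but when the contribution coming from $\partial_t \mu_t$ on the $\rho$-side is replaced by the $\dot x_t$-driven pointwise time derivative on the $\mu$-side, the symmetry $(\nabla_{zx}^2 f)^T = \nabla_{xz}^2 f$ still forces the cancellation in $\ddt D_c(\gamma_t)$, giving $D_c(\gamma_t)\le e^{-2\lambda_c t}D_c(\gamma_0)<\infty$.

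Combining \cite[Theorem~23.9]{Villani07} applied to the density component (with $\rho_t = (\nabla\varphi_t)_\#\rho_t'$) with the direct differentiation of $\|x_t - x_t'\|^2$ along the ODEs for $x_t, x_t'$, and performing the pushforward rewrite used in Proposition~\ref{prop:contraction} (with $\nabla\psi_t$ the trivial map sending $x_t'$ to $x_t$), we obtain
\[
\ddt \Wbar(\gamma_t, \gamma_t')^2 = 2\iint \begin{bmatrix}\nabla\varphi_t(z) - z\\ \nabla\psi_t(x) - x\end{bmatrix} \cdot \begin{bmatrix}\nabla_z\delta_\rho F_c[\gamma_t](\nabla\varphi_t(z)) - \nabla_z\delta_\rho F_c[\gamma_t'](z)\\ \nabla_x\delta_\mu F_c[\gamma_t'](x) - \nabla_x\delta_\mu F_c[\gamma_t](\nabla\psi_t(x))\end{bmatrix} \d\gamma_t'(z,x).
\]
Applying Lemma~\ref{lem:Wbar_upper_bd} with $\gamma = \gamma_t'$ and $\tilde\gamma = \gamma_t$ yields $\ddt \Wbar^2 \le -2\lambda_c \Wbar^2$, from which the claim follows by Gronwall. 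The lemma remains valid in this mixed Dirac/absolutely-continuous setting because (a) its $C_1$-estimate is automatic since the $\mu$-entropy term is absent when $\beta = 0$, (b) the $\mu$-contribution to $C_3$ collapses to $\nabla W_2(0) - \nabla W_2(0) = 0$ by symmetry of $W_2$, and (c) the bounds for $C_2$ and $C_4$ only use pointwise values of the transport maps at the single points $x_t, x_t'$, which are well-defined.

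The main technical obstacles are therefore (i) extending the zero-sum cancellation of Lemma~\ref{lem:velocities_in_L2} to the situation where one species evolves as a Dirac governed by an ODE rather than as an absolutely continuous density governed by a PDE, and (ii) verifying that the four estimates $C_1$--$C_4$ underlying Lemma~\ref{lem:Wbar_upper_bd} survive when the $\mu$-transport map degenerates to a constant map between two Dirac masses. Both reduce to the structural symmetries $\nabla W_2(0) = 0$ and $(\nabla_{zx}^2 f)^T = \nabla_{xz}^2 f$, but both must be written out explicitly.
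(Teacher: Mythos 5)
Your proposal is correct and follows essentially the same approach as the paper's proof: decompose $\Wbar(\gamma_t,\gamma_t')^2$ into the Wasserstein part for $\rho$ (differentiated via \cite[Theorem~23.9]{Villani07}, using Lemma~\ref{lem:velocities_in_L2} for the $L^2$-velocity hypothesis) and the squared Euclidean distance for the Dirac positions (differentiated along the ODE, with the self-interaction term dropping out by $\nabla W_2(0)=0$), rewrite both via the transport maps, apply Lemma~\ref{lem:Wbar_upper_bd}, and conclude with Gr\"onwall. Your remarks on the two technical points --- that the zero-sum coupling cancellation in Lemma~\ref{lem:velocities_in_L2} survives when $\mu$ propagates as a Dirac, and that the $C_1$--$C_4$ estimates underlying Lemma~\ref{lem:Wbar_upper_bd} degenerate gracefully when the $\mu$-side transport map is the constant map $x_t'\mapsto x_t$ (realized as $\nabla\psi_t$ with $\psi_t(x)=\tfrac12\|x-x_t'+x_t\|^2$, as the paper notes) --- match exactly what the paper's proof addresses.
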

\begin{proof}
     Define $\nabla \varphi_t(z)$ and $T_t(x)$ such that $\rho_t = \nabla \varphi_{t\#} \rho_t'$ and $x_t = T_t(x_t')$. Note that $\nabla \varphi_t(z)$ is invertible because $\rho_t,\rho_t'$ are absolutely continuous. The time derivative of $\Wbar(\gamma_t,\gamma_t')^2$ is
     \begin{align*}
         \ddt \Wbar(\gamma_t,\gamma_t')^2 = \ddt \Wass_2(\rho_t,\rho_t')^2 + \ddt \norm{x_t -x_t'}^2 \,.
     \end{align*}
     To compute the time derivative of the Wasserstein-2 squared distance via \cite[Theorem 23.9]{Villani07}, we use that 
          \begin{align*}
         \int \norm{\nabla_z \delta_\rho F_c[\gamma_t](z)}^2 \d \rho_t(z) + \norm{\nabla_x \delta_\mu F_c[\gamma_t](x_t)}^2  < \infty\,,
     \end{align*}
     thanks to Lemma~\ref{lem:velocities_in_L2},
     with the same holding true for $\gamma_t'$. Then by \cite[Theorem 23.9]{Villani07}, the time derivative of $\Wass_2(\rho_t,\rho_t')^2$ is
      \begin{align*}
          \ddt \Wass_2(\rho_t,\rho_t')^2 &= 2\int \<\nabla_z \delta_\rho F_c[\gamma_t'](z),z-\nabla \varphi_{t}(z)> \d \rho_t'(z) \\
          &\quad +2 \int \<\nabla_z \delta_\rho F_c[\gamma_t](z),z-(\nabla \varphi_{t})^{-1}(z)> \d \rho_t(z) \\
          &= 2\int \<\nabla_z \delta_\rho F_c[\gamma_t'](z)-\nabla_z \delta_\rho F_c[\gamma_t](\nabla \varphi_t(z)),z-\nabla \varphi_{t}(z)> \d \rho_t'(z)  
      \end{align*}
      Since $\mu$ evolves as a Dirac for all time, the dynamics for $x_t$ are
      \begin{align*}
          \dot x_t =v(\rho_t,x_t) \coloneqq -\nabla_x \left( \int f(z,x_t)\d\rho_t(z) + V_2(x_t)\right) \,, 
      \end{align*}
      Note that the interaction term does not appear in the dynamics because
    \begin{align*}
       \nabla_x \int \delta_{x_t} W_2 \ast \delta_{x_t} = \nabla_x W_2(x_t-x_t) = \nabla_x W_2(0) = 0 \,.
    \end{align*}
    The time derivative of $\norm{x_t - x_t'}^2$ is therefore
    \begin{align*}
        \ddt &\norm{x_t-x_t'}^2 = 2 \<x_t-x_t',v(\rho_t,x_t)-v(\rho_t',x_t')> \\
        &= 2\<x_t'-T_t(x_t'),v(\rho_t',x_t')-v(\rho_t,T_t(x_t')) > \\
        &= -2\<x_t'-T_t(x_t'),\nabla_x \left(\int ( f(z,x_t')-f(\nabla \varphi_t(z),T_t(x_t')))\d \rho_t'(z) + V_2(x_t') -V_2(T_t(x_t'))\right) > \\
        &= -2\int \<x-T_t(x),\nabla_x \delta_\mu F_c[\gamma_t'](x)-\nabla_x \delta_\mu F_c[\gamma_t](T_t(x))> \d \mu_t'(x)\,,
    \end{align*}
    where $\mu_t' = \delta_{x_t'}$.
    Summing $\ddt \norm{x_t - x_t'}^2$ and $\ddt\Wass_2(\rho_t,\rho_t')^2$ results in
    \begin{align*}
        \ddt \Wbar(\gamma_t,\gamma_t')^2 = -2\iint \begin{bmatrix}
            z- \nabla \varphi_t(z) \\
            x_t - T_t(x) 
        \end{bmatrix} \cdot \begin{bmatrix}
             \nabla_z \delta_\rho F_c[\gamma_t](\nabla \varphi_t(z)) - \nabla_z \delta_\rho F_c[\gamma_t'](z) \\
            \nabla_x \delta_\mu F_c[\gamma_t'](x) - \nabla_x \delta_\mu F_c[\gamma_t](T_t(x)) 
        \end{bmatrix} \d \gamma_t'(z,x)\,,
    \end{align*}
    which is the same expression given in the proof of contraction in the setting where both $\mu_t,\rho_t\in\P_2^{ac}$ (Proposition~\ref{prop:contraction}) by choosing $\psi_t(x):=\frac{1}{2}\norm{x-x_t'+x_t}^2$ resulting in $T_t=\nabla\psi_t$ on $\supp\mu_t'$. Contraction, boundedness of the second moments, and convergence to the steady state follow similarly.
\end{proof}

\begin{lemma}\label{lem:rho_log_rho_lower_bound}
    For any $\rho\in\P_2(\R^d)$ and any $\eps>0$, it holds that $\int \rho \log \rho \ge -\eps \int \norm{z}^2 \d \rho(z) - c_\eps$, for some $c_\eps\in[0,\infty)$.
\end{lemma}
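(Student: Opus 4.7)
The plan is to lower-bound the entropy $\int \rho\log\rho$ by comparing $\rho$ to a centered Gaussian reference measure whose variance is tuned to $\eps$. The key tool is the non-negativity of the Kullback–Leibler divergence: for any absolutely continuous probability density $g$ on $\R^d$ with full support,
\begin{equation*}
    KL(\rho \| g) = \int \rho(z)\log\frac{\rho(z)}{g(z)}\,dz \ge 0\,,
\end{equation*}
which rearranges to $\int \rho\log\rho \ge \int \rho\log g\,dz$. If $\rho$ is not absolutely continuous, then $\int \rho\log\rho = +\infty$ by the convention fixed in the definition of $\mathcal H$, and the inequality holds trivially.

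The main step is then to choose $g = \gamma_\sigma$, the centered Gaussian with variance $\sigma^2$, namely
\begin{equation*}
    \gamma_\sigma(z) = (2\pi\sigma^2)^{-d/2}\exp\!\Bigl(-\tfrac{\|z\|^2}{2\sigma^2}\Bigr)\,,
\end{equation*}
so that $\log\gamma_\sigma(z) = -\tfrac{d}{2}\log(2\pi\sigma^2) - \tfrac{\|z\|^2}{2\sigma^2}$. Substituting this into the KL bound gives
\begin{equation*}
    \int\rho\log\rho \,\ge\, -\tfrac{1}{2\sigma^2}\int \|z\|^2\,d\rho(z) - \tfrac{d}{2}\log(2\pi\sigma^2)\,.
\end{equation*}
Now I would set $\sigma^2 = 1/(2\eps)$ to match the prefactor on the second-moment term, yielding
\begin{equation*}
    \int\rho\log\rho \,\ge\, -\eps \int \|z\|^2\,d\rho(z) - \tfrac{d}{2}\log(\pi/\eps)\,,
\end{equation*}
and define $c_\eps := \max\bigl\{0,\,\tfrac{d}{2}\log(\pi/\eps)\bigr\}\in[0,\infty)$ to ensure the required sign.

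There is no real obstacle here: finiteness of the second moment of $\rho$ (since $\rho\in\P_2$) guarantees that $\int\|z\|^2\,d\rho(z)<\infty$, hence $\int\rho\log\gamma_\sigma\,dz$ is a well-defined finite quantity, and the Gibbs inequality $KL\ge 0$ applies without additional regularity assumptions on $\rho$.
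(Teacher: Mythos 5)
Your proof is correct, but it takes a route that is related to yet noticeably different from the paper's. The paper proves the pointwise Fenchel--Young inequality for $a(x)=x\log x$, whose Legendre transform is $a^*(y)=e^{y-1}$: it sets $x=\rho(z)$, $y=-\eps\|z\|^2$, obtains $\rho\log\rho + e^{-\eps\|z\|^2-1}\ge -\eps\|z\|^2\rho$ pointwise, and integrates, ending up with $c_\eps=\int e^{-\eps\|z\|^2-1}\,\d z = e^{-1}(\pi/\eps)^{d/2}$. You instead invoke the global Gibbs inequality $\int\rho\log\rho\ge\int\rho\log g$ (equivalently $KL(\rho\|g)\ge 0$) with $g$ a normalized Gaussian of variance $1/(2\eps)$. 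Both are manifestations of the same entropy duality, but yours works with the normalized reference measure and hence yields the tighter constant $\tfrac{d}{2}\log(\pi/\eps)$, which grows only logarithmically as $\eps\downarrow 0$ rather than polynomially; this sharper constant is not needed for the lemma's applications but is a genuine improvement. Your handling of the non-absolutely-continuous case via the convention $\Hc(\rho)=+\infty$ is appropriate, and taking $c_\eps=\max\{0,\tfrac{d}{2}\log(\pi/\eps)\}$ correctly respects the requirement $c_\eps\in[0,\infty)$. One small point worth making explicit in a final write-up: since $\rho\in\P_2$, the term $\int\rho\log\gamma_\sigma\,\d z$ is finite, so the split $\int\rho\log\rho = \int\rho\log(\rho/\gamma_\sigma)+\int\rho\log\gamma_\sigma$ is a well-defined identity in $(-\infty,+\infty]$ and the Gibbs inequality applies without hidden $\infty-\infty$ issues.
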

\begin{proof}
    Consider the function $a(x):[0,\infty) \to \R$ defined as $a(x) = x \log x$. The Legendre dual, given by
    $a^*(y) = \sup_{x\ge0} \left[x\cdot y - a(x)\right]\,, $
    is $a^*(y) = e^{y-1}$. By definition, for all $x,y\in [0,\infty)$, it holds that $a(x)+a^*(y) \ge x\cdot y$. Selecting $x=\rho(z)$ and 
    $y=-\eps \norm{z}^2$ for any $\eps>0$, for any value of $z\in\R^d$, we have
    \begin{align*}
        \rho(z) \log \rho(z) + e^{-\eps\norm{z}^2-1} \ge -\norm{z}^2\eps\rho(z)\,, 
    \end{align*}
    and integrating this function over all $z$ results in
    \begin{align*}
        \int \rho \log \rho \,\d z\ge -\eps \int \norm{z}^2 \d \rho(z)  - c_\eps\,,
    \end{align*}
    where $c_\eps := \int e^{-\eps \norm{z}^2-1}\d z \in [0,\infty)$.
\end{proof}
\begin{lemma}[Lower Semicontinuity of Entropy]\label{lem:lsc_entropy_santambrogio}
    For every sequence $\rho_n \rightharpoonup \rho$ converging weakly according to Definition~\ref{def:weakcv}, we have $\li \int \rho_n \log \rho_n \ge \int \rho \log \rho$.
\end{lemma}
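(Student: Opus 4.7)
The plan is to reduce the lower semi-continuity of $\Hc$ to the well-known narrow lower semi-continuity of the relative entropy against a fixed Gaussian reference measure, and then transfer the result back to $\Hc$ using the uniform second moment control built into Definition~\ref{def:weakcv}.

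For each fixed $\theta>0$, introduce the Gaussian probability density $m_\theta(z):=C_\theta\, e^{-\theta \|z\|^2}$ with normalization $C_\theta=(\theta/\pi)^{d/2}$ so that $m_\theta\in\P(\R^d)$. The Donsker--Varadhan variational formula
\begin{align*}
H(\rho\,|\,m_\theta)=\sup_{\phi\in C_b(\R^d)}\left\{\int\phi\,d\rho-\log\int e^{\phi}\,dm_\theta\right\}
\end{align*}
expresses $H(\cdot\,|\,m_\theta)$ as a supremum of narrowly continuous affine functionals (for each bounded continuous $\phi$, the quantity $\int e^\phi\,dm_\theta$ is finite since $m_\theta$ is a probability measure), hence it is lower semicontinuous in the narrow topology, and consequently in the weak topology of Definition~\ref{def:weakcv}. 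On absolutely continuous measures with finite second moment, a direct computation gives the algebraic identity
\begin{align*}
H(\rho\,|\,m_\theta)=\int\rho\log\rho\,dz+\theta\int\|z\|^2\,d\rho-\log C_\theta,
\end{align*}
with the understanding that both sides are $+\infty$ in the singular or diverging-integral cases; note that since $m_\theta>0$ everywhere, $\rho\ll m_\theta$ if and only if $\rho\ll\mathcal L^d$, so the cases are exactly aligned with the definition of $\Hc$.

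Applying the lower semicontinuity of $H(\cdot\,|\,m_\theta)$ to $\rho_n\rightharpoonup\rho$ and substituting the identity yields
\begin{align*}
\liminf_{n\to\infty}\left[\int\rho_n\log\rho_n\,dz+\theta\int\|z\|^2\,d\rho_n\right]\ge \int\rho\log\rho\,dz+\theta\int\|z\|^2\,d\rho.
\end{align*}
Using the elementary inequality $\liminf(a_n+b_n)\le\liminf a_n+\limsup b_n$ with $a_n=\int\rho_n\log\rho_n\,dz$ and $b_n=\theta\int\|z\|^2\,d\rho_n$, combined with the uniform bound $C:=\sup_n\int\|z\|^2\,d\rho_n<\infty$ supplied by Definition~\ref{def:weakcv}, one obtains
\begin{align*}
\liminf_{n\to\infty}\int\rho_n\log\rho_n\,dz\ge\int\rho\log\rho\,dz+\theta\left(\int\|z\|^2\,d\rho-C\right).
\end{align*}
Fatou's lemma gives $\int\|z\|^2\,d\rho\le C$, so the error term on the right is nonpositive, and it vanishes as $\theta\to 0^+$; letting $\theta\to 0^+$ closes the argument.

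The main technical obstacle is the apparent sign mismatch introduced by passing from $\Hc$ to $H(\cdot\,|\,m_\theta)$: the extra quadratic term $\theta\int\|z\|^2\,d\rho_n$ has the ``wrong'' direction for a clean Fatou application, which is precisely why narrow convergence alone is insufficient for lower semicontinuity of $\Hc$ on unbounded $\R^d$ (mass spreading flat could drive $\Hc$ to $-\infty$). The uniform second moment bound encoded in the weak topology of Definition~\ref{def:weakcv}, combined with the free parameter $\theta$ and the $\theta\to 0^+$ limit, is exactly what is needed to absorb this sign mismatch.
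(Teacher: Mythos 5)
Your proof is correct, and it differs genuinely from the paper's. The paper reduces the uniform second-moment bound to a uniform first-moment bound (via $\|z\|\le \|z\|^2+\tfrac14$) and then simply cites \cite[Proposition 2.1]{santambrogio_dealing_2015}, which gives lower semicontinuity of the entropy under narrow convergence with uniformly bounded first moments. You instead give a self-contained argument: penalize by a Gaussian reference $m_\theta$, use the Donsker--Varadhan representation to get narrow lower semicontinuity of $H(\cdot\,|\,m_\theta)$, expand the identity $H(\rho\,|\,m_\theta)=\Hc(\rho)+\theta\int\|z\|^2\,d\rho-\log C_\theta$, absorb the quadratic term using the uniform second-moment bound (and $\int\|z\|^2\,d\rho\le C$ by lower semicontinuity of the moment under narrow convergence), and send $\theta\to 0^+$. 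This is essentially the same penalization mechanism that underlies Santambrogio's result, but tailored to the second-moment hypothesis directly rather than routed through the first moment. What the paper's route buys is brevity via a citation; what yours buys is a self-contained proof and a cleaner alignment with the hypothesis actually supplied by Definition~\ref{def:weakcv}.

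One small clarification you may want to make explicit: the estimate $\liminf(a_n+b_n)\le\liminf a_n+\limsup b_n$ is harmless here precisely because $\limsup_n b_n\le\theta C<\infty$; alternatively, the more direct route $a_n\ge(a_n+b_n)-\theta C$ gives $\liminf a_n\ge \liminf(a_n+b_n)-\theta C$ without invoking that inequality and avoids any worry about indeterminate forms when $\liminf a_n=-\infty$ (which in fact cannot occur, by the bound of Lemma~\ref{lem:rho_log_rho_lower_bound} applied with the uniform second-moment control). Also, when spelling out the algebraic identity you should note, as you hint, that $-\int\rho\log m_\theta$ is finite whenever $\rho\in\P_2$, so that $\Hc(\rho)$ and $H(\rho\,|\,m_\theta)$ are $+\infty$ for exactly the same $\rho$ and are otherwise related by a finite additive correction; this is what makes the substitution legitimate.
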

\begin{proof}
    A uniform bound on the second moment implies a uniform bound on the first moment:
    \begin{align*}
        \int \norm{z} \d \rho_n(z) \le \int \left(\norm{z}^2 + \frac{1}{4}\right)\d \rho_n(z) \le C + \frac{1}{4} \,,
    \end{align*}
    then the result follows from \cite[Proposition 2.1]{santambrogio_dealing_2015}.
\end{proof}

\begin{lemma}[Upper and Lower Bounds on $\m$]\label{lem:upper_lower_bounds_f}
    Let $\m(z,x) \coloneqq -f_1(z,x) -\alpha \log \rhot(z) \in C^2(\R^d\times\R^d)$ and $x_n\rightarrow \bar x$ in $\R^d$.
    \begin{enumerate}
        \item [(1)] Under Assumptions~\ref{assump:f_lower}(ii) and \ref{assump:V_lower}, there exist constants $\hat c_1,\hat c_2 \in \R$ such that 
    \begin{align*}
        \m(z,x_n) \ge -\hat c_1 + \hat c_2 \norm{z}^2 \quad \forall\, z\in\R^{d}\,, n\in\nat\,.
    \end{align*}
    Further, if $\lambda_b>0$, then one can choose $\hat c_2 > 0$.
    \item [(2)] Under Assumptions~\ref{assump:f_upper} and \ref{assump:V_upper}, there exists constants $c_1\in \R$ and $c_2>0$ such that
    \begin{align*}
        \m(z,x_n) \le c_1 + c_2 \norm{z}^2 \quad \forall\, z\in\R^{d}\,, n\in\nat\,.
    \end{align*}
    \end{enumerate}
\end{lemma}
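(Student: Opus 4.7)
\medskip

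\noindent\textbf{Proof plan.} Both bounds will follow from a second-order Taylor expansion of $\m(\cdot,x_n)$ around $z=0$, combined with the hypothesized Hessian estimates and Young's inequality to control the linear term. Since $(x_n)$ converges in $\R^d$, it is contained in a compact set, and so by continuity $\m(0,x_n)$ and $\nabla_z\m(0,x_n)$ are uniformly bounded in $n$; the uniformity in $n$ of the resulting estimates will come from this observation.

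\smallskip

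\noindent For (1), I would first note that under Assumption~\ref{assump:f_lower}(ii) with $\lambda_{f,1}=-\Lambda_1$ and Assumption~\ref{assump:V_lower} with constant $\tilde\lambda$, we have
\[
\nabla_z^2 \m(z,x) = -\nabla_z^2 f_1(z,x) - \alpha\nabla_z^2\log\rhot(z) \succeq (-\Lambda_1 + \alpha\tilde\lambda)\Id_d = \lambda_b \Id_d
\]
for all $(z,x)\in\R^d\times\R^d$. The integral form of Taylor's theorem then gives
\[
\m(z,x_n) \ge \m(0,x_n) + \nabla_z\m(0,x_n)\cdot z + \tfrac{\lambda_b}{2}\|z\|^2.
\]
Applying Young's inequality to the linear term (e.g. $\nabla_z\m(0,x_n)\cdot z \ge -\tfrac{1}{\lambda_b}\|\nabla_z\m(0,x_n)\|^2 - \tfrac{\lambda_b}{4}\|z\|^2$ in the case $\lambda_b>0$), and absorbing the uniformly bounded terms $\m(0,x_n)$ and $\|\nabla_z\m(0,x_n)\|^2$ into a constant $\hat c_1$, yields
\[
\m(z,x_n) \ge -\hat c_1 + \tfrac{\lambda_b}{4}\|z\|^2 \qquad \forall\, z\in\R^d,\ n\in\nat,
\]
so that $\hat c_2=\lambda_b/4>0$. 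In the general case (without assuming $\lambda_b>0$), Young's inequality still produces a bound of the required form with possibly non-positive $\hat c_2$.

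\smallskip

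\noindent For (2), Assumption~\ref{assump:f_upper}(a) and Assumption~\ref{assump:V_upper} give the symmetric upper bound $\nabla_z^2 \m(z,x) \preceq (\alpha\Lambdat - \ell_1)\Id_d$. Let $K:=\max\{\alpha\Lambdat-\ell_1,\,0\}$. A Taylor expansion then yields
\[
\m(z,x_n) \le \m(0,x_n) + \nabla_z\m(0,x_n)\cdot z + \tfrac{K}{2}\|z\|^2,
\]
and estimating the linear term by Young's inequality, $\nabla_z\m(0,x_n)\cdot z \le \tfrac14\|\nabla_z\m(0,x_n)\|^2 + \|z\|^2$, and again using continuity of $\m(0,\cdot)$ and $\nabla_z\m(0,\cdot)$ on the compact set containing $(x_n)$, produces constants $c_1\in\R$ and $c_2:= 1 + K/2>0$ such that $\m(z,x_n)\le c_1 + c_2\|z\|^2$ uniformly in $n$.

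\smallskip

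\noindent There is no substantive obstacle: the only mild care needed is (i) handling the sign of $\lambda_b$ for the strict positivity $\hat c_2>0$ in the last assertion of part (1), and (ii) extracting uniformity in $n$ from the convergence $x_n\to\bar x$ via continuity on a compact set, rather than from any structural hypothesis on $f_1$ at $x=\bar x$.
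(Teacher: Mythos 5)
Your proof is correct and, for part (1), it is actually a cleaner route than the one the paper takes. The paper's proof expands $\m$ twice: first in $z$ around $z=0$, and then it further expands the gradient term $\nabla_z\m(0,x)$ in $x$ around $x=0$, producing a cross term $z^\top\nabla^2_{xz}\m(0,\xi_1)x$ that is then bounded by $-L\|z\|\|x\|$ via Assumption~\ref{assump:f_upper}(b). This makes the paper's argument for part (1) lean on the cross-Hessian bound even though the lemma's statement only lists Assumptions~\ref{assump:f_lower}(ii) and \ref{assump:V_lower}. You avoid this entirely by stopping at the first expansion
\[
\m(z,x_n)\ge\m(0,x_n)+\nabla_z\m(0,x_n)\cdot z+\tfrac{\lambda_b}{2}\|z\|^2
\]
and treating $\m(0,x_n)$ and $\nabla_z\m(0,x_n)$ as uniformly bounded in $n$ by continuity of $f_1\in C^2$ and $\log\rhot\in C^2$ on the compact set $\{x_n\}\cup\{\bar x\}$. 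That relies only on the stated assumptions, so your version is both sufficient and more faithful to the hypothesis list. Part (2) is likewise handled correctly; your clamp $K:=\max\{\alpha\Lambdat-\ell_1,0\}$ together with the additive $\|z\|^2$ from Young's inequality is a tidy way to guarantee $c_2>0$ independent of the sign of $\alpha\Lambdat-\ell_1$, something the paper glosses over. In short: same basic Taylor-plus-Young strategy, but with a single expansion in $z$ and a compactness argument for the $x$-dependence instead of the paper's second expansion in $x$.
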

\begin{proof}
    Taylor expanding $\m(z,x)$ first in $z$ and then the second term in $x$, we have
\begin{align*}
  \m(z,x) &= \m(0,x) + z^\top \nabla_z \m(0,x) + \frac{1}{2} z^\top \nabla_{z}^2 \m(\xi_2,x) z \\
  &= \m(0,x) + z^\top \nabla_z \m(0,0) + z^\top \nabla_{xz}^2 \m(0,\xi_1) x + \frac{1}{2} z^\top \nabla_{z}^2 \m(\xi_2,x) z \,,
\end{align*}
with $\xi_1=sx$ for some $s\in[0,1]$, $\xi_2= \tau z$ for some $\tau\in[0,1]$.

Proof of (1): By Assumptions \ref{assump:f_lower}(ii) and \ref{assump:V_lower}, we have for some continuous function $\hat c(x)>0$
 \begin{align*}
\m(z,x) \ge \m(0,x) + z^\top \nabla_z \m(0,0) - L \|z\| \| x \| + \frac{\lambda_b}{2} \norm{z}^2
    \ge \hat c(x) + \hat c_2 \norm{z}^2\,,
 \end{align*}
with $\hat c(x) \in \R$ and $\hat c_2 >0$ if $\lambda_b > 0$.
Substituting $x=x_n$ in the above estimate and using that $x_n$ converges to $\bar x$, there exists a constant $\hat c_1\in\R$ such that
\begin{align}\label{eq:fn-lowerbound}
    \m(z,x_n) \ge -\hat c_1 + \hat c_2 \norm{z}^2\,.
\end{align}
Proof of (2): Similarly for the upper bound,
\begin{align*}
   \m(z,x) 
   &\le  \m(0,x) + z^\top \nabla_z \m(0,0) + L \|z\| \| x \| + \frac{-\ell_1+\alpha \tilde \Lambda}{2} \norm{z}^2
    \le c(x) + c_2 \norm{z}^2\,,
\end{align*}
where $c_2>0$
and $c(x)$ is a continuous function. The upper bounds $-\ell_1$ and $L$ come from Assumption~\ref{assump:f_upper}
and $\Lambdat$ from Assumption~\ref{assump:V_upper}. Again using that $x_n$ converges to $\bar x$, there exists a constant $c_1$ such that
\begin{align*}
    \m(z,x_n) \le c_1 + c_2 \norm{z}^2\,.
\end{align*}
\end{proof}
\begin{lemma}\label{lem:f_aui}
Let $m\in C^2(\R^d\times\R^d)$ such that $\|\nabla^2_zm(z,x)\|_2\ge \lambda$ and $\|\nabla^2_{xz}m(z,x)\|_2\le L$ for all $z,x\in\R^d$ for some $\lambda,L >0$.
Consider a sequence of vectors $x_n\in \R^d$ converging to some limit $\bar x\in \R^d$ and any sequence of measures $(\mu_n)_{n\in\nat}\in\P(\R^d)$ narrowly converging to some limit $\bar\mu\in\P(\R^d)$.
    Then there exists a constant $c\in\R$ such that $m_n(z)\coloneqq \m(z,x_n)\ge c$ for all $z\in\R^d$, for all $n\ge 0$. In particular, $(m_n^-)_{n\in\nat}$ is asymptotically uniformly integrable with respect to $(\mu_n)_{n\in\nat}$, where $m_n^-(z)=-\min\{ m_n(z),0 \}$.
\end{lemma}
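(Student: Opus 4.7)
The plan is to reduce the asymptotic uniform integrability statement to a uniform pointwise lower bound on $(m_n)$, which in turn follows from the same Taylor expansion argument used in the proof of Lemma~\ref{lem:upper_lower_bounds_f}(1). Note that the cited applications of this lemma (see, e.g., Proposition~\ref{prop:gamma_cv}) in fact use the positive semidefinite bound $\nabla_z^2 m(z,x)\succeq \lambda\,\Id_d$, and this is what I will use below.

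First, I would expand $m(z,x_n)$ using a second-order Taylor expansion, first in $z$ around $0$ and then expanding the resulting gradient term $\nabla_z m(0,x_n)$ in $x$ around $0$, yielding
\begin{align*}
    m(z,x_n) = m(0,x_n) + z^\top \nabla_z m(0,0) + z^\top \nabla_{xz}^2 m(0,\xi_1)\,x_n + \tfrac{1}{2} z^\top \nabla_z^2 m(\xi_2,x_n)\, z,
\end{align*}
for some $\xi_1$ on the segment between $0$ and $x_n$ and some $\xi_2$ on the segment between $0$ and $z$.

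Next, combining the Hessian lower bound $\nabla_z^2 m\succeq \lambda \Id_d$ with the cross-derivative bound $\|\nabla_{xz}^2 m\|_2 \le L$ and Young's inequality, I would absorb the cross term $z^\top \nabla_{xz}^2 m(0,\xi_1)\,x_n$ into half of the quadratic $\tfrac{\lambda}{2}\|z\|^2$ at the cost of adding a term proportional to $\|x_n\|^2$. Since $x_n\to\bar x$ in $\R^d$, both $\|x_n\|$ and $|m(0,x_n)|$ are uniformly bounded along the sequence by continuity. This produces constants $\hat c_1\in\R$ and $\hat c_2 >0$ independent of $n$ such that
\begin{align*}
    m_n(z) \ge -\hat c_1 + \hat c_2 \|z\|^2 \ge -\hat c_1 \quad \forall\, z\in\R^d,\ \forall\, n\in\nat,
\end{align*}
establishing the first claim with $c := -\hat c_1$.

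The a.u.i.\ conclusion is then immediate and does not actually involve the sequence $(\mu_n)$: from the uniform lower bound $m_n\ge c$ we deduce $m_n^- = \max\{-m_n,0\} \le \max\{-c,0\}=:M$ uniformly in $z$ and $n$. Hence for every $R>M$, the indicator $\mathbbm{1}\{z : |m_n^-(z)|\ge R\}$ vanishes identically on $\R^d$, so the integral in Definition~\ref{def:aui} is zero for every such $R$, and the double limit is trivially zero. The only real step is unpacking the Taylor expansion carefully enough to track how $\hat c_1$ and $\hat c_2$ depend on the uniform bounds on $(x_n)$ along a convergent sequence; no new ideas beyond those in Lemma~\ref{lem:upper_lower_bounds_f}(1) are required, and in particular the narrow convergence hypothesis on $(\mu_n)$ is only stated for uniformity of notation with Theorem~\ref{thm:fatou_weak_cv_meas} and plays no role in the proof.
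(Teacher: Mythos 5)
Your proof is correct, but it takes a genuinely different route from the paper's. The paper introduces the best response $z^*(x) := \amin_{z} m(z,x)$, invokes the implicit function theorem to show $\|\nabla_x z^*(x)\|_2 \le L/\lambda$ (using strong convexity of $m(\cdot,x)$ to invert $\nabla^2_z m$), deduces continuity of $m^*(x) := m(z^*(x),x)$, and uses $m^*(x_n)\to m^*(\bar x)$ to get the uniform constant $c$. You instead reuse the Taylor-expansion argument from Lemma~\ref{lem:upper_lower_bounds_f}(1) directly, which is more elementary and actually yields the stronger conclusion $m_n(z)\ge -\hat c_1 + \hat c_2\|z\|^2$ rather than only a constant lower bound. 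Your flag about the hypothesis is also a real issue: the stated spectral-norm bound $\|\nabla^2_z m\|_2\ge\lambda$ is not enough for either proof (take $m(z,x)=-\tfrac12\|z\|^2$, which satisfies the norm hypothesis with $\lambda=1$ yet has no lower bound in $z$), and the paper's own proof implicitly upgrades it to $\nabla^2_z m\succeq\lambda\,\Id_d$ when it appeals to ``strong convexity'' for invertibility. The application in Proposition~\ref{prop:gamma_cv} does supply the positive-definite bound, so the statement is used consistently, but the hypothesis as written is weaker than what both proofs require. You are also right that narrow convergence of $(\mu_n)$ plays no role once a pointwise uniform lower bound on $m_n$ is established; the paper's proof likewise never uses it. Both approaches are fine; yours buys simplicity (no IFT) and a quantitatively sharper bound, while the paper's argument is perhaps written to emphasize the role of the best-response map $z^*(x)$ used elsewhere in Appendix~\ref{sec:fast_rho_proof}.
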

\begin{proof}
Let $z^*(x):\R^d \to \R^d$, $\m^*(x):\R^d\to \R$, and $\bar z \in \R^d$ be given by 
\begin{align*}
    \m^*(x) \coloneqq \min_{z\in\R^d} m(z,x)\,, \quad z^*(x) \coloneqq \amin_{z\in \R^d} \m(z,x)\,, \quad \bar z = \amin_{z\in\R^d} \m(z,\bar x)\,.
\end{align*}
Showing that $(\m^*(x_n))_{n\in\nat}$ has a uniform lower bound is sufficient to achieve the desired a.u.i result for $(m_n^-)_{n\in\nat}$. We will show this uniform lower bound
using the implicit function theorem \cite[Theorem C.7]{evans_partial_2010}. By definition of the best response,
\begin{align*}
    \nabla_z \m(z,x)\big|_{z=z*(x)} = 0 \quad \Rightarrow \quad \nabla_{z}^2 \m(z^*(x),x) \nabla_x z^*(x) + \nabla_{zx}^2 \m(z^*(x),x) = 0\,,
\end{align*}
and since $\nabla_{z}^2 \m(z^*(x),x)$ is invertible due to strong convexity,
\begin{align*}
    \nabla_x z^*(x) & = -\left( \nabla_{z}^2 \m(z,x) ^{-1} \nabla_{zx}^2 \m(z,x)\right) \big|_{ z = z^*(x)} \,.
\end{align*}
Using the matrix bounds results in the following bound for $\nabla_x z^*(x)$
\begin{align*}
    \norm{\nabla_x z^*(x)}_2 \le \frac{L}{\lambda} \quad \forall \ x\in\R^d\,.
\end{align*}
This gradient bound provides a bound on the distance of $z^*(x_n)$ from $z^*(\bar x)$ as
\begin{align*}
    \norm{z^*(x_n) - z^*(\bar x)}_2 \le \frac{L}{\lambda} \norm{x_n-\bar x}\,.
\end{align*}
In particular, $z^*(x)$ is continuous and therefore $\m^*(x) \coloneqq \m(z^*(x),x)$ is continuous in $x$. 
Hence $\m(z,x_n)\ge \m^*(x_n) \ge c$ for all $n>0$ for some $c\in\R$. If $c \ge0$, then the a.u.i condition is immediately satisfied because $m_n^-(z) = 0$ for all $n$. If $c < 0$, then for all $K>|c|$, 
    \begin{align*}
        \limsup_{n\rightarrow\infty} \int |m_n^-(z)|\mathbbm{1}\{ z: |m_n^-(z)|\ge K\} \d\mu_n(z) = 0
    \end{align*}
    and $m_n^-$ is therefore asymptotically uniformly integrable.
\end{proof}

\section*{Acknowledgments} 
The authors are grateful for helpful discussions with Jos\'e A. Carrillo, Filippo Santambrogio and Andre Wibisono, as well as Eitan Levin, Matthieu Darcy and Pau Batlle. In particular, Filippo Santambrogio suggested the strategy for the proof of Proposition~\ref{prop:danskin-br} using the $\Gamma$-convergence result Proposition~\ref{prop:gamma_cv}, and pointed us to the version of Lemma~\ref{lem:rho_log_rho_lower_bound} in \cite{santambrogio_dealing_2015}. We thank Andre Wibisono for providing a correction for the proof of Lemma~\ref{lem:Wbar_upper_bd}, and Oliver Tse for suggesting Remark~\ref{rem:mean_field_SDE}. LC is funded by an NDSEG fellowship from the AFOSR and a PIMCO fellowship, FH is supported by start-up funds at the California Institute of Technology and by NSF CAREER Award 2340762, EM is supported in part from NSF award 2240110, LJR is supported in part by NSF 1844729, NSF 2312775, and ONR YIP N000142012571.

\printbibliography
\end{document}